\documentclass[10pt,reqno]{amsart}


\usepackage[pass]{geometry}
\newlength\DX
\DX=1in
\paperwidth=\dimexpr\paperwidth-\DX\relax
\hoffset=\dimexpr\hoffset-.5\DX\relax
\newlength\DY
\DY=1in
\paperheight=\dimexpr\paperheight-\DY\relax
\voffset=\dimexpr\voffset-.5\DY-.5\footskip\relax


\usepackage{mathtools}

\usepackage[T1]{fontenc}

\usepackage{amsmath,amsfonts,amsbsy,amsgen,amscd,mathrsfs,amssymb,amsthm}
\usepackage{enumerate}
\usepackage{bm}

\usepackage{booktabs}

\usepackage{stmaryrd}
\SetSymbolFont{stmry}{bold}{U}{stmry}{m}{n} 

\usepackage[usenames,dvipsnames]{xcolor}
\usepackage[colorlinks=true,citecolor=blue,linkcolor=blue]{hyperref}

\usepackage[font=small,margin=10pt]{caption}
\usepackage{tikz}


\newtheorem{thm}{Theorem}[section]
\newtheorem{lem}[thm]{Lemma}

\newtheorem{prop}[thm]{Proposition}
\newtheorem{cor}[thm]{Corollary}

\theoremstyle{definition}

\newtheorem{defn}[thm]{Definition}

\theoremstyle{remark}

\newtheorem{example}[thm]{Example}

\newtheorem{rem}[thm]{Remark}
\newtheorem*{rem*}{Remark}

\numberwithin{equation}{section} 
\numberwithin{figure}{section}
\numberwithin{table}{section}

\makeatletter

\let\oldtocsection=\tocsection
\let\oldtocsubsection=\tocsubsection
\let\oldtocsubsubsection=\tocsubsubsection
\renewcommand{\tocsection}[2]{\hspace{-1.2em}\oldtocsection{#1}{#2}}
\renewcommand{\tocsubsection}[2]{\hspace{-.2em}\oldtocsubsection{#1}{#2}}
\renewcommand{\tocsubsubsection}[2]{\hspace{0.8em}\oldtocsubsubsection{#1}{#2}}

\DeclareRobustCommand{\gobblefive}[5]{}
\newcommand*{\SkipTocEntry}{\addtocontents{toc}{\gobblefive}}

\makeatother

\newcommand{\ntr}{\mathop{\mathrm{tr}}}
\newcommand{\tr}{\mathop{\mathrm{Tr}}}

\newcommand{\M}{\mathrm{M}}
\newcommand{\EE}{\mathbf{E}}
\newcommand{\Cov}{\mathrm{Cov}}
\newcommand{\PP}{\mathbf{P}}
\newcommand{\spc}{\mathrm{sp}}

\newcommand{\id}{\mathbf{1}}
\newcommand{\dH}{\mathrm{d_H}}
\newcommand{\RR}{\mathrm{R}}

\begin{document}

\title[Universality and matrix concentration]{Universality and sharp 
matrix concentration inequalities}

\author{Tatiana Brailovskaya}
\address{Fine Hall 218, Princeton University, Princeton, NJ 08544, USA}
\email{tatianab@princeton.edu}

\author{Ramon van Handel}
\address{Fine Hall 207, Princeton University, Princeton, NJ 08544, USA}
\email{rvan@math.princeton.edu}

\begin{abstract}
We show that, under mild assumptions, the spectrum of a sum of independent 
random matrices is close to that of the Gaussian random matrix whose 
entries have the same mean and covariance. This nonasymptotic universality 
principle yields sharp matrix concentration inequalities for general 
sums of independent random matrices when combined with the Gaussian theory 
of Bandeira, Boedihardjo, and Van Handel. A key feature of the resulting 
theory is that it is applicable to a broad class of random 
matrix models that may have highly nonhomogeneous and dependent entries, 
which can be far outside the mean-field situation considered in classical 
random matrix theory. We illustrate the theory in
applications to random graphs, 
matrix concentration inequalities for smallest singular values, 
sample covariance matrices, 
strong asymptotic freeness, 
and phase transitions in spiked models.
\end{abstract}

\subjclass[2010]{60B20; 
                 60E15; 
                 46L53; 
                 46L54; 
                 15B52} 

\keywords{Random matrices; matrix concentration; 
universality; free probability}

\maketitle

\thispagestyle{empty}
{\small
\setcounter{tocdepth}{2}
\tableofcontents
}

\section{Introduction}
\label{sec:intro}

\subsection{Matrix concentration inequalities}

Let $Z_1,\ldots,Z_n$ be 
independent $d\times d$ random matrices with zero mean, and let
\begin{equation}
\label{eq:intmodel}
	X := \sum_{i=1}^n Z_i.
\end{equation}
Random matrices of this form arise in numerous 
applications. As guiding examples, the reader may keep in mind the 
following very special cases:
\medskip
\begin{enumerate}[$\bullet$]
\itemsep\medskipamount
\item Any random matrix $X$ with centered jointly Gaussian entries may be 
represented in this form by setting $X=\sum_{i=1}^n g_iA_i$ for
suitable deterministic matrices $A_i$, where $g_1,\ldots,g_n$ are i.i.d.\ 
standard Gaussian variables.
\item Any random matrix $X$ with centered independent entries may be 
represented in this form as $X= \sum_{i,j=1}^d \eta_{ij} e_ie_j^*$, where 
$\eta_{ij}$ are independent centered random variables and $e_1,\ldots,e_d$ 
denotes the standard basis of $\mathbb{C}^d$.
\end{enumerate}
\medskip
Many other kinds of summands $Z_i$ arise naturally in a diverse range of 
pure and applied mathematical problems; cf.\ \cite{Tro15} and the
references therein, and the applications that are discussed in 
sections \ref{sec:introapp} and \ref{sec:app} below.

Already in the special cases highlighted above, it is clear that random 
matrices of the form \eqref{eq:intmodel} can possess a nearly arbitrary 
structure: the model allows for essentially any pattern of entry 
variances, dependencies, and distributions. Such general models are 
outside the reach of classical random matrix theory, which is primarily 
concerned with the asymptotic behavior of highly symmetric models such as 
matrices with i.i.d.\ entries or invariant ensembles \cite{AGZ10,Tao12}.

Rather surprisingly, one of the most fruitful ideas that has been 
developed in the present setting is that one can treat the model 
\eqref{eq:intmodel} essentially as though it is a sum of independent 
\emph{scalar} random variables. This approach results in a somewhat crude 
but extremely versatile family of nonasymptotic \emph{matrix concentration 
inequalities}. Two important examples of such inequalities are:\footnote{%
	Here and in the sequel, $\|M\|$ denotes the operator norm
	(i.e., the largest singular value) of a 
	matrix $M$, and $a\lesssim b$ denotes $a\le Cb$ for a universal 
	constant $C$.}
\medskip
\begin{enumerate}[$\bullet$]
\itemsep\medskipamount
\item 
For a self-adjoint random matrix $X$ with centered jointly 
Gaussian entries, the noncommutative Khintchine inequality of
Lust-Piquard and Pisier
\cite[\S 9.8]{Pis03} yields
\begin{equation}
\label{eq:nck}
	\EE\|X\| \lesssim
	\|\EE X^2\|^{\frac{1}{2}}\sqrt{\log d}.
\medskip
\end{equation}
\item 
For a self-adjoint random matrix $X$ of the form 
\eqref{eq:intmodel} with $\|Z_i\|\le R$ a.s., the 
matrix Bernstein inequality of Oliveira and Tropp \cite{Oli10,Tro15} 
yields
\begin{equation}
\label{eq:bernstein}
	\EE\|X\| \lesssim
	\|\EE X^2\|^{\frac{1}{2}} \sqrt{\log d} +  R\log d.
\end{equation}
\end{enumerate}
\medskip
To understand the significance of these inequalities,
note that
$(\EE\|X\|^2)^{\frac{1}{2}} \ge \|\EE X^2\|^{\frac{1}{2}}$ by Jensen's 
inequality. The above bounds can therefore capture the norm of very 
general random matrices up to a logarithmic dimensional factor. The 
dimensional factor proves to be suboptimal, however, even for the simplest 
random matrix models (such as those with i.i.d.\ entries).

The inefficiency of classical matrix concentration inequalities stems from 
the fact that by mimicking the proofs of scalar concentration 
inequalities, these bounds ignore noncommutativity of the summands $Z_i$ 
in \eqref{eq:intmodel}. 
In the setting of Gaussian random matrices, a significant step toward 
addressing this inefficiency was recently made by Bandeira, Boedihardjo, 
and the second author \cite{BBV21}, who developed a new class of 
\emph{sharp} matrix concentration inequalities that capture 
noncommutativity. For example, if $X$ is a self-adjoint random matrix with 
centered jointly Gaussian entries, \cite[Corollary 2.2]{BBV21} yields
\begin{equation}
\label{eq:bbv}
	\EE\|X\| \le
	\|X_{\rm free}\| + C\|\EE X^2\|^{\frac{1}{4}}
	\|\mathrm{Cov}(X)\|^{\frac{1}{4}} (\log d)^{\frac{3}{4}}
\end{equation}
for a universal constant $C$. Here $\mathrm{Cov}(X)$ denotes the 
$d^2\times d^2$ covariance matrix of the entries of $X$, while $X_{\rm 
free}$ is a certain noncommutative model of $X$ that arises from free 
probability theory.  As $\|X_{\rm free}\|\le 2\|\EE X^2\|^{\frac{1}{2}}$,
the inequality \eqref{eq:bbv} shows that the dimensional factor in 
\eqref{eq:nck} can be removed as soon as $\|\mathrm{Cov}(X)\|\ll (\log 
d)^{-3}\|\EE X^2\|$, which is a mild assumption in many applications. 
The theory of \cite{BBV21} yields much more, however: both 
the support and the empirical distribution of the spectrum of $X$ is close to 
that of $X_{\rm free}$, and similar results hold for polynomials of 
such matrices. Such results open the door to developing a nonasymptotic 
random matrix theory for nearly arbitrarily structured random matrices.

In view of these developments, it is of considerable interest to extend 
the Gaussian theory of \cite{BBV21} to the much more general setting 
\eqref{eq:intmodel} of sums of independent random matrices. For classical 
matrix concentration inequalities, this extension has been achieved in two 
distinct ways: one may either derive both \eqref{eq:nck} and 
\eqref{eq:bernstein} by a common method of proof \cite{Oli10,Tro15}, or 
deduce \eqref{eq:bernstein} from \eqref{eq:nck} by a symmetrization 
argument as in \cite{Rud99,Tro16}. Unfortunately, neither of these 
approaches appears to give rise to a satisfactory extension of the theory 
of \cite{BBV21}. The methods of \cite{BBV21} rely heavily on Gaussian 
analysis, and it is unclear how to adapt them to non-Gaussian situations. 
On the other hand, sharp inequalities are fundamentally inaccessible by 
symmetrization, as is explained in \cite[\S 8.2.2]{BBV21}.

\subsection{Universality}

In this paper, we take an entirely different viewpoint on such 
problems. To motivate the form of our main results, let us note
that if the last term in the inequality \eqref{eq:bernstein} is 
negligible, then \eqref{eq:bernstein} has exactly the same form as the 
Gaussian inequality \eqref{eq:nck}. The main theme of this paper is 
that this phenomenon has nothing to do with matrix concentration 
inequalities themselves, but is rather a consequence of a general 
\emph{universality principle}:
\begin{quote}
\emph{If $\max_{1\le i\le n}\|Z_i\| \ll\|\EE X^2\|^{\frac{1}{2}} (\log 
d)^{-\beta}$ \emph{(}for an appropriate $\beta>0$\emph{)}, then the 
spectrum of a self-adjoint
random matrix $X=\sum_{i=1}^n Z_i$ as in
\eqref{eq:intmodel} nearly coincides with that
of the Gaussian random matrix $G$ whose entries have the same mean and 
covariance as $X$.}
\end{quote}
This principle directly reduces the study of the spectrum of sums of 
independent random matrices to that of Gaussian matrices, regardless of 
what theory is applied to the Gaussian matrices. In particular, it 
simultaneously explains the phenomenon behind 
\eqref{eq:nck}---\eqref{eq:bernstein}, and enables us to fully extend the 
sharp matrix concentration theory of \cite{BBV21} to the model 
\eqref{eq:intmodel}.

The universality principle was stated above in an informal manner. A 
detailed formulation of our results will be given in section 
\ref{sec:main} below. In particular, we will obtain nonasymptotic 
inequalities that establish closeness both of the spectral distributions 
of $X$ and $G$, and of the spectra themselves in Hausdorff distance. 
(These results apply in a more general setting than \eqref{eq:intmodel}, 
where the random matrices may have an arbitrary mean.) We further 
formulate resulting sharp matrix concentration inequalities that 
arise from the theory of \cite{BBV21}.

Universality phenomena have been widely investigated in classical random 
matrix theory. As in many previous works on this topic, the starting point 
for our analysis is the cumulant expansion of Barbour \cite{Bar86} and 
Lytova and Pastur \cite{LP09}, which has been primarily applied to 
classical random matrix models with independent entries. A rather 
complicated extension of the cumulant expansion to dependent models 
appears in \cite{EKS19}, where it is used to study random matrices whose 
entries exhibit decay of correlations. A straightforward extension of 
Barbour's method to the dependent setting will be formulated in section 
\ref{sec:cumulant}; such an extension does not in itself require any new 
idea as compared to \cite{Bar86,LP09}.

The core contribution of this paper lies in the mechanism that gives rise 
to universality. To the best of our knowledge, prior universality results 
are essentially limited to the ``classical random matrix regime'' where 
the entry variances are of order $d^{-\frac{1}{2}}$ and the entries are 
independent or nearly independent (in the sense that they exhibit decay of 
correlations). In other words, these results rely on restrictive 
mean-field assumptions. In contrast, the independent sum models 
\eqref{eq:intmodel} of the present paper can lie far outside the 
mean-field regime: they can be highly nonhomogeneous, sparse, and exhibit 
strong dependence among the entries, and are not assumed to possess any 
special structure or symmetries. The properties of these 
models therefore cannot be explained by previous universality results that 
rely heavily on the special structure of the underlying models.

The central idea of this paper is that universality arises in these models 
in a different manner through an operator-theoretic mechanism: a key 
ingredient of our approach are high-order trace inequalities (section 
\ref{sec:tools}) that enable us to control the contributions of the terms 
in the cumulant expansion without imposing any correlation decay or 
mean-field assumptions. This operator-theoretic viewpoint on universality, 
together with a number of other new tools (such as nonstandard 
concentration inequalities for spectral statistics), provides access to 
many applications that are not captured by classical random matrix models.

\begin{rem}
Much of the literature on universality of classical random 
matrix models has focused on establishing universality at or near the 
scale of the fluctuations of the eigenvalues; see, e.g., \cite{EKS19} and 
the references therein. It should be emphasized that the results of this 
paper do not provide any information at the scale of the fluctuations, but 
rather only at scales at which the spectral statistics exhibit 
concentration. At the level of generality considered in this paper, the 
scale of the fluctuations is strongly model-dependent (see, e.g., 
\cite{Sod10,CDF09}), so that it is unclear how a meaningful result for 
generally structured models could be formulated. Even 
a plausible conjecture in this direction would be of considerable 
interest.
\end{rem}

\subsection{Applications}
\label{sec:introapp}

To illustrate the main results of this paper, we will develop several 
applications that we briefly describe here (see section \ref{sec:app} for 
detailed statements). Beyond their independent interest, we emphasize that 
the completely general universality phenomenon described by our main 
results is the common mechanism underlying all these rather diverse 
applications.

\subsubsection*{Random graphs and expanders}

The expansion properties of random regular graphs have been extensively 
studied for graphs of bounded degree $k$.  In particular, such graphs are 
nearly Ramanujan, i.e., they have the smallest possible (by \cite{Nil91}) 
second eigenvalue $\lambda_2= (1+o(1))2\sqrt{k-1}$ to 
leading order \cite{Fri08}. While the strong expansion properties of such 
graphs are expected to persist when the degree is allowed to diverge, this 
situation remains much more poorly understood; see, e.g., the survey 
\cite{Vu08}. The universality principles of this paper enable us to 
address this question both in classical and in new situations:
\medskip
\begin{enumerate}[$\bullet$]
\itemsep\medskipamount
\item The permutation model of random regular graphs with $n$ 
vertices of degree $k$ is nearly Ramanujan when $k\gg (\log n)^4$, 
addressing a well known question \cite[\S 1.4]{BKY17}. To date, the best 
known bound in this setting was $\lambda_2=O(\sqrt{k})$ 
\cite{FKS89,DJPP13,CGJ18}.
\item A classical result of Alon and Roichman \cite{AR94} states that if
$\Gamma$ is any finite group and $k\gg\log|\Gamma|$ generators are chosen 
uniformly at random, the resulting Cayley graph is an expander. This 
result cannot be improved for abelian groups. Here we show that under mild 
assumptions that hold, e.g., for all nonabelian finite simple groups, the 
Cayley graph defined by choosing $k\gg(\log|\Gamma|)^4$ random generators 
is nearly Ramanujan. This appears to be the first result of its kind.
\item A fundamental result of Bordenave and Collins \cite{BC19} states
that for any fixed base graph $H$, the new eigenvalues of 
its random $n$-lift are bounded as $n\to\infty$ by the spectral 
radius of the universal cover of $H$. Here we show that this conclusion 
remains valid for any sequence of base graphs $H_n$ whose maximal degrees 
grow at least polylogarithmically in the number of vertices of their 
random lifts. Moreover, in this setting we uncover a new phenomenon:
when the base graphs are simple, random $2$-lifts already achieve the 
optimal bound.
\end{enumerate}

\subsubsection*{Matrix concentration inequalities for smallest singular values}

By their nature, classical matrix concentration inequalities can only 
control the largest singular value of nonhomogeneous random matrices. In 
contrast, the universality principles of this paper apply not only to the 
largest singular value but also to the entire spectrum. By combining our 
results with the Gaussian theory of \cite{BBV21}, we are therefore able to 
obtain sharp matrix concentration inequalities for the smallest singular 
value of random matrices of the form \eqref{eq:intmodel} that may be 
viewed as a nonasymptotic, nonhomogeneous form of the classical Bai-Yin 
law \cite{BY93}. Let us emphasize that even if one is interested in 
suboptimal bounds on the smallest singular value, such information is 
fundamentally inacceassible by the methods used to prove classical matrix 
concentration inequalities for general models of the form 
\eqref{eq:intmodel}.

A direct application yields bounds for the smallest singular value of 
sparse nonhomogeneous bipartite Erd\H{o}s-R\'enyi graphs that are sharp to 
leading order. To date, the best known bounds \cite{DZ22} were suboptimal 
for nonhomogeneous graphs.

\subsubsection*{Sample covariance matrices}

Let $Y_1,\ldots,Y_n$ be independent, centered random vectors in 
$\mathbb{R}^d$. The $d\times d$ random matrix defined by $S := 
\sum_{i=1}^n Y_iY_i^*$ is called the (nonhomogeneous) sample 
covariance matrix. Equivalently, $S=YY^*$, where 
$Y:=\sum_{i=1}^n Y_i e_i^*$ is the $d\times n$ matrix whose columns are 
$Y_1,\ldots,Y_n$.

A central problem in this setting is to control the deviation of the 
sample covariance matrix from its mean $\|S-\EE S\|$. A curious feature of 
this problem is that we may express $S$ in terms of a model of the form 
\eqref{eq:intmodel} in two different ways: we may either consider $S$ 
itself as a model of the form \eqref{eq:intmodel}, or we may consider $Y$ 
as a model of the form \eqref{eq:intmodel}. These two representations give 
rise to distinct universality principles: roughly speaking, applying our 
universality principles to $S$ is efficient when $n$ is sufficiently large 
compared to $d$, while applying universality to $Y$ is efficient when $d$ 
is sufficiently large compared to $n$.

We will illustrate this phenomenon in the setting of nonhomogeneous 
Gaussian sample covariance matrices with arbitrary covariance matrices of 
$Y_1,\ldots,Y_n$, for which we obtain nonasymptotic bounds on $\|S-\EE 
S\|$ that are sharp for a wide range of parameters. No sharp bounds appear 
to be known in the literature at this level of generality. We also discuss 
non-Gaussian sample covariance matrices, which will be developed further 
in forthcoming work \cite{PvH23}.

\subsubsection*{Strong asymptotic freeness}

A celebrated result of Voiculescu \cite{Voi91} states that the traces of 
polynomials of independent $N\times N$ Wigner matrices converge as 
$N\to\infty$ to the traces of polynomials of certain limiting objects that 
arise in free probability theory. In an important breakthrough, Haagerup 
and Thorbj{\o}rnsen \cite{HT05} showed that this convergence holds not 
only for the trace but also for the norm:
\begin{equation}
\label{eq:introsaf}
	\lim_{N\to\infty}\|p(X_1^N,\ldots,X_m^N)\|=
	\|p(s_1,\ldots,s_m)\|\quad\mbox{a.s.}
\end{equation}
for every noncommutative polynomial $p$, where $X_1^N,\ldots,X_m^N$ are 
independent $N\times N$ complex Gaussian Wigner matrices and 
$s_1,\ldots,s_m$ is a free semicircular family. This property, called 
strong asymptotic freeness, is of fundamental importance both to random 
matrices and in the theory of operator algebras.

Whether \eqref{eq:introsaf} holds for more general models of random 
matrices $X_i^N$ is far from clear from the original rather delicate 
proofs. Previously, the state-of-the-art \cite{And13} was that 
\eqref{eq:introsaf} holds for matrices with i.i.d.\ centered entries with 
unit variance and bounded fourth moment. Very recently, however, the sharp 
matrix concentration theory of \cite{BBV21} made it possible to establish 
\eqref{eq:introsaf} for an extremely general class of Gaussian random 
matrices, showing that this phenomenon is much more ubiquitous than was 
previously understood. Our universality principles extend this conclusion 
even further to general non-Gaussian random matrices of the form 
\eqref{eq:intmodel} under mild assumptions that allow for 
significant sparsity and dependence.

\subsubsection*{Phase transitions in spiked models}

The behavior of low-rank perturbations of random matrices (so-called 
``spiked'' models) has attracted much attention in pure and applied random 
matrix theory since the work of Baik, Ben Arous and P\'ech\'e 
\cite{BBP05}. The characteristic feature of such models is that they 
exhibit a phase transition depending on the size of the perturbation: 
there is a threshold above which one or more isolated eigenvalues detach 
from the bulk of the spectrum. Most of the literature on this topic is 
concerned with Wigner matrices or with unitarily invariant models; see, 
e.g., the survey \cite{CD17}. The universality principles of this paper 
enable us to investigate such phenomena in much more general situations, 
including models that exhibit significant sparsity and dependence. Little 
appears to be known in this direction: previous work on a special type of 
low-rank perturbations of sparse random matrices appeared only very 
recently in \cite{LM22}.

As our primary aim here is to illustrate the main results of this paper, 
we will focus our attention on sparse and dependent models whose behavior 
can be reduced by universality to the classical spiked Wigner model. In 
this setting, we will show how our universality principles enable us to 
capture the number and locations of the outlier eigenvalues, as well as 
the overlaps of the associated eigenvectors with those of the low-rank 
perturbation. However, much more general situations become amenable to 
analysis in combination with the Gaussian theory of \cite{BBV21}, which 
makes it possible to investigate analogous phase transition phenomena in 
nonhomogeneous models. The computations involved in the nonhomogenous 
setting are unrelated to universality, and are treated in
detail in \cite{BCSV23}.

\subsection{Organization of this paper}

The remainder of this paper is organized as follows. In section 
\ref{sec:main}, we formulate the main results of this paper. Section 
\ref{sec:app} is devoted to a detailed formulation of the applications 
described above.

In section \ref{sec:cumulant}, we provide a brief self-contained treatment 
of the multivariate cumulant expansion. Section \ref{sec:tools} develops 
some key tools that are used in the proofs of our main results: high-order 
trace inequalities that provide the main mechanism for controlling the 
terms in the cumulant expansion, and certain nonstandard concentration of 
measure inequalities. The following three sections are devoted to the 
proofs of our main results. Section \ref{sec:univstat} proves the 
universality principles for spectral statistics, while section 
\ref{sec:univspec} proves the universality principle for the support of 
the spectrum. Section \ref{sec:trunc} is devoted to a truncation argument 
that extends our main results to models that satisfy minimal moment 
assumptions. Finally, section \ref{sec:applproofs} is devoted to the 
proofs of the various applications discussed in section \ref{sec:app}.

\subsection{Notation}

The following notation will be used throughout the paper. We write 
$[n]:=\{1,\ldots,n\}$ for $n\in\mathbb{N}$. The algebra of $d\times d$ 
matrices with values in a *-algebra $\mathcal{A}$ is denoted as 
$\M_d(\mathcal{A})$, and its subspace of self-adjoint matrices is denoted 
as $\M_d(\mathcal{A})_{\rm sa}$. For a matrix or operator $X$, we denote 
by $\|X\|$ its operator norm, by $\spc(X)$ its spectrum, and by
$|X|:=(X^*X)^{\frac{1}{2}}$. 
For self-adjoint $X,Y$, 
we denote by $X\le Y$ the positive semidefinite order.
The identity matrix or operator is denoted as 
$\id$. For $M\in\M_d(\mathbb{C})$, we denote by $\tr M:=\sum_{i=1}^d 
M_{ii}$ the unnormalized trace and by $\ntr M := \frac{1}{d}\tr M$ the 
normalized trace. We denote by $W^{k,1}(\mathbb{R})$ the Sobolev 
space of $f:\mathbb{R}\to\mathbb{C}$ so that
$\|f\|_{W^{k,1}(\mathbb{R})} :=
\sum_{i=0}^k \int_{-\infty}^\infty \big|\frac{d^i}{dx^i}f(x)\big|\,dx
<\infty$.
Finally, we use the convention that when a functional is followed by 
square brackets, it is applied before any other operations; for example,
$\mathbf{E}[X]^\alpha := (\mathbf{E}X)^\alpha$ and
$\ntr[M]^\alpha := (\ntr M)^\alpha$.

\section{Main results}
\label{sec:main}

\subsection{Random matrix models and matrix parameters}
\label{sec:model}

\subsubsection{The general model}

The basic random matrix model of this paper is defined as follows.
Fix $d\ge 2$ and $n\in\mathbb{N}$, let $Z_0\in\M_d(\mathbb{C})_{\rm sa}$ 
be any deterministic $d\times d$ self-adjoint matrix, and let 
$Z_1,\ldots,Z_n$ be any independent $d\times d$ self-adjoint random 
matrices with zero mean $\EE[Z_i]=0$ and complex-valued entries. We define
\begin{equation}
\label{eq:model}
	X:=Z_0+\sum_{i=1}^n Z_i.
\end{equation}
Note that this model is slightly more general than the model 
\eqref{eq:intmodel} discussed in the introduction, in 
that we allow for an arbitrary mean.

\begin{rem}
\label{rem:nonsa}
The assumption that $X$ is self-adjoint is made primarily for notational
convenience. Our main results extend directly to non-self-adjoint 
matrices as follows.
For any matrix $M\in\M_d(\mathbb{C})$, define its dilation
$\breve{M}\in\M_{2d}(\mathbb{C})_{\rm sa}$ as
$$
	\breve{M} := \begin{bmatrix} 0 & M \\ M^* & 0 \end{bmatrix}.
$$
If we denote by $M=U|M|$ the polar decomposition of $M$, it follows that
$$
	\breve{M} = V \begin{bmatrix} -|M| & 0 \\ 0 & |M| 
	\end{bmatrix}V^*\qquad\mbox{with}\qquad
	V:=\frac{1}{\sqrt{2}}\begin{bmatrix} U & U \\ -\id & \id
	\end{bmatrix},
$$
where $|M|:=(M^*M)^{\frac{1}{2}}$. As $V$ is unitary,
this shows that the eigenvalues of $\breve{M}$ coincide precisely 
(including multiplicities) with $\{\pm\sigma_i:i\in[d]\}$, where 
$\sigma_1,\ldots,\sigma_d$ are the singular values of $M$. 
Consequently, by applying our results to $\breve X$, we can 
immediately extend their conclusions on the eigenvalues of 
self-adjoint random matrices to the singular values of 
non-self-adjoint random matrices.

Singular values of rectangular random matrices are readily reduced to
those of square matrices by adding additional zero rows or columns.
On the other hand, we emphasize that the results of this paper do not
provide bounds on the complex eigenvalues of non-self-adjoint matrices.
For further comments on the non-self-adjoint case, see \cite[Remark 
2.6]{BBV21} and Corollary \ref{cor:userfriendly}.
\end{rem}

Associated with the random matrix $X$ are two models that capture its 
structure in an idealized manner. We introduce these models presently.

\subsubsection{The Gaussian model}

Throughout this paper, we denote by $G$ the Gaussian model that has the 
same mean and covariance structure as $X$. More precisely, denote by
$\Cov(X)$ the $d^2\times d^2$ covariance matrix
of the entries of $X$, that is,
\begin{equation}
\label{eq:covdefn}
	\Cov(X)_{ij,kl} := \EE[(X-\EE X)_{ij} \overline{(X-\EE X)_{kl}}].
\end{equation}
We define $G$ to be the $d\times d$ self-adjoint random matrix such that:
\begin{enumerate}[1.]
\itemsep\abovedisplayskip
\item
$\{\mathrm{Re}\,G_{ij},\mathrm{Im}\,G_{ij}:i,j\in[d]\}$ are jointly
Gaussian;
\item $\EE[G] =\EE[X]$ and $\Cov(G)=\Cov(X)$.
\end{enumerate}
Note that as $G$ is a self-adjoint matrix $G_{lk}=\overline{G_{kl}}$, the 
covariance matrix of the real-valued Gaussian vector 
$\{\mathrm{Re}\,G_{ij},\mathrm{Im}\,G_{ij}:i,j\in[d]\}$ is fully specified 
by $\Cov(G)$. Thus the above properties uniquely define the distribution 
of $G$.

\subsubsection{The noncommutative model}
\label{sec:noncmodel}

We now introduce a noncommutative model $X_{\rm free}$ that has the 
same mean and covariance structure as $X$. To this end, we must recall 
some basic notions from free probability theory; we refer to 
\cite{NS06} for precise definitions and a comprehensive treatment.

Fix a $C^*$-probability space $(\mathcal{A},\tau)$, that is, a 
unital $C^*$-algebra $\mathcal{A}$ endowed with a faithful trace $\tau$.
The following may be viewed as a noncommutative analogue of 
jointly Gaussian variables with mean $\mu$ and covariance $C$, cf.\
\cite[p.\ 128]{NS06}.

\begin{defn}
\label{defn:semi}
A family of self-adjoint elements $s_1,\ldots,s_m\in\mathcal{A}$ is said 
to be a \emph{semicircular family with mean $\mu$ and covariance $C$}
if 
$$
	\tau(s_k)=\mu_k,\qquad
	\tau((s_{k_1}-\mu_{k_1}\id)\cdots (s_{k_p}-\mu_{k_p}\id)) 
	=
	\sum_{\pi\in\mathrm{NC}_2([p])}
	\prod_{\{i,j\}\in\pi} C_{k_ik_j}
$$
for all $p\ge 1$ and $k,k_1,\ldots,k_p\in[m]$, where $\mathrm{NC}_2([p])$ 
denotes the collection of noncrossing pair partitions of $[p]$.
\end{defn}

A $d\times d$ matrix $Y\in\M_d(\mathcal{A})$ with 
$\mathcal{A}$-valued entries 
is naturally identified with an element of the $C^*$-algebra 
$\M_d(\mathbb{C})\otimes\mathcal{A}$, which we endow with the normalized 
trace ${\ntr}\otimes\tau$, cf.\ \cite[Chapter 9]{MS17}.
Define the entry covariance matrix $\Cov(Y)$ as
$$
	\Cov(Y)_{ij,kl}:=\tau((Y_{ij}-\tau(Y_{ij})\id)(Y_{kl}-\tau(Y_{kl})\id)^*).
$$
With these definitions in place, we can now define $X_{\rm 
free}\in\M_d(\mathcal{A})_{\rm sa}$ as follows:
\begin{enumerate}[1.]
\itemsep\abovedisplayskip
\item $\{\mathrm{Re}\,(X_{\rm 
free})_{ij},\mathrm{Im}\,(X_{\rm free})_{ij}:i,j\in[d]\}$ is a 
semicircular family;
\item $(\mathrm{id}\otimes\tau)(X_{\rm free}) =\EE[X]$ and 
$\Cov(X_{\rm free})=\Cov(X)$.
\end{enumerate}
Here we write $\mathrm{Re}\,a:= \frac{1}{2}(a+a^*)$ and
$\mathrm{Im}\,a := \frac{1}{2i}(a-a^*)$ for $a\in\mathcal{A}$.

\begin{rem}
\label{rem:coeffrep}
As jointly Gaussian variables can always be written as linear combinations 
of independent standard Gaussian variables, $G$ may be expressed as
$$
	G = Z_0 + \sum_{i=1}^N A_i g_i
$$
for some deterministic matrices $A_1,\ldots,A_N\in\M_d(\mathbb{C})_{\rm 
sa}$ and
i.i.d.\ (real) standard Gaussians $g_1,\ldots,g_N$ (note that this 
representation is not unique). Given any such a representation, it is 
readily verified that one may express $X_{\rm free}$ as
$$
	X_{\rm free} = Z_0\otimes\id + \sum_{i=1}^N A_i\otimes s_i,
$$
where $s_1,\ldots,s_N$ is a \emph{free} semicircular family, that is,
with zero mean and identity covariance matrix.
Thus the present definition of $X_{\rm free}$ agrees with the one 
in \cite{BBV21}.
\end{rem}

\subsubsection{Matrix parameters}
\label{sec:matparm}

Let $X$ be a self-adjoint random matrix as in \eqref{eq:model}. The 
following basic parameters will appear in our main results:
\begin{align}
	\sigma(X) & := \big\|\EE[(X-\EE X)^2]\big\|^{\frac{1}{2}},
	\phantom{\Big\|}
\label{eq:parfirst}
\\
	\sigma_*(X) &:= 
	\sup_{\|v\|=\|w\|=1} \EE\big[
	|\langle v,(X-\EE X)w\rangle|^2\big]^{\frac{1}{2}},
	\phantom{\Big\|}
\\
	v(X) &:= \|\Cov(X)\|^{\frac{1}{2}}, 
	\phantom{\Big\|}
\\
	R(X) &:= \Big\|\max_{1\le i\le n}\|Z_i\|\Big\|_\infty,
\end{align}
where $\|Y\|_\infty$ denotes the essential supremum of the random
variable $|Y|$.

The significance of these parameters may be summarized as follows. The 
parameter $\sigma(X)$ roughly captures the spread of the spectrum of 
$X-\EE X$, as was explained in the introduction. The parameter 
$\sigma_*(X)$ controls the fluctuations of the spectral statistics of
$X$ and $G$, see, e.g., section \ref{sec:conc} below. The parameter $v(X)$ 
quantifies the degree to which the spectral properties of $G$ are captured 
by those of $X_{\rm free}$: this is the main outcome of the theory of 
\cite{BBV21}. Finally, the universality principles of this paper will show 
that the parameter $R(X)$ quantifies the degree to which the spectral 
properties of $X$ are captured by those of $G$.

The parameter $R(X)$ is meaningful only when the random matrices $Z_i$ are 
uniformly bounded. We will also prove versions of our main results that 
apply to unbounded summands under minimal moment assumptions. The 
formulation of these results requires the following modified matrix 
parameters:
\begin{align}
	\sigma_q(X) & := 
	\big(\ntr \EE[(X-\EE X)^2]^{\frac{q}{2}}\big)^{\frac{1}{q}},
	\phantom{\Big\|}
\\
	R_q(X) &:= \big({\textstyle\sum_{i=1}^n \EE[\ntr |Z_i|^q]}
	\big)^{\frac{1}{q}},
	\phantom{\Big\|}
\\
	\bar R(X) &:= \EE\Big[\max_{1\le i\le n}\|Z_i\|^2
	\Big]^{\frac{1}{2}}
\label{eq:parlast}
\end{align}
for $q<\infty$, and $\sigma_\infty(X):=\sigma(X)$, $R_\infty(X):=R(X)$.

\smallskip
\begin{rem}
Let us emphasize the following basic facts.
\medskip
\begin{enumerate}[$\bullet$]
\itemsep\medskipamount
\item
All these parameters depend only on $X-\EE X$, i.e., they do 
not depend on $Z_0$.
\item $\sigma(X),\sigma_q(X),\sigma_*(X),v(X)$ only depend 
on the covariance of the entries of $X$, and therefore capture the 
universal behavior that is shared between $X$, $G$, and $X_{\rm free}$.
\item
In contrast, $R(X),R_q(X),\bar R(X)$ are specific to the non-Gaussian model.
While denote these as parameters of $X$ for 
notational simplicity,
we emphasize that these parameters depend on how $X$ is 
represented as a sum of $Z_i$ as in \eqref{eq:model}.
\item Recall the basic inequalities $\sigma_*(X)\le\sigma(X)$ and
$\sigma_*(X)\le v(X)$ \cite[\S 2.1]{BBV21}.
\end{enumerate}
\end{rem}

\begin{table}
\centering
\begin{small}
\begin{tabular}{@{}lcccc@{}}
\toprule
Model & $\sigma(X)$ & $v(X)$ & $R(X)$ & $\bar R(X)$ \\
\midrule
(a) Erd\H{o}s-R\'enyi graph $\mathrm{G}(d,q)$ 
(section \ref{sec:indent}) & $\sqrt{dq}$ &
$\sqrt{q}$ & $1$ & $O(1)$ \\
(b) Random $k$-regular graph (section \ref{sec:rregdet}) &
$\sqrt{k}$ & $\sqrt{\frac{k}{d}}$ & $1$ & 
$\phantom{\big(^{\frac{2}{p}}}1\phantom{\big)^{\frac{2}{p}}}$ \\
(c) Wigner matrix: $p$ moments (section \ref{sec:indent})
 & $\sqrt{d}$ & $1$ & $\infty$ &
$O\big(d^{\frac{2}{p}}\big)$ \\
(d) Band matrix: width $k$, $p$ moments 
(section \ref{sec:appsaf}) &
$\sqrt{k}$ & $1$ & $\infty$  & $O\big((kd)^{\frac{1}{p}}\big)$ \\
\bottomrule 
\end{tabular}
\end{small}
\caption{Order of magnitude of the matrix parameters for 
some classical models: (a) adjacency matrix of
Erd\H{o}s-R\'enyi $\mathrm{G}(d,q)$ graph; (b) adjacency matrix of random 
$k$-regular graph with $d$ vertices; (c)
Wigner matrix $X$ of dimension $d$ with
$\mathbf{E}[X_{ij}]=0$, $\mathbf{E}[X_{ij}^2]=1$, $\|X_{ij}\|_p=O(1)$;
(d) Band matrix $\bar X$ defined by $\bar X_{ij}=X_{ij} 1_{|i-j|\le 
\frac{k-1}{2}}$.
\label{tab:extable}}
\end{table}
\begin{rem}
In most applications of our theory
$v(X),\bar R(X)\ll\sigma(X)$ (up to a logarithmic factor in 
dimension): in this case, our results will show that the spectral edges of 
$X$ agree with those of $X_{\rm free}$ to leading order, and that the 
spectral distributions of $X$ and $X_{\rm free}$ agree at a mesoscopic 
scale.

To help the reader gain some insight into these parameters, we list their 
order of magnitude for some classical random matrix models in Table 
\ref{tab:extable}. In all these models, the condition 
$v(X),\bar R(X)\ll\sigma(X)$ is essentially optimal for 
universality. For random graphs, this requires that the (average) degree 
diverges; this is necessary, as the spectral distribution of both 
Erd\H{o}s-R\'enyi and random regular graphs of constant (average) degree 
does not match the semicircle distribution of the associated Gaussian 
model. For Wigner matrices whose entries have $p$ bounded moments, we need 
$p>4$; this is nearly optimal for the spectral edges, as it is well known 
that the spectrum has outliers when $p<4$. The analogous condition for 
random band matrices is also nearly optimal (cf.\ Corollary 
\ref{cor:sparsesaf}).

For these classical models, much more precise results have been achieved 
at the scale of the fluctuations of the eigenvalues using problem-specific 
methods. Our results do not provide any information at this scale, but are 
instead able to establish universality for the leading order behavior of 
the bulk and edges of the spectrum in far more general situations; see 
section \ref{sec:app} for a diverse range of applications.
\end{rem}

\subsection{Universality}
\label{sec:univmain}

We now provide precise formulations of the universality principle. We 
prove several results that capture different aspects of the spectrum.

\subsubsection{Universality of the spectrum}
\label{sec:specmain}

In this section, we formulate the universality principle for the spectrum 
$\spc(X)$ of $X$. Recall that the Hausdorff distance between two subsets 
$A,B\subseteq\mathbb{R}$ of the real line is defined as 
$$
	\dH(A,B) :=
	\inf\{\varepsilon>0:
	A\subseteq B+[-\varepsilon,\varepsilon]
	\mbox{ and }
	B\subseteq A+[-\varepsilon,\varepsilon]\}.
$$
Our main result is the following.

\begin{thm}[Spectrum universality]
\label{thm:specuniv}
For any $t\ge 0$, we have
$$
	\mathbf{P}\big[\dH(\spc(X),\spc(G)) >
	C\varepsilon(t)\big] \le de^{-t},
$$
where $C$ is a universal constant and
$$
	\varepsilon(t) =
	\sigma_*(X)\, t^{\frac{1}{2}} +
	R(X)^{\frac{1}{3}}\sigma(X)^{\frac{2}{3}} t^{\frac{2}{3}} +
	R(X)\,t.
$$
Moreover,
$$
	\EE\big[\dH(\spc(X),\spc(G))\big] \lesssim
	\sigma_*(X)\, (\log d)^{\frac{1}{2}} +
	R(X)^{\frac{1}{3}}\sigma(X)^{\frac{2}{3}} (\log d)^{\frac{2}{3}} +
	R(X)\log d.
$$
\end{thm}

Note that while we defined the distributions of $X$ and $G$ in section 
\ref{sec:model}, we did not specify their joint distribution. However,
the conclusion of Theorem \ref{thm:specuniv} is valid regardless of how
$X$ and $G$ are defined on the same probability space due to the strong 
concentration properties of random matrices.

Theorem \ref{thm:specuniv} readily yields a universality principle for the 
spectral edge. In the following, we denote by $\lambda_{\rm max}(X):=
\sup\spc(X)$ the upper edge of the spectrum. (Inequalities for the
lower edge follow readily as $\inf\spc(X)= -\lambda_{\rm max}(-X)$.)

\begin{cor}[Edge universality]
\label{cor:normuniv}
For any $t\ge 0$, we have
$$
	\mathbf{P}\big[
	|\lambda_{\rm max}(X)-\lambda_{\rm max}(G)| >
	C\varepsilon(t)\big] \le de^{-t},
$$
as well as
$$
	\mathbf{P}\big[
	|\lambda_{\rm max}(X)-\EE\lambda_{\rm max}(G)| >
	C\varepsilon(t)\big] \le de^{-t},
$$
where $C$ is a universal constant and
$\varepsilon(t)$ is as in Theorem \ref{thm:specuniv}.
Moreover,
$$
	|\EE\lambda_{\rm max}(X)-\EE\lambda_{\rm max}(G) | \lesssim
	\sigma_*(X)\,(\log d)^{\frac{1}{2}} +
	R(X)^{\frac{1}{3}} 
	\sigma(X)^{\frac{2}{3}}
	(\log d)^{\frac{2}{3}} +
	R(X)\log d.	
$$
The same bounds hold if $\lambda_{\rm max}(X),\lambda_{\rm max}(G)$
are replaced by $\|X\|,\|G\|$, respectively.
\end{cor}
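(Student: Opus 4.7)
The plan is to derive each assertion directly from Theorem \ref{thm:specuniv} via elementary tail-integration and Gaussian concentration; no new ideas enter. For the first inequality, the key observation is that $|\sup A-\sup B|\le d_{\rm H}(A,B)$ for any nonempty closed bounded subsets of $\mathbb{R}$ (if $\sup A>\sup B+\varepsilon$, then $\sup A$ is at distance $>\varepsilon$ from every point of $B$). Applied with $A=\spc(X)$ and $B=\spc(G)$, this upgrades Theorem \ref{thm:specuniv} immediately to the first displayed inequality.

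For the second inequality, it suffices to obtain a matching subgaussian concentration estimate for $\lambda_{\rm max}(G)$ around its mean and combine with the first via the triangle inequality. Writing $G=Z_0+\sum_{k=1}^N A_k g_k$ as in Remark \ref{rem:coeffrep}, the map $g\mapsto\lambda_{\rm max}(G)$ is Lipschitz on $\mathbb{R}^N$ with constant $\sup_{\|v\|=\|w\|=1}\bigl(\sum_k|\langle v,A_kw\rangle|^2\bigr)^{1/2}=\sigma_*(X)$, the latter identity being merely the coefficient-representation form of $\sigma_*$ (cf.\ \cite[\S 2.1]{BBV21}). Borell--TIS then yields
$$\PP\bigl[|\lambda_{\rm max}(G)-\EE\lambda_{\rm max}(G)|>\sigma_*(X)\sqrt{2t}\bigr]\le 2e^{-t},$$
and a union bound with the first inequality (absorbing $\sigma_*(X)\sqrt{2t}$ into $C\varepsilon(t)$) gives the second.

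The expectation bound follows by tail integration. By Jensen it suffices to bound $\EE|\lambda_{\rm max}(X)-\lambda_{\rm max}(G)|=\int_0^\infty\PP[|\lambda_{\rm max}(X)-\lambda_{\rm max}(G)|>u]\,du$. Changing variables $u=C\varepsilon(t)$ and splitting at the threshold $t=\log d$ at which $de^{-t}=1$ — below which the trivial bound $\PP\le 1$ contributes $C\varepsilon(\log d)$, and above which $\PP\le de^{-t}$ combined with $\varepsilon'(t)$ being slowly varying at the critical scale makes the remaining integral $O(\varepsilon(\log d))$ — yields the displayed three-term estimate. For the norm version, $\|M\|=\max(\lambda_{\rm max}(M),\lambda_{\rm max}(-M))$ and all matrix parameters are invariant under $X\mapsto -X$; applying the same arguments to the pairs $(X,G)$ and $(-X,-G)$ and union-bounding reproduces the conclusions for $\|X\|,\|G\|$ after absorbing a factor of $2$ into the universal constant.

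The main obstacle is essentially nil: the substantive content sits in Theorem \ref{thm:specuniv}, and the sole point requiring a moment of care is the identification of the Lipschitz constant of $\lambda_{\rm max}(G)$ with exactly $\sigma_*(X)$ via the coefficient representation of Remark \ref{rem:coeffrep}. The tail-integration step is standard, and in particular the term $R(X)\log d$ in the expectation bound arises from the $R(X)\,t$ term in $\varepsilon(t)$ evaluated at the critical scale $t=\log d$, with the upper tail contributing at the same order.
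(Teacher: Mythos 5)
Your proposal is correct and follows essentially the same route as the paper: the elementary observation $|\sup A-\sup B|\le d_{\rm H}(A,B)$ applied to Theorem \ref{thm:specuniv}, Gaussian concentration of $\lambda_{\rm max}(G)$ with Lipschitz constant $\sigma_*(X)$ (the paper cites \cite[Corollary 4.14]{BBV21} rather than invoking Borell--TIS directly, but it is the same fact), and a change-of-variables tail integration split at $t=\log d$. The only cosmetic difference is your $\|M\|=\max(\lambda_{\rm max}(M),\lambda_{\rm max}(-M))$ device for the norm version, which the paper sidesteps by noting that the same Hausdorff-distance and concentration arguments apply verbatim with $\lambda_{\rm max}$ replaced by $\|\cdot\|$.
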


The proofs of Theorem \ref{thm:specuniv} and Corollary \ref{cor:normuniv} 
are given in section \ref{sec:univspec}.

The above results are meaningful only when $R(X)<\infty$, which requires 
that the matrices $Z_i$ are uniformly bounded. However, this restriction 
is almost entirely removed by the following result that is proved in 
section \ref{sec:trunc}.

\begin{thm}[Spectrum universality: unbounded case]
\label{thm:specheavy}
We have
$$
	\PP\Big[\dH(\spc(X),\spc(G)) > C
	\varepsilon_R(t),
	~\max_{1\le i\le n}\|Z_i\|\le R\Big] 
	\le de^{-t}
$$
for all $t\ge 0$ and $R\ge \bar R(X)^{\frac{1}{2}}\sigma(X)^{\frac{1}{2}}+
2^{\frac{1}{2}}\bar R(X)$, where
$$
	\varepsilon_R(t) = \sigma_*(X)\,t^{\frac{1}{2}} +
	R^\frac{1}{3}\sigma(X)^{\frac{2}{3}}t^{\frac{2}{3}} +
	Rt
$$
and $C$ is a universal constant. Moreover,
$$
	\EE\big[\dH(\spc(X),\spc(G))\big] \lesssim
	\sigma_*(X)\,(\log d)^{\frac{1}{2}} +
	\bar R(X)^{\frac{1}{6}}\sigma(X)^{\frac{5}{6}}\log d
$$
whenever $\bar R(X)\, (\log d)^3 \lesssim \sigma(X)$.
\end{thm}

\subsubsection{Universality of spectral statistics}
\label{sec:specstatmain}

We now complement the above results by formulating universality principles 
for various spectral statistics.

We begin by establishing universality of even moments.

\begin{thm}[Moment universality]
\label{thm:momentuniv}
For any $p\in\mathbb{N}$ and $2p\le q\le\infty$, we have
$$
	\big|\EE[\ntr X^{2p}]^{\frac{1}{2p}} -
	\EE[\ntr G^{2p}]^{\frac{1}{2p}}\big|
	\lesssim
	R_q(X)^{\frac{1}{3}}\sigma_q(X)^{\frac{2}{3}} p^{\frac{2}{3}}
	+ R_q(X)\,p
$$
as well as
$$
	\big|\EE[\ntr X^{2p}]^{\frac{1}{2p}} -
	\EE[\ntr G^{2p}]^{\frac{1}{2p}}\big|
	\lesssim R_q(X)\,p^2.
$$
\end{thm}

The first inequality has a better dependence on $p$, while the second 
inequality yields a sharper estimate when $R_q(X)\,p^2\ll\sigma_q(X)$. 
Both inequalities are variations of the same proof, which is given in 
section \ref{sec:univstat}.

Moment bounds provide limited information on the spectrum of a random 
matrix. Complementary information can be extracted from the resolvent. 

\begin{thm}[Resolvent universality]
\label{thm:smuniv}
We have
$$
	\big\|\EE[(z\id-X)^{-1}] - \EE[(z\id-G)^{-1}]\big\|
	\lesssim
	\frac{R(X)\sigma(X)^2+R(X)^3\log d}{(\mathrm{Im}\,z)^4} 
$$
for every $z\in\mathbb{C}$ with $\mathrm{Im}\,z>0$. Consequently,
$$
	\big\|\EE[\varphi(X)] - \EE[\varphi(G)] \big\|
	\lesssim
	\big(R(X)\sigma(X)^2+R(X)^3\log d\big)
	\|\varphi\|_{W^{5,1}(\mathbb{R})}
$$
for every $\varphi\in W^{5,1}(\mathbb{R})$.
\end{thm}

We can also generalize Theorem \ref{thm:smuniv} to unbounded 
random matrices, at the expense of somewhat worse quantitative error bounds.

\begin{thm}[Resolvent universality: unbounded case]
\label{thm:smunivheavy}
We have
$$
	\big\|\EE[(z\id -X)^{-1}]-
	\EE[(z\id -G)^{-1}]\big\| 
	\lesssim
	\frac{\sigma_*(X) +
	\bar R(X)^{\frac{1}{10}}\sigma(X)^{\frac{9}{10}}}{(\mathrm{Im}\,z)^2}
$$
for every $z\in\mathbb{C}$ with $\mathrm{Im}\,z>0$ and
$$
	\big\|\EE[\varphi(X)] - \EE[\varphi(G)] \big\|
	\lesssim
	\big(\sigma_*(X) + \bar 
	R(X)^{\frac{1}{10}}\sigma(X)^{\frac{9}{10}}
	\big)
	\|\varphi\|_{W^{3,1}(\mathbb{R})}
$$
for every $\varphi\in W^{3,1}(\mathbb{R})$,
provided that $\bar R(X) (\log d)^{\frac{5}{3}}\lesssim \sigma(X)$.
\end{thm}

The proofs of Theorems \ref{thm:smuniv} and \ref{thm:smunivheavy} are
given in sections \ref{sec:univstat} and \ref{sec:trunc}.

While the above universality principles suffice for many applications, our 
proofs can be readily adapted to the study of other spectral statistics. 
In particular, a universality principle for moments of the resolvent 
$\EE[\ntr |z\id - X|^{-2p}]$ (Theorem \ref{thm:resuniv}) will play a 
central role in the proof of Theorem \ref{thm:specuniv}. Let us also note 
that the quantitative bounds of Theorems \ref{thm:smuniv} and 
\ref{thm:smunivheavy} can be considerably improved if one is interested 
only in the trace of the resolvent, cf.\ Remark \ref{rem:stieltjes}. 
However, the norm bounds given here are particularly useful as they 
contain information on the eigenvectors of the random matrices, as will
be explained in section \ref{sec:appspiked}.

\begin{rem}
For simplicity, we formulated the results of this 
section only for expected spectral statistics. However, corresponding 
tail bounds follow by combining these bounds with concentration
inequalities for the relevant spectral statistics; cf.\ 
Lemma \ref{lem:pfsafconc} for moments, and Proposition \ref{prop:specconc}
for general spectral statistics.
\end{rem}

\begin{rem}
Theorems \ref{thm:smuniv} and \ref{thm:smunivheavy} provide access to the 
mesoscopic distribution of the random matrices (at a scale determined by 
$R(X)$ or $\bar R(X)$). Their proofs are much simpler than 
our universality results for the spectral edges, and could also be 
approached by more 
traditional methods \cite{Cha06,Cha14}; they are included here for 
completeness. While these results apply in principle to arbitrary spectral 
statistics $\varphi$, they do not provide useful bounds on $p$th 
moments for large $p$ (and thus for the spectral edges) as 
this would give rise to analogues of Theorems 
\ref{thm:momentuniv} and \ref{thm:resuniv} with constants growing 
exponentially rather than polynomially in $p$.
\end{rem}

\subsection{Matrix concentration inequalities}
\label{sec:mainmtx}

Universality principles show that a non-Gaussian random matrix $X$ behaves 
as a Gaussian random matrix $G$, but do not explain in themselves what the 
spectra of these matrices look like. To apply these results to specific 
models, our universality principles must be combined with suitable bounds 
for Gaussian random matrices. We will presently show that both classical 
matrix concentration inequalities, and new sharp matrix concentration 
inequalities, arise directly from our main results.

\subsubsection{Classical matrix concentration inequalities}

We begin by briefly illustrating how two classical matrix concentration 
inequalities can be recovered from our main results. While direct proofs 
of these inequalities \cite{Tro15,MJCFT14} are considerably simpler (and 
yield better numerical constants), this provides a new explanation for the 
form of these inqualities and serves as the simplest illustration of our 
results.

\begin{example}[Matrix Bernstein]
As $\sigma_*(X)\le\sigma(X)$ and as
$R(X)^{\frac{1}{3}}\sigma(X)^{\frac{2}{3}}t^{\frac{2}{3}} \lesssim
\sigma(X)\,t^{\frac{1}{2}}+R(X)\,t$ by Young's inequality,
Corollary \ref{cor:normuniv} implies
$$
	\EE\|X\| \lesssim
	\EE\|G\| + \sigma(X)\sqrt{\log d} + R(X)\log d.
$$
We may therefore view the matrix Bernstein inequality \eqref{eq:bernstein}
as a direct consequence of the Gaussian bound \eqref{eq:nck} and 
Corollary \ref{cor:normuniv}. The tail bound of \cite[Theorem 
6.1.1]{Tro15} can also easily be recovered up to universal constants
from Corollary \ref{cor:normuniv}.
\end{example}

\begin{example}[Matrix Rosenthal]
\label{ex:rosenthal}
Suppose that $\EE[X]=0$. 
Then the noncommutative Khintchine inequality \cite[\S 9.8]{Pis03} 
states that for every $p\in\mathbb{N}$, we have
$$
	\EE[\ntr G^{2p}]^{\frac{1}{2p}}\lesssim
	\sigma_{2p}(X)\sqrt{p}
$$
(the norm bound \eqref{eq:nck} follows directly from this estimate by 
choosing $p\sim\log d$).
Combining the noncommutative Khintchine inequality with Theorem 
\ref{thm:momentuniv} yields
$$
	\EE[\ntr X^{2p}]^{\frac{1}{2p}} \lesssim
	\sigma_{2p}(X)\sqrt{p}
	+R_{2p}(X)\,p,
$$
where we used that 
$R_{2p}(X)^{\frac{1}{3}}\sigma_{2p}(X)^{\frac{2}{3}}p^{\frac{2}{3}} 
\lesssim 
\sigma_{2p}(X)\sqrt{p}+R_{2p}(X)\,p$ by Young's inequality. This matrix 
Rosenthal inequality \cite[Corollary 7.4]{MJCFT14} may therefore be 
viewed as another consequence of the universality principle.
\end{example}

\subsubsection{Sharp matrix concentration inequalities}
\label{sec:sharpmmain}

A primary motivation behind our universality principles is that they may 
be combined with the Gaussian theory of \cite{BBV21} to obtain a powerful 
new family of sharp matrix concentration inequalities for sums of 
independent random matrices. These inequalities reduce the study of a very 
large family of nonhomogeneous random matrices to explicit computations. 
Let us state a prototypical inequality of this kind for sake of 
illustration.

\begin{thm}[Sharp matrix concentration]
\label{thm:smconc}
For any $t\ge 0$, we have
$$
	\mathbf{P}\big[\spc(X) \subseteq \spc(X_{\rm free}) +
	C\big\{v(X)^{\frac{1}{2}}\sigma(X)^{\frac{1}{2}}
	(\log d)^{\frac{3}{4}}+\varepsilon(t)\big\}[-1,1]\big] 
	\ge 1-2de^{-t},
$$
where $C$ is a universal constant and
$\varepsilon(t)$ is as in Theorem \ref{thm:specuniv}. In particular,
$$
	\mathbf{P}\big[\lambda_{\rm max}(X)\ge 
	\lambda_{\rm max}(X_{\rm free}) +
	Cv(X)^{\frac{1}{2}}\sigma(X)^{\frac{1}{2}}
        (\log d)^{\frac{3}{4}}+C\varepsilon(t)\big] \le 2de^{-t}
$$
and
\begin{multline*}
	\EE\lambda_{\rm max}(X) \le \lambda_{\rm max}(X_{\rm free}) +
 \\
	C\big\{v(X)^{\frac{1}{2}}\sigma(X)^{\frac{1}{2}}
        (\log d)^{\frac{3}{4}} +
	R(X)^{\frac{1}{3}}\sigma(X)^{\frac{2}{3}}(\log d)^{\frac{2}{3}} +
	R(X)\log d\big\}.
\end{multline*}
The same bounds hold if $\lambda_{\rm max}(X),\lambda_{\rm max}(X_{\rm 
free})$ are replaced by $\|X\|,\|X_{\rm free}\|$.
\end{thm}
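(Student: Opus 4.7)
The plan is to combine the universality principles already established in the paper with the sharp Gaussian matrix concentration of \cite{BBV21} through a triangle inequality, using the Gaussian model $G$ as an intermediate between $X$ and $X_{\rm free}$. Since $G$ is defined to have the same mean and covariance as $X$, the scalar parameters $\sigma(G)=\sigma(X)$, $\sigma_*(G)=\sigma_*(X)$, $v(G)=v(X)$ all agree, and the noncommutative model $G_{\rm free}$ associated with $G$ coincides with $X_{\rm free}$.

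The Gaussian step is provided by \cite{BBV21}, which yields a high-probability one-sided spectral inclusion of the form
\[
	\PP\bigl[\spc(G)\subseteq\spc(X_{\rm free})+C\bigl\{v(X)^{1/2}\sigma(X)^{1/2}(\log d)^{3/4}+\sigma_*(X)t^{1/2}\bigr\}[-1,1]\bigr]\ge 1-de^{-t},
\]
together with the matching expectation bound $\EE\lambda_{\rm max}(G)\le\lambda_{\rm max}(X_{\rm free})+Cv(X)^{1/2}\sigma(X)^{1/2}(\log d)^{3/4}$.

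The non-Gaussian step uses Theorem \ref{thm:specuniv}, which furnishes an event of probability at least $1-de^{-t}$ on which $\spc(X)\subseteq\spc(G)+C\varepsilon(t)[-1,1]$. Composing the two one-sided inclusions (via the elementary fact that $A\subseteq B+a[-1,1]$ and $B\subseteq C+b[-1,1]$ imply $A\subseteq C+(a+b)[-1,1]$) and taking a union bound, whose factor of $2$ is absorbed into $C$ by shifting $t$ by an absolute constant, yields the claimed spectrum inclusion; the Gaussian deviation term $\sigma_*(X)t^{1/2}$ is already dominated by $\varepsilon(t)$. The deviation bounds for $\lambda_{\rm max}(X)$ and $\|X\|$ follow immediately from this inclusion (the latter by applying the argument to $-X$ as well).

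For the expectation bound I would use the analogous decomposition $\EE\lambda_{\rm max}(X)\le|\EE\lambda_{\rm max}(X)-\EE\lambda_{\rm max}(G)|+\EE\lambda_{\rm max}(G)$, combining Corollary \ref{cor:normuniv} for the first difference with the Gaussian expectation bound above. The $\sigma_*(X)(\log d)^{1/2}$ contribution from Corollary \ref{cor:normuniv} is absorbed into $v(X)^{1/2}\sigma(X)^{1/2}(\log d)^{3/4}$ via $\sigma_*(X)^2\le v(X)\sigma(X)$, leaving precisely the three terms in the theorem. The main obstacle is not probabilistic but one of bookkeeping: verifying that the cited Gaussian estimates from \cite{BBV21} are available in the precise one-sided spectral form needed, with the correct sub-Gaussian tail scale $\sigma_*(X)$, and that all parameter conventions line up. No new random-matrix input is required beyond Theorem \ref{thm:specuniv}, Corollary \ref{cor:normuniv}, and the Gaussian theory.
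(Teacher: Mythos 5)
Your proposal is correct and follows essentially the same route as the paper: apply \cite[Theorem 2.1]{BBV21} to get $\spc(G)\subseteq\spc(X_{\rm free})+\cdots$ with high probability, chain it with Theorem \ref{thm:specuniv}'s inclusion $\spc(X)\subseteq\spc(G)+\cdots$ via a union bound, note that the Gaussian $\sigma_*(X)\sqrt{t}$ term is dominated by $\varepsilon(t)$, and absorb the $\sigma_*(X)(\log d)^{1/2}$ term in the expectation bound using $\sigma_*(X)^2\le v(X)\sigma(X)$. The only cosmetic difference is that you obtain the expectation bound by triangle inequality through Corollary \ref{cor:normuniv} plus the Gaussian expectation estimate, whereas the paper integrates the tail bound directly; these two derivations are equivalent and produce identical bookkeeping.
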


\begin{proof}
This follows immediately by the union bound from Theorem 
\ref{thm:specuniv} and an application of 
\cite[Theorem 2.1]{BBV21} to the Gaussian matrix $G$.
\end{proof}

Theorem \ref{thm:smconc} shows that when $v(X)$ and $R(X)$ are 
sufficiently small, the 
spectrum of $X$ is controlled by that of its noncommutative model
$X_{\rm free}$. The latter admits explicit computations using tools of 
free probability. For example, it was shown by Lehner 
\cite[Corollary 1.5]{Leh99} (cf. \cite[\S 4.1]{BBV21}) that
$$
	\lambda_{\rm max}(X_{\rm free}) =
	\inf_{B>0}\lambda_{\rm max}\big(
	B^{-1}+\EE X + \EE[(X-\EE X)B(X-\EE X)]\big),
$$
where the infimum is over positive definite $B\in\M_d(\mathbb{C})_{\rm 
sa}$ (the infimum may be further restricted to $B$ for which the matrix in 
$\lambda_{\rm max}(\cdots)$ is a multiple of the identity). On the other 
hand, one may also easily deduce ``user-friendly'' bounds in the spirit 
of \cite{Tro15} whose statements make no reference to free probability.

\begin{cor}[``User-friendly'' bound]
\label{cor:userfriendly}
Let $Y=\sum_{i=1}^n Z_i$, where $Z_1,\ldots,Z_n$ are independent 
(possibly non-self-adjoint) $d\times d$ random matrices with $\EE[Z_i]=0$. 
Then
\begin{multline*}
	\mathbf{P}\big[\|Y\|\ge
	\|\EE Y^*Y\|^{\frac{1}{2}} +
        \|\EE YY^*\|^{\frac{1}{2}} + \\
        C\big\{v(Y)^{\frac{1}{2}}\sigma(Y)^{\frac{1}{2}}
        (\log d)^{\frac{3}{4}}+
        \sigma_*(Y)\, t^{\frac{1}{2}} +
        R(Y)^{\frac{1}{3}}\sigma(Y)^{\frac{2}{3}} t^{\frac{2}{3}} +
        R(Y)\,t\big\}
	\big] \le 4de^{-t}
\end{multline*}
for a universal constant $C$ and all $t\ge 0$, and
\begin{multline*}
	\EE\|Y\| \le
	\|\EE Y^*Y\|^{\frac{1}{2}} +
        \|\EE YY^*\|^{\frac{1}{2}} +
\\
	C\big\{v(Y)^{\frac{1}{2}}\sigma(Y)^{\frac{1}{2}}
        (\log d)^{\frac{3}{4}} +
	R(Y)^{\frac{1}{3}}\sigma(Y)^{\frac{2}{3}}(\log d)^{\frac{2}{3}} +
	R(Y)\log d\big\}.
\end{multline*}
Here we define $\sigma(Y) := \max(\|\EE Y^*Y\|^{\frac{1}{2}},
\|\EE YY^*\|^{\frac{1}{2}})$ in the non-self-adjoint case, while
$\sigma_*(Y),v(Y),R(Y)$ are defined as in section \ref{sec:matparm}.
\end{cor}

\begin{proof}
Combine Theorem \ref{thm:smconc}, Remark 
\ref{rem:nonsa}, and \cite[Lemmas 2.5 and 4.10]{BBV21}.
\end{proof}

There are many other possible combinations of the results of \cite{BBV21} 
and in section \ref{sec:univmain} above. For example, unbounded variants 
of Theorem \ref{thm:smconc} and Corollary \ref{cor:userfriendly} that 
involve the parameter $\bar R(X)$ instead of $R(X)$ are readily deduced 
from Theorem \ref{thm:specheavy}, and we obtain two-sided bounds for the 
moments and other spectral statistics of $X$ in terms of those of $X_{\rm 
free}$ by combining the results of section \ref{sec:specstatmain} with 
those of \cite[\S 2.2]{BBV21}. In the interest of space we do not spell 
out further combinations of this kind here; the appropriate results are 
easily applied directly in any given application.

\section{Applications}
\label{sec:app}

In this section, we provide precise formulations of the applications that 
were introduced in section \ref{sec:introapp} above, and discuss how they 
arise from our main results. Some technical proofs are postponed until 
section \ref{sec:applproofs}.

\subsection{Independent entries}
\label{sec:indent}

Before we turn to the main applications of this paper, we begin in this 
short section by bounding the matrix parameters that appear in our main 
results in the special case of random matrices with independent entries. 
While much stronger results and more complicated models will be considered 
in the sequel, our aim here is to help the reader gain some 
insight into the magnitudes of the parameters that appear in our bounds in 
the simplest setting.

Let $X$ be a $d\times d$ self-adjoint random matrix so that
$(X_{ij})_{i\ge j}$ are independent real random variables. Then we may 
write $X$ in the form \eqref{eq:model} as
\begin{equation}
\label{eq:indentex}
	X = \mathbf{E}[X] + \sum_{i\ge j}  \xi_{ij} E_{ij}
\end{equation}
where $\xi_{ij}:=X_{ij}-\mathbf{E} X_{ij}$,
$E_{ij}:=e_ie_j^*+e_je_i^*$ for $i>j$, and $E_{ii}:=e_ie_i^*$.

\begin{lem}
\label{lem:indepexp}
For the model \eqref{eq:indentex}, we have
$$
	\sigma(X)^2 = \max_i \sum_j \mathrm{Var}(X_{ij}),\qquad
	\sigma_*(X)^2\le v(X)^2 \le 2 \max_{i,j} \mathrm{Var}(X_{ij}),
$$
and for any $p\ge 1$
$$
	R(X) \le \max_{ij}\|\xi_{ij}\|_\infty,\qquad
	\bar R(X) \le
	\bigg(
	\sum_{i,j} \EE\big[\xi_{ij}^{2p}\big]
	\bigg)^{1/2p}.
$$
\end{lem}

\begin{proof}
As $\sigma(X),\sigma_*(X),v(X)$ are unchanged if we replace $X$ by its
Gaussian model, the first equation display follows from
\cite[Lemma 3.1]{BBV21}.  The bound on $R(X)$ follows immediately from the 
definition using $\|E_{ij}\|\le 1$. To bound $\bar R(X)$, we note that
$$
	\bar R(X)^{2p} \le
	\mathbf{E}\bigg[\max_{i,j}\|\xi_{ij}E_{ij}\|^{2p}\bigg]
	\le
	\sum_{i,j}
	\mathbf{E}\big[\|\xi_{ij}\|^{2p}\big],
$$
where we used Jensen in the first and $\|E_{ij}\|\le 1$
in the second inequality.
\end{proof}

The parameter $R(X)$ is finite when the entries of $X$ are uniformly 
bounded, as is often the case for discrete random matrices; otherwise, 
$\bar R(X)$ must be used.  Let us illustrate these parameters in two 
classical examples.

\begin{example}[Erd\H{o}s-R\'enyi]
\label{ex:indER}
Let $X_{ij}\sim\mathrm{Bern}(q)$ for all $i,j$, i.e., $X$ is the 
adjacency matrix of an Erd\H{o}s-R\'enyi graph with $d$ vertices and edge 
probability $q$. Then
$$
	\sigma(X)=\sqrt{dq(1-q)},\qquad\sigma_*(X)\le 
	v(X)\lesssim \sqrt{q(1-q)},\qquad R(X)\lesssim 1.
$$
The Gaussian model $G-\EE G$ associated to 
$X-\EE X$ is a Wigner matrix with entry variance $q(1-q)$, for 
which it is well known that $\mathbf{E}\|G-\EE 
G\|=(1+o(1))2\sqrt{dq(1-q)}$
(see, e.g., \cite{AGZ10}). Thus Corollary \ref{cor:normuniv} yields
$$
	\mathbf{E}\|X-\EE X\|=(1+o(1))2\sqrt{dq(1-q)}
	\quad\text{when}\quad dq(1-q) \gg (\log d)^4.
$$
The latter condition is optimal up to the power of the logarithm 
\cite{BBK19}. Note that it is readily verified in this example that $\bar 
R(X)\gtrsim 1$
as along as $\min(q,1-q)\gtrsim \frac{1}{d^2}$, so that the application of
Theorem \ref{thm:specheavy} does not yield any improvement here.
\end{example}

\begin{example}[Wigner matrix with $p$ moments]
\label{ex:indW}
Suppose that $(X_{ij})_{i\ge j}$ are i.i.d.\ with
zero mean, unit variance, and $\|X_{ij}\|_p\le C$ for some $p\ge 2$.
Then
$$
	\sigma(X)=\sqrt{d},\qquad \sigma_*(X)\le v(X)\lesssim 1,\qquad
	\bar R(X) \lesssim Cd^{\frac{2}{p}}.
$$
As the Gaussian model $G$ is a Wigner matrix with entry variance $1$, 
it is classical that $\mathbf{E}\|G\|=(1+o(1))2\sqrt{d}$ as above.
Thus Theorem \ref{thm:specheavy} yields
$$
	\mathbf{E}\|X\|=(1+o(1))2\sqrt{d}
	\quad\text{when}\quad d\gg C^2d^{\frac{4}{p}}(\log d)^{12}.
$$
In particular, the latter condition holds as soon as $p>4$. This is again 
optimal up to the power of the logarithm, cf.\ Corollary 
\ref{cor:sparsesaf} below.
\end{example}

We emphasize that the results of this paper do not provide any new 
information on the classical Examples \ref{ex:indER} and \ref{ex:indW}, 
whose spectral properties 
have been studied in stunning detail in the random matrix theory 
literature down to microscopic scales inaccessible by our bounds. However, 
for nonhomogeneous random matrices with independent entries, the 
theory of this paper provides new results that were not accessible by 
prior methods; see, e.g., Corollary \ref{cor:sparsesaf} below.

\begin{rem}
Combining Lemma \ref{lem:indepexp} with Corollary 
\ref{cor:userfriendly} yields ``user-friendly'' 
inequalities for random matrices with independent entries. Similar 
inequalities were obtained in \cite[\S 4.3]{LVY18} by different 
methods that are specific to the independent entry case.
The latter yield slightly better quantitative results in some examples 
(such as for the Erd\H{o}s-R\'enyi model, cf.\ \cite[Example 4.1]{LVY18}),
but cannot capture the sharp leading-order term 
$\|X_{\rm free}\|$ as in Theorem \ref{thm:smconc}.
For this reason, the bounds of this paper are often sharper than
those of \cite{LVY18} even for independent entry models.
\end{rem}

\subsection{Random graphs and expanders}
\label{sec:appgraph}

While random regular graphs, random Cayley graphs, and random $n$-lifts 
appear at first sight to be rather different models, our analysis of all 
these models will ultimately be based on a basic observation regarding 
random matrices defined by group representations. We first introduce some 
general facts, and then consider each of the above models in turn.

\subsubsection{Random matrices defined by group representations}

Let $\Gamma$ be a finite group, let $\rho:\Gamma\to U(d)$ be a nontrivial
unitary representation of dimension $d$, and let $g_1,\ldots,g_k$ be 
i.i.d.\ random variables drawn uniformly from $\Gamma$. Then 
\begin{equation}
\label{eq:mtxgpgen}
	X = \sum_{i=1}^k (\rho(g_i)+\rho(g_i)^*)
\end{equation}
defines a random matrix of the form \eqref{eq:model}. As 
$\rho(g_i)$ are unitary, it is easy to see that $\sigma(X)\asymp\sqrt{k}$ 
and $R(X)\asymp 1$. Therefore, our universality principles show that the 
spectrum of such matrices behaves as the associated Gaussian model when 
$k\gg (\log d)^\beta$ for a suitable $\beta>0$. The random regular 
graph, Cayley graph, and $n$-lift models will all arise as variations on 
this theme.

To understand the behavior of such matrices, it then remains to understand 
the behavior of the Gaussian model associated to $X$. The following 
standard group-theoretic facts will suffice for this purpose.

\begin{lem}
\label{lem:groupthy}
Suppose that $\rho$ is irreducible. Then there exists $s\in\{-1,0,1\}$ so 
that
$$
	\EE[\rho(g_i)]=0,\qquad 
	\Cov(\rho(g_i))=\frac{1}{d}\id,\qquad
	\EE[\rho(g_i)^2] = \frac{s}{d}\id.
$$
\end{lem}

\begin{proof}
The first two statements are standard facts about nontrivial irreducible 
representations \cite[Proposition 4.3.1 and Corollary 4.3.9]{Kow14}. To 
prove the last statement, we first observe that $\tr 
\EE[\rho(g_i)^2]=:s\in\{-1,0,1\}$ by a theorem of Frobenius and Schur 
\cite[Theorem 6.2.3]{Kow14}. On the other hand, for any $h\in\Gamma$, the 
random variables $g_i,h^{-1}g_i,g_i h^{-1}$ are uniformly distributed on 
$\Gamma$. Therefore,
$$
	\rho(h)\,\EE[\rho(g_i)^2]\, \rho(h^{-1}) =
	\EE[\rho(g_i h^{-1} g_i)]\, \rho(h^{-1}) =
	\EE[\rho(g_i)^2]
$$ 
for every $h\in\Gamma$, and the conclusion follows by \cite[Proposition 
4.3.4]{Kow14}.
\end{proof}

\begin{rem}
Let us emphasize that as $\rho(g_i)$ is not self-adjoint, the entry 
covariance matrix $\Cov(\rho(g_i))$ as defined in \eqref{eq:covdefn} does 
not fully determine the covariance of the real and imaginary parts of 
the entries of $\rho(g_i)$. In fact, by the Frobenius-Schur theorem used in 
the proof, it is the value $s$ that determines whether the representation 
is real $(s=1)$, complex $(s=0)$, or quaternionic $(s=-1)$.
\end{rem}

\subsubsection{Random regular graphs}
\label{sec:rregdet}

In this subsection, let $\Pi_1,\ldots,\Pi_k$ be i.i.d.\ uniformly 
distributed random $d\times d$ permutation matrices. Then
$$
	X = \sum_{i=1}^k (\Pi_i+\Pi_i^*)
$$
is the adjacency matrix of a (not necessarily simple) $2k$-regular graph 
with $d$ vertices. This is the \emph{permutation model} of random 
regular graphs. Before we proceed, let us recall a basic fact
about the adjacency matrix of any regular graph.

\begin{lem}[Alon-Boppana]
\label{lem:alonbop}
Let $A$ be the adjacency matrix of an $m$-regular graph with $d$ vertices. 
Then the largest and second largest eigenvalue of $A$ satisfy
$$
	\lambda_1(A) = m,\qquad\quad
	\lambda_2(A) \ge \bigg(1-C\,\frac{\log m}{\log d}
	\bigg)2\sqrt{m-1}
$$
for a universal constant $C$. Moreover, $1\in\mathbb{R}^d$ (the vector all 
of whose entries are one) is an eigenvector of $A$ with eigenvalue $m$.
\end{lem}

\begin{proof}
The statement about the largest eigenvalue and eigenvector follow 
immediately from the Perron-Frobenius theorem
and $A1=m1$. The bound on the second 
eigenvalue is a classical result of Alon-Boppana (in this 
form, see \cite{Nil91}).
\end{proof}

In the seminal paper \cite{Fri08}, Friedman shows that when $k$ is fixed 
and $d\to\infty$, the second largest eigenvalue of the adjacency matrix 
$X$ of the permutation model satisfies $\lambda_2(X)\le 
(1+o(1))2\sqrt{2k-1}$ with high probability. Thus, by Lemma 
\ref{lem:alonbop}, such graphs have the largest possible spectral gap to 
leading order, that is, they are ``nearly Ramanujan''. It is an old 
question whether this conclusion persists when both $k,d\to\infty$; see, 
e.g., \cite[\S 1.4]{BKY17}, and \cite{Vu08} for further questions of this 
kind. While some quantitative information can be extracted from proofs of 
Friedman's theorem (for example, a special case of \cite[Theorem 
1.4]{BC23} shows that Friedman's result remains valid when $k\ll 
\frac{\log d}{(\log\log d)^2}$), all known proofs appear to break down for 
larger $k$. In the latter regime, it is known \cite{FKS89,DJPP13,CGJ18} 
that $\lambda_2(X)=O(\sqrt{k})$, but these results cannot recover the 
optimal constant.

We presently settle this question for $k\gg (\log d)^4$. This leaves only 
a narrow range of parameters $\frac{\log d}{(\log\log d)^2}\lesssim k 
\lesssim (\log d)^4$ open.\footnote{%
This remaining range of parameters was recently settled in the
subsequent work \cite{CGV24}.}

\begin{thm}
\label{thm:rreg}
Denote by $X^\perp$ the restriction of $X$ to $1^\perp$.
Then for every $a>0$, there is a constant $C>0$ depending only on $a$ so that
$$
	\|X^\perp\| \le
	\bigg(
	1 + C \frac{(\log d)^{\frac{3}{4}}}{d^{\frac{1}{4}}} +
	C \frac{(\log d)^{\frac{2}{3}}}{k^{\frac{1}{6}}} + 
	C \frac{\log d}{k^{\frac{1}{2}}}
	\bigg)
	2\sqrt{2k}
$$
with probability at least $1-d^{-a}$. In particular,
$\lambda_2(X)\le (1+o(1))2\sqrt{2k-1}$ with probability $1-o(1)$
whenever $d,k\to\infty$ with $(\log d)^4=o(k)$.
\end{thm}

\begin{proof}
The random matrix $X$ is a special case of the model of the
previous section, where we choose $\rho$ to be the permutation 
representation of the symmetric group $\mathrm{S}_d$. Moreover, 
$\rho^\perp=\rho|_{1^\perp}$ is an irreducible representation of 
$\mathrm{S}_d$ of dimension $d-1$. We can therefore compute using Lemma 
\ref{lem:groupthy}
$$
	\EE[X^\perp]=0,\qquad
	\|\EE[(X^\perp)^2]\| = 
	2k\bigg(1+\frac{s}{d-1}\bigg)
$$
with $|s|\le 1$, as well as (using that $v(A+B)\le v(A)+v(B)$)
$$
	\sigma_*(X^\perp)\le 
	v(X^\perp) \le
	2\,v\bigg(\sum_{i=1}^k\rho^\perp(g_i)\bigg)
	=2\sqrt{\frac{k}{d-1}},\qquad
	R(X^\perp) \le 2.
$$
The bound on $\|X^\perp\|$ follows directly by applying the tail bound 
of Corollary \ref{cor:userfriendly} with $t=(a+3)\log d$.
As $1$ is an eigenvector of $X$ with eigenvalue $\lambda_1(X)$, we clearly
have $\lambda_2(X)\le\|X^\perp\|$ and the proof is complete.
\end{proof}

\begin{rem}
Theorem \ref{thm:rreg} yields an upper bound on $\lambda_2(X)$ for the 
permutation model, which agrees with the Alon-Boppana lower bound that 
holds for \emph{any} regular graph. Note, however, that the Alon-Boppana 
bound (Lemma \ref{lem:alonbop}) is only meaningful when $\log k\ll\log d$. 
A variant of Theorem \ref{thm:rreg} readily shows that for the permutation 
model, the lower bound $\lambda_2(X)\ge (1+o(1))2\sqrt{2k-1}$ remains 
valid for \emph{any} $k\gg (\log d)^4$, even when the Alon-Boppana bound 
fails (e.g., combine Corollary \ref{cor:normuniv} with \cite[Corollary 
2.11]{BBV21}). However, when the Alon-Boppana bound fails it need not be 
the case that random regular graphs are optimal expanders.
\end{rem}

\subsubsection{Random Cayley graphs}

In this subsection we let $\Gamma$ be a finite group, and let 
$g_1,\ldots,g_k$ be i.i.d.\ variables drawn uniformly from $\Gamma$. We 
consider the Cayley graph defined by the generating set 
$\{g_1,\ldots,g_k,g_1^{-1},\ldots,g_k^{-1}\}$, and denote its adjacency 
matrix by $X$. This model is a special case of \eqref{eq:mtxgpgen} where 
$\rho:\Gamma\to U(\ell_2(\Gamma))$ is the right-regular representation of 
$\Gamma$, that is, $(\rho(g)f)_h = f_{hg}$.

A classical result of Alon and Roichman \cite{AR94} states that if we 
choose $k\gg\log|\Gamma|$ generators, then the random Cayley graph is an 
expander, that is, $\lambda_2(X)=o(k)$ (see also \cite{Nao12} and the 
references therein for alternative proofs and extensions).\footnote{%
	The asymptotic notation used here implicity assumes a family
	of groups $\Gamma_n$ and degrees $k_n$ with $n\to\infty$; we
	dropped the indexing for simplicity. Note, however, that 
	our results are nonasymptotic; asymptotic statements
	are given only to clarify their qualitative features.
}
The remarkable 
feature of this result is that it holds for \emph{any} finite group 
$\Gamma$. Here we exhibit a new phenomenon: for many groups, choosing 
$k\gg(\log|\Gamma|)^4$ generators suffices to ensure that the random 
Cayley graph is nearly Ramanujan, that is, that $\lambda_2(X)\le 
(1+o(1))2\sqrt{2k-1}$. This cannot happen for an arbitrary group, as the 
weaker estimates of \cite{AR94} are essentially optimal for abelian 
groups. Rather, we show that this is the case for general nonabelian 
groups as soon as the dimensions of the nontrivial irreducible 
representations are not too small.

\begin{thm}
\label{thm:cayley}
Denote by $d_0\le d_1\le \ldots\le d_m$ the dimensions of the 
isomorphism classes of irreducible representations of $\Gamma$, where 
$d_0=1$ corresponds to the trivial representation. Define the parameter
$$
	\alpha(\Gamma) := 
	\max_{1\le i\le m} \sqrt{\frac{\log(i+1)}{d_i}}.
$$
Then for every $a>0$, 
there is a constant $C>0$ depending only on $a$ so that
$$
	\|X^\perp\| \le 
	\bigg(1
	+C\alpha(\Gamma)
	+C\frac{(\log d_1)^{\frac{3}{4}}}{d_1^{\frac{1}{4}}}
	+C \frac{ (\log |\Gamma|)^{\frac{2}{3}}}{k^{\frac{1}{6}}}
	+C \frac{\log|\Gamma|}{k^{\frac{1}{2}}}
	\bigg)2\sqrt{2k}
$$
with probability at least $1-d_1^{-a}$. In particular,
$\lambda_2(X)\le (1+o(1))2\sqrt{2k-1}$ with probability $1-o(1)$ whenever
$\alpha(\Gamma)=o(1)$ and $(\log |\Gamma|)^4 = o(k)$.
\end{thm}

\begin{proof}
The Peter-Weyl theorem \cite[Theorem 5.4.1]{Kow14} states that the 
right-regular representation decomposes as a direct sum
$\rho=\bigoplus_{i=0}^m(\rho_i\otimes\id_{d_i})$ of non-isomorphic 
irreducible representations $\rho_i$ of $\Gamma$, each of which appears 
with multiplicity $d_i$ (here $\id_d$ is the identity matrix of dimension 
$d$). In other words, there is a choice of basis in which $X$ is 
block-diagonal with blocks $X_i\otimes\id_{d_i}$, where each $X_i$ is of 
the form \eqref{eq:mtxgpgen} for a distinct irreducible representation of 
$\Gamma$. As the trivial representation accounts for the action on the 
eigenvector $1$, we obtain $X^\perp = \bigoplus_{1\le i\le m} X_i\otimes 
\id_{d_i}$.

As in the proof of Theorem \ref{thm:rreg}, we can compute
using Lemma \ref{lem:groupthy}
$$
	\EE[X_i] = 0,\quad
	\sigma(X_i)^2 \le 2k\bigg(1+\frac{1}{d_i}\bigg),\quad
	\sigma_*(X_i)^2 \le v(X_i)^2 \le \frac{4k}{d_i},\quad
	R(X_i)\le 2
$$
for $1\le i\le m$. Thus Corollary \ref{cor:userfriendly} yields
$$
	\mathbf{P}\bigg[
	\|X_i\| \ge 
	\bigg(
	1
	+ C\frac{(\log d_i)^{\frac{3}{4}}}{d_i^{\frac{1}{4}}}
	+ C\frac{t^{\frac{1}{2}}}{d_i^{\frac{1}{2}}}
	+ C\frac{t^{\frac{2}{3}}}{k^{\frac{1}{6}}}
	+ C\frac{t}{k^{\frac{1}{2}}}
	\bigg)
	2\sqrt{2k}\bigg]
	\le 4d_i e^{-t}
$$
for all $1\le i\le m$ and $t\ge 0$. Choosing $t=2\log(i+1) + (a+3)\log d_i$ 
yields
$$
	\mathbf{P}\bigg[
	\|X_i\| \ge 
	\bigg(
	1
	+ C\alpha(\Gamma)
	+ C\frac{(\log d_1)^{\frac{3}{4}}}{d_1^{\frac{1}{4}}}
	+ C\frac{(\log|\Gamma|)^{\frac{2}{3}}}{k^{\frac{1}{6}}}
	+ C\frac{\log|\Gamma|}{k^{\frac{1}{2}}}
	\bigg)
	2\sqrt{2k}\bigg]
	\le 
	\frac{d_1^{-a}}{(i+1)^2}
$$
for all $1\le i\le m$, where $C$ depends only on $a$. (Here we used that 
$t\le (a+5)\log|\Gamma|$ as $d_i\le|\Gamma|$ and $m+1\le|\Gamma|$.)
Using $\|X^\perp\|=\max_{1\le i\le m}\|X_i\|$, applying a union bound,
and noting that $\sum_{i=1}^\infty (i+1)^{-2}<1$ concludes the proof.
\end{proof}

The parameter $\alpha(\Gamma)$ in Theorem \ref{thm:cayley} controls the 
growth rate of the dimensions of the irreducible representations of 
$\Gamma$. The condition $\alpha(\Gamma)=o(1)$ holds as soon as the 
irreducible representations are sufficiently high-dimensional. This 
condition is satisfied in many examples. For example, the following result 
is a slightly stronger form of \cite[Lemma 9]{BKMZ22}; we include the 
proof for completeness.

\begin{lem}
For any sequence $\Gamma_n$ of nonabelian finite simple groups such that 
$|\Gamma_n|\to\infty$, we have $\alpha(\Gamma_n)\to 0$.
\end{lem}

\begin{proof}
The nonabelian finite simple groups are classified \cite[\S 5.1]{KL90} 
into 6 families of classical simple groups, 10 families of exceptional 
simple groups, the alternating groups, and a finite number of sporadic 
groups. As $|\Gamma_n|\to\infty$, the sporadic groups are irrelevant. In 
the following, we fix a non-sporadic finite simple group with 
$d_1\le\ldots\le d_m$ ($m+1\le|\Gamma|$)
defined as in Theorem \ref{thm:cayley}.

If $\Gamma$ is a classical or exceptional simple group, \cite[Tables 
5.1.A--B and 5.3.A]{KL90} yield $\log |\Gamma| \asymp r^2\log q$ and 
$d_1\gtrsim q^{cr}$ for some $r\in\mathbb{N}$ and prime power $q$, where 
$c$ is a universal constant. Thus $\alpha(\Gamma)^2 \le 
\frac{\log|\Gamma|}{d_1} \lesssim \frac{(\log q^{cr})^2}{q^{cr}} = o(1)$ 
as $|\Gamma|\to\infty$.

Now let $\Gamma=\mathrm{Alt}(r)$ be an alternating group with $r\ge 9$; 
then $\log |\Gamma| \asymp r\log r$, and it follows from \cite[\S 
5.1]{FH91} and \cite[Result 2]{Ras77} that $d_1=r-1$ and $d_2\ge cr^2$ for 
a universal constant $c$. Thus $\alpha(\Gamma)^2 \le \max(\frac{\log 
2}{d_1},\frac{\log|\Gamma|}{d_2})=o(1)$ as $|\Gamma|\to\infty$. 
\end{proof}

Theorem \ref{thm:cayley} therefore implies that for nonabelian finite 
simple groups, the Cayley graph defined by choosing $k\gg 
(\log|\Gamma|)^4$ random generators is nearly Ramanujan with high 
probability. When we are in addition in the domain of validity of the 
Alon-Boppana theorem, that is, when $\log k \ll \log|\Gamma|$ (cf.\ Lemma 
\ref{lem:alonbop}), it follows that these random Cayley graphs are optimal 
expanders.

The formulation of Theorem \ref{thm:cayley} was inspired by \cite{BKMZ22}, 
where a variant of the Gaussian model $G$ associated to $X$ was introduced 
on an ad-hoc basis to illustrate certain subtleties in the formulation of 
matrix concentration inequalities \cite[\S 8.1]{BBV21}. The key point 
here, however, is that the universality principles of this paper make 
random matrices of this kind appear in a fundamental manner in the study 
of the expansion properties of random Cayley graphs.

\begin{rem} To achieve a sharp bound, Theorem \ref{thm:cayley} requires at 
least that the smallest dimension $d_1$ of a nontrivial irreducible 
representation diverges; that is, Theorem \ref{thm:cayley} is concerned 
with quasirandom groups in the sense of \cite{Gow08}. This condition also 
plays a key role in deep results on expansion in finite simple groups of 
Lie type that were pioneered by Bourgain and Gamburd \cite{BG08,BGGT15}. 
In contrast to Theorem \ref{thm:cayley}, the latter results require only a 
bounded number of random generators but do not achieve the near-Ramanujan 
property of the associated Cayley graphs. Whether both properties can be 
achieved simultaneously is a long-standing question, see \cite{RS19} for 
further discussion and numerical evidence.

It is clear that $d_1\to\infty$ is 
also necessary to obtain an optimal expander with high probability. For 
example, if $\Gamma=\mathrm{S}_d$ is the symmetric group, $d_1=1$ 
corresponds to the sign representation, and the $1\times 1$ block 
$X_1$ in the proof of Theorem \ref{thm:cayley} equals twice the sum of $k$ 
i.i.d.\ symmetric Bernoulli variables. Thus $\lambda_2(X)\ge\|X_1\|$ already 
exceeds, say, $3\sqrt{2k-1}$ with constant probability. A similar argument 
applies to any sequence of groups for which $d_1\not\to\infty$. However, 
in this situation it is still possible to obtain $\lambda_2(X)\le 
(1+o(1))2\sqrt{2k-1}$ with constant probability as long as the 
number of low-dimensional irreducible representations is bounded.
Conditions for this to hold follow along the same lines as in
Theorem \ref{thm:cayley}.
\end{rem}

\subsubsection{Random lifts}

For an $m$-regular graph, the Alon-Boppana lower bound on the second 
eigenvalue arises from the fact that $2\sqrt{m-1}$ is the spectral radius 
of the infinite $m$-regular tree.  This suggests that an analogue of Lemma 
\ref{lem:alonbop} for a \emph{non-regular} graph $H$ should lower bound 
its second eigenvalue by the spectral radius $\varrho(\hat H)$ of its 
universal covering tree $\hat H$. This is captured, at least 
qualitatively, by \cite[Theorem 6.6]{HWL06}. A non-regular graph $H$ may 
thus be viewed as an optimal expander if its second eigenvalue is bounded 
by $(1+o(1))\varrho(\hat H)$ \cite[\S 6]{HWL06}.

Amit and Linial \cite{AL02} and Friedman \cite{Fri03} proposed a model of 
random graphs that is designed to achieve such optimal expansion 
properties. Given any base graph $H=([d],E_H)$ with $d$ vertices, its 
\emph{random $n$-lift} $H^{(n)}=([d]\times[n],E_{H^{(n)}})$ is obtained by 
duplicating each vertex and edge of the base graph $n$ times, and randomly 
scrambling the duplicate edges among the duplicate vertices. That is, for 
each $e\in E_H$ with $e=(i,j)$, $i\le j$, we construct $e_k\in 
E_{H^{(n)}}$, $k=1,\ldots,n$ with $e_k=((i,k),(j,\sigma_e(k)))$, where 
$\sigma_e$ is a random permutation that is chosen independently for each 
$e\in E_H$. The adjacency matrix $X^{(n)}$ of $H^{(n)}$ is 
$$
	X^{(n)} = 
	\sum_{e\in E_H} 
	(A_e \otimes \Pi_e^{(n)} + A_e^*\otimes\Pi_e^{(n)*}),
$$
where $\Pi_e^{(n)}$ are i.i.d.\ uniformly distributed $n\times n$ random 
permutation matrices and $A_e$ are the $d\times d$ matrices 
$A_e=e_ie_j^*$ for $e=(i,j)$, $i\le j$. 

It is important to note that for every $n$, the restriction of $X^{(n)}$ 
to $\mathbb{C}^d\otimes \mathbb{C}1$ coincides with the adjacency matrix 
$X^{(1)}$ of $H$. Thus every eigenvalue of $H$ is also an eigenvalue of 
$H^{(n)}$. The \emph{new} eigenvalues that are introduced by the random 
lift are the eigenvalues of $X^{(n)\perp}$, the restriction of $X^{(n)}$ 
to $\mathbb{C}^d\otimes 1^\perp$. The long-standing conjecture that 
$\|X^{(n)\perp}\|\le (1+o(1))\varrho(\hat H)$ as $n\to\infty$ for fixed 
$H$ was proved by Bordenave and Collins in \cite{BC19}. This shows that 
random $n$-lifts are optimal expanders provided the base graph is an 
optimal expander.

As in the case of random regular graphs, however, it is far from clear 
whether this phenomenon persists if one considers random $n$-lifts of an 
unbounded sequence of base graphs $H_n$. The best bound to date 
\cite[Theorem 1.4]{BC23} is restricted to $n$-lifts of graphs $H$ with 
$|E_H|\ll \frac{\log n}{(\log\log n)^2}$ edges. The following result 
addresses the complementary regime where the maximal degree of $H$ grows 
at least polylogarithmically in the number of vertices $nd$ of its 
$n$-lift $H^{(n)}$.

\begin{thm}
\label{thm:lift}
Let $H=([d],E_H)$ be an (undirected, not necessarily simple) graph
without self-loops.
Denote by $\mathrm{D}(H)$ the maximal degree of a vertex of $H$ and by 
$\mathrm{M}(H)$ the 
maximal multiplicity of an edge of $H$. Then for every $a>0$, there is a 
constant $C>0$ depending only on $a$ so that the new eigenvalues of 
$H^{(n)}$ satisfy
$$
	\|X^{(n)\perp}\| \le
	\bigg(
	1
	+C\frac{\mathrm{M}(H)^{\frac{1}{4}}}{n^{\frac{1}{4}}}
	\frac{(\log nd)^{\frac{3}{4}}}{\mathrm{D}(H)^{\frac{1}{4}}}
	+C\frac{(\log nd)^{\frac{2}{3}}}{\mathrm{D}(H)^{\frac{1}{6}}} 
	+C\frac{\log nd}{\mathrm{D}(H)^{\frac{1}{2}}}
	\bigg)
	\varrho(\hat H)
$$
with probability at least $1-(nd)^{-a}$. In particular,
$\|X^{(n)\perp}\| \le (1+o(1))\varrho(\hat H)$ with probability $1-o(1)$
whenever $(\log nd)^4=o(\mathrm{D}(H))$ and $\mathrm{M}(H)=O(n)$.
\end{thm}

A surprising aspect of Theorem \ref{thm:lift} is that when $H$ is a simple 
graph and $\mathrm{D}(H)\gg (\log d)^4$, the conclusion holds already for 
$n=2$, that is, for random $2$-lifts. This is stark contrast to the 
bounded degree case, where one must in general let $n\to\infty$ to achieve 
an $(1+o(1))\varrho(\hat H)$ upper bound. On the other hand, when 
$n\to\infty$, a quantitative Alon-Boppana type theorem of \cite[Theorem 
1.7]{BC23} shows that there is a broad range of parameters where Theorem 
\ref{thm:lift} yields the smallest possible new eigenvalues among all (not 
necessarily random) $n$-lifts.

\begin{rem}
The assumption that $H$ has no self-loops was made for simplicity. The 
proof of Theorem \ref{thm:lift} will allow for self-loops, but in this 
case we must let $n\to\infty$ to achieve a $(1+o(1))\varrho(\hat H)$ 
bound. We already discussed a special case of this setting: when
$H$ is the graph with $1$ vertex and $k$ self-loops, $H^{(n)}$ 
coincides with the permutation model of random regular graphs of section 
\ref{sec:rregdet}.
\end{rem}

\begin{rem}
When $H$ is a simple $m$-regular graph (for which
$\varrho(\hat H)=2\sqrt{m-1}$), a result along the lines 
of Theorem \ref{thm:lift} can be obtained in a much simpler manner by 
comparing the norm of $X^\perp$ to that of a Wigner matrix
\cite{BvH16,BD21}. The primary interest of Theorem \ref{thm:lift} is
that it yields the correct upper bound for general $H$.
\end{rem}

The proof of Theorem \ref{thm:lift} is given in section \ref{sec:pflift}. 
Let however briefly outline the argument. It is a basic fact (see, e.g.,
\cite{BC19}) that $\varrho(\hat H)$ may be computed as
\begin{equation}
\label{eq:sprcover}
	\varrho(\hat H) = 
	\Bigg\|
	\sum_{e\in E_H} (A_e \otimes \lambda(g_e) + 
		A_e^* \otimes \lambda(g_e)^*)
	\Bigg\|,
\end{equation}
where $(g_e)_{e\in E_H}$ are the free generators of the free group 
$\mathrm{F}_{|E_H|}$ and $\lambda$ denotes the left-regular 
representation. On the other hand, Theorem \ref{thm:smconc} enables us to 
bound the norm of $X^{(n)\perp}$ by that of $X^{(n)\perp}_{\rm free}$.
This almost yields the desired conclusion, except that in 
$X^{(n)\perp}_{\rm free}$ the free generators $\lambda(g_i)$ are replaced 
by certain deformed circular variables. While in general
$\|X^{(n)\perp}_{\rm free}\|>\varrho(\hat H)$,
we will show these quantities coincide 
to leading order when $\mathrm{D}(H)\to\infty$, concluding the proof.

\subsection{Matrix concentration inequalities for smallest singular 
values}
\label{sec:appsmsing}

The theory behind classical matrix concentration inequalities \cite{Tro15} 
is inherently limited to the extreme eigenvalues of random matrices. In 
contrast, our results control the entire spectrum. This makes it possible, 
for example, to obtain matrix concentration inequalities for the smallest 
singular value of non-self-adjoint random matrices. Such results are 
fundamentally outside the scope of classical matrix concentration 
inequalities for general models of the form \eqref{eq:model} (unless one 
imposes special structure, see, e.g., \cite[\S 5.2.1]{Tro15} for an 
example).

We presently state a general result of this kind. In the following, we 
define the smallest singular value of $Y$ as
$\mathrm{s_{min}}(Y):=\inf\spc(|Y|)$.

\begin{thm}
\label{thm:ssing}
Let $Y=Z_0+\sum_{i=1}^n Z_i$, where $Z_0$ is a nonrandom $d\times m$ 
matrix and $Z_1,\ldots,Z_n$ are independent centered $d\times m$ random 
matrices, with $d\ge m$. Then
\begin{multline*}
	\mathbf{P}\big[
	\mathrm{s_{min}}(Y) \le \mathrm{s_{min}}(Y_{\rm free}) -
	C\big\{
	v(Y)^{\frac{1}{2}}\sigma(Y)^{\frac{1}{2}}(\log d)^{\frac{3}{4}}
\\
	- \sigma_*(Y)t - R(Y)^{\frac{1}{3}}\sigma(Y)^{\frac{2}{3}}
	t^{\frac{2}{3}} - R(Y)t\big\}
	\big]
	\le de^{-t}
\end{multline*}
for all $t\ge 0$, where $C$ is universal constant.
Here $\sigma(Y),\sigma_*(Y),v(Y),R(Y)$ are defined as in Corollary 
\ref{cor:userfriendly}, and $Y_{\rm free}$ is 
the $d\times m$ matrix so that the real and imaginary parts of its 
entries is a semicircular family with the same mean and covariance as the 
real and imaginary parts of the entries of $Y$.
\end{thm}

The proof of Theorem \ref{thm:ssing} will be given in section 
\ref{sec:pfssing} below. A variant of Theorem \ref{thm:ssing} for 
unbounded random matrices can also be deduced along the same lines, by 
using Theorem \ref{thm:specheavy} instead of Theorem \ref{thm:specuniv} in 
the proof.

In the case that $\EE[Y]=0$, a simple ``user-friendly'' bound
\begin{equation}
\label{eq:userfsmsg}
	\mathrm{s_{min}}(Y_{\rm free}) \ge
	\mathrm{s_{min}}(\EE Y^*Y)^{\frac{1}{2}} -
	\|\EE YY^*\|^{\frac{1}{2}}
\end{equation}
was obtained in \cite[Lemma 3.15]{BBV21}. This bound gives rise to very 
simple explicit estimates, but may be far from sharp for nonhomogeneous 
random matrices. In this case, the quantity $\mathrm{s_{min}}(Y_{\rm 
free})$ can also be computed exactly using an explicit variational formula 
(in the spirit of \cite{Leh99}) that is obtained in \cite{EvH23}.

\begin{example}[Bipartite random graphs]
Consider a bipartite random graph with vertex set $[d]\sqcup[m]$ ($d\ge 
m$) in which each edge $(i,j)$ with $i\in[d],j\in[m]$ is included 
independently with probability $p_{ij}$. This is a nonhomogeneous and 
bipartite analogue of the classical Erd\H{o}s-R\'enyi model. 
The adjacency matrix $A$ of this graph is the $d\times m$ matrix with 
independent entries $A_{ij}\sim\mathrm{Bern}(p_{ij})$.

A basic question of interest in this setting (cf.\ \cite{DZ22} and the 
references therein) is to bound the largest and smallest singular values 
of $A-\EE A$. 
\end{example}

\begin{cor}
\label{cor:bipartite}
Denote by $\rho := \min_j \sum_i p_{ij}(1-p_{ij})$, 
$\gamma := \max_i \sum_j p_{ij}(1-p_{ij})$, and $k :=
\max\big\{\max_i\sum_j p_{ij}(1-p_{ij}), \max_j\sum_i 
p_{ij}(1-p_{ij})\big\}$. Then for every $a>0$, there is a 
constant $C>0$ that depends only on $a$ so that
\begin{align*}
	\|A-\EE A\| &\le
	\sqrt{\rho}+\sqrt{\gamma} +
	C k^{\frac{1}{3}}(\log d)^{\frac{2}{3}},
\\
	\mathrm{s_{min}}(A-\EE A) &\ge
	\sqrt{\rho}-\sqrt{\gamma} -
	C k^{\frac{1}{3}}(\log d)^{\frac{2}{3}}
\end{align*}
with probability at least $1-d^{-a}$,
provided that $k\ge \log d$.
\end{cor}

\begin{proof}
We can express $Y:=A-\EE A$ as $Y = \sum_{ij} 
Z_{ij}$ where $Z_{ij}=Y_{ij}e_ie_j^*$ are independent centered random 
matrices. Then we readily compute $\mathrm{s_{min}}(\EE Y^*Y)=\rho$,
$\|\EE YY^*\|=\gamma$, $\sigma^2(X)\le k$, 
$\sigma_*(Y)^2\le v(Y)^2 \le \max_{ij}p_{ij}(1-p_{ij})$, and $R(Y)\le 1$.
The conclusion now follows directly from Corollary \ref{cor:userfriendly},
Theorem \ref{thm:ssing}, and \eqref{eq:userfsmsg}.
\end{proof}

The simplest example of this result is the homogeneous case where 
$p_{ij}=p<1$. In this case, the above bounds reduce to 
$$
	1-\sqrt{\frac{m}{d}} -
	\frac{C(\log d)^{\frac{2}{3}}}{(dp)^{\frac{1}{6}}}
	\le
	\frac{\mathrm{s_{min}}(A-\EE A)}{\sqrt{dp(1-p)}} \le 
	\frac{\|A-\EE A\|}{\sqrt{dp(1-p)}} \le
	1+\sqrt{\frac{m}{d}} + 
	\frac{C(\log d)^{\frac{2}{3}}}{(dp)^{\frac{1}{6}}}.
$$
This shows that the classical Bai-Yin law \cite{BY93}, which applies
to dense graphs with constant $0<p,\frac{m}{d}<1$ as $d\to\infty$, remains 
valid for sparse graphs with average degree $dp\gg (\log d)^4$. 
In this homogeneous setting, the results of \cite{DZ22} establish the same 
conclusion in the slightly larger range $dp\gg \log d$. However, for 
nonhomogeneous graphs, the best known bounds due to \cite{DZ22} are 
already weaker than those of Corollary \ref{cor:bipartite} to leading 
order, cf.\ \cite[Remark 2.6]{DZ22}.

On the other hand, we have formulated the simple bounds of Corollary 
\ref{cor:bipartite} for sake of illustration only: the same proof yields 
bounds in which $\sqrt{\rho}+\sqrt{\gamma}$ and 
$\sqrt{\rho}-\sqrt{\gamma}$ are replaced by the optimal leading-order 
terms $\|Y_{\rm free}\|$ and $\mathrm{s_{min}}(Y_{\rm free})$, 
respectively (where $Y=A-\EE A$), which can be computed in terms of 
explicit variational principles \cite{Leh99,EvH23}. In other words, in 
contrast to previous results, we obtain sharp Bai-Yin laws for 
sparse nonohomogeneous random matrices.

\begin{rem}
More generally, Theorem \ref{thm:ssing} may be viewed as a nonasymptotic, 
nonhomogeneous Bai-Yin law that is sharp to 
leading order. It should be emphasized, however, that it can only locate 
the smallest singular value of $Y$ near that of its noncommutative model 
$Y_{\rm free}$. In particular, Theorem \ref{thm:ssing} sheds no light on 
the invertibility of $Y$ when $\mathrm{s_{min}}(Y_{\rm free})=0$, as is 
the case, e.g., for square matrices with i.i.d.\ 
entries. The latter question is of a fundamentally different nature, which 
is presently understood for nonhomogeneous models only under restrictive 
assumptions \cite{RZ16,Coo18} (see, however, \cite{Tik23} for significant 
recent progress in this direction).
\end{rem}

\subsection{Sample covariance matrices}
\label{sec:appscov}

Let $Y_1,\ldots,Y_n$ be independent, centered random
vectors in $\mathbb{R}^d$. The $d\times d$ random matrix
\begin{equation}
\label{eq:smtx}
        S = \sum_{i=1}^n Y_iY_i^*
\end{equation}
is called the (nonhomogeneous) \emph{sample covariance matrix} associated
to the data $Y_1,\ldots,Y_n$. Equivalently, we may express 
$S=YY^*$, where
\begin{equation}
\label{eq:ymtx}
	Y = \sum_{i=1}^n Y_i e_i^*
\end{equation}
is the $d\times n$ random matrix with independent columns
$Y_1,\ldots,Y_n$.
In the classical setting where $Y_1,\ldots,Y_n$ are identically 
distributed, $\frac{1}{n}\EE S$ is the covariance matrix of 
$Y_i$, and $\frac{1}{n}S$ may be viewed as a statistical estimator of this 
covariance matrix. A central problem is then to bound the deviation 
$\frac{1}{n}\|S-\EE S\|$ of the estimated covariance matrix from the 
actual covariance matrix. Here we allow for a more general 
nonhomogeneous situation where the data $Y_1,\ldots,Y_n$ need not be 
identically distributed, which is of independent interest (see, e.g., 
\cite{CHZ20}).

From the viewpoint of this paper, sample covariance matrices may 
approached in two different ways: we may either view $S$ itself as a model 
of the form \eqref{eq:model}, or we may view $Y$ as a model of the form 
\eqref{eq:model}. These two interpretations give rise to distinct 
universality principles. As we will see below, neither approach subsumes 
the other: they control the behavior of $S$ in complementary regimes.

For simplicity, we focus in this section on ``user-friendly'' explicit 
bounds on the expected deviation $\EE\|S-\EE S\|$; sharp bounds in terms 
of $S_{\rm free}$ and $Y_{\rm free}$, as well as high-probability bounds, 
may be obtained analogously.

\subsubsection{Gaussian sample covariance matrices}
\label{sec:gaussscov}

In this section we consider Gaussian sample covariance matrices, that is, 
\eqref{eq:smtx} where $Y_1,\ldots,Y_n$ are independent Gaussian random 
vectors $Y_i\sim N(0,\Sigma_i)$. In this case, $Y$ is a Gaussian random 
matrix, to which the Gaussian theory of \cite{BBV21} can be applied.

\begin{thm}[Gaussian bound]
\label{thm:sgbd}
Let $Y_i\sim N(0,\Sigma_i)$. Then we have
\begin{multline*}
	\EE\|S-\EE S\| \le
	2\bigg\|
	\sum_{i=1}^n \tr[\Sigma_i]\Sigma_i
	\bigg\|^{\frac{1}{2}} +
	\max_{i\le n}\tr \Sigma_i
	\\ +
	C
	\bigg(\bigg\|\sum_{i=1}^n\Sigma_i\Bigg\|+
	\max_{i\le n}\tr\Sigma_i\bigg)^{\frac{3}{4}}
	\max_{i\le n} \|\Sigma_i\|^{\frac{1}{4}} 
	\log^{\frac{3}{2}}(d+n).
\end{multline*}
\end{thm}

\begin{proof}
By \cite[Lemma 3.8]{BBV21}, we have $\sigma(Y)^2 = \|\sum_i 
\Sigma_i\|\vee \max_i\tr\Sigma_i$ and $v(Y)^2=\max_i\|\Sigma_i\|$. As 
$v(Y)\le\sigma(Y)$, \cite[Theorem 3.11 and Proposition 3.12]{BBV21} 
yield
$$
	\EE\|S-\EE S\| \le
	2\|\EE[Y\,\EE[Y^*Y]\,Y^*\|^{\frac{1}{2}} + \|\EE Y^*Y\| +
	C\sigma(Y)^{\frac{3}{2}}v(Y)^{\frac{1}{2}}\log^{\frac{3}{2}}(d+n)
$$
for a universal constant $C$. The leading terms are readily computed.
\end{proof}

On the other hand, even when $Y$ is Gaussian, we may view $S$ as a 
non-Gaussian random matrix of the form \eqref{eq:model} with 
$Z_i=Y_iY_i^*$, to which the universality principles of this paper may be 
applied. For example, applying Theorem \ref{thm:momentuniv} yields the 
following bound, whose proof is given in section \ref{sec:pfsguniv}.

\begin{thm}[$S$-universality bound]
\label{thm:sguniv}
Let $Y_i\sim N(0,\Sigma_i)$. Then for any $\varepsilon\in (0,1]$
$$
	\EE\|S-\EE S\| \le
	(1+\varepsilon)\,
	2\bigg\|
	\sum_{i=1}^n \tr[\Sigma_i]\Sigma_i
	\bigg\|^{\frac{1}{2}} 
	+
	\frac{C}{\varepsilon^3}\bigg(
	\bigg\|\sum_{i=1}^n \Sigma_i^2\bigg\|^{\frac{1}{2}}
	+
	\max_{i\le n}\tr\Sigma_i\bigg)
	\log^3(d+n).
$$
\end{thm}

The fundamental distinction between these bounds is that Theorem
\ref{thm:sguniv} models $S$ by the noncommutative model $S_{\rm free}$, 
while Theorem \ref{thm:sgbd} models $S=YY^*$ by the noncommutative 
model $Y_{\rm free}Y_{\rm free}^*$. Somewhat surprisingly, these distinct 
interpretations have complementary (partially overlapping) domains of 
validity, which is already illustrated by the simplest possible example.

\begin{example}
\label{ex:baiyinscov}
Suppose that $Y_1,\ldots,Y_n$ are i.i.d.\ standard Gaussian vectors in 
$\mathbb{R}^d$, that is, $\Sigma_i=\id$ for all $i$. In this setting,
the classical Bai-Yin law \cite{BY93} implies that
$\EE\|S-\EE S\| = (1+o(1))(2\sqrt{nd}+d)$ when $n,d\to\infty$ with
$\frac{n}{d}$ fixed.

Let us now verify what Theorems \ref{thm:sgbd} 
and \ref{thm:sguniv} yield for this model.
\begin{enumerate}[$\bullet$]
\itemsep\medskipamount
\item First, note that the Gaussian bound of Theorem \ref{thm:sgbd} 
yields
$$
	\EE\|S-\EE S\| \le 2\sqrt{nd} + d +
	C(n+d)^{\frac{3}{4}}\log^{\frac{3}{2}}(d+n).
$$
Here the leading terms agree with the Bai-Yin law, but the error term
is of smaller order if and only if
$n\to\infty$ and $d\gg n^{\frac{1}{2}}(\log n)^3$. This includes the 
$n\propto d$ setting of the classical Bai-Yin law,
but excludes cases where $n$ is much larger than $d$.
\item
On the other hand, the universality bound of Theorem \ref{thm:sguniv} 
yields
$$
	\EE\|S-\EE S\| \le (1+\varepsilon)\,2\sqrt{nd} +
	C\varepsilon^{-3}(\sqrt{n}+d)\log^3(d+n).
$$
In this bound, the leading term agrees with the Bai-Yin law only when
$n\gg d$, and the error term is of smaller order if and only if 
$(\log n)^6\ll d\ll \frac{n}{(\log n)^6}$. This regime excludes the 
setting of the classical Bai-Yin law, but covers precisely the situation 
that the Gaussian bound fails to capture.
\end{enumerate}
Combining the above bounds yields $\EE\|S-\EE S\|\le 
(1+o(1))(2\sqrt{nd}+d)$ whenever $n\to\infty$ and $d\gg(\log n)^6$. This 
very general conclusion hides the fact that two complementary approaches 
were used to capture the large $d$ and large $n$ regimes.
\end{example}

The homogeneous setting of Example \ref{ex:baiyinscov} is special in that 
$\Sigma_i=\id$ implies $\|S-\EE S\| = \|YY^*-n\id\| = 
\max\{\|Y\|^2-n,n-\mathrm{s_{min}}(Y^*)^2\}$, so that this case can also 
be approached using the methods of section \ref{sec:appsmsing}. Such a 
reduction fails, however, for nonhomogeneous sample covariance matrices. 
In the general setting, Theorems \ref{thm:sgbd} and \ref{thm:sguniv} 
control the behavior of Gaussian sample covariance matrices in 
complementary regimes that together span a wide range of parameters.

\begin{rem}
Let us note for completeness that Gaussian sample covariance matrices 
always satisfy $\EE\|S-\EE S\|\gtrsim \|\sum_i 
\tr[\Sigma_i]\Sigma_i\|^{\frac{1}{2}}+\max_i\tr\Sigma_i$, cf.\ section 
\ref{sec:pfsimplelower}. Thus the leading terms in Theorems
\ref{thm:sgbd} and \ref{thm:sguniv} are also lower bounds on $\EE\|S-\EE 
S\|$ up to a universal constant. On the other hand, the proofs of these 
results can even capture the sharp leading term predicted by free 
probability.
\end{rem}

\subsubsection{Non-Gaussian models}

We now consider more general models where the data $Y_1,\ldots,Y_n$ may be
non-Gaussian. This makes little difference in the setting of Theorem 
\ref{thm:sguniv}: here we already interpreted $S$ itself as a non-Gaussian 
matrix, and applied the universality principle to compare it with its 
Gaussian model (the assumption that $Y$ is Gaussian was not used in a 
fundamental way in the proof).

On the other hand, in order to extend Theorem \ref{thm:sgbd} to the 
non-Gaussian setting, we must compare $S=YY^*$ with $HH^*$, where $H$ is 
the Gaussian model associated to $Y$. Such a comparison can be deduced 
from our universality principles by means of a linearization argument as 
in \cite[\S 3.3]{BBV21}. Note that the setting of the following result is 
far more general than that of the model \eqref{eq:smtx}.

\begin{thm}[$Y$-universality]
\label{thm:scovlin}
Let $Y=Z_0+\sum_{i=1}^n Z_i$ be a $d\times m$ random matrix defined as in 
Theorem \ref{thm:ssing}, and let $H$ be the $d\times m$ random matrix
so that the real and imaginary parts of its entries are jointly Gaussian
with the same mean and covariance as the real and imaginary parts 
of the entries of $Y$. Then
$$
        \big|\EE\|YY^*-\EE YY^*\| -
        \EE\|HH^*-\EE HH^*\| \big| \lesssim
        \delta\,\EE\|H\| + \delta^2
$$
with
$$
        \delta =
        \sigma_*(Y)\log^{\frac{1}{2}}(d+m) +
        R(Y)^{\frac{1}{3}}\sigma(Y)^{\frac{2}{3}}
	\log^{\frac{2}{3}}(d+m)
        +R(Y)\log(d+m).
$$
\end{thm}

The proof of Theorem \ref{thm:scovlin} is in section 
\ref{sec:pfscovlin}. We state the result for bounded random matrices for 
simplicity; similar results for unbounded matrices are obtained by using 
Theorem \ref{thm:specheavy} rather than Theorem \ref{thm:specuniv} in the 
proof.

As $Y$ is already expressed as a sum of independent random matrices in 
\eqref{eq:ymtx}, it is tempting to attempt to apply Theorem 
\ref{thm:scovlin} with $Z_i=Y_ie_i^*$. Unfortunately, as is illustrated in 
the following example, such a straightforward application of the 
universality principle fails to yield meaningful results. The reason is 
simple: \eqref{eq:ymtx} captures only the independence of the data 
$Y_1,\ldots,Y_n$, but independent data alone does not suffice to ensure 
universality of sample covariance matrices.

\begin{example}
\label{ex:baiyinsecond}
Let us revisit the Bai-Yin setting of Example \ref{ex:baiyinscov} in the 
non-Gaussian case, that is, we now assume only that $Y_1,\ldots,Y_n$ are 
i.i.d.\ centered random vectors with unit covariance matrix $\Sigma_i=\id$ 
for all $i$.

In this setting, the Gaussian model $H$ associated to $Y$ in 
Theorem \ref{thm:scovlin} is precisely the $d\times n$ random matrix with 
i.i.d.\ standard Gaussian entries. In particular, the classical Bai-Yin 
law \cite{BY93} implies that $\EE\|HH^*-\EE HH^*\| = 
(1+o(1))(2\sqrt{\gamma}+1)d$ when $n,d\to\infty$ with $\frac{n}{d}=\gamma$ 
fixed. On the other hand, if we write $Y=\sum_{i=1}^n Z_i$ with
$Z_i=Y_ie_i^*$, we clearly have $R(Y)^2\ge \max_i \EE\|Z_i\|^2 = d$, so 
that
$$
	\delta\, \EE\|H\|+\delta^2
	\gtrsim
	\frac{(\log d)^2}{2\sqrt{\gamma}+1}\,
	\EE\|HH^*-\EE HH^*\|.
$$
Thus Theorem \ref{thm:scovlin} cannot yield universality of the 
Bai-Yin law in this manner, as its error term is always of larger order
than the Gaussian quantity of interest.

The problem that arises here is not an inefficiency of our universality
principles, however, but is a genuine phenomenon: at the 
present level of generality, universality of the Bai-Yin law is simply 
false. For example, let $Y_i=\sqrt{d}\,\varepsilon_i
e_{I_i}$, where $\varepsilon_1,\ldots,\varepsilon_n$ are i.i.d.\
random signs and $I_1,\ldots,I_n$ are i.i.d.\ 
uniformly distributed variables on $[d]$. Then $Y_1,\ldots,Y_n$ are 
i.i.d.\ with zero mean and unit covariance. However, as $YY^*$ is diagonal 
with multinomially distributed diagonal entries, we have \cite{RS98}
$$
        \EE\|YY^*-\EE YY^*\| = (1+o(1))
        \frac{d\log d}{\log(\gamma^{-1} \log d)}
	\gg
	\EE\|HH^*-\EE HH^*\|
$$
when $n,d\to\infty$ with $\frac{n}{d}=\gamma$ fixed. Thus universality
of the Bai-Yin law fails.
\end{example}

Example \ref{ex:baiyinsecond} illustrates that even in the classical 
setting of Bai-Yin law, some additional assumption on the distribution of 
the vectors $Y_i$ is needed to achieve universality. The additional 
assumption that would enable us to apply Theorem~\ref{thm:scovlin} is that 
each $Y_i$ is itself a sum of independent random vectors of sufficiently 
small norm. This situation arises naturally in random matrix theory: we 
presently provide one example of such a model, where the above results 
yield a considerable improvement on the best known nonasymptotic bounds.

\begin{example}[Product of random and deterministic matrices]
\label{ex:embed}
Let $A$ be a $N\times n$ random matrix with independent real entries that
have zero mean and unit variance, and let $B$ be a $d\times N$ 
nonrandom matrix. We are interested in the sample covariance
matrix $S=YY^*$ where $Y=BA$. The difficulty of analyzing such models
is that even though $A$ has independent entries, the matrix $Y$ generally 
has highly dependent entries which renders many standard tools of 
nonasymptotic random matrix theory inapplicable.
Here we obtain the following.
\end{example}

\begin{thm}
\label{thm:versh}
Let $S=YY^*$ with $Y=BA$, where $A$ is a $N\times n$ random matrix with
$\EE[A_{ij}]=0$, $\mathrm{Var}(A_{ij})=1$, and 
$\|A_{ij}\|_\infty\le\alpha$, and let $B$ be a $d\times N$ 
nonrandom matrix. Assume that $\alpha\le\sqrt{n}$ and $\|B\|_{\rm 
HS}\ge\alpha\|B\|$. Then we have
\begin{multline*}
	\EE\|S-\EE S\| \le \\
	\bigg(
	1+
	C
	\bigg\{
	\bigg(\frac{\alpha}{\sqrt{n}}\bigg)^{\frac{1}{15}}
	+
	\bigg(\frac{\alpha\|B\|}{\|B\|_{\rm HS}}
	\bigg)^{\frac{1}{4}}
	\bigg\}
	\log^3(d+n)
	\bigg)
	\big(
	2\|B\|_{\rm HS}\|B\|\sqrt{n}
	+ \|B\|_{\rm HS}^2
	\big),
\end{multline*}
where $C$ is a universal constant, $\EE S =n BB^*$, and
$\|M\|_{\rm HS}^2 := \tr |M|^2$.
\end{thm}

The proof is given in section \ref{sec:pfversh}. While we 
have formulated a single bound, it should be emphasized that the proof is 
once again a combination of two distinct universality principles. (We have 
made no effort to optimize the exponents in the lower-order terms, which 
are not expected to be optimal.)

It is instructive to compare Theorem \ref{thm:versh} with previous 
nonasymptotic results in this setting, which establish bounds analogous to 
Theorem \ref{thm:versh} up to a multiplicative factor that depends on the 
moments of $A_{ij}$: see \cite{Zhi22} and the references therein 
for subgaussian or subexponential entries, and \cite{Ver11} for a slightly 
weaker result for entries with bounded fourth moment. The advantage of 
Theorem \ref{thm:versh} is twofold. First, Theorem 
\ref{thm:versh} reproduces the correct leading-order behavior in the 
Bai-Yin law (i.e., the case $N=d,B=\id$), while previous results lose at 
least a multiplicative factor. Second, Theorem \ref{thm:versh} is 
applicable to sparse random matrices, while previous bounds are 
fundamentally inefficient in the sparse setting.

To illustrate this point, suppose that $A_{ij}$ are symmetric Bernoulli 
variables with $\mathbf{P}[A_{ij}=0]=1-p$ and 
$\mathbf{P}[A_{ij}=p^{-\frac{1}{2}}]=\frac{p}{2}$. Then Theorem 
\ref{thm:versh} yields 
$$
	\EE\|S-\EE S\| \le (1+o(1))\big(2\|B\|_{\rm 
	HS}\|B\|\sqrt{n} + \|B\|_{\rm HS}^2\big)
	\quad\mbox{for}\quad
	p\gg \frac{\log^\beta(d+n)}{n\wedge r}
$$
for a suitable $\beta$, where 
$r=\|B\|_{\rm HS}^2\|B\|^{-2}$ is the effective rank of $B$.
On the other hand, as $\EE |A_{ij}|^4 = \frac{1}{p}$ diverges
as soon as $p\to 0$, the results of \cite{Ver11,Zhi22} fail to achieve
even the correct order of magnitude of the norm in the sparse setting.

\begin{rem}
We have formulated Theorem \ref{thm:versh} for the case
that the entries $A_{ij}$ are uniformly bounded, while prior results
\cite{Ver11,Zhi22} also consider unbounded entries. However, our
restriction to bounded entries was made for simplicity of exposition only,
and is not a fundamental restriction of the proof of Theorem 
\ref{thm:versh}. A related inequality for the unbounded case may be
found in Remark \ref{rem:fourmomsc}.
\end{rem}

While Example \ref{ex:embed} provides a natural model where each $Y_i$ is 
itself a sum of independent random vectors, such an assumption can be 
restrictive for more general models of sample covariance matrices. On the 
other hand, in the special (homogeneous) setting of the Bai-Yin law, it 
was shown in \cite{CT18} that a much weaker assumption suffices to achieve 
universal behavior: in this case one need only assume that each $Y_i$ 
satsifies certain concentration of measure properties, which rules out the 
counterexample of Example \ref{ex:baiyinsecond}. In forthcoming work 
\cite{PvH23}, the universality principles of this paper are further 
refined to capture such concentration assumptions for general 
(nonhomogeneous) sample covariance matrices.

More generally, the above considerations highlight the broader question 
whether the universality principles of this paper extend to random 
matrices that admit more general dependence structures than can be 
captured by the model \eqref{eq:model}; such principles could enable the 
analysis of natural models that are outside the scope of this 
paper. Progress in this direction may be found in \cite{PvH23,SvW23}.

\subsection{Strong asymptotic freeness}
\label{sec:appsaf}

The celebrated asymptotic freeness theorem of Voiculescu \cite{Voi91} 
states that if $X_1^N,\ldots,X_m^N$ are independent $N\times N$ Wigner 
matrices and $s_1,\ldots,s_m$ is a free semicircular family (i.e., a 
semicircular family as in Definition \ref{defn:semi} with zero mean and 
unit covariance), then 
$$
	\lim_{N\to\infty}
	\ntr p(X_1^N,\ldots,X_m^N) = \tau(p(s_1,\ldots,s_m))
	\quad\mbox{a.s.}
$$
for every noncommutative polynomial $p$. This makes it possible to 
compute the limiting spectral distributions of polynomials Wigner 
matrices using tools of free probabilty; see, e.g., \cite[Chapter 5]{AGZ10}.
That the convergence holds also in norm
$$
	\lim_{N\to\infty}
	\|p(X_1^N,\ldots,X_m^N)\| = \|p(s_1,\ldots,s_m)\|
	\quad\mbox{a.s.}
$$
is a deep result of Haagerup and Thorbj{\o}rnsen \cite{HT05}, who proved 
it for GUE matrices. The latter strong asymptotic 
freeness property is of fundamental importance both to 
random matrices and in the theory of operator algebras.

The methods of \cite{HT05} are rather delicate, and their extension even 
to random matrices with i.i.d.\ entries with bounded fourth moment 
requires considerable effort \cite{And13}. It was therefore long unclear 
whether the strong asymptotic freeness pheonomenon could be expected to 
hold in the absence of strong symmetry assumptions. That this is indeed 
the case is a notable application of the sharp matrix concentration theory 
of \cite{BBV21}, which made it possible to establish strong asymptotic 
freeness of an extremely general class of Gaussian random matrix models.

Here we extend the latter results to an even more general family of 
non-Gaussian random matrices. To this end, we must show that our 
universality principles for random matrices of the form \eqref{eq:model} 
imply universality for polynomials of such matrices. In the following 
(asymptotic) result, whose proof is given in section \ref{sec:pfsaf}, this 
is accomplished by a direct application of known linearization arguments 
\cite{HT05,DlS10} that we use as a black box. However, while we do not 
develop this direction systematically in this paper, our methods can also 
be used to obtain nonasymptotic bounds for polynomials of random matrices: 
for example, Theorem \ref{thm:scovlin} may be viewed as a result of this 
kind for a certain quadratic polynomial.

\begin{thm}[Strong asymptotic freeness]
\label{thm:saf}
Let $s_1,\ldots,s_m$ be a free semicircular family.
For each $N\ge 1$, let
Let $H_1^N,\ldots,H_m^N$ be independent self-adjoint random matrices of 
dimension $d_N\ge N$ defined by
$$
	H_k^N = Z_{k0}^N + \sum_{i=1}^{M_N} Z_{ki}^N,
$$
where $Z_{k0}^N$ is a deterministic self-adjoint matrix and
$Z_{k1}^N,\ldots,Z_{kM_N}^N$ are independent self-adjoint random matrices 
with zero mean. Suppose that
$$
	\lim_{N\to\infty}\|\EE[H_k^N]\|=
	\lim_{N\to\infty}\|\EE[(H_k^N)^2]-\id\|=
	\lim_{N\to\infty}\bar R(H_k^N)=0
$$
and that
$$
	\lim_{N\to\infty}
	(\log d_N)^{\frac{3}{2}} 
	v(H_k^N) = 0,\qquad
	\lim_{N\to\infty} (\log d_N)^{2}\max_{1\le i\le M_N}\|Z_{ki}^N\|
	=0\quad\mbox{a.s.}
$$
for every $1\le k\le m$. Then
\begin{align*}
	\lim_{N\to\infty}\ntr p(H_1^N,\ldots,H_m^N) =
	\tau(p(s_1,\ldots,s_m))\quad\mbox{a.s.},\\
	\lim_{N\to\infty}\|p(H_1^N,\ldots,H_m^N)\| =
	\|p(s_1,\ldots,s_m)\|\quad\mbox{a.s.}
\end{align*}
for every noncommutative polynomial $p$.
Moreover, the analogous result holds if a.s.\ 
convergence is replaced by convergence in probability.
\end{thm}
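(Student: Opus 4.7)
The overall plan is to reduce the theorem to two known ingredients: the universality principle (Corollary \ref{cor:normuniv}), which lets us replace the non-Gaussian $H_k^N$ by Gaussian analogues with matching first and second moments, and the strong asymptotic freeness of Gaussian models established in \cite{BBV21}. Two technical hurdles must be cleared: the summands $Z_{ki}^N$ may be unbounded, so the parameter $R(H_k^N)$ need not be finite; and the universality theorems address a single self-adjoint random matrix, whereas strong asymptotic freeness concerns joint statistics of $m$ matrices and arbitrary noncommutative polynomials.

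The first hurdle is handled by truncation. Choose $\tau_N\to 0$ with $\tau_N(\log d_N)^2\to 0$ slowly enough that, by the a.s.\ hypothesis $(\log d_N)^2\max_i\|Z_{ki}^N\|\to 0$, one has $\max_{k,i}\|Z_{ki}^N\|\le\tau_N$ eventually a.s. Let $\tilde Z_{ki}^N$ be the mean-zero truncation of $Z_{ki}^N$ at level $\tau_N$ and set $\tilde H_k^N := Z_{k0}^N + \sum_i\tilde Z_{ki}^N$; on the event just described, $\tilde H_k^N$ differs from $H_k^N$ only by a deterministic shift coming from the truncation bias. A routine computation using $\EE[\max_i\|Z_{ki}^N\|^2]\to 0$ and Cauchy--Schwarz shows that this shift vanishes in norm and that the hypotheses of the theorem still hold for the truncated family, now with $R(\tilde H_k^N)\le 2\tau_N$ deterministically. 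Hence it suffices to prove the conclusion for $\tilde H_k^N$.

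The second hurdle is handled by the linearization trick of \cite{HT05}: for any self-adjoint noncommutative polynomial $p$ there exist $D\ge 1$ and self-adjoint coefficients $a_0,\ldots,a_m\in\M_D(\mathbb{C})$, independent of $N$, such that $\|p(\tilde H_1^N,\ldots,\tilde H_m^N)\|$ and $\|p(s_1,\ldots,s_m)\|$ can be read off from the spectra of the single matrices $L^N := a_0\otimes\id_{d_N} + \sum_k a_k\otimes\tilde H_k^N$ and $L_{\rm free} := a_0\otimes\id + \sum_k a_k\otimes s_k$, respectively. Since the $\tilde H_k^N$ are independent, $L^N$ fits the model \eqref{eq:model} in dimension $Dd_N$ as a sum of $\{a_k\otimes\tilde Z_{ki}^N\}_{k,i}$, and a direct computation using independence gives $\sigma(L^N)\lesssim\max_k\|a_k\|$, $v(L^N)\lesssim\max_k\|a_k\|\max_k v(\tilde H_k^N)$, and $R(L^N)\le\max_k\|a_k\|\max_k R(\tilde H_k^N)$; since $D$ is fixed, all the smallness conditions on $L^N$ hold in the asymptotic regime. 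Applying Corollary \ref{cor:normuniv} with $t\asymp\log d_N$ makes the tail $d_Ne^{-t}$ summable, so Borel--Cantelli gives $\|L^N\|-\|G^N\|\to 0$ a.s., where $G^N$ is the Gaussian model of $L^N$. The theory of \cite{BBV21} then yields $\|G^N\|\to\|L_{\rm free}\|$ a.s.\ (using $v(G^N)(\log d_N)^{3/2}\to 0$), so $\|L^N\|\to\|L_{\rm free}\|$ a.s. Since a countable family of polynomials with rational coefficients is dense, a single a.s.\ event handles all $p$ simultaneously, giving the norm statement. The trace statement is analogous: joint normalized traces of the $\tilde H_k^N$ can be extracted from normalized traces of powers of $L^N$ for varying choices of $a_k$, and Theorem \ref{thm:smuniv} or \ref{thm:ssconc} combined with standard concentration of linear spectral statistics yields the required a.s.\ convergence.

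The main obstacle is verifying that the parameters of the linearized matrix $L^N$ genuinely inherit the smallness of the individual $\tilde H_k^N$ at the rate needed to beat the $\log d_N$ factors in Corollary \ref{cor:normuniv} and in \cite{BBV21}. This relies essentially on the independence of the family $\{\tilde H_k^N\}_{k=1}^m$ (so that $\Cov(L^N)$ decomposes as a sum of $\Cov$'s controlled by the individual $v(\tilde H_k^N)$) and on the normalization $\|\EE[(H_k^N)^2]-\id\|\to 0$ (which keeps $\sigma(L^N)$ of constant order). Without both of these inputs, either $\sigma(L^N)$ would fail to stay bounded or $v(L^N)$ would not vanish fast enough for the sharp Gaussian bound to close the argument.
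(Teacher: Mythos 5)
Your overall strategy (truncate to make $R$ small, then combine universality with the Gaussian theory of \cite{BBV21} via Haagerup--Thorbj{\o}rnsen linearization for norms and via trace universality plus concentration for traces) is the same as the paper's, and the truncation step is essentially the paper's Lemma \ref{lem:aszero} argument. But there is a genuine gap in how you close the norm statement. You apply Corollary \ref{cor:normuniv} to the pencils $L^N=a_0\otimes\id+\sum_k a_k\otimes\tilde H_k^N$ and conclude $\|L^N\|\to\|L_{\rm free}\|$ a.s.\ for every fixed linearization, and then assert that $\|p(\tilde H_1^N,\ldots,\tilde H_m^N)\|\to\|p(s_1,\ldots,s_m)\|$ can be ``read off'' from this. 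The linearization trick does not work from norm (edge) convergence alone: to transfer information about a general polynomial $p$ one must rule out eigenvalues of $L^N$ in \emph{spectral gaps} of $L_{\rm free}$, i.e.\ one needs the one-sided spectral inclusion $\spc(L^N)\subseteq\spc(L_{\rm free})+[-\varepsilon,\varepsilon]$ for every linearization, which edge universality does not give. This is why the paper invokes the full-spectrum results (Theorem \ref{thm:specuniv} through Theorem \ref{thm:smconc}, together with \cite[Lemma 7.10]{BBV21}) at this point, and even then the linearization only yields the upper bound $\limsup_N\|p(H^N)\|\le\|p(s)\|$; the matching lower bound is obtained separately from weak asymptotic freeness, $\liminf_N\|p(H^N)\|\ge\lim_N(\ntr|p(H^N)|^{2r})^{1/2r}=\tau(|p(s)|^{2r})^{1/2r}$, letting $r\to\infty$ and using faithfulness of $\tau$. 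Your proposal omits this lower-bound step entirely, and the step it does contain rests on an unproved implication.

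A second, smaller but still real, gap is the appeal to ``standard concentration of linear spectral statistics'' for the trace statement. For sums of independent matrices the naive bounded-differences bound for $\ntr$ of a fixed polynomial produces a variance proxy of order $R^2M_N$ (the trace-norm of a single replaced summand can be of order $d_N R$), which does not vanish in the regimes covered by the theorem (e.g.\ sparse Wigner matrices). The paper has to prove a dedicated inequality (Proposition \ref{prop:weakconc}) that exploits the structure of the increments --- bounding the sum of squared differences by quantities like $\sup_{\|v\|=\|w\|=1}\sum_i|\langle v,(Z_{ki}^N-\tilde Z_{ki}^N)w\rangle|^2$ and $\|\sum_i(Z_{ki}^N-\tilde Z_{ki}^N)^2\|$, controlled via self-bounding and Talagrand-type arguments --- before Borel--Cantelli can be applied. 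If you repair the norm argument by working with spectral inclusion (Theorem \ref{thm:smconc}) and supply a genuine concentration inequality of this type (or the paper's direct cumulant-expansion treatment of monomial traces), your outline becomes the paper's proof.
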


Theorem \ref{thm:saf} applies to a large family of random matrices with 
non-Gaussian, nonhomogeneous, and dependent entries. In order to 
illustrate some characteristic features of this result, let us develop one 
example in more detail.

\begin{example}[Sparse Wigner matrices]
\label{ex:sparsew}
We consider matrices with a deterministic sparsity pattern, where all 
nonzero entries of the matrix are i.i.d. We emphasize that
such random matrices may be highly nonhomogeneous.
\end{example}

\begin{defn}
Let $(\eta_{ij})_{1\le i\le j<\infty}$ be i.i.d.\ real-valued random 
variables with zero mean and unit variance, and let
$\mathrm{G}=([d],E)$ be a $k$-regular graph with $d$ vertices.
Then the \emph{$(\mathrm{G},\eta)$-sparse Wigner matrix} is the $d\times 
d$ self-adjoint random matrix $X$ with entries $X_{ij}=k^{-\frac{1}{2}}
\eta_{ij}1_{\{i,j\}\in E}$ for $1\le i\le j\le d$.
\end{defn}

The proof of of the following result is given in section 
\ref{sec:pfcorsparse}.

\begin{cor}
\label{cor:sparsesaf}
Let $(\eta_{rij})_{1\le r\le m,1\le i\le j<\infty}$ be i.i.d.\ centered
random variables with unit variance and $\EE[|\eta_{kij}|^p]<\infty$
for some $p>2$, and 
let $\mathrm{G}_N$ be a $k_N$-regular graph with $d_N\ge N$ 
vertices. Let $H_r^N$ be the 
$(\mathrm{G}_N,\eta_r)$-sparse Wigner matrix.
\begin{enumerate}[a.]
\itemsep\medskipamount
\item If $k_N\gg d_N^{\frac{2}{p-2}}(\log d_N)^{\frac{4p}{p-2}}$, then 
\begin{align*}
	\lim_{N\to\infty}\ntr p(H_1^N,\ldots,H_m^N) =
	\tau(p(s_1,\ldots,s_m))~~\mbox{in probability},\\
	\lim_{N\to\infty}\|p(H_1^N,\ldots,H_m^N)\| =
	\|p(s_1,\ldots,s_m)\|~~\mbox{in probability}
\end{align*}
for every noncommutative polynomial $p$.
If the graphs $\mathrm{G}_N=([d_N],E_N)$ are increasing
\emph{(}i.e., $E_N\subseteq E_{N+1}$ for all $N$\emph{)}, the 
convergence also holds a.s.
\item If $k_N\ll d_N^{\frac{2}{p-2}}(\log d_N)^{-\frac{2p}{p-2}}$, 
then the conclusion of part a.\ must fail for some entry distribution
satisfying the assumptions.
\end{enumerate}
\end{cor}

A fundamental phenomenon that is captured by this result is that strong 
asymptotic freeness requires a tradeoff between sparsity and integrability 
of the entries. For dense Wigner matrices $k_N=d_N$, we obtain strong 
asymptotic freeness as soon as the entries have $4+\varepsilon$ moments 
for some $\varepsilon>0$, as was previously shown in \cite{And13}. On the 
other hand, as we bound more moments, increasingly sparse random matrices 
can still achieve strong asymptotic freeness. This tradeoff is captured 
nearly optimally by Corollary \ref{cor:sparsesaf}, up to logarithmic 
factors.

In the opposite extreme, when the entries $\eta_{rij}$ are uniformly 
bounded, it follows directly from Theorem \ref{thm:saf} that strong 
asymptotic freeness holds (in the a.s.\ sense) as soon as $k_N\gg(\log 
d_N)^4$. By taking $\eta_{rij}$ to be symmetric Bernoulli variables, this 
shows that one can construct $d\times d$ random matrices with independent 
entries that achieve strong asymptotic freeness using only $O(d\log^5 d)$ 
bits of randomness.

\begin{rem}
The sparse Wigner model of Example \ref{ex:sparsew} is only one special 
case of the very general setting captured by Theorem \ref{thm:saf}, which 
also includes several of the examples that were discussed in the previous 
sections: e.g., random matrices defined by group representations 
as in section \ref{sec:appgraph}, or centered adjacency matrices of sparse 
Erd\H{o}s-R\'enyi graphs. Such examples further extend the scope of the 
strong asymptotic freeness phenomenon beyond what was previously known.
\end{rem}

\subsection{Phase transitions in spiked models}
\label{sec:appspiked}

A widely studied phenomenon in random matrix theory, which dates back to 
the work of Baik, Ben Arous and P\'ech\'e \cite{BBP05}, is that low-rank 
perturbations of random matrices (known as ``spiked'' models) give rise to 
phase transitions: small perturbations do not affect the limiting 
eigenvalue statistics, while large perturbations give rise to the 
appearance of outlier eigenvalues. Several closely related forms of this 
phenomenon have been investigated by many authors; see, e.g., the survey 
\cite{CD17}. For sake of illustration, we focus here on the following 
prototypical phenomenon of this kind.\footnote{%
	The assumption that $A_d$ is positive semidefinite is made here
	exclusively to simplify the notation; any negative 
	eigenvalues of $A_d$ exhibit a completely analogous transition at 
	$\theta_i=-1$.
}

\begin{thm}[BBP transition for spiked GOE \cite{BN11}]
\label{thm:bbp}
Let $G_d$ be a $d\times d$ self-adjoint 
random matrix whose entries $(G_{dij})_{i\ge j}$ are independent real 
Gaussian variables with mean $0$ and variance $\frac{1+1_{i=j}}{d}$. Let 
$A_d$ be a nonrandom $d\times d$ positive semidefinite matrix of rank $r$ 
whose eigenvalues $\theta_1\ge\cdots\ge \theta_r>0$ are independent of $d$.
\begin{enumerate}[a.]
\itemsep\medskipamount
\item We have for $1\le i\le r$
$$
	\lambda_i(A_d+G_d) \xrightarrow[\text{a.s.}]{d\to\infty}
	\begin{cases} 
	\theta_i + \frac{1}{\theta_i} & \mbox{for }\theta_i>1,\\
	2 &\mbox{for }\theta_i\le 1,
	\end{cases},\qquad
	\lambda_{r+1}(A_d+G_d) \xrightarrow[\text{a.s.}]{d\to\infty} 2,
$$
where $\lambda_1(M)\ge\cdots\ge\lambda_d(M)$ are the eigenvalues
of $M$.
\item For every $1\le i, j\le r$ such that $\theta_i>1$ and
$\theta_j\ne\theta_i$, we have
$$
	\|P_i(A_d)v_i(A_d+G_d)\|^2 \xrightarrow[\text{a.s.}]{d\to\infty}
	1-\frac{1}{\theta_i^2},
	\qquad
	\|P_j(A_d)v_i(A_d+G_d)\|^2 \xrightarrow[\text{a.s.}]{d\to\infty}0,
$$
where $P_i(M)$ is the projection on the eigenspace of $M$ 
associated to the eigenvalue $\lambda_i(M)$, and $v_i(M)$ is any 
unit norm eigenvector of $M$ with eigenvalue $\lambda_i(M)$.
\end{enumerate}
\end{thm}

We aim to understand whether the phenomena described in Theorem 
\ref{thm:bbp} are universal: do the conclusions remain valid if the GOE 
matrix $G_n$ is replaced by another random matrix $H_d$ whose entries have 
the same mean and covariance? Previous results have extended Theorem 
\ref{thm:bbp} to the setting where $H_d$ has i.i.d.\ entries above the 
diagonal under distributional assumptions that require at least some 
bounded moments of higher order, cf.\ \cite{CD17} and the references 
therein. (Analogues of Theorem \ref{thm:bbp} are also known to hold
for non-Gaussian homogeneous models where $H_d$ is invariant under a
symmetry group; such models are rather different in spirit from the
kind of universality phenomena considered here.)

When applied to this setting, our universality 
principles can capture many new situations, including sparse and 
dependent models.

\begin{thm}
\label{thm:spiked}
Let $G_d,A_d$ be as in Theorem \ref{thm:bbp}, and let $H_d$ be any $d\times 
d$ self-adjoint real random matrix of the form \eqref{eq:model} whose 
entries have the same mean and covariance as those of $G_d$. Suppose that 
$(\log d)^2R(H_d)\to 0$ as $d\to\infty$. Then all the conclusions of Theorem 
\ref{thm:bbp} remain valid if $G_d$ is replaced by $H_d$.
\end{thm}

The proof of this result is given in section \ref{sec:pfspiked}. Let us 
however briefly outline the main ingredients of the proof. On the one hand, 
Theorem \ref{thm:specuniv} shows that the eigenvalues of $A_d+H_d$ 
concentrate at the locations predicted by Theorem \ref{thm:bbp}. On the 
other hand, for $\theta_i>1$, let $\varphi_i$ be a mollification of the 
indicator function of a small interval around 
$\theta_i+\frac{1}{\theta_i}$. Then $\varphi_i(A_d+H_d)$ coincides with 
high probability with the projection onto the linear span of the 
eigenvectors of $A_d+H_d$ whose eigenvalues concentrate at 
$\theta_i+\frac{1}{\theta_i}$. We can therefore apply the second part of 
Theorem \ref{thm:smuniv} to establish universality of these 
eigenprojections.

To illustrate Theorem \ref{thm:spiked}, we briefly discuss
one simple example.

\begin{example}[Planted clique in the permutation model]
Let $X_d$ be the adjacency matrix of a random $2k_d$-regular graph with 
$d$ vertices in the permutation model defined in section 
\ref{sec:rregdet}, where $(\log d)^4\ll k_d\ll d^2$. Choose a subset 
$E_d\subset[d]$ of vertices
so that $|E_d|=(1+o(1)) \theta \sqrt{2k_d}$. Then 
$1_{E_d}1_{E_d}^*+X_d$ is the adjacency matrix of the random 
graph in which we planted a clique with vertices $E_d$.

By Lemma \ref{lem:groupthy}, the random matrix 
$(2k_d)^{-\frac{1}{2}}X_d^\perp$, where $X_d^\perp$ is the restriction of 
$X_d$ to $1^\perp$, has the same mean and covariance as a GOE matrix of 
dimension $d-1$. Furthermore, the assumptions on $k_d$ and $E_d$ imply that 
there exist unit vectors $v_d\in 1^\perp$ so that 
$\|(2k_d)^{-\frac{1}{2}}1_{E_d}1_{E_d}^*-\theta v_dv_d^*\|\to 0$. Thus 
applying Theorem \ref{thm:spiked} with 
$H_{d-1}=(2k_d)^{-\frac{1}{2}}X_d^\perp$ and $A_{d-1}=\theta v_dv_d^*$ 
shows that the adjacency matrix of the planted model has an outlier 
eigenvalue (beside its Perron-Frobenius eigenvalue) if and only if 
$\theta>1$. In other words, the detectability of a planted clique by an 
outlier in the spectrum exhibits a phase transition at $|E_d| = 
\sqrt{2k_d}$.

Let us emphasize that the random matrices that arise in this example are 
both dependent and may be highly sparse. (For the classical study of 
spectral detection of planted cliques in dense Erd\H{o}s-R\'enyi graphs, 
see \cite{AKS98}.)
\end{example}

\begin{rem}
Even if we consider $H_d$ with i.i.d.\ entries, sparse 
matrices are not captured by previous extensions of Theorem 
\ref{thm:bbp} as their entries have unbounded moments of order $p>2$ (cf.\ 
Example \ref{ex:embed}). Some results for sparse 
matrices were obtained very recently in \cite{LM22}, but rely on a special 
choice of $A_d$.
\end{rem}

In this section we have used the classical Gaussian result of Theorem 
\ref{thm:bbp} as input for the universality theory of this paper. However, 
much more general results can be obtained in the Gaussian setting by 
applying the sharp matrix concentration theory of \cite{BBV21}. This 
approach has two key advantages: it is nonasymptotic, and it yields 
analogous phenomena in nonhomogeneous situations. The latter are of 
particular interest in many applications, but are much less well 
understood than the homogeneous setting \cite{BM21,LL22,LM22,Au23}. 
A detailed study of phase transitions in nonhomogeneous models using
the methods of \cite{BBV21} and of this paper appears in
\cite{BCSV23}.

\section{The cumulant method}
\label{sec:cumulant}

The aim of this section is to introduce the basic device that we 
will use to prove universality throughout this paper. The general setting 
that will be considered in this section is the following. Let 
$Y_1,\ldots,Y_n$ be independent random vectors in $\mathbb{R}^N$, and 
let $U_1,\ldots,U_n$ be independent Gaussian random vectors such 
that 
$Y_i$ and $U_i$ have the same mean and covariance. Given a function 
$f:\mathbb{R}^{Nn}\to\mathbb{C}$, we aim to bound the deviation from the 
Gaussian model 
$$
	\Delta := \EE[f(Y_1,\ldots,Y_n)]-\EE[f(U_1,\ldots,U_n)].
$$
There are various classical approaches to such problems. For example, the 
Lindeberg method replaces $Y_i$ by $U_i$ one term at a time, and then uses 
Taylor expansion to third order to control the error of each term; similar 
bounds arise from Stein's method \cite[\S 5]{Cha14}. Unfortunately, in the 
setting of this paper such methods appear to give rise to very poor 
bounds. For example, in the context of Theorem \ref{thm:momentuniv}, 
classical methods yield bounds where the parameter 
$\sigma_q(X)^2\le\sigma(X)^2:=\|\sum_{i=1}^n \EE Z_i^2\|$ is replaced by 
at least $\sum_{i=1}^n \EE\|Z_i\|^2$, which is typically much larger.

The reason for the inefficiency of classical approaches to universality is 
that they require the independent variables to be bounded term by term. In 
the present setting, bounding the contribution of each summand $Z_i$ in 
\eqref{eq:model} separately ignores the noncommutativity of the summands. 
To surmount this problem, we will work instead with an exact formula for 
the deviation $\Delta$ in terms of a series expansion in the cumulants of 
the underlying variables. For our purposes, the advantage of this exact 
formula is that it will enable us to keep the summands $Z_i$ together, and 
estimate the resulting terms efficiently using trace inequalities without 
destroying their noncommutativity. The price we pay for this is that we 
must expand the deviation $\Delta$ to high order in order to obtain 
efficient estimates.

In the univariate case $N=1$, the cumulant expansion dates back to the 
work of Barbour \cite{Bar86}, and has been routinely applied to the study 
of random matrices with independent entries since the work of Lytova and 
Pastur \cite{LP09}. In the remainder of this section, we recall the 
relevant arguments of \cite{Bar86,LP09} and spell out their immediate
extension to the multivariate case $N>1$.

\subsection{Cumulants}
\label{sec:cumdef}

Let $W_1,\ldots,W_m$ be bounded real-valued random variables. Then 
their log-moment generating function is analytic with power 
series expansion
$$
	\log \EE[e^{\sum_{i=1}^m t_iW_i}]
	 =
	\sum_{k=1}^\infty 
	\sum_{j_1,\ldots,j_k=1}^m
	\frac{1}{k!}\,
	\kappa(W_{j_1},\ldots,W_{j_k})
	\,t_{j_1}\cdots t_{j_k}.
$$
The coefficient $\kappa(W_1,\ldots,W_k)$ is called the \emph{joint 
cumulant} of the random variables $W_1,\ldots,W_k$. Joint cumulants are 
multilinear in their arguments and invariant under permutation of their
arguments. Moreover, for jointly Gaussian random variables, all joint 
cumulants of order $k\ge 3$ vanish.

For any subset $J\subseteq[m]:=\{1,\ldots,m\}$, denote by 
$W_J:=(W_j)_{j\in J}$ the associated subset of random variables. 
Moreover, denote by $\mathrm{P}([m])$ the collection of all partitions 
of $[m]$. The following fundamental result \cite[Proposition 3.2.1]{PT11} 
expresses the relation between joint cumulants and moments.

\begin{lem}[Leonov-Shiryaev]
\label{lem:shir}
We can write
$$
	\EE[W_1\cdots W_m] =
	\sum_{\pi\in\mathrm{P}([m])}
	\prod_{J\in\pi}\kappa(W_J).
$$
Conversely, we have
$$
	\kappa(W_1,\ldots,W_m) =
	\sum_{\pi\in\mathrm{P}([m])}
	(-1)^{|\pi|-1}(|\pi|-1)!
	\prod_{J\in\pi}
	\EE\Bigg[\prod_{j\in J}W_j\Bigg].
$$
\end{lem}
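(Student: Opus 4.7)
The plan is to derive both identities from the fundamental relation $M(t) = e^{K(t)}$ between the moment generating function $M(t) := \EE[e^{\sum_i t_i W_i}]$ and the cumulant generating function $K(t) := \log M(t)$. Since the $W_i$ are bounded, both series converge absolutely in a neighborhood of the origin, so we may freely differentiate term by term. The two stated formulas correspond to extracting the Taylor coefficient of $t_1 t_2 \cdots t_m$ on either side of this exponential identity, and they are related to one another by Möbius inversion on the partition lattice $\mathrm{P}([m])$.

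For the first identity, I would compute $\partial_1 \cdots \partial_m \,e^{K(t)}\big|_{t=0}$ using the multivariate Faà di Bruno formula
$$\partial_1 \cdots \partial_m \,e^{K(t)} = e^{K(t)} \sum_{\pi \in \mathrm{P}([m])} \prod_{J \in \pi} \partial_J K(t), \qquad \partial_J := \prod_{j \in J} \partial_j,$$
which follows by a short induction on $m$ via the product rule: the base case $m=1$ is trivial, and the induction step applies $\partial_{m+1}$ to the right-hand side, producing either a new singleton block $\{m+1\}$ (from differentiating the prefactor $e^{K(t)}$) or adjoining $m+1$ to an existing block (from differentiating one of the $\partial_J K$ factors). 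Evaluating at $t=0$, and using $M(0)=1$, $\partial_1 \cdots \partial_m M(0) = \EE[W_1\cdots W_m]$, together with $\partial_J K(0) = \kappa(W_J)$ by definition of the joint cumulant, yields the first identity.

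For the second identity, I would apply the first identity separately to the variables indexed by each block of an arbitrary partition $\sigma \in \mathrm{P}([m])$, obtaining
$$\prod_{J \in \sigma} \EE\bigg[\prod_{j \in J} W_j\bigg] = \sum_{\pi \leq \sigma} \prod_{B \in \pi} \kappa(W_B),$$
where $\pi \leq \sigma$ denotes that $\pi$ refines $\sigma$. This is a unitriangular linear system expressing block-products of moments in terms of block-products of cumulants across the partition lattice, and so it can be inverted by Möbius inversion. The Möbius function of $\mathrm{P}([m])$ satisfies $\mu(\pi, \hat 1) = (-1)^{|\pi|-1}(|\pi|-1)!$, where $\hat 1 := \{[m]\}$ is the maximum element (a standard fact about the partition lattice); setting $\sigma = \hat 1$ in the Möbius-inverted formula produces the second identity.

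There is no real obstacle here: this is a purely algebraic/combinatorial identity, and the only genuine care needed is the bookkeeping with partitions. If one preferred to avoid invoking the partition-lattice Möbius function explicitly, one could instead expand $K(t) = \log(1 + (M(t)-1))$ as a power series in $(M(t)-1)$ and directly match coefficients of $t_1 \cdots t_m$ on both sides; the signed factorials $(-1)^{|\pi|-1}(|\pi|-1)!$ then emerge naturally from the Taylor expansion of the logarithm.
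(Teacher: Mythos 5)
Your proposal is correct. Note, however, that the paper does not actually prove this lemma: it is stated as a classical result of Leonov--Shiryaev and cited to Peccati--Taqqu (Proposition 3.2.1 there), so there is no in-paper argument to compare against. Your derivation is the standard one and is sound: the multivariate Fa\`a di Bruno identity $\partial_1\cdots\partial_m e^{K}=e^{K}\sum_{\pi\in\mathrm{P}([m])}\prod_{J\in\pi}\partial_J K$ (proved by the induction you describe) gives the moment--cumulant formula, where the only small point worth making explicit is that $\partial_J K(0)=\kappa(W_J)$ follows from the paper's definition because the $k!$ orderings of $J$ in the power series cancel the $\frac{1}{k!}$, using permutation invariance of the joint cumulant. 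Applying the first identity block-wise to a partition $\sigma$ does give $\prod_{J\in\sigma}\EE[\prod_{j\in J}W_j]=\sum_{\pi\le\sigma}\prod_{B\in\pi}\kappa(W_B)$, and M\"obius inversion on the partition lattice with $\mu(\pi,\hat 1)=(-1)^{|\pi|-1}(|\pi|-1)!$ yields the converse formula; your alternative of expanding $\log(1+(M(t)-1))$ and matching the coefficient of $t_1\cdots t_m$ is an equally valid way to produce the signed factorials without invoking the M\"obius function. Boundedness of the $W_i$ justifies the convergence and term-by-term differentiation, as you note, so there is no gap.
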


The significance of cumulants for our purposes is the following identity. 
The univariate ($m=1$) case was proved in \cite[Lemma 1]{Bar86} and 
\cite[Proposition 3.1]{LP09}; the multivariate case follows precisely in 
the same manner.

\begin{lem}
\label{lem:ibp}
For any polynomial $f:\mathbb{R}^m\to\mathbb{C}$ and $i\in[m]$, we have
\begin{multline*}
	\EE[W_if(W_1,\ldots,W_m)] = \\
	\sum_{k=0}^\infty
	\sum_{j_1,\ldots,j_k=1}^m
	\frac{1}{k!}\,
	\kappa(W_i,W_{j_1},\ldots,W_{j_k})\,
	\EE\bigg[
	\frac{\partial^k f}{\partial x_{j_1}\cdots\partial x_{j_k}}
	(W_1,\ldots,W_k)
	\bigg].
\end{multline*}
\end{lem}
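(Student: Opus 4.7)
The plan is to derive the identity first for exponential $f$ and then bootstrap to polynomials by comparing Taylor coefficients.

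\textbf{Step 1 (Generating-function setup).} Since $W_1,\ldots,W_m$ are bounded, the moment-generating function $M(t):=\EE[e^{\sum_j t_jW_j}]$ is entire on $\mathbb{C}^m$, and $K(t):=\log M(t)$ is analytic in a neighborhood of the origin. By the very definition of joint cumulants recalled above,
$$
  K(t)=\sum_{k\ge 1}\frac{1}{k!}\sum_{j_1,\ldots,j_k=1}^m\kappa(W_{j_1},\ldots,W_{j_k})\,t_{j_1}\cdots t_{j_k}.
$$
Differentiating $M=e^K$ with respect to $t_i$ gives $\EE[W_ie^{\sum_jt_jW_j}]=(\partial_{t_i}K(t))\,M(t)$; expanding $\partial_{t_i}K$ termwise yields
$$
  \EE[W_ie^{\sum_j t_jW_j}]=\sum_{k\ge 0}\frac{1}{k!}\sum_{j_1,\ldots,j_k=1}^m\kappa(W_i,W_{j_1},\ldots,W_{j_k})\,t_{j_1}\cdots t_{j_k}\,M(t).
$$

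\textbf{Step 2 (The identity for exponentials).} For the exponential $f_t(x):=e^{\langle t,x\rangle}$, one has $\partial^kf_t/\partial x_{j_1}\cdots\partial x_{j_k}=t_{j_1}\cdots t_{j_k}f_t$, so $t_{j_1}\cdots t_{j_k}M(t)=\EE[(\partial^kf_t/\partial x_{j_1}\cdots\partial x_{j_k})(W)]$. Plugging this into the formula above produces the claimed identity with $f=f_t$, for every $t$ in a neighborhood of $0$.

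\textbf{Step 3 (Bootstrapping to polynomials).} Both sides of the desired identity are linear in $f$, so it suffices to establish it for monomials $f(x)=x^\alpha$, $\alpha\in\mathbb{Z}_{\ge 0}^m$. Since $f_t(x)=\sum_\alpha(t^\alpha/\alpha!)\,x^\alpha$ and all power series involved converge absolutely in a neighborhood of $t=0$ (as $W_i$ are bounded and $f_t$ is entire), one may expand both sides of the identity from Step 2 as power series in $t$ and equate coefficients of $t^\alpha$; the coefficient identity is exactly the claim for $f=x^\alpha$. Linearity then extends the identity to arbitrary polynomials.

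\textbf{Main obstacle.} There is essentially no deep obstacle: the argument is word-for-word the multivariate analogue of the univariate chain-rule proof in \cite{Bar86,LP09}, and the only care needed lies in the bookkeeping of Step 3. A self-contained combinatorial alternative, which avoids generating functions entirely, is to apply Lemma \ref{lem:shir} directly to $\EE[W_iW_{l_1}\cdots W_{l_p}]$ for a monomial $f(x)=x_{l_1}\cdots x_{l_p}$, group the resulting partitions of $\{0,1,\ldots,p\}$ by the block containing the ``slot'' of $W_i$, and apply Lemma \ref{lem:shir} once more to the remaining partition; the factor $1/k!$ then arises because each unordered $k$-subset of $\{1,\ldots,p\}$ corresponds to $k!$ ordered tuples, and the permutation symmetry of $\kappa$ is used to replace the resulting unordered sum by an ordered one as in the statement.
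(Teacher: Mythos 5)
Your generating-function argument (Steps 1--3) is correct and is essentially the paper's own proof: the paper also proves the identity for the exponential $\varphi(x)=e^{\sum_j t_j x_j}$ via $\partial_{t_i}\log M(t)$, and then recovers the monomial case by observing that $W_{i_1}\cdots W_{i_l}=\partial^l\varphi/\partial t_{i_1}\cdots\partial t_{i_l}\big|_{t=0}$, which is the same as your Taylor-coefficient comparison. The combinatorial alternative you sketch at the end (group the Leonov--Shiryaev partitions of $\{0,1,\ldots,p\}$ by the block containing the distinguished slot, then re-sum the remaining blocks via Lemma~\ref{lem:shir}, with the $1/k!$ accounting for the passage from ordered tuples to unordered subsets using the permutation symmetry of $\kappa$) is also correct, and it has the modest advantage of avoiding any analytic/convergence bookkeeping and making the role of Lemma~\ref{lem:shir} transparent; the generating-function route, which both you and the paper take as the main proof, is shorter to write and parallels the original univariate references.
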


\begin{proof}
Let $\varphi(x_1,\ldots,x_m):= e^{\sum_{j=1}^m t_j x_j}$. Then
\begin{align*}
	&\EE[W_i \varphi(W_1,\ldots,W_m)] =
	\EE[\varphi(W_1,\ldots,W_m)]
	\frac{\partial}{\partial t_i}
	\log\EE[e^{\sum_{j=1}^m t_j W_j}] \\
	&=
	\sum_{k=0}^\infty 
	\sum_{j_1,\ldots,j_k=1}^m
	\frac{1}{k!}\,
	\kappa(W_i,W_{j_1},\ldots,W_{j_k})
	\,t_{j_1}\cdots t_{j_k}\,
	\EE[\varphi(W_1,\ldots,W_m)] \\
	&=
        \sum_{k=0}^\infty
        \sum_{j_1,\ldots,j_k=1}^m
        \frac{1}{k!}\,
        \kappa(W_i,W_{j_1},\ldots,W_{j_k})\,
        \EE\bigg[
        \frac{\partial^k \varphi}{\partial x_{j_1}\cdots\partial x_{j_k}}
        (W_1,\ldots,W_k)
        \bigg].
\end{align*}
As any monomial is given by $W_{i_1}\cdots W_{i_l} =
\frac{\partial^l}{\partial t_{i_1}\cdots\partial 
t_{i_l}}\varphi(W_1,\ldots,W_m)\big|_{t_1,\ldots,t_m=0}$, the conclusion 
follows readily by differentiating the above identity.
\end{proof}

Note that the first two cumulants are given by $\kappa(W)=\EE[W]$ and 
$\kappa(W_1,W_2)=\mathrm{Cov}(W_1,W_2)$. Thus if $W_1,\ldots,W_m$ are 
centered and jointly Gaussian (so that the cumulants of order $k\ge 3$ 
vanish), the identities of Lemmas \ref{lem:shir} and \ref{lem:ibp} reduce 
to
\begin{equation}
\label{eq:wick}
	\EE[W_1\cdots W_m] = \sum_{\pi\in\mathrm{P}_2([m])}
	\prod_{\{i,j\}\in\pi} \mathrm{Cov}(W_i,W_j)
\end{equation}
(where $\mathrm{P}_2([m])$ is the collection of pair partitions of $[m]$)
and
\begin{equation}
\label{eq:gibp}
	\EE[W_i f(W_1,\ldots,W_m)] = 
	\sum_{j=1}^m \mathrm{Cov}(W_i,W_j)\, \EE\bigg[
	\frac{\partial f}{\partial x_j}(W_1,\ldots,W_m)\bigg].
\end{equation}
These are none other than the well-known Wick formula and integration by 
parts formula for centered Gaussian measures.

\subsection{Cumulant expansion}
\label{sec:cumex}

We can now express the basic principle that will be used to prove 
universality. This principle is a direct extension of the method of 
\cite{Bar86,LP09} to the multivariate case; see, e.g., \cite[Corollary 
3.1]{LP09}.

\begin{thm}
\label{thm:cumpoly}
Let $Y_1,\ldots,Y_n$ be independent centered and bounded random vectors in 
$\mathbb{R}^N$, and let $U_1,\ldots,U_n$ be independent centered Gaussian 
random vectors in $\mathbb{R}^N$ such that $Y_i$ and $U_i$ have the same 
covariance. Assume that $Y=(Y_1,\ldots,Y_n)$ and $U=(U_1,\ldots,U_n)$ are 
independent of each other, and define
$$
	Y(t) := \sqrt{t}\,Y + \sqrt{1-t}\,U.
$$
Then we have
$$
	\frac{d}{dt}\EE[f(Y(t))] =
	\frac{1}{2}\sum_{k=3}^\infty
	\sum_{i=1}^n
	\sum_{j_1,\ldots,j_k=1}^N
	\frac{t^{\frac{k}{2}-1}}{(k-1)!}
	\,\kappa(Y_{ij_1},\ldots,Y_{ij_k})\,
	\EE\bigg[
	\frac{\partial^kf}{\partial y_{ij_1}\cdots y_{ij_k}}
	(Y(t))
	\bigg]
$$
for any polynomial $f:\mathbb{R}^{Nn}\to\mathbb{C}$ and $t\in[0,1]$.
\end{thm}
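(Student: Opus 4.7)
The plan is to differentiate $\EE[f(Y(t))]$ under the expectation using the chain rule, then apply the cumulant integration-by-parts identity (Lemma \ref{lem:ibp}) to the non-Gaussian contribution and the Gaussian identity \eqref{eq:gibp} to the Gaussian contribution. Since $\frac{d}{dt}Y(t) = \frac{1}{2\sqrt{t}}Y - \frac{1}{2\sqrt{1-t}}U$, the chain rule produces
$$
\frac{d}{dt}\EE[f(Y(t))] = \frac{1}{2\sqrt{t}}\sum_{i,j}\EE\!\left[Y_{ij}\frac{\partial f}{\partial y_{ij}}(Y(t))\right] - \frac{1}{2\sqrt{1-t}}\sum_{i,j}\EE\!\left[U_{ij}\frac{\partial f}{\partial y_{ij}}(Y(t))\right].
$$

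For the first sum I fix $i$ and condition on $(Y_l)_{l\ne i}$ and on $U$. Because $Y_i$ is independent of this conditioning data and its coordinates are bounded, Lemma \ref{lem:ibp} applies to the variables $(Y_{ij})_{j=1}^N$ and the (conditionally) polynomial function $Y_i \mapsto \partial f/\partial y_{ij}(\sqrt{t}Y + \sqrt{1-t}U)$. Every derivative $\partial/\partial y_{ij'}$ of this inner function contributes a chain-rule factor $\sqrt{t}$, yielding
$$
\EE\!\left[Y_{ij}\frac{\partial f}{\partial y_{ij}}(Y(t))\right] = \sum_{k=0}^\infty \sum_{j_1,\ldots,j_k=1}^N \frac{t^{k/2}}{k!}\,\kappa(Y_{ij},Y_{ij_1},\ldots,Y_{ij_k})\,\EE\!\left[\frac{\partial^{k+1}f}{\partial y_{ij}\partial y_{ij_1}\cdots\partial y_{ij_k}}(Y(t))\right].
$$
An analogous argument applied to the $U$-sum, using the Gaussian IBP identity \eqref{eq:gibp} (and conditioning on $Y$), produces only a single term carrying a chain-rule factor $\sqrt{1-t}$, since Gaussian cumulants of order $\ge 3$ vanish.

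The key cancellation is now visible. The $k=0$ contribution in the $Y$-expansion is zero by centering, $\kappa(Y_{ij})=\EE[Y_{ij}]=0$. The $k=1$ contribution, once the $\frac{1}{2\sqrt{t}}$ prefactor absorbs the $t^{1/2}$, equals $\frac{1}{2}\sum_{i,j,j_1}\kappa(Y_{ij},Y_{ij_1})\EE[\partial^2 f/\partial y_{ij}\partial y_{ij_1}(Y(t))]$, and this cancels exactly against the Gaussian $U$-contribution because $Y_i$ and $U_i$ share the same covariance. What remains is the sum over $k\ge 2$ in the $Y$-expansion; reindexing $k\mapsto k-1$ and using the permutation symmetry of joint cumulants rewrites the surviving terms in the form stated by the theorem. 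The series is in fact a finite sum since derivatives of the polynomial $f$ eventually vanish. The only real care-point is the bookkeeping of the $\sqrt{t}$ and $\sqrt{1-t}$ chain-rule factors together with justifying the conditional application of Lemma \ref{lem:ibp}; otherwise the argument is mechanical.
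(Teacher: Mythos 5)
Your proof is correct and follows exactly the same route as the paper: the same chain-rule decomposition into a $Y$-term and a $U$-term, the same conditional application of Lemma \ref{lem:ibp} to the $Y$-term and of the Gaussian integration-by-parts formula \eqref{eq:gibp} to the $U$-term, and the same bookkeeping of $\sqrt{t}$ and $\sqrt{1-t}$ chain-rule factors. The only difference is that you make explicit the vanishing of the $k=0$ term by centering and the exact cancellation of the $k=1$ (covariance) term against the Gaussian contribution, whereas the paper leaves these verifications implicit in the phrase ``the conclusion follows by applying Lemma \ref{lem:ibp} ... and \eqref{eq:gibp} ...''.
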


\begin{proof}
We readily compute
$$
	\frac{d}{dt}\EE[f(Y(t))] =
	\frac{1}{2}
	\sum_{i=1}^n \sum_{j=1}^N
	\bigg\{
	\frac{1}{\sqrt{t}}\,
	\EE\bigg[Y_{ij}
	\frac{\partial f}{\partial y_{ij}}(Y(t))\bigg]
	-
	\frac{1}{\sqrt{1-t}}\,
	\EE\bigg[U_{ij}
	\frac{\partial f}{\partial y_{ij}}(Y(t))
	\bigg]\bigg\}.
$$
The conclusion follows by applying Lemma \ref{lem:ibp} conditionally 
on $\{U,(Y_k)_{k\ne i}\}$ to compute the first term in the sum,
and applying \eqref{eq:gibp} conditionally on $\{Y,(U_k)_{k\ne i}\}$
to compute the second term in the sum.
\end{proof}

The model $Y(t)$ should be viewed as an interpolation between the original 
model $Y$ and the associated Gaussian model $U$. In particular,
Theorem \ref{thm:cumpoly} yields a bound on the Gaussian 
deviation by the fundamental theorem of calculus
$$
	\EE[f(Y)] - \EE[f(U)] =
	\int_0^1 \frac{d}{dt}\EE[f(Y(t))]\, dt.
$$
We will however often find it necessary to perform a change of variables 
before applying the fundamental theorem of calculus.

When the function $f$ is not a polynomial, it must be approximated by a 
polynomial before we can apply Theorem \ref{thm:cumpoly}. The following 
result is a straightforward combination of Theorem \ref{thm:cumpoly} with 
Taylor expansion to order $p-1$.

\begin{thm}
\label{thm:cumsm}
Let $Y_1,\ldots,Y_n$ be independent centered and bounded random vectors in 
$\mathbb{R}^N$, and let $U_1,\ldots,U_n$ be independent centered Gaussian 
random vectors in $\mathbb{R}^N$ such that $Y_i$ and $U_i$ have the same 
covariance. Assume that $Y=(Y_1,\ldots,Y_n)$ and $U=(U_1,\ldots,U_n)$ are 
independent of each other, and define
$$
	Y(t) := \sqrt{t}\,Y + \sqrt{1-t}\,U.
$$
Let $p\ge 3$ and $f:\mathbb{R}^{Nn}\to\mathbb{C}$ be a smooth function.
Then we have
\begin{multline*}
	\frac{d}{dt}\EE[f(Y(t))] =\\
	\frac{1}{2}\sum_{k=3}^{p-1}
	\sum_{i=1}^n
	\sum_{j_1,\ldots,j_k=1}^N
	\frac{t^{\frac{k}{2}-1}}{(k-1)!}
	\,\kappa(Y_{ij_1},\ldots,Y_{ij_k})\,
	\EE\bigg[
	\frac{\partial^kf}{\partial y_{ij_1}\cdots y_{ij_k}}
	(Y(t))
	\bigg] + \mathcal{R}
\end{multline*}
for any $t\in[0,1]$, where the reminder term satisfies
\begin{align*}
	& |\mathcal{R}| \lesssim
	\sup_{s,t\in[0,1]}
	\Bigg\{
	\bigg|
	\sum_{i=1}^n
	\sum_{j_1,\ldots,j_p=1}^N
	\EE\bigg[
	Y_{ij_1}\cdots Y_{ij_p}
        \frac{\partial^pf}{\partial y_{ij_1}\cdots y_{ij_p}}
        (Y(t,i,s))
	\bigg]\bigg| +
	\\
	&\; \max_{2\le k\le p-1} 
	\bigg|
	\sum_{i=1}^n
	\sum_{j_1,\ldots,j_p=1}^N
	\frac{\kappa(Y_{ij_1},\ldots,Y_{ij_k})}{(k-1)!}
	\,
	\EE\bigg[
	Y_{ij_{k+1}}\cdots Y_{ij_p}
        \frac{\partial^pf}{\partial y_{ij_1}\cdots y_{ij_p}}
        (Y(t,i,s))
	\bigg]\bigg|\Bigg\}
\end{align*}
with $Y_j(t,i,s) := s^{1_{i=j}}\sqrt{t}\,Y_j+\sqrt{1-t}\,U_j$.
\end{thm}

\begin{proof}
Let $g:\mathbb{R}^{Nn}\to\mathbb{C}$ be a smooth function, and
let $g_i$ be the Taylor expansion of 
$t\mapsto g(y_1,\ldots,y_{i-1},ty_i,y_{i+1},\ldots,y_n)$ to order
$p-1$ around $0$ (evaluated at $t=1$):
$$
	g_i(y) :=
	\sum_{l=0}^{p-1}
	\sum_{j_1,\ldots,j_l=1}^N
	\frac{1}{l!}\,
	y_{ij_1}\cdots y_{ij_l}\,
	\frac{\partial^lg}{\partial y_{ij_1}\cdots\partial y_{ij_l}}
	(y_1,\ldots,y_{i-1},0,y_{i+1},\ldots,y_n).
$$
Then
\begin{multline*}
	\frac{\partial^kg}{\partial y_{ij_1}\cdots\partial y_{ij_k}}(y) = 
	\frac{\partial^kg_i}{\partial y_{ij_1}\cdots\partial y_{ij_k}}(y)
	+ 
	\int_0^1 \frac{(1-s)^{p-k-1}}{(p-k-1)!}\cdot\mbox{}
	\\
	\sum_{j_{k+1},\ldots,j_p=1}^N
	y_{ij_{k+1}}\cdots y_{ij_p}
	\frac{\partial^pg}{\partial y_{ij_1}\cdots\partial y_{ij_p}}
	(y_1,\ldots,y_{i-1},sy_i,y_{i+1},\ldots,y_n)\,ds
\end{multline*}
for all $0\le k\le p-1$ and $j_1,\ldots,j_k$.
Choosing $g(Y) := f(\sqrt{t}\,Y+\sqrt{1-t}\,U)$ yields
\begin{align*}
	\frac{1}{2\sqrt{t}}
	\sum_{i=1}^n \sum_{j=1}^N
        \EE\bigg[Y_{ij}
        \frac{\partial f}{\partial y_{ij}}(Y(t))\bigg]
	&=
	\frac{1}{2t}
	\sum_{i=1}^n \sum_{j=1}^N
        \EE\bigg[Y_{ij}
        \frac{\partial g}{\partial y_{ij}}(Y)\bigg]
	\\ &=
	\frac{1}{2t}
	\sum_{i=1}^n\sum_{j=1}^N
        \EE\bigg[Y_{ij}
        \frac{\partial g_i}{\partial y_{ij}}(Y)\bigg]
	+ \mathcal{R}_1,
\end{align*}
where
$$
	\mathcal{R}_1=
	\frac{t^{\frac{p}{2}-1}}{2}
	\int_0^1 \frac{(1-s)^{p-2}}{(p-2)!}
	\sum_{i=1}^n
	\sum_{j_1,\ldots,j_p=1}^N
        \EE\bigg[
	Y_{ij_1} \cdots Y_{ij_p}
	\frac{\partial^pf}{\partial u_{ij_1}\cdots\partial y_{ij_p}}
	(Y(t,i,s))
	\bigg]\,ds.
$$
As $y_i\mapsto g_i(y)$ is a polynomial of degree $p-1$ and
$\kappa(Y_{ij})=\EE[Y_{ij}]=0$, we
can now apply Lemma \ref{lem:ibp} conditionally
on $\{U,(Y_k)_{k\ne i}\}$ to compute
\begin{align*}
	&\frac{1}{2t}
	\sum_{i=1}^n\sum_{j=1}^N
        \EE\bigg[Y_{ij}
        \frac{\partial g_i}{\partial y_{ij}}(Y)\bigg] \\
	&=
	\frac{1}{2t}
	\sum_{k=2}^{p-1}
	\sum_{i=1}^n
	\sum_{j_1,\ldots,j_k=1}^N
	\frac{1}{(k-1)!}\, \kappa(Y_{ij_1},\ldots,Y_{ij_k})\,
        \EE\bigg[
        \frac{\partial g_i}{\partial y_{ij_1}
	\cdots \partial y_{ij_k}}(Y)\bigg]
	\\
	&=
	\frac{1}{2}
	\sum_{k=2}^{p-1}
	\sum_{i=1}^n
	\sum_{j_1,\ldots,j_k=1}^N
	\frac{t^{\frac{k}{2}-1}}{(k-1)!}\, \kappa(Y_{ij_1},\ldots,Y_{ij_k})\,
        \EE\bigg[
        \frac{\partial f}{\partial y_{ij_1}
	\cdots \partial y_{ij_k}}(Y(t))\bigg]
	- \mathcal{R}_2,
\end{align*}
where
\begin{multline*}
	\mathcal{R}_2 =
	\frac{t^{\frac{p}{2}-1}}{2}
	\sum_{k=2}^{p-1}
	\int_0^1 \frac{(1-s)^{p-k-1}}{(p-k-1)!}
	\cdot\mbox{}\\
	\sum_{i=1}^n
	\sum_{j_1,\ldots,j_p=1}^N
	\frac{\kappa(Y_{ij_1},\ldots,Y_{ij_k})}{(k-1)!}\, 
        \EE\bigg[
	Y_{ij_{k+1}}\cdots Y_{ij_p}
	\frac{\partial^pf}{\partial y_{ij_1}\cdots\partial y_{ij_p}}
	(Y(t,i,s))\bigg]\,ds.
\end{multline*}
Thus the identity in the statement follows precisely
as in the proof of Theorem \ref{thm:cumpoly} with $\mathcal{R}=
\mathcal{R}_1-\mathcal{R}_2$.
The estimate on $|\mathcal{R}|$ now follows readily by noting
that
$$
	\sum_{k=1}^{p-1}
	\int_0^1 \frac{(1-s)^{p-k-1}}{(p-k-1)!}\,ds =
	\sum_{k=1}^{p-1}
	\frac{1}{(p-k)!}
	\le e-1,
$$
concluding the proof.
\end{proof}

\section{Basic tools}
\label{sec:tools}

The aim of this section is to develop two important tools that will be 
needed in the proofs of our main results. In section \ref{sec:trace}, we 
prove a trace inequality that will enable us to control the derivatives 
that arise in the cumulant expansion of various spectral statistics. In 
section \ref{sec:conc}, we develop concentration of measure inequalities 
for the resolvent and for more general spectral statistics.

\subsection{A trace inequality}
\label{sec:trace}

Let $L_p(S_p^d)$ be the Banach space of $d\times d$ random matrices $M$
(that is, $\M_d(\mathbb{C})$-valued random variables on an underlying 
probability space that we consider fixed throughout the paper)
such that $\|M\|_p<\infty$. Here
$$
	\|M\|_p := \begin{cases}
	\big(\EE[\ntr |M|^p]\big)^{\frac{1}{p}} & \mbox{if }1\le p<\infty,\\
	\| \|M\| \|_\infty & \mbox{if }p=\infty,
	\end{cases}
$$
where we recall that $\ntr$ denotes the normalized trace. In particular, 
when $M\in\M_d(\mathbb{C})$ is a deterministic matrix, $\|M\|_p$ is the 
(normalized) Schatten-$p$ norm. In this notation, we can write
$$
	\sigma_q(X) := 
	\Bigg\|\Bigg(\sum_{i=1}^n \EE 
	Z_i^2\Bigg)^{\frac{1}{2}}\Bigg\|_q,
	\qquad
	R_q(X):= \Bigg(\sum_{i=1}^n \|Z_i\|_q^q\Bigg)^{\frac{1}{q}}
$$
for $q<\infty$ (cf.\ section \ref{sec:matparm}).

The following trace inequality will play a key role throughout this paper.

\begin{prop}
\label{prop:trace}
Fix $k\ge 2$. 
Let $(Z_{ij})_{i\in[n],j\in[k]}$ be a collection of (possibly dependent) 
$d\times d$ self-adjoint random matrices such that $Z_{ij}$ has the same 
distribution as $Z_i$ for each $i,j$.
Let $1\le p_1,\ldots,p_k,q\le\infty$
satisfy $\sum_{j=1}^{k}\frac{1}{p_j}=1-\frac{k}{q}$. Then
$$
	\Bigg|
	\sum_{i=1}^n
	\EE[\ntr Z_{i1} Y_1 Z_{i2} Y_2 \cdots Z_{ik} Y_k]
	\Bigg|
	\le
	R_q(X)^{\frac{(k-2)q}{q-2}}
	\sigma_q(X)^{\frac{2(q-k)}{q-2}}
	\prod_{j=1}^k \|Y_j\|_{p_j}
$$
for any (possibly dependent) $d\times d$ random matrices $Y_1,\ldots,Y_k$ 
that are independent of the random matrices $(Z_{ij})_{i\in[n],j\in[k]}$.
\end{prop}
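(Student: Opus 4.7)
My plan is to combine noncommutative Hölder's inequality with a column/row (operator Cauchy--Schwarz) identity, arranged so that two of the $k$ $Z$-factors jointly produce the $\sigma_q(X)^2$ factor (via a $\sum_i Z_i^2$ quantity) while the remaining $k-2$ contribute $R_q(X)$ each. The combinatorial split $\alpha + \beta = k$ in the exponents $\alpha = (k-2)q/(q-2)$, $\beta = 2(q-k)/(q-2)$ reflects precisely this partition of the $Z$-slots into $k-2$ ``peripheral'' and $2$ ``central'' roles, and the identity $\alpha/(q) + \beta/2 + \sum_j 1/p_j = 1$ is the natural Hölder balance.

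As a first step I would establish an extreme configuration at $q=\infty$ by pairing two $Z$-factors, say $Z_{i1}$ and $Z_{i2}$, and splitting the cyclic product as $\ntr A_i V_i$ with $A_i := Z_{i1} Y_1 Z_{i2} Y_2$ and $V_i := Z_{i3} Y_3 \cdots Z_{ik} Y_k$. Bounding $\|V_i\|_\infty \le R(X)^{k-2} \prod_{j=3}^k \|Y_j\|_\infty$ pointwise, then applying $\|A_i\|_1 \le \|Z_{i1}Y_1\|_2 \|Z_{i2}Y_2\|_2$ followed by Cauchy--Schwarz in $i$, yields
\[
	\sum_i \|A_i\|_1 \le \Bigl(\sum_i \ntr Y_1^* Z_{i1}^2 Y_1\Bigr)^{1/2} \Bigl(\sum_i \ntr Y_2^* Z_{i2}^2 Y_2\Bigr)^{1/2}.
\]
Taking expectation and exploiting the independence of the $Y$'s from the $Z$'s,
\[
	\EE[\ntr Y_j^* (\textstyle\sum_i Z_{ij}^2) Y_j] = \ntr (\EE Y_j Y_j^*)\bigl(\textstyle\sum_i \EE Z_i^2\bigr) \le \sigma(X)^2 \|Y_j\|_2^2,
\]
using that $\ntr CD \le \|C\|\ntr D$ for $C,D \ge 0$. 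This produces the extreme bound
\[
	\Bigl|\sum_i \EE\ntr Z_{i1}Y_1 \cdots Z_{ik}Y_k\Bigr| \le R(X)^{k-2} \sigma(X)^2 \|Y_1\|_2 \|Y_2\|_2 \prod_{j=3}^k \|Y_j\|_\infty,
\]
which corresponds to $q=\infty$, $p_1 = p_2 = 2$, $p_j = \infty$ for $j \ge 3$ (with $\sum 1/p_j = 1$ as required). Cyclic permutations of the product give the analogous bounds for any other choice of which two $Y$-slots carry $p_j = 2$.

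The general statement follows by Riesz--Thorin complex interpolation applied to the multilinear form $(Y_1, \ldots, Y_k) \mapsto \sum_i \EE\ntr Z_{i1}Y_1 \cdots Z_{ik}Y_k$. Interpolating among the various $q=\infty$ extreme cases (varying which pair of $p_j$'s is set to $2$) produces arbitrary $p_j$ with $\sum 1/p_j = 1$ at $q=\infty$, and a further interpolation between the endpoints $q = \infty$ (bound $R(X)^{k-2} \sigma(X)^2 \prod \|Y_j\|_{p_j}$) and $q = k$ (the straightforward Hölder bound $R_k(X)^k \prod \|Y_j\|_\infty$, valid because $\sum 1/p_j = 1 - k/k = 0$ forces $p_j = \infty$) yields the general $q \ge k$ bound. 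The main obstacle will be the exponent bookkeeping along the interpolation path: one must check that the interpolated exponents of $R$ and $\sigma$ combine into precisely $(k-2)q/(q-2)$ and $2(q-k)/(q-2)$, which follows from the identity $(k-2)/(q-2) + (q-k)/(q-2) = 1$ that parameterizes the interpolation weight between $q = k$ and $q = \infty$.
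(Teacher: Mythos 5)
Your Step 1 computation is correct and nicely self-contained: the bound
\[
\Bigl|\sum_i \EE\ntr Z_{i1}Y_1\cdots Z_{ik}Y_k\Bigr|\le R(X)^{k-2}\sigma(X)^2\|Y_1\|_2\|Y_2\|_2\prod_{j\ge 3}\|Y_j\|_\infty
\]
does follow from pointwise H\"older, Cauchy--Schwarz in $i$ and $\omega$, and independence of the $Y$'s from the $Z$'s. However, the interpolation argument that is supposed to carry this to the general statement has two gaps, either of which is fatal.

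First, your $q=\infty$ configurations $(1/p_1,\dots,1/p_k)=\tfrac12(e_a+e_b)$ are \emph{not} the extreme points of the simplex $\{x\in[0,1]^k:\sum_j x_j=1\}$. They are edge midpoints, and their convex hull is the proper sub-simplex $\{x:\sum_j x_j=1,\ x_j\le 1/2\ \forall j\}$. The actual vertices $e_j$ (i.e.\ one $p_j=1$, the rest $\infty$) are outside this set whenever $k\ge 3$, and these are exactly the cases needed downstream (e.g.\ in the proof of Proposition~\ref{prop:momdiffeq}, where the $r_j$'s may all concentrate on one index). So Lemma~\ref{lem:calderon} applied to your endpoint bounds cannot reach the whole simplex even at $q=\infty$. (Also, cyclic permutation of the trace yields only adjacent pairs $\{j,j+1\}$, not arbitrary pairs, though this is secondary given the vertex problem.)

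Second, and more fundamentally, Riesz--Thorin interpolation between the $q=\infty$ and $q=k$ endpoints cannot produce the claimed $R_q(X)$ and $\sigma_q(X)$ for intermediate $q$. Interpolating with weight $\theta=k/q$ yields the constant
\[
\bigl[R_\infty(X)^{k-2}\sigma_\infty(X)^2\bigr]^{1-k/q}\bigl[R_k(X)^k\bigr]^{k/q}
= R_\infty(X)^{(k-2)(1-k/q)}\,\sigma_\infty(X)^{2(1-k/q)}\,R_k(X)^{k^2/q},
\]
not $R_q(X)^{(k-2)q/(q-2)}\sigma_q(X)^{2(q-k)/(q-2)}$. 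These are genuinely different: the $q$-indexed parameters are intrinsic to the argument for that $q$ and do not appear in the interpolation, and even if $R_q=R_\infty$ and $\sigma_q=\sigma_\infty$ the exponents $2(1-k/q)=2(q-k)/q$ and $2(q-k)/(q-2)$ disagree for $q>k$. The identity $(k-2)/(q-2)+(q-k)/(q-2)=1$ confirms the target exponents sum to $k$, but it does not explain how they would arise from your interpolation path. The paper avoids both problems by fixing $q$, reducing by Lemma~\ref{lem:calderon} (applied for that fixed $q$) to the vertex $p_i=q/(q-k)$ with the remaining $p_j=\infty$, and then handling the vertex directly via a Cauchy--Schwarz with polar decompositions, a H\"older step with the auxiliary exponent $r=(q-2)/(q-k)$, and the Lieb--Thirring inequality (Lemma~\ref{lem:lieb}). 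That per-$q$ vertex computation is the ingredient your proposal is missing.
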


In preparation for the proof of this result, we recall some fundamental 
tools that will be needed below. We first state a variant of
the Riesz-Thorin interpolation theorem for Schatten classes. (The 
application of complex interpolation in this context was inspired by 
\cite{Tro18}, and was previously used in \cite[Lemma 4.5]{BBV21}.)

\begin{lem}
\label{lem:calderon}
Let $F:(L_\infty(S_\infty^d))^k\to\mathbb{C}$ be a multilinear functional.
Then the map
$$
	\bigg(\frac{1}{p_1},\ldots,\frac{1}{p_k}\bigg)
	\mapsto
	\log \sup_{M_1,\ldots,M_k}
	\frac{|F(M_1,\ldots,M_k)|}{\|M_1\|_{p_1}\cdots\|M_k\|_{p_k}}
$$
is convex on $[0,1]^k$.
\end{lem}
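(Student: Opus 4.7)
The plan is to prove the statement via the standard complex interpolation argument, i.e., as a multilinear Riesz-Thorin theorem adapted to the noncommutative $L^p$-spaces $L_p(S_p^d)$. Fix two points $\mathbf{r}^{(0)}, \mathbf{r}^{(1)} \in [0,1]^k$ whose $j$-th coordinates are $1/p_j^{(0)}$ and $1/p_j^{(1)}$, a parameter $\theta \in (0,1)$, and define $1/p_j^\theta := (1-\theta)/p_j^{(0)} + \theta/p_j^{(1)}$. Writing $C(\mathbf{r})$ for the supremum in the statement, the target inequality is $\log C(\mathbf{r}^{(\theta)}) \le (1-\theta)\log C(\mathbf{r}^{(0)}) + \theta\log C(\mathbf{r}^{(1)})$; by homogeneity in each $M_j$, this reduces to showing $|F(M_1,\ldots,M_k)| \le C(\mathbf{r}^{(0)})^{1-\theta} C(\mathbf{r}^{(1)})^\theta$ whenever $\|M_j\|_{p_j^\theta} = 1$ for each $j$.

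The core step is to associate to each $M_j$ an analytic family of random matrices that interpolates between the relevant boundary norms. Using the pointwise polar decomposition $M_j = U_j |M_j|$, I would define
\[
M_j(z) := U_j |M_j|^{\beta_j(z)}, \qquad \beta_j(z) := p_j^\theta\Big(\tfrac{1-z}{p_j^{(0)}} + \tfrac{z}{p_j^{(1)}}\Big),
\]
via Borel functional calculus (with the convention $1/\infty := 0$). The choice of $\beta_j$ is forced by the three requirements $\beta_j(\theta) = 1$ (so $M_j(\theta) = M_j$), $\mathrm{Re}\,\beta_j(it) = p_j^\theta/p_j^{(0)}$, and $\mathrm{Re}\,\beta_j(1+it) = p_j^\theta/p_j^{(1)}$. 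The key identity $|M_j(z)| = |M_j|^{\mathrm{Re}\,\beta_j(z)}$ follows from
\[
M_j(z)^* M_j(z) = |M_j|^{\overline{\beta_j(z)}} U_j^* U_j |M_j|^{\beta_j(z)} = |M_j|^{2\mathrm{Re}\,\beta_j(z)},
\]
using that $U_j^* U_j$ is the range projection of $|M_j|$. Consequently $\|M_j(it)\|_{p_j^{(0)}} = \|M_j\|_{p_j^\theta}^{p_j^\theta/p_j^{(0)}} = 1$, and likewise $\|M_j(1+it)\|_{p_j^{(1)}} = 1$. Setting $\phi(z) := F(M_1(z),\ldots,M_k(z))$, the definition of $C(\mathbf{r})$ gives $|\phi(it)| \le C(\mathbf{r}^{(0)})$ and $|\phi(1+it)| \le C(\mathbf{r}^{(1)})$, while $\phi(\theta) = F(M_1,\ldots,M_k)$, so Hadamard's three lines lemma yields the desired bound on $|\phi(\theta)|$.

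The main obstacle is purely technical: the complex powers $|M_j|^{\beta_j(z)}$ must be well-defined even when $|M_j|$ has a nontrivial kernel, and $\phi$ must be analytic on the open strip $0 < \mathrm{Re}\,z < 1$ and bounded and continuous on its closure so that Hadamard actually applies. I would address these points by a standard regularization: replace $|M_j|$ by $|M_j| + \varepsilon\id$ (extending $U_j$ to a unitary) so that $|M_j|^{\beta_j(z)}$ is genuinely analytic and uniformly bounded on the closed strip via $\|M_j(z)\|_\infty \le \|M_j\|_\infty^{\mathrm{Re}\,\beta_j(z)}$, run the argument above for the regularized matrices, and send $\varepsilon \downarrow 0$ using dominated convergence and continuity of $F$ on $L_\infty(S_\infty^d)^k$. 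Endpoint cases in which some $p_j^{(\alpha)} = \infty$ (so that $\mathrm{Re}\,\beta_j \equiv 0$ on the corresponding boundary) are covered by the observation that $|M_j|^{iy}$ is a partial isometry with operator norm at most $1$, so the boundary norm estimate remains intact.
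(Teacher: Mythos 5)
Your proof is correct and is in substance the same as the paper's: the paper disposes of the lemma by citing Calder\'on's complex interpolation theorem for multilinear maps together with the fact that the $L_p(S_p^d)$ form a complex interpolation scale, and your argument --- constructing the analytic family $M_j(z)=U_j|M_j|^{\beta_j(z)}$ with the affine exponent, verifying the boundary norms on $\mathrm{Re}\,z\in\{0,1\}$, and invoking Hadamard's three-lines lemma after an $\varepsilon$-regularization to handle kernels and analyticity --- is exactly the proof one obtains by unwinding those two citations. The only thing worth flagging is that, as written, your argument (and indeed the lemma as applied later in the paper) tacitly assumes $F$ is \emph{bounded} multilinear, so that $z\mapsto F(M_1(z),\ldots,M_k(z))$ is analytic and bounded on the strip; this holds in every use of the lemma in the paper, but it is good to state it.
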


\begin{proof}
This follows immediately from the classical complex interpolation theorem 
for multilinear maps \cite[\S 10.1]{Cal64} and the fact that
the spaces $L_p(S_p^d)$ form a complex 
interpolation scale $L_r(S_r^d)=(L_p(S_p^d),L_q(S_q^d))_{\theta}$ with 
$\frac{1}{r}=\frac{1-\theta}{p}+\frac{\theta}{q}$ \cite[\S 2]{Pis03b}. 
\end{proof}

Next, we recall a H\"older inequality for Schatten classes. We include a 
proof in order to illustrate Lemma \ref{lem:calderon}; the same method 
will be used again below.

\begin{lem}
\label{lem:holder}
Let $1\le 
p_1,\ldots,p_k\le\infty$ satisfy $\sum_{i=1}^k\frac{1}{p_i}=1$. Then
$$
	|\EE[\ntr Y_1\cdots Y_k]| \le
	\|Y_1\|_{p_1}\cdots \|Y_k\|_{p_k}
$$
for any $d\times d$ random matrices $Y_1,\ldots,Y_k$.
\end{lem}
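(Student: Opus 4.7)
The plan is to prove the bound by complex interpolation via Lemma \ref{lem:calderon}, reducing everything to the extreme cases where exactly one $p_j$ equals $1$ and the others equal $\infty$.

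First I would set up the multilinear functional $F(Y_1,\ldots,Y_k) := \EE[\ntr Y_1\cdots Y_k]$ and view it as a map on $(L_\infty(S_\infty^d))^k$. By Lemma \ref{lem:calderon}, the log of the operator norm of $F$ with respect to the $L_{p_j}(S_{p_j}^d)$ norms is a convex function of $(1/p_1,\ldots,1/p_k)\in[0,1]^k$. The simplex $\{\alpha\in[0,1]^k:\sum_j\alpha_j=1\}$ has vertices at the standard basis vectors $e_1,\ldots,e_k$, so any point in this simplex is a convex combination of the $e_j$. Consequently, it suffices to verify the bound at each vertex, i.e., in the case where $p_j=1$ and $p_i=\infty$ for $i\ne j$.

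For such an extreme case, I would use cyclicity of the trace to rotate $Y_j$ into the first slot and then apply the deterministic matrix inequality $|\ntr(AB)|\le \ntr|A|\cdot\|B\|$ together with submultiplicativity of the spectral norm:
$$
|\ntr(Y_1\cdots Y_k)| = |\ntr(Y_jY_{j+1}\cdots Y_kY_1\cdots Y_{j-1})| \le \ntr|Y_j|\prod_{i\ne j}\|Y_i\|.
$$
Taking expectations and pulling out the $L^\infty$ factors yields
$$
|F(Y_1,\ldots,Y_k)| \le \EE[\ntr|Y_j|]\prod_{i\ne j}\bigl\|\|Y_i\|\bigr\|_\infty = \|Y_j\|_1\prod_{i\ne j}\|Y_i\|_\infty,
$$
which shows that the log of the operator norm of $F$ is $\le 0$ at each vertex $e_j$. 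By convexity it remains $\le 0$ on the entire simplex $\sum 1/p_i=1$, which is precisely the claimed Hölder inequality.

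There is no real obstacle here: the only mild subtlety is ensuring that the $L^\infty$ factors genuinely commute with the expectation (which they do, since $\bigl\|\|Y_i\|\bigr\|_\infty$ is a deterministic upper bound), and that $F$ is bounded on $(L_\infty(S_\infty^d))^k$ so that Lemma \ref{lem:calderon} applies in the first place; both are immediate from the extreme-case estimate above with $k-1$ factors in $L_\infty$ and one in $L_1\subseteq L_\infty$. Extension from polynomial or bounded $Y_i$ to general $Y_i\in L_{p_i}(S_{p_i}^d)$, if needed, follows by a routine truncation and density argument.
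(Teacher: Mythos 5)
Your proof is correct and follows essentially the same route as the paper's: invoke Lemma \ref{lem:calderon} to reduce to the vertices of the simplex $\sum 1/p_i=1$, then handle the extreme case $p_j=1$, all others $\infty$, by cyclicity of the trace together with a trace Hölder inequality $|\ntr(AB)|\le \ntr|A|\,\|B\|$ and submultiplicativity of the operator norm. The paper phrases the last step via $|XY|^2\le\|X\|^2|Y|^2$ instead of the trace Hölder inequality, but the content is identical.
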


\begin{proof}
It suffices to prove the inequality for any
$Y_1,\ldots,Y_k\in L_\infty(S_\infty^d)$, that is,
we must show that the multilinear functional  
$F(Y_1,\ldots,Y_k) :=
\EE[\ntr Y_1\cdots Y_k]$ satisfies
$$
	\sup_{Y_1,\ldots,Y_k\in L_\infty(S_\infty^d)}
	\frac{|F(Y_1,\ldots,Y_k)|}{\|Y_1\|_{p_1}\cdots\|Y_k\|_{p_k}}
	\le 1\qquad\mbox{for all}\quad
	\bigg(\frac{1}{p_1},\ldots,\frac{1}{p_k}\bigg)\in \Delta,
$$
where $\Delta := \big\{x\in[0,1]:\sum_{i=1}^k x_i=1\big\}$. By Lemma 
\ref{lem:calderon}, it suffices to prove the claim only for the extreme 
points of $\Delta$, that is, when $p_i=1$ and $p_j=\infty$, $j\ne i$ for 
some $i$. But the latter case is elementary, as $|XY|^2 = Y^*X^*XY \le \|X\|^2 
|Y|^2$ and thus $|F(Y_1,\ldots,Y_k)|\le \|Y_{i+1}\cdots Y_k Y_1\cdots 
Y_{i-1}\|_\infty\|Y_i\|_1\le
\|Y_i\|_1\prod_{j\ne i}\|Y_j\|_\infty$. 
\end{proof}

Finally, we recall without proof the Lieb-Thirring inequality
\cite[Theorem 7.4]{Car10}.

\begin{lem}
\label{lem:lieb}
Let $Y,Z$ be $d\times d$ positive semidefinite random matrices. Then
$$
	\EE[\ntr {(ZYZ)^r}] \le \EE[\ntr Z^r Y^r Z^r]
$$
for every $1\le r<\infty$.
\end{lem}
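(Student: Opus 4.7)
The statement is the random-matrix version of the classical Lieb--Thirring trace inequality, so the plan is a two-step reduction: first pass from the random setting to the deterministic one, then invoke the classical result.

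For the first reduction, observe that $(ZYZ)^r$ and $Z^rY^rZ^r$ are both PSD for every realization: the former because $ZYZ=(Y^{1/2}Z)^*(Y^{1/2}Z)\ge 0$, and the latter because $Z^rY^rZ^r=(Y^{r/2}Z^r)^*(Y^{r/2}Z^r)\ge 0$. Since the asserted inequality is a pointwise statement on deterministic PSD matrices and expectation is monotone, it suffices to establish the deterministic inequality $\tr((ZYZ)^r)\le \tr(Z^rY^rZ^r)$ for all PSD $Y,Z\in\M_d(\mathbb{C})_{\rm sa}$ and $r\ge 1$.

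For the second step, use cyclicity to put both sides in standard form. Setting $B:=Z^2\ge 0$ gives $ZYZ=B^{1/2}YB^{1/2}$ and $\tr(Z^rY^rZ^r)=\tr(B^rY^r)$, so the claim becomes the classical Lieb--Thirring inequality
$$\tr\bigl((B^{1/2}YB^{1/2})^r\bigr)\le \tr(B^rY^r),\qquad Y,B\ge 0,\ r\ge 1.$$
My preferred route to this is via log-majorization of eigenvalues: one shows $\lambda\bigl((B^{1/2}YB^{1/2})^r\bigr)\prec_{\log}\lambda(B^rY^r)$, where the right-hand side is a vector of nonnegative numbers since $B^rY^r$ is similar to the PSD matrix $B^{r/2}Y^rB^{r/2}$. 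This log-majorization follows by combining Horn's multiplicative singular-value inequality (applied iteratively to the factors of $(B^{1/2}YB^{1/2})^r$) with Araki's analytic monotonicity argument controlling $p\mapsto \log\tr\bigl((B^{p/2}Y^pB^{p/2})^{q/p}\bigr)$. Since log-majorization of nonnegative vectors implies weak majorization, summing the components yields the desired trace inequality.

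The main obstacle lies entirely in the second step, at the level of the classical inequality itself. For \emph{integer} $r$ one has a short inductive proof starting from the base case $\tr((AB)^2)=\tr(ABAB)\le \tr(A^2B^2)$ for PSD $A,B$ (which is a single application of Cauchy--Schwarz for the trace inner product) and iterating via squaring. For general real $r\ge 1$, however, one genuinely needs either complex/Stein interpolation or Araki's analytic monotonicity argument, and neither is elementary. As the inequality is standard and used here only as a technical tool, the sensible choice---adopted by the authors---is simply to cite it from a textbook reference such as \cite{Car10}.
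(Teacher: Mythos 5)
Your proposal is correct and matches the paper's treatment: the paper states this lemma without proof, citing the classical Lieb--Thirring inequality \cite[Theorem 7.4]{Car10}, exactly as you conclude one should. Your pointwise reduction to the deterministic trace inequality and the sketch via Araki's log-majorization are both sound, though strictly more than the paper itself supplies.
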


We can now proceed to the proof of Proposition \ref{prop:trace}.

\begin{proof}[Proof of Proposition \ref{prop:trace}]
Throughout the proof we will assume without loss of generality that 
$R_q(X)<\infty$, as the conclusion is trivial otherwise.
Thus Lemma \ref{lem:holder} implies that 
the multilinear functional $\tilde F:(\M_d(\mathbb{C}))^k\to\mathbb{C}$ 
defined by
$$
	\tilde F(M_1,\ldots,M_k) :=
	\sum_{i=1}^n
	\EE[\ntr Z_{i1} M_1 Z_{i2} M_2 \cdots Z_{ik} M_k]
$$
satisfies $|\tilde F(M_1,\ldots,M_k)| \le \prod_{j=1}^k \|M_j\|_\infty$
for all $M_1,\ldots,M_k\in \M_d(\mathbb{C})$. Therefore
$$
	F(Y_1,\ldots,Y_k) := \mathbf{E}[\tilde F(Y_1,\ldots,Y_k)]
$$
defines a multilinear functional 
$F:(L_\infty(S_\infty^d))^k\to\mathbb{C}$. Throughout the proof,
it is implicit in the notation that $(Y_1,\ldots,Y_k)$ are taken to be 
independent of $(Z_{ij})_{i,j}$.

\medskip
\textbf{Step 1.} Our aim is to show that
$$
	\sup_{Y_1,\ldots,Y_k\in L_\infty(S_\infty^d)}
	\frac{|F(Y_1,\ldots,Y_k)|}{\|Y_1\|_{p_1}\cdots\|Y_k\|_{p_k}}
	\le 
	R_q(X)^{\frac{(k-2)q}{q-2}}
	\sigma_q(X)^{\frac{2(q-k)}{q-2}}
$$
for all $(\frac{1}{p_1},\ldots,\frac{1}{p_k})\in \Delta := \big\{
x\in[0,1]^k:\sum_{i=1}^k x_i = 1-\frac{k}{q}\big\}$. By Lemma 
\ref{lem:calderon}, it suffices to prove the claim only for 
$(\frac{1}{p_1},\ldots,\frac{1}{p_k})$
that are extreme points of the simplex $\Delta$, that is, when
$p_i = \frac{q}{q-k}$ and $p_j=\infty$, $j\ne i$ holds for some $i$.

By cyclic permutation of the trace, it suffices to consider the case
$p_1,\ldots,p_{k-1}=\infty$ and $p_k=\frac{q}{q-k}$. To further simplify 
the statement to be proved, let $I$ be a random variable that 
is uniformly distributed on $[n]$ and is independent of 
$(Y_j,Z_{ij})_{i,j}$, and define the random matrices $\mathbf{Z}_j := 
Z_{Ij}$. Then it suffices to show that
$$
	n |\EE[\ntr \mathbf{Z}_{1} Y_1 \mathbf{Z}_{2} Y_2 \cdots 
	\mathbf{Z}_{k} Y_k]|
	\le
	R_q(X)^{\frac{(k-2)q}{q-2}}
        \sigma_q(X)^{\frac{2(q-k)}{q-2}}
$$
whenever $\|Y_1\|_\infty=\cdots=\|Y_{k-1}\|_\infty=1$ and
$\|Y_k\|_{\frac{q}{q-k}}=1$. In the remainder of the proof,
we fix $Y_1,\ldots,Y_k$ satisfying the latter 
assumptions.

\medskip
\textbf{Step 2.}
The assumptions on $k,p_1,\ldots,p_k,q$ imply that
$q\ge k\ge 2$. In the case that $q=k$, we can estimate using
Lemma \ref{lem:holder}
$$
	n|\EE[\ntr \mathbf{Z}_{1} Y_1 \mathbf{Z}_{2} Y_2 \cdots
        \mathbf{Z}_{k} Y_k]|
	\le
	n\|\mathbf{Z}_{1}\|_k\|Y_1\|_\infty\cdots
	\|\mathbf{Z}_{k}\|_k\|Y_k\|_\infty
	=
	R_k(X)^k,
$$
completing the proof. We therefore assume in the rest of the proof 
that $q>k$.

\medskip
\textbf{Step 3.}
Suppose $k$ is even. 
Denote by $\mathbf{Z}_j=\mathbf{U}_j|\mathbf{Z}_j|$
and $Y_k=V_k|Y_k|$ the polar decompositions of $\mathbf{Z}_j$ and
$Y_k$, respectively. Then we can estimate for $r\ge 1$
\begin{align*}
	&
	|\EE[\ntr \mathbf{Z}_{1} Y_1 \mathbf{Z}_{2} Y_2 \cdots
        \mathbf{Z}_{k} Y_k]| 
	=
	|\EE[\ntr 
	|Y_k|^{\frac{1}{2}}\mathbf{Z}_{1} Y_1 \mathbf{Z}_{2} Y_2 \cdots
        \mathbf{Z}_{k} V_k |Y_k|^{\frac{1}{2}}]| \\
	&\le
	\EE[\ntr 
	\mathbf{Z}_{1} Y_1 \cdots
	\mathbf{Z}_{\frac{k}{2}} Y_{\frac{k}{2}}
	Y_{\frac{k}{2}}^* \mathbf{Z}_{\frac{k}{2}}
	\cdots Y_1^* \mathbf{Z}_{1} |Y_k|]^{\frac{1}{2}}\cdot
	\\ &\qquad\quad
	\EE[\ntr 
	\mathbf{Z}_{k} Y_{k-1}^*
	\mathbf{Z}_{k-1}
	\cdots 
	Y_{\frac{k}{2}+1}^* 
	\mathbf{Z}_{\frac{k}{2}+1}
	\mathbf{Z}_{\frac{k}{2}+1}
	Y_{\frac{k}{2}+1}
	\cdots
	\mathbf{Z}_{k-1}
	Y_{k-1}
        \mathbf{Z}_{k} V_k |Y_k| V_k^*]^{\frac{1}{2}}
\\
	&=
	\EE[\ntr 
	|\mathbf{Z}_{1}|^{1-\frac{1}{r}}
	\mathbf{U}_{1}^*
	Y_1
	\mathbf{Z}_{2} 
	\cdots
	Y_{\frac{k}{2}-1}
	\mathbf{Z}_{\frac{k}{2}} Y_{\frac{k}{2}}
	\cdot \\
	&\qquad\qquad\qquad\qquad
	Y_{\frac{k}{2}}^* \mathbf{Z}_{\frac{k}{2}}
	Y_{\frac{k}{2}-1}^*
	\cdots \mathbf{Z}_{2}
	Y_1^* \mathbf{U}_1
	|\mathbf{Z}_{1}|^{1-\frac{1}{r}}
	|\mathbf{Z}_{1}|^{\frac{1}{r}}
	|Y_k|  
	|\mathbf{Z}_{1}|^{\frac{1}{r}}
	]^{\frac{1}{2}}\cdot
	\\ &\phantom{\mbox{}=\mbox{}}
	\EE[\ntr 
        |\mathbf{Z}_{k}|^{1-\frac{1}{r}}
	\mathbf{U}_{k}^*
	Y_{k-1}^* \mathbf{Z}_{k-1}
	\cdots 
	Y_{\frac{k}{2}+1}^* 
	\mathbf{Z}_{\frac{k}{2}+1}\cdot\\
	&\qquad\qquad\qquad\qquad
	\mathbf{Z}_{\frac{k}{2}+1}
	Y_{\frac{k}{2}+1}
	\cdots
	\mathbf{Z}_{k-1}
	Y_{k-1} 
	\mathbf{U}_{k} 
        |\mathbf{Z}_{k}|^{1-\frac{1}{r}}
        |\mathbf{Z}_{k}|^{\frac{1}{r}}
	V_k |Y_k| V_k^*
        |\mathbf{Z}_{k}|^{\frac{1}{r}}]^{\frac{1}{2}}
\end{align*}
by Cauchy-Schwarz. Now let
$$
	r = \frac{q-2}{q-k} \in [1,\infty).
$$
Then we have $2\frac{1-\frac{1}{r}}{q} + (k-2)\frac{1}{q} + \frac{1}{r} 
= 1$. We can therefore estimate
\begin{align*}
	&|\EE[\ntr \mathbf{Z}_{1} Y_1 \mathbf{Z}_{2} Y_2 \cdots
        \mathbf{Z}_{k} Y_k]| \\
	&\le
	\|\mathbf{Z}_1\|_q^{1-\frac{1}{r}}
	\|\mathbf{Z}_2\|_q
	\cdots
	\|\mathbf{Z}_{\frac{k}{2}}\|_q\,
	\| |\mathbf{Z}_1|^{\frac{1}{r}}|Y_k|
		|\mathbf{Z}_1|^{\frac{1}{r}}\|_r^{\frac{1}{2}}
	\cdot \\
	&\qquad\quad
	\|\mathbf{Z}_k\|_q^{1-\frac{1}{r}}
	\|\mathbf{Z}_{\frac{k}{2}+1}\|_q
	\cdots
	\|\mathbf{Z}_{k-1}\|_q\,
	\| |\mathbf{Z}_k|^{\frac{1}{r}}V_k|Y_k|V_k^*
		|\mathbf{Z}_k|^{\frac{1}{r}}\|_r^{\frac{1}{2}}
	\\
	&=
	n^{-\frac{k-2}{q-2}}
	R_q(X)^{\frac{(k-2)q}{q-2}}\,
	\| |\mathbf{Z}_1|^{\frac{1}{r}}|Y_k|
		|\mathbf{Z}_1|^{\frac{1}{r}}\|_r^{\frac{1}{2}}\,
	\| |\mathbf{Z}_k|^{\frac{1}{r}}V_k|Y_k|V_k^*
		|\mathbf{Z}_k|^{\frac{1}{r}}\|_r^{\frac{1}{2}}
\end{align*}
by Lemma \ref{lem:holder}, where we used that
$\|\mathbf{Z}_j\|_q = n^{-\frac{1}{q}}R_q(X)$ and that
$k-\frac{2}{r}=\frac{(k-2)q}{q-2}$.
On the other hand, using Lemma \ref{lem:lieb} we obtain
$$
	\| |\mathbf{Z}_1|^{\frac{1}{r}}|Y_k|
                |\mathbf{Z}_1|^{\frac{1}{r}}\|_r^r
	\le
	\EE[\ntr |Y_k|^r \mathbf{Z}_1^2 ] =
	\EE[\ntr |Y_k|^r \EE[\mathbf{Z}_1^2]] \le
	\|\EE[\mathbf{Z}_1^2]\|_{\frac{q}{2}},
$$
where we used that $Y_k$ and $\mathbf{Z}_1$ are independent and
$\||Y_k|^r\|_{\frac{q}{q-2}}=
\|Y_k\|_{\frac{q}{q-k}}^r=1$.
The analogous term involving $\mathbf{Z}_k$ is estimated identically.
We therefore obtain
$$
	|\EE[\ntr \mathbf{Z}_{1} Y_1 \mathbf{Z}_{2} Y_2 \cdots
        \mathbf{Z}_{k} Y_k]|
	\le
	n^{-1}
	R_q(X)^{\frac{(k-2)q}{q-2}}\,
	\sigma_q(X)^{\frac{2(q-k)}{q-2}},
$$
where we used that
$\|\EE[\mathbf{Z}_j^2]\|_{\frac{q}{2}}=n^{-1}\sigma_q(X)^2$ for all $j$.
This concludes the proof of the inequality for the case that $k$ is even.

\medskip
\textbf{Step 4.}
Finally, suppose $k$ is odd. Then we apply Cauchy-Schwarz as follows:
\begin{align*}
	&
	|\EE[\ntr \mathbf{Z}_{1} Y_1 \mathbf{Z}_{2} Y_2 \cdots
        \mathbf{Z}_{k} Y_k]| 
	\\ &=
	|\EE[\ntr 
	|Y_k|^{\frac{1}{2}}\mathbf{Z}_{1} Y_1
	\cdots
	\mathbf{Z}_{\frac{k-1}{2}}	
	Y_{\frac{k-1}{2}}
	\mathbf{U}_{\frac{k+1}{2}}
	|\mathbf{Z}_{\frac{k+1}{2}}|^{\frac{1}{2}}
\cdot\\
&\qquad\qquad\qquad\qquad
	|\mathbf{Z}_{\frac{k+1}{2}}|^{\frac{1}{2}}
	Y_{\frac{k+1}{2}}
	\mathbf{Z}_{\frac{k+3}{2}}
	\cdots
	Y_{k-1}
        \mathbf{Z}_{k} V_k |Y_k|^{\frac{1}{2}}]| \\
	&\le
	\EE[\ntr 
	\mathbf{Z}_{1} Y_1
	\cdots
	\mathbf{Z}_{\frac{k-1}{2}}	
	Y_{\frac{k-1}{2}}
	\mathbf{U}_{\frac{k+1}{2}}
	|\mathbf{Z}_{\frac{k+1}{2}}|
	\mathbf{U}_{\frac{k+1}{2}}^*
	Y_{\frac{k-1}{2}}^*
	\mathbf{Z}_{\frac{k-1}{2}}
	\cdots
	Y_1^*
	\mathbf{Z}_{1}
	|Y_k|
	]^{\frac{1}{2}}\cdot \\
	&\qquad\quad
	\EE[\ntr 
	\mathbf{Z}_{k}
	Y_{k-1}^*
	\cdots
	\mathbf{Z}_{\frac{k+3}{2}}
	Y_{\frac{k+1}{2}}^*
	|\mathbf{Z}_{\frac{k+1}{2}}|
	Y_{\frac{k+1}{2}}
	\mathbf{Z}_{\frac{k+3}{2}}
	\cdots
	Y_{k-1}
        \mathbf{Z}_{k} V_k |Y_k|V_k^*
	]^{\frac{1}{2}}.
\end{align*}
The rest of the proof proceeds exactly as in the case that $k$ is 
even.
\end{proof}

\subsection{Concentration of measure}
\label{sec:conc}

In the proof of our main results, it will be necessary to control the norms 
of the resolvents $\|(z\id-X)^{-1}\|$ and $\|(z\id-G)^{-1}\|$ 
simultaneously over many points $z\in\mathbb{C}$. To this end, we will 
exploit the fact that these quantities are strongly concentrated around 
their means.

For the Gaussian model $G$, such concentration inequalities follow from a 
routine application of Gaussian concentration, as we recall in section 
\ref{sec:concgauss}. However, the non-Gaussian model $X$ does not appear 
to be amenable to off-the-shelf concentration inequalities: while convex 
Lipschitz functions of sums of independent random matrices (such as the 
norm $\|X\|$) can be treated using concentration inequalities due to 
Talagrand, such methods do not apply to the non-convex function 
$(Z_1,\ldots,Z_n)\mapsto \|(z\id -X)^{-1}\|$. In section 
\ref{sec:concnongauss}, we develop a specialized concentration inequality 
that will play a key role in the proofs of our main results.

Finally, in section \ref{sec:concspstat}, we obtain concentration 
inequalities for the spectral statistics $\langle v,\varphi(X)w\rangle$ 
both in the Gaussian and non-Gaussian situations, which may be used in 
conjunction with Theorem \ref{thm:smuniv} to obtain high probability 
universality bounds for spectral statistics. The proofs of these 
concentration inequalities rely on concentration of the resolvent as 
derived in the previous sections. An analogous concentration inequality 
for moments, which may be used in conjunction with Theorem 
\ref{thm:momentuniv}, is much simpler and follows from a routine 
application of Talagrand's concentration inequality; such an inequality is 
given in Lemma \ref{lem:pfsafconc}.

\subsubsection{Resolvent norm: the Gaussian case}
\label{sec:concgauss}

The Gaussian random matrix $G$ is amenable to a routine application of 
Gaussian concentration \cite[Theorem 5.6]{BLM13} as in \cite[Lemma 
6.5]{BBV21}. For completeness, we spell out the argument.

\begin{lem}
\label{lem:gconc}
Fix $z\in\mathbb{C}$ with $\mathrm{Im}\,z>0$. Then we have for any
$x\ge 0$
$$
	\mathbf{P}\bigg[
	\big|\|(z\id -G)^{-1}\| 
	-\EE\|(z\id -G)^{-1}\|\big| \ge
	\frac{\sigma_*(G)}{(\mathrm{Im}\,z)^2}\,x\bigg]
	\le 2e^{-x^2/2}.
$$
\end{lem}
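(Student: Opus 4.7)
The plan is to realize $f(g):=\|(z\id-G)^{-1}\|$ as a Lipschitz function on a Gaussian space and invoke the standard Gaussian concentration inequality \cite[Theorem 5.6]{BLM13}. By Remark \ref{rem:coeffrep}, we may write $G = Z_0 + \sum_{i=1}^N A_i g_i$ for deterministic self-adjoint matrices $A_1,\ldots,A_N$ and i.i.d.\ standard Gaussians $g_1,\ldots,g_N$, viewing $f$ as a function of $g\in\mathbb{R}^N$. Gaussian concentration then yields the tail bound with constant $L^2/2$ in the exponent, where $L$ is the Euclidean Lipschitz constant of $f$. Hence it suffices to show $L\le \sigma_*(X)/(\mathrm{Im}\,z)^2$.

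To bound $L$, I would first use the resolvent identity and the elementary bound $\|(z\id-A)^{-1}\|\le 1/\mathrm{Im}\,z$ (valid for any self-adjoint $A$) to get
\begin{equation*}
|f(g)-f(g')| \le \|(z\id-G(g))^{-1}-(z\id-G(g'))^{-1}\| \le \frac{\|G(g)-G(g')\|}{(\mathrm{Im}\,z)^2}.
\end{equation*}
Then I would control $\|G(g)-G(g')\|=\|\sum_i A_i(g_i-g_i')\|$ by Cauchy--Schwarz and the fact that for a self-adjoint matrix $M$, $\|M\|=\sup_{\|v\|=1}|\langle v,Mv\rangle|$:
\begin{equation*}
\Big\|\sum_i A_i h_i\Big\| = \sup_{\|v\|=1}\Big|\sum_i h_i\langle v,A_iv\rangle\Big| \le \|h\|_2\sup_{\|v\|=1}\Big(\sum_i|\langle v,A_iv\rangle|^2\Big)^{1/2}.
\end{equation*}

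The final step identifies the right-hand supremum with (a lower bound on) $\sigma_*(X)$. Since $G$ and $X$ have the same covariance, a direct computation gives $\EE|\langle v,(G-\EE G)w\rangle|^2 = \sum_i|\langle v,A_iw\rangle|^2$, so $\sigma_*(X)^2=\sigma_*(G)^2=\sup_{\|v\|=\|w\|=1}\sum_i|\langle v,A_iw\rangle|^2$, which dominates the diagonal $v=w$ case appearing in the Lipschitz estimate. Combining everything yields $L\le \sigma_*(X)/(\mathrm{Im}\,z)^2$, and Gaussian concentration with $t=Lx$ gives the stated inequality. There is no serious obstacle: the only point requiring care is to ensure that the self-adjoint structure is used (to pass from operator norm to the diagonal quadratic form), so that the resulting constant is $\sigma_*(X)$ and not the potentially larger parameter $v(X)$.
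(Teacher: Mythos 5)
Your proof is correct and matches the paper's argument: express $G$ as a Gaussian matrix series, bound the Lipschitz constant of the resolvent norm via the resolvent identity, $\|(z\id-A)^{-1}\|\le(\mathrm{Im}\,z)^{-1}$, and Cauchy--Schwarz, then apply Gaussian concentration. The only cosmetic difference is that you use $\|M\|=\sup_{\|v\|=1}|\langle v,Mv\rangle|$ for self-adjoint $M$ while the paper uses $\|M\|=\sup_{\|v\|=\|w\|=1}|\langle v,Mw\rangle|$ and hence lands on $\sigma_*(X)$ directly; your closing parenthetical that the self-adjoint structure is what keeps you at $\sigma_*(X)$ rather than $v(X)$ is a red herring, since the general bilinear-form bound already gives $\sigma_*(X)$ without using self-adjointness.
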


\begin{proof}
Without loss of generality, we may express
$$
	G = A_0 + \sum_{i=1}^N g_i A_i 
$$
for some deterministic $A_0,\ldots,A_N\in\M_d(\mathbb{C})_{\rm sa}$ and
i.i.d.\ standard Gaussian variables $g_1,\ldots,g_N$ (cf.\ 
Remark \ref{rem:coeffrep}). Now consider the function
$f:\mathbb{R}^N\to\mathbb{R}$ defined by
$$
	f(x) :=
	\Bigg\|\bigg(z\id - A_0 - \sum_{i=1}^Nx_i A_i\bigg)^{-1}\Bigg\|.
$$
As $A^{-1}-B^{-1} = A^{-1}(B-A)B^{-1}$ for invertible matrices $A,B$, and
as $\|(z\id-Y)^{-1}\|\le (\mathrm{Im}\,z)^{-1}$ for any self-adjoint 
matrix $Y$, we obtain
\begin{align*}
	|f(x)-f(y)|  &\le
	\Bigg\|
	\bigg(z\id - A_0 - \sum_{i=1}^Nx_i A_i\bigg)^{-1}-
	\bigg(z\id - A_0 - \sum_{i=1}^Ny_i A_i\bigg)^{-1}
	\Bigg\|
	\\
	&\le
	\frac{1}{(\mathrm{Im}\,z)^2}\,\Bigg\|
	\sum_{i=1}^N(x_i-y_i)A_i\Bigg\|
	\le \frac{\sigma_*(G)}{(\mathrm{Im}\,z)^2}\,\|x-y\|,
\end{align*}
where we used that
\begin{align*}
	\Bigg\|
        \sum_{i=1}^N(x_i-y_i)A_i\Bigg\| &=
	\sup_{\|v\|=\|w\|=1}
	\Bigg|
        \sum_{i=1}^N(x_i-y_i)\langle v,A_iw\rangle
	\Bigg| \\
	&\le
	\sup_{\|v\|=\|w\|=1}
	\Bigg(
	\sum_{i=1}^N |\langle v,A_iw\rangle|^2
	\Bigg)^{\frac{1}{2}}\|x-y\| \\ &=	
	\sup_{\|v\|=\|w\|=1} 
	\EE\big[|\langle v,(G-\EE G)w\rangle|^2\big]^{\frac{1}{2}}\|x-y\|
	\phantom{\bigg|}
	\\ &= \sigma_*(G)\,\|x-y\|.
	\phantom{\bigg|}
\end{align*}
Thus $\|(z\id -G)^{-1}\|=f(g_1,\ldots,g_N)$ is a
$\frac{\sigma_*(G)}{(\mathrm{Im}\,z)^2}$-Lipschitz function of a standard 
Gaussian vector. The conclusion is therefore immediate from the Gaussian 
concentration inequality \cite[Theorem 5.6]{BLM13}, which states than an 
$L$-Lipschitz function of a standard Gaussian vector is $L^2$-subgaussian.
\end{proof}

\subsubsection{Resolvent norm: the non-Gaussian case}
\label{sec:concnongauss}

We now aim to prove an analogue of Lemma \ref{lem:gconc} for the 
non-Gaussian model $X$. To this end, we exploit the resolvent identity to 
prove a specialized concentration inequality using the entropy method 
\cite[Chapter 6]{BLM13}. The result takes a more complicated form than 
Lemma \ref{lem:gconc}, but will nonetheless suffice for the purposes of 
this paper.

\begin{prop}
\label{prop:ngconc}
Fix $z\in\mathbb{C}$ with $\mathrm{Im}\,z>0$. Then we have
\begin{multline*}
	\mathbf{P}\bigg[\big|\|(z\id -X)^{-1}\| -
	\EE\|(z\id -X)^{-1}\|\big|
	\ge
	\frac{\sigma_*(X)}{(\mathrm{Im}\,z)^2}\sqrt{x} + 
	\bigg\{
	\frac{R(X)}{(\mathrm{Im}\,z)^2}+
	\frac{R(X)^2}{(\mathrm{Im}\,z)^3}
	\bigg\}\,x 
\\ 
	+\bigg\{
	\frac{R(X)^{\frac{1}{2}}(\EE\|X-\EE X\|)^{\frac{1}{2}}}
	{(\mathrm{Im}\,z)^2} +
	\frac{R(X)(\EE\|X-\EE X\|^2)^{\frac{1}{2}}}{(\mathrm{Im}\,z)^3}
	\bigg\}
	\sqrt{x}
	\bigg]
	\le 2e^{-Cx}
\end{multline*}
for any $x\ge 0$, where $C$ is a universal constant.
\end{prop}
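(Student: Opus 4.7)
The function $f(Z_1,\ldots,Z_n) := \|(z\id-X)^{-1}\|$ is not convex in the entries of the $Z_i$, so Talagrand-type inequalities for convex Lipschitz functionals do not apply; the plan is instead to apply an entropy-method ``bounded differences with variance'' inequality (e.g.\ from Boucheron--Lugosi--Massart). Let $Z_1',\ldots,Z_n'$ be independent copies, set $X^{(i)} := X - Z_i + Z_i'$, $f^{(i)} := \|(z\id-X^{(i)})^{-1}\|$, and write $\mathbf R := (z\id-X)^{-1}$, $\mathbf R^{(i)} := (z\id-X^{(i)})^{-1}$. If $c$ uniformly bounds $|f-f^{(i)}|$ and $V := \sum_i \EE[(f-f^{(i)})_+^2 \mid Z_1,\ldots,Z_n]$ satisfies $\EE V \le v$, a Bernstein-type inequality of the form $\mathbf P[|f-\EE f| \ge \sqrt{2vx}+2cx] \le 2e^{-x}$ holds. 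The task reduces to producing the correct scales for $c$ and $\EE V$.

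Iterating the resolvent identity yields
\[
\mathbf R - \mathbf R^{(i)} = \mathbf R(Z_i-Z_i')\mathbf R^{(i)} = \mathbf R(Z_i-Z_i')\mathbf R + \mathbf R(Z_i-Z_i')\mathbf R(Z_i-Z_i')\mathbf R^{(i)}.
\]
The crude operator-norm bound on the first form gives $|f-f^{(i)}| \le 2R(X)/(\mathrm{Im}\,z)^2$, accounting for one of the subexponential $x$-scales in the claim. The quadratic remainder on the right is uniformly bounded by $\lesssim R(X)^2/(\mathrm{Im}\,z)^3$ and, treated as a second bounded-difference contribution, provides the finer subexponential scale $R(X)^2/(\mathrm{Im}\,z)^3$ that appears in the statement.

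The main work is the bound on $\EE V$. Pick unit vectors $u,v$ (random, but functions only of $Z$) such that $\|\mathbf R\| = |\langle u, \mathbf R v\rangle|$; then
\[
(f-f^{(i)})_+ \le \bigl|\langle (\mathbf R^{(i)})^*u,\,(Z_i-Z_i')\mathbf R v\rangle\bigr|.
\]
Replacing $(\mathbf R^{(i)})^*u$ by the $Z_i'$-independent vector $\mathbf R^*u$ produces a leading term whose conditional second moment, summed over $i$, reduces using $\EE Z_i' = 0$ and independence of the $Z_j'$ to
\[
\langle \mathbf R^*u,\, \EE[(X-\EE X)(\mathbf R v)(\mathbf R v)^*(X-\EE X)]\, \mathbf R^*u\rangle \le \|\mathbf R\|^4\,\sigma_*(X)^2 \le \frac{\sigma_*(X)^2}{(\mathrm{Im}\,z)^4}
\]
by the very definition of $\sigma_*(X)$. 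The discrepancy between $(\mathbf R^{(i)})^*u$ and $\mathbf R^*u$, handled by another resolvent step, contributes mixed quadratic terms whose expectations are of order $R(X)\,\EE\|X-\EE X\|/(\mathrm{Im}\,z)^4$ and $R(X)^2\,\EE\|X-\EE X\|^2/(\mathrm{Im}\,z)^6$, matching the squares of the mixed $\sqrt{x}$-coefficients in the statement.

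The delicate point is ensuring that the leading variance term carries the sharp parameter $\sigma_*(X)$ rather than the crude $\sigma(X)$. This forces the singular vectors $u,v$ to be carried through every resolvent expansion and conditional calculation, since $\|\EE[(X-\EE X)ww^*(X-\EE X)]\|$ is of order $\sigma_*(X)^2$ only when $w$ is held fixed; a naive argument that collapses the rank-one outer product $ww^*$ into the scalar $\|w\|^2$ would yield the suboptimal scale $\sigma(X)^2$. Managing the interplay between the random singular vectors, the additional resolvent expansions needed to decouple from $Z_i'$, and a concentration inequality flexible enough to accommodate a stochastic variance proxy constitutes the bulk of the technical work.
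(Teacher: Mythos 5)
The overall outline — bounded differences with a variance proxy via the entropy method, iterated resolvent identity, careful carrying of the random singular vectors — matches the paper's strategy, but there is a genuine gap in the central concentration step. You assert as given a ``Bernstein-type inequality'' of the form $\mathbf{P}[|f-\EE f|\ge\sqrt{2vx}+2cx]\le 2e^{-x}$ requiring only a \emph{uniform} bound $c$ on the increments and a bound $\EE V\le v$ on the \emph{expectation} of the variance proxy. No such theorem holds in this generality: the exponential Efron--Stein inequality controls the MGF of $f-\EE f$ in terms of the MGF of $V$, not its mean, and upgrading $\EE V\le v$ to an exponential moment bound requires extra structure. The paper supplies precisely this missing ingredient: Lemma~\ref{lem:Wsb} establishes that the variance-proxy random variable $W$ majorizing $V$ is \emph{self-bounding} (the discrete gradient of $W$ is controlled by $W$ itself), which via \cite[Theorem~6.19]{BLM13} yields $\log\EE[e^{W/a}]\le 2\EE[W]/a$ and makes the exponential Poincar\'e inequality effective. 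The lower tail requires yet another modification of \cite[Theorem~6.16]{BLM13} using the entropy duality formula, which your plan does not address. Without the self-bounding step, the argument does not close.

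A secondary issue concerns the bound on $\EE V$. The displayed estimate $\langle \mathbf R^*u,\,\EE[(X-\EE X)(\mathbf R v)(\mathbf R v)^*(X-\EE X)]\,\mathbf R^*u\rangle\le\|\mathbf R\|^4\sigma_*(X)^2$ is not a valid step: $\mathbf R$ depends on $X$, so the expectation cannot be pushed inside around the resolvent-transformed vectors. The conditional second moment of the leading increment splits into a $Z_i'$-part (which, after averaging over $Z'$ alone with $u,v$ held fixed, is indeed controlled by $\sigma_*(X)^2$) and an unaveraged $Z_i$-part $\sum_i|\langle \mathbf R^*u,\,Z_i\mathbf R v\rangle|^2$. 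Because $\mathbf R^*u$ and $\mathbf R v$ are random functions of the whole sample, bounding the expectation of the latter requires controlling $\EE[\sup_{v,w}\sum_i|\langle v,Z_iw\rangle|^2]$ — an empirical-process supremum — which is what produces the $R(X)\,\EE\|X-\EE X\|$ contribution (the paper invokes \cite[Theorem~11.8]{BLM13} for exactly this). You identify that term in your answer but attribute it to the replacement $(\mathbf R^{(i)})^*u\mapsto\mathbf R^*u$; in fact it arises even before any such replacement, from the gap between $\sup\EE$ and $\EE\sup$ for the empirical process indexed by the unit sphere.
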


In preparation for the proof, we begin by estimating a type of discrete 
gradient of the function $(Z_1,\ldots,Z_n)\mapsto \|(z\id -X)^{-1}\|$.

\begin{lem}
\label{lem:W}
Let $(Z_1',\ldots,Z_n')$ be an independent copy of $(Z_1,\ldots,Z_n)$.
Let $X$ be as in \eqref{eq:model} and let
$X^{\sim i}:=Z_0+\sum_{j\ne i}Z_j + Z_i'$. Then
$$
	\|(z\id -X)^{-1}\|- \|(z\id -X^{\sim i})^{-1}\| \le
	\frac{2R(X)}{(\mathrm{Im}\,z)^2}
$$
for all $i$, and
$$
	\sum_{i=1}^n
	(\|(z\id -X)^{-1}\|- \|(z\id -X^{\sim i})^{-1}\|)_+^2
	\le W
$$
with
$$
	W:=
	\frac{2}{(\mathrm{Im}\,z)^4}
	\sup_{\|v\|=\|w\|=1}
	\sum_{i=1}^n
	|\langle v,(Z_i-Z_i')w\rangle|^2 +
	\frac{8}{(\mathrm{Im}\,z)^6}
	R(X)^2\Bigg\|\sum_{i=1}^n(Z_i-Z_i')^2\Bigg\|.
$$
\end{lem}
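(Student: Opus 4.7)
Both bounds rest on the resolvent identity $A^{-1} - B^{-1} = A^{-1}(B-A)B^{-1}$ applied with $A = z\id - X$ and $B = z\id - X^{\sim i}$, giving $B - A = Z_i - Z_i'$. The first (pointwise) bound is immediate: since $\|(z\id-Y)^{-1}\|\le 1/\mathrm{Im}\,z$ for self-adjoint $Y$ and $\|Z_i - Z_i'\|\le\|Z_i\|+\|Z_i'\|\le 2R(X)$ a.s., one obtains $\|(z\id-X)^{-1}-(z\id-X^{\sim i})^{-1}\|\le 2R(X)/(\mathrm{Im}\,z)^2$, and the claim follows by the reverse triangle inequality for the operator norm.

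The second bound requires more care, since a naive first-order bound would only produce a leading term of the form $\|\sum_i(Z_i-Z_i')^2\|$, which is too coarse to recover the sharp $\sigma_*$-type quantity $\sup_{\|v\|=\|w\|=1}\sum_i|\langle v,(Z_i-Z_i')w\rangle|^2$ appearing in $W$. To capture this, I would exploit the fact that $(z\id-X)^{-1}$ is \emph{normal} (because $X$ is self-adjoint): letting $u$ be the unit eigenvector of $X$ corresponding to the eigenvalue closest to $z$, we have $(z\id-X)^{-1}u = \alpha u$ with $|\alpha| = \|(z\id-X)^{-1}\|$. By the variational formula for the operator norm,
\[
\bigl(\|(z\id-X)^{-1}\| - \|(z\id-X^{\sim i})^{-1}\|\bigr)_+ \le \bigl|\langle u,\,[(z\id-X)^{-1}-(z\id-X^{\sim i})^{-1}]\,u\rangle\bigr|.
\]

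The key step is to iterate the resolvent identity to second order, so that \emph{both} outer resolvents equal $(z\id-X)^{-1}$:
\[
(z\id-X)^{-1}-(z\id-X^{\sim i})^{-1}=(z\id-X)^{-1}(Z_i-Z_i')(z\id-X)^{-1}-(z\id-X)^{-1}(Z_i-Z_i')(z\id-X^{\sim i})^{-1}(Z_i-Z_i')(z\id-X)^{-1}.
\]
Sandwiching against $u$, the factor $(z\id-X)^{-1}u=\alpha u$ collapses to a scalar $\alpha$ on both ends, producing the bilinear form $|\alpha|^2\,\langle u,(Z_i-Z_i')u\rangle$ for the leading term and $|\alpha|^2\langle(Z_i-Z_i')u,(z\id-X^{\sim i})^{-1}(Z_i-Z_i')u\rangle$ for the correction. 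Applying the triangle inequality and the operator bounds $|\alpha|\le 1/\mathrm{Im}\,z$ and $\|(z\id-X^{\sim i})^{-1}\|\le 1/\mathrm{Im}\,z$ gives
\[
(\|(z\id-X)^{-1}\|-\|(z\id-X^{\sim i})^{-1}\|)_+\le\frac{|\langle u,(Z_i-Z_i')u\rangle|}{(\mathrm{Im}\,z)^2}+\frac{2R(X)\|(Z_i-Z_i')u\|^2/(2R(X))}{(\mathrm{Im}\,z)^3},
\]
where in the second term I would use $\|(Z_i-Z_i')u\|^2\le 2R(X)\|(Z_i-Z_i')u\|$ (from $\|Z_i-Z_i'\|\le 2R(X)$) to match the form of $W$. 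Then squaring, applying $(a+b)^2\le 2a^2+2b^2$, and summing over $i$ yields the two sums $\sum_i|\langle u,(Z_i-Z_i')u\rangle|^2$ and $\sum_i\|(Z_i-Z_i')u\|^2$. The former is bounded by $\sup_{\|v\|=\|w\|=1}\sum_i|\langle v,(Z_i-Z_i')w\rangle|^2$ (specializing $v=w=u$), while the latter equals $\langle u,\sum_i(Z_i-Z_i')^2u\rangle\le\|\sum_i(Z_i-Z_i')^2\|$, producing exactly the $W$ in the statement with constants $2$ and $8$.

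\textbf{Main obstacle.} The subtle point is the choice of expansion: the natural first-order form $(z\id-X^{\sim i})^{-1}(Z_i-Z_i')(z\id-X)^{-1}$ has a middle resolvent involving $X^{\sim i}$, which defeats the simplification $(z\id-X)^{-1}u=\alpha u$ and loses the bilinear $\sigma_*$-structure. Iterating so that both ends involve $(z\id-X)^{-1}$ is what allows the eigenvector of $X$ to act directly on both sides, producing the inner product $\langle u,(Z_i-Z_i')u\rangle$ in the leading term --- this sharper term is precisely what enables Proposition~\ref{prop:ngconc} to track $\sigma_*(X)$ rather than the typically much larger $\|(\sum_i(Z_i-Z_i')^2)^{1/2}\|$.
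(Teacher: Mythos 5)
Your proof is correct and follows the same strategy as the paper: bound $\|R\|-\|R'\|$ by $|\langle v, (R-R')w\rangle|$ for unit vectors achieving the norm of $R=(z\id-X)^{-1}$, iterate the resolvent identity once so that both outer factors are $R$ rather than $R'=(z\id-X^{\sim i})^{-1}$, and estimate the two resulting terms via Cauchy--Schwarz and $\|R\|\le(\mathrm{Im}\,z)^{-1}$, $\|Z_i-Z_i'\|\le 2R(X)$. The only cosmetic difference is that you exploit normality of $R$ to take $v=w=u$ an eigenvector of $X$, whereas the paper keeps a general maximizing pair $(v_*,w_*)$ and separately normalizes $(\bar z\id-X)^{-1}v_*$ and $(z\id-X)^{-1}w_*$; both routes yield $W$ with identical constants.
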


\begin{proof}
The first part of the statement follows as
$$
	\|(z\id -X)^{-1}\|- \|(z\id -X^{\sim i})^{-1}\| 
	\le \|(z\id- X)^{-1}(Z_i-Z_i')(z\id-X^{\sim i})^{-1}\|
	\le \frac{2R(X)}{(\mathrm{Im}\,z)^2}
$$
using the reverse triangle inequality and 
$A^{-1}-B^{-1}=A^{-1}(B-A)B^{-1}$ in the first inequality, and
$\|(z\id - A)^{-1}\|\le (\mathrm{Im}\,z)^{-1}$ in the second inequality.

To prove the second part of the statement, we must estimate more 
carefully.
Let $v_*,w_*$ be (random) vectors in the unit sphere such that
$$
	\|(z\id - X)^{-1}\| = 
	\sup_{\|v\|=\|w\|=1} |\langle v,(z\id - X)^{-1}w\rangle|
	=
	|\langle v_*,(z\id - X)^{-1}w_*\rangle|.
$$
Then
\begin{align*}
	&\|(z\id - X)^{-1}\| -
	\|(z\id - X^{\sim i})^{-1}\|
\\
	&\le
	|\langle v_*,(z\id - X)^{-1}w_*\rangle| -
	|\langle v_*,(z\id - X^{\sim i})^{-1}w_*\rangle|
\\
	&\le
	|\langle v_*,(z\id - X)^{-1}
	(Z_i-Z_i')(z\id - X^{\sim i})^{-1}
	w_*\rangle|
\\
	&\le
	|\langle v_*,(z\id - X)^{-1}
	(Z_i-Z_i')(z\id - X)^{-1}
	w_*\rangle|
\\
&\qquad+
	|\langle v_*,(z\id - X)^{-1}
	(Z_i-Z_i')(z\id - X^{\sim i})^{-1}
	(Z_i-Z_i')(z\id - X)^{-1}
	w_*\rangle|,
\end{align*}
where we used twice the identity $A^{-1}-B^{-1} = A^{-1}(B-A)B^{-1}$.
But as we have
$\|(z\id-X)^{-1}w_*\|\le (\mathrm{Im}\,z)^{-1}$ and
$\|(\bar{z}\id-X)^{-1}v_*\|\le (\mathrm{Im}\,z)^{-1}$,
we can estimate
\begin{align*}
	&\sum_{i=1}^n
	|\langle v_*,(z\id - X)^{-1}
        (Z_i-Z_i')(z\id - X)^{-1}
        w_*\rangle|^2 
\\ &\qquad\qquad\le
	\frac{1}{(\mathrm{Im}\,z)^4}
	\sup_{\|v\|=\|w\|=1}
	\sum_{i=1}^n
	|\langle v,
        (Z_i-Z_i')
        w\rangle|^2.
\end{align*}
On the other hand, we have
\begin{align*}
	&
	\sum_{i=1}^n
	|\langle v_*,(z\id - X)^{-1}
	(Z_i-Z_i')(z\id - X^{\sim i})^{-1}
	(Z_i-Z_i')(z\id - X)^{-1}
	w_*\rangle|^2 \\
	&\le
	\sum_{i=1}^n
	\|(Z_i-Z_i')(\bar z\id - X)^{-1}v_*\|^2
	\|(z\id - X^{\sim i})^{-1}
	(Z_i-Z_i')(z\id - X)^{-1}
	w_*\|^2 \\
	&\le
	\frac{4R(X)^2}{(\mathrm{Im}\,z)^4}
	\sum_{i=1}^n
	\langle (z\id - X)^{-1}
        w_*,(Z_i-Z_i')^2(z\id - X)^{-1}
	w_*\rangle \\
	&\le
	\frac{4R(X)^2}{(\mathrm{Im}\,z)^6}
	\Bigg\|
	\sum_{i=1}^n
        (Z_i-Z_i')^2\Bigg\|.
\end{align*}
The conclusion follows readily using $(a+b)^2\le 2a^2+2b^2$.
\end{proof}

Next, we bound the expectation of the random variable $W$.

\begin{lem}
\label{lem:EW}
Let $W$ be defined as in Lemma \ref{lem:W}. Then
$$
	\EE[W] \lesssim
	\frac{\sigma_*(X)^2}{(\mathrm{Im}\,z)^4}
	+
	\frac{R(X)\EE\|X-\EE X\|}{(\mathrm{Im}\,z)^4}
	+
	\frac{R(X)^2\EE\|X-\EE X\|^2}{(\mathrm{Im}\,z)^6}.
$$
\end{lem}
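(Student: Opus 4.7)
The plan is to decompose $W = \frac{2}{(\mathrm{Im}\,z)^4}A + \frac{8R(X)^2}{(\mathrm{Im}\,z)^6}B$, where
\[
A := \sup_{\|v\|=\|w\|=1}\sum_{i=1}^n|\langle v,W_iw\rangle|^2,\qquad B := \bigg\|\sum_{i=1}^n W_i^2\bigg\|,\qquad W_i:=Z_i-Z_i',
\]
and bound $\EE[A]$ and $\EE[B]$ separately. The three terms in the target inequality correspond to showing $\EE[A] \lesssim \sigma_*(X)^2 + R(X)\,\EE\|X-\EE X\|$ (yielding the first two terms) and $\EE[B] \lesssim \EE\|X-\EE X\|^2$ (yielding the third).

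For $\EE[A]$, I first observe that for any fixed unit vectors $v,w$,
\[
\EE\sum_{i=1}^n|\langle v,W_iw\rangle|^2 = 2\sum_{i=1}^n \EE|\langle v,Z_iw\rangle|^2 \le 2\sigma_*(X)^2
\]
by the definition of $\sigma_*$. Splitting
$\EE[A] \le 2\sigma_*(X)^2 + \EE\sup_{v,w}\big|\sum_i(|\langle v,W_iw\rangle|^2 - \EE|\langle v,W_iw\rangle|^2)\big|$,
the fluctuation term is controlled by standard Rademacher symmetrization followed by the Ledoux--Talagrand contraction principle. Since $t\mapsto t^2$ is Lipschitz with constant $4R(X)$ on $[-2R(X),2R(X)]$, applying contraction to $\mathrm{Re}\langle v,W_iw\rangle$ and $\mathrm{Im}\langle v,W_iw\rangle$ separately yields
\[
\EE\sup_{v,w}\bigg|\sum_i\varepsilon_i|\langle v,W_iw\rangle|^2\bigg| \lesssim R(X)\,\EE\sup_{v,w}\bigg|\sum_i\varepsilon_i\langle v,W_iw\rangle\bigg| = R(X)\,\EE\bigg\|\sum_i\varepsilon_iW_i\bigg\|,
\]
where $\varepsilon_i$ are i.i.d.\ Rademacher variables independent of $W$. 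The distributional symmetry $W_i \stackrel{d}{=} -W_i$ gives $\sum_i\varepsilon_iW_i \stackrel{d}{=} X-X'$, and hence the right-hand side is at most $2R(X)\,\EE\|X-\EE X\|$.

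For $\EE[B]$, I use the algebraic identity $\sum_iW_i^2 = (X-X')^2 - \sum_{i\ne j}W_iW_j$ to write $B \le \|X-X'\|^2 + \|\sum_{i\ne j}W_iW_j\|$. The first piece contributes $\EE\|X-X'\|^2 \le 4\,\EE\|X-\EE X\|^2$ by the triangle inequality. For the off-diagonal quadratic form, a standard de la Pe\~na--Gin\'e decoupling argument replaces one factor in each product by an independent copy, giving
\[
\EE\bigg\|\sum_{i\ne j}W_iW_j\bigg\| \lesssim \EE\|(X-X')(\tilde X-\tilde X')\| \le \sqrt{\EE\|X-X'\|^2\,\EE\|\tilde X-\tilde X'\|^2} \lesssim \EE\|X-\EE X\|^2
\]
by Cauchy--Schwarz, where $\tilde X$ is an independent copy of $X$.

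The hardest step will be justifying the contraction argument cleanly: since $\langle v,W_iw\rangle$ is complex-valued and the supremum is over an infinite-dimensional class of pairs $(v,w)$, the Ledoux--Talagrand principle must be reduced to its real-valued form by separately treating real and imaginary parts, and combined with the vector-valued contraction to control the supremum. The decoupling step for the off-diagonal quadratic form is also delicate in the matrix setting but follows by standard techniques once the setup is in place.
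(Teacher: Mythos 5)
Your treatment of the first term $\EE[A]$ is essentially the same argument the paper uses, just unpacked: the paper cites the Bousquet-type inequality \cite[Theorem 11.8]{BLM13} for $\EE\sup_{v,w}\sum_i(\mathrm{Re}\,\langle v,Z_iw\rangle)^2$, whose proof is exactly your symmetrization-plus-Ledoux--Talagrand-contraction argument. The only cosmetic difference is that the paper first reduces from $W_i=Z_i-Z_i'$ to $Z_i$ via $(a-b)^2\le 2a^2+2b^2$ and equidistribution; both routes give $\EE[A]\lesssim\sigma_*(X)^2+R(X)\,\EE\|X-\EE X\|$.

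Your treatment of $\EE[B]$, however, has a real gap and also takes a considerably heavier route than necessary. The de la Pe\~na--Gin\'e decoupling inequality replaces the off-diagonal bilinear form $\sum_{i\ne j}W_iW_j$ by the off-diagonal decoupled form $\sum_{i\ne j}W_i\tilde W_j$, \emph{not} by the full product $(X-X')(\tilde X-\tilde X')=\sum_{i,j}W_i\tilde W_j$. To pass from the former to the latter you must add back the decoupled diagonal $\sum_iW_i\tilde W_i$, and you give no control on $\EE\|\sum_iW_i\tilde W_i\|$. This term is not obviously $\lesssim\EE\|X-\EE X\|^2$: the summands $W_i\tilde W_i$ are not self-adjoint, and the natural estimates (e.g.\ conditional matrix Bernstein) introduce unwanted $\log d$ factors and different matrix parameters. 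So the chain $\EE\|\sum_{i\ne j}W_iW_j\|\lesssim\EE\|(X-X')(\tilde X-\tilde X')\|$ as written is unjustified. The paper sidesteps this entirely with a one-line observation: since $W_i\mapsto\varepsilon_iW_i$ preserves the distribution, $\sum_iW_i^2=\EE_\varepsilon[(\sum_i\varepsilon_iW_i)^2]$, and Jensen gives $\|\sum_iW_i^2\|\le\EE_\varepsilon\|\sum_i\varepsilon_iW_i\|^2$; then $\sum_i\varepsilon_iW_i\stackrel{d}{=}\sum_iW_i=X-X'$, so $\EE[B]\le\EE\|X-X'\|^2\le4\,\EE\|X-\EE X\|^2$. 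You should either adopt that argument or supply a genuine bound on the decoupled diagonal.
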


\begin{proof}
First note that as $(a-b)^2 \le 2a^2+2b^2$ and as $(Z_1,\ldots,Z_n)$ and
$(Z_1',\ldots,Z_n')$ have the same distribution, we can estimate
$$
	\EE[W] \le
	\frac{8}{(\mathrm{Im}\,z)^4}
	\EE\Bigg[
	\sup_{\|v\|=\|w\|=1}
	\sum_{i=1}^n
	|\langle v,Z_iw\rangle|^2\Bigg] +
	\frac{8}{(\mathrm{Im}\,z)^6}
	R(X)^2 \EE\Bigg\|\sum_{i=1}^n (Z_i-Z_i')^2\Bigg\|.
$$
To estimate the first term, we apply \cite[Theorem 11.8]{BLM13} to obtain
\begin{align*}
	&\EE\Bigg[
	\sup_{\|v\|=\|w\|=1}
	\sum_{i=1}^n
	(\mathrm{Re}\,\langle v,Z_iw\rangle)^2
	\Bigg]
	\\ &\le
	8R(X)\,
	\EE\Bigg[\sup_{\|v\|=\|w\|=1}
	\sum_{i=1}^n 
	\mathrm{Re}\,\langle
	v,Z_iw\rangle\Bigg] +
	\sup_{\|v\|=\|w\|=1} \sum_{i=1}^n
	\EE[(\mathrm{Re}\,\langle v,Z_iw\rangle)^2],
	\\ &\le
	8R(X)\,\EE\|X-\EE X\| + \sigma_*(X)^2,\phantom{\bigg]}
\end{align*}
and analogously when the real part is replaced by the imaginary part. Thus
$$
	\EE\Bigg[
        \sup_{\|v\|=\|w\|=1}
        \sum_{i=1}^n
        |\langle v,Z_iw\rangle|^2\Bigg] 
	\le
	2\sigma_*(X)^2+
	16R(X)\EE\|X-\EE X\|.
$$
To estimate the second term, note that
\begin{align*}
	\EE\Bigg\|\sum_{i=1}^n (Z_i-Z_i')^2\Bigg\| &=
	\EE\Bigg\|
	\mathbf{E}_\varepsilon\Bigg[
	\Bigg(\sum_{i=1}^n \varepsilon_i(Z_i-Z_i')\Bigg)^2\Bigg] 
	\Bigg\|
	\le
	\EE\Bigg\|
	\sum_{i=1}^n \varepsilon_i(Z_i-Z_i')
	\Bigg\|^2
	\\ &=
	\EE\Bigg\|
	\sum_{i=1}^n (Z_i-Z_i')
	\Bigg\|^2
	\le 4\EE\|X-\EE X\|^2,
\end{align*}
where $\varepsilon_1,\ldots,\varepsilon_n$ are i.i.d.\ random signs 
independent of $Z,Z'$ and $\EE_\varepsilon$ denotes the expectation with
respect to the variables $\varepsilon_i$ only. The first equality is 
trivial, the first inequality is by Jensen, the second equality holds by 
the exchangeability of $(Z_i,Z_i')$, and the second inequality follows by 
the triangle inequality and $(a+b)^2\le 2a^2+2b^2$. Combining the above 
estimates completes the proof.
\end{proof}

Finally, we show that $W$ has a self-bounding property.

\begin{lem}
\label{lem:Wsb}
Let $W$ be defined as in Lemma \ref{lem:W}, and define
$$
	W^{\sim i}:=
	\frac{2}{(\mathrm{Im}\,z)^4}
	\sup_{\|v\|=\|w\|=1}
	\sum_{j\ne i}
	|\langle v,(Z_j-Z_j')w\rangle|^2 +
	\frac{8}{(\mathrm{Im}\,z)^6}
	R(X)^2\Bigg\|\sum_{j\ne i}(Z_j-Z_j')^2\Bigg\|.
$$
Then $W^{\sim i}\le W$ and
$$
	\sum_{i=1}^n (W-W^{\sim i})^2 \le
	\bigg\{
	\frac{16R(X)^2}{(\mathrm{Im}\,z)^4}+
	\frac{64R(X)^4}{(\mathrm{Im}\,z)^6}
	\bigg\}W.
$$
\end{lem}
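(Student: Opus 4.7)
\medskip

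\noindent\textbf{Proof plan.} The monotonicity $W^{\sim i}\le W$ is immediate since each of the two terms defining $W^{\sim i}$ is obtained from the corresponding term in $W$ by dropping the nonnegative contribution of the index $i$ (the sup over $v,w$ of a sum of nonnegative terms is monotone, and $\sum_{j\ne i}(Z_j-Z_j')^2\preceq\sum_j(Z_j-Z_j')^2$ in the positive semidefinite order, so the operator norm is monotone). So the work is the self-bounding estimate.

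\medskip

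\noindent Write
\[
W=\frac{2}{(\mathrm{Im}\,z)^4}\,A+\frac{8R(X)^2}{(\mathrm{Im}\,z)^6}\,B,
\qquad
W^{\sim i}=\frac{2}{(\mathrm{Im}\,z)^4}\,A_i+\frac{8R(X)^2}{(\mathrm{Im}\,z)^6}\,B_i,
\]
where $A$, $B$ denote the two suprema/norms appearing in $W$ and $A_i$, $B_i$ are the analogues with the $i$-th summand deleted. The plan is to prove two parallel one-index drop inequalities:
\[
\textstyle
A-A_i\le 4R(X)^2,\qquad \sum_{i=1}^n (A-A_i)\le A,
\]
\[
\textstyle
B-B_i\le 4R(X)^2,\qquad \sum_{i=1}^n (B-B_i)\le B,
\]
and then combine them.

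\medskip

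\noindent For the $A$-bounds, I choose (random) unit vectors $v_\ast,w_\ast$ attaining the supremum in $A$. Then, since $A_i$ is itself a supremum,
\[
A_i\ge \sum_{j\ne i}|\langle v_\ast,(Z_j-Z_j')w_\ast\rangle|^2 = A-|\langle v_\ast,(Z_i-Z_i')w_\ast\rangle|^2.
\]
Hence $A-A_i\le|\langle v_\ast,(Z_i-Z_i')w_\ast\rangle|^2\le\|Z_i-Z_i'\|^2\le 4R(X)^2$ (using $\|Z_i\|,\|Z_i'\|\le R(X)$ a.s.), and summing on $i$ gives $\sum_i(A-A_i)\le A$. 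The $B$-bounds are entirely analogous: choose a unit $u_\ast$ with $B=\langle u_\ast,\sum_j(Z_j-Z_j')^2u_\ast\rangle$; then $B_i\ge B-\|(Z_i-Z_i')u_\ast\|^2$, so $B-B_i\le\|Z_i-Z_i'\|^2\le 4R(X)^2$ and $\sum_i(B-B_i)\le B$.

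\medskip

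\noindent Putting these together using $a\le b,\ \sum a_i\le c\Rightarrow \sum a_i^2\le bc$, I get $\sum_i(A-A_i)^2\le 4R(X)^2 A$ and $\sum_i(B-B_i)^2\le 4R(X)^2 B$. Then by $(a+b)^2\le 2a^2+2b^2$,
\[
\sum_{i=1}^n(W-W^{\sim i})^2\le \frac{8}{(\mathrm{Im}\,z)^8}\sum_i(A-A_i)^2+\frac{128R(X)^4}{(\mathrm{Im}\,z)^{12}}\sum_i(B-B_i)^2,
\]
which after substituting the two estimates becomes
\[
\frac{32R(X)^2}{(\mathrm{Im}\,z)^8}A+\frac{512R(X)^6}{(\mathrm{Im}\,z)^{12}}B
=\frac{16R(X)^2}{(\mathrm{Im}\,z)^4}\cdot\frac{2A}{(\mathrm{Im}\,z)^4}+\frac{64R(X)^4}{(\mathrm{Im}\,z)^6}\cdot\frac{8R(X)^2 B}{(\mathrm{Im}\,z)^6},
\]
which is exactly $\bigl\{16R(X)^2/(\mathrm{Im}\,z)^4+64R(X)^4/(\mathrm{Im}\,z)^6\bigr\}W$ as claimed. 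There is no real obstacle: the only nonobvious point is that one must pick the supremum-attaining vectors in $A$ (respectively in $B$) and use them as admissible test vectors for $A_i$ (respectively $B_i$); the constants in the definition of $W$ were chosen precisely so that the two pieces land with matching coefficients on the right-hand side.
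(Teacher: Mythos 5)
Your proof is correct and follows essentially the same route as the paper: choose supremum-attaining unit vectors for the two pieces of $W$, use them as admissible test vectors to bound the one-index drop $W-W^{\sim i}$, apply $(a+b)^2\le 2a^2+2b^2$ together with $\|Z_i-Z_i'\|^2\le 4R(X)^2$, and recombine. The only cosmetic difference is that you abstract the two pieces as $A,B$ and state the bounds $\sum_i(A-A_i)^2\le 4R(X)^2A$, $\sum_i(B-B_i)^2\le 4R(X)^2B$ separately before merging, whereas the paper carries out the same manipulations inline.
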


\begin{proof}
That $W^{\sim i}\le W$ is obvious. To prove the self-bounding inequality,
let $u_*,v_*,w_*$ be (random) vectors in the unit sphere such that
$$
	\sup_{\|v\|=\|w\|=1}
	\sum_{i=1}^n
	|\langle v,(Z_i-Z_i')w\rangle|^2
	=
	\sum_{i=1}^n
	|\langle v_*,(Z_i-Z_i')w_*\rangle|^2
$$
and
$$
	\Bigg\|\sum_{i=1}^n(Z_i-Z_i')^2\Bigg\| =
	\sup_{\|u\|=1}
	\sum_{i=1}^n \|(Z_i-Z_i')u\|^2
	=
	\sum_{i=1}^n \|(Z_i-Z_i')u_*\|^2.
$$
Then
$$
	W-W^{\sim i} \le
	\frac{2}{(\mathrm{Im}\,z)^4}
	|\langle v_*,(Z_i-Z_i')w_*\rangle|^2 +
	\frac{8}{(\mathrm{Im}\,z)^6}
        R(X)^2
	\|(Z_i-Z_i')u_*\|^2.
$$
Therefore,
\begin{align*}
	& \sum_{i=1}^n (W-W^{\sim i})^2 \\
	& \le
	\frac{8}{(\mathrm{Im}\,z)^8}
	\sum_{i=1}^n
	|\langle v_*,(Z_i-Z_i')w_*\rangle|^4 +
	\frac{128}{(\mathrm{Im}\,z)^{12}}
        R(X)^4
	\sum_{i=1}^n
	\|(Z_i-Z_i')u_*\|^4 \\
	&\le
	\frac{32}{(\mathrm{Im}\,z)^8}
	R(X)^2
	\sum_{i=1}^n
	|\langle v_*,(Z_i-Z_i')w_*\rangle|^2 +
	\frac{512}{(\mathrm{Im}\,z)^{12}}
        R(X)^6
	\sum_{i=1}^n
	\|(Z_i-Z_i')u_*\|^2 \\
	&\le
	\frac{16R(X)^2}{(\mathrm{Im}\,z)^4}
	\cdot
	\frac{2}{(\mathrm{Im}\,z)^4}
	\sup_{\|v\|=\|w\|=1}
	\sum_{i=1}^n
	|\langle v,(Z_i-Z_i')w\rangle|^2 
	\\ &\qquad+
	\frac{64R(X)^4}{(\mathrm{Im}\,z)^6}
	\cdot
	\frac{8}{(\mathrm{Im}\,z)^6}
        R(X)^2
	\Bigg\|\sum_{i=1}^n
	(Z_i-Z_i')^2\Bigg\|.
\end{align*}
The conclusion follows from the definition of $W$.
\end{proof}

We can now complete the proof of Proposition \ref{prop:ngconc}.

\begin{proof}[Proof of Proposition \ref{prop:ngconc}]
We begin by noting that the self-bounding property established in Lemma 
\ref{lem:Wsb} implies, by \cite[Theorem 6.19]{BLM13}, that
\begin{equation}
\label{eq:selfb}
	\log \EE[e^{W/a}]
	\le
	\frac{2}{a}\,\EE[W], \qquad
	a = 
        \frac{16R(X)^2}{(\mathrm{Im}\,z)^4}+
        \frac{64R(X)^4}{(\mathrm{Im}\,z)^6}.
\end{equation}
On the other hand, the estimate of Lemma \ref{lem:W} implies,
by the exponential Poincar\'e inequality
\cite[Theorem 6.16]{BLM13}, that for $0\le\lambda<a^{-\frac{1}{2}}$
$$
	\log \EE[e^{\lambda \{\|(z\id - X)^{-1}\|-\EE \|(z\id - 
	X)^{-1}\|\}}]
	\le \frac{\lambda^2a}{1-\lambda^2a}
	\log \EE[e^{W/a}].
$$
Combining these estimates with a Chernoff bound
\cite[p.\ 29]{BLM13} yields
$$
	\mathbf{P}\big[
	\|(z\id - X)^{-1}\|\ge \EE \|(z\id - X)^{-1}\| +
	\sqrt{8\EE[W]x} + \sqrt{a}\,x\big]
	\le e^{-x}
$$
for all $x\ge 0$. This yields a tail bound for deviation above the mean.

We must now prove a tail bound for deviation below the mean. 
This requires a variant of the second inequality of \cite[Theorem 
6.16]{BLM13}, whose proof we spell out for completeness. The last 
inequality of \cite[Theorem 6.15]{BLM13} and Lemma \ref{lem:W} imply
$$
	\mathrm{Ent}\big[e^{-\lambda \|(z\id - X)^{-1}\|}\big]
	\le
	\lambda^2\vartheta(\lambda b)\,
	\EE\big[
	We^{-\lambda \|(z\id - X)^{-1}\|}\big],\qquad
	b=\frac{2R(X)}{(\mathrm{Im}\,z)^2}
$$
for $\lambda\ge 0$, where $\mathrm{Ent}(Z):=\EE[Z\log 
Z]-\EE[Z]\log\EE[Z]$ and we used that $\vartheta(x):=\frac{e^x-1}{x}$ is a 
positive increasing function. In particular, as $b^2\le a$
and $\vartheta(1)\le 2$, 
$$
	\mathrm{Ent}\big[e^{-\lambda \|(z\id - X)^{-1}\|}\big]
	\le
	2\lambda^2\,
	\EE\big[
	We^{-\lambda \|(z\id - X)^{-1}\|}\big]
$$
for $0\le \lambda\le a^{-\frac{1}{2}}$. Applying the duality 
formula of entropy as in \cite[p.\ 187]{BLM13} yields
$$
	\mathrm{Ent}\big[e^{-\lambda \|(z\id - X)^{-1}\|}\big]
	\le
	\frac{2\lambda^2a}{
	1-2\lambda^2a}
	\log\EE[e^{W/a}]
	\, \EE\big[e^{-\lambda \|(z\id - X)^{-1}\|}\big]
$$
for $0\le \lambda\le (2a)^{-\frac{1}{2}}$. Therefore
$$
	\frac{d}{d\lambda} 
	\bigg(
	\frac{1}{\lambda} \log \EE\big[e^{-\lambda \|(z\id - X)^{-1}\|}\big]
	\bigg)
	=
	\frac{\mathrm{Ent}\big[e^{-\lambda \|(z\id - X)^{-1}\|}\big]}
	{\lambda^2\, \EE\big[e^{-\lambda \|(z\id - X)^{-1}\|}\big]}
	\le
	\frac{2a}{
        1-2\lambda^2a}
        \log\EE[e^{W/a}]
$$
for $0\le \lambda\le (2a)^{-\frac{1}{2}}$. Integrating both sides yields
\begin{align*}
	\log \EE\big[e^{-\lambda \{\|(z\id - X)^{-1}\|
	-\EE\|(z\id - X)^{-1}\|\}}\big]
	&\le
	\lambda\sqrt{2a}\, \mathrm{arctanh}(\lambda\sqrt{2a}) 
        \log\EE[e^{W/a}]
	\\
	&
	\le \frac{2\lambda^2a}{1-\lambda\sqrt{2a}}	
        \log\EE[e^{W/a}]
	\le \frac{4\EE[W]\lambda^2}{1-\lambda\sqrt{2a}},
\end{align*}
where we used $\mathrm{arctanh}(x)\le \frac{x}{1-x}$ in the second 
inequality and \eqref{eq:selfb} in the last inequality.
We can now apply a Chernoff bound
\cite[p.\ 29]{BLM13} to obtain
$$
        \mathbf{P}\big[
        \|(z\id - X)^{-1}\|\le \EE \|(z\id - X)^{-1}\| -
        4\sqrt{\EE[W]x} - \sqrt{2a}\,x\big]
        \le e^{-x}
$$
for all $x\ge 0$. This yields a tail bound for deviation below the mean.

To conclude the proof, it remains to combine the upper and lower tail 
bounds by the union bound, and to use
Lemma \ref{lem:EW} to estimate $\EE[W]$.
\end{proof}

\subsubsection{Spectral statistics}
\label{sec:concspstat}

Theorem \ref{thm:smuniv} establishes universality of the expectations of 
spectral statistics of the form $\langle v,\varphi(X)w\rangle$. A 
corresponding tail bound would follow if we can prove a concentration 
inequality for such spectral statistics. This problem turns out to be 
subtle even in the Gaussian case: even when $\varphi$ is Lipschitz, the 
Lipschitz property of $(g_1,\ldots,g_N)\mapsto\langle v,\varphi(G)w\rangle$ 
is not obvious. Deep results on the latter problem \cite{CMPS14} could be 
applied in the Gaussian setting, but do not appear to be sufficiently 
powerful to handle the non-Gaussian case.

Here we take a different approach. Using functional calculus \cite[\S 
2.2]{Dav95}, $\varphi(X)$ can be expressed as an integral of the 
resolvent of $X$. (We will use an essentially equivalent formulation that 
appears in the proof of \cite[Theorem 6.2]{HT05}.) With this representation 
in hand, we can readily repeat the proof of Proposition \ref{prop:ngconc} 
to obtain a concentration inequality. The main result of this section
is the following.

\begin{prop}
\label{prop:specconc}
For $\varphi\in W^{4,1}(\mathbb{R})$ and $v,w\in\mathbb{R}^d$ with
$\|v\|=\|w\|=1$, we have
$$
	\mathbf{P}\big[|\langle v,\varphi(G)w\rangle -
	\EE[\langle v,\varphi(G)w\rangle]|
	\ge \|\varphi\|_{W^{3,1}}\sigma_*(X)\sqrt{x}
	\big]\le 4e^{-Cx}
$$
and
\begin{multline*}
	\mathbf{P}\big[|\langle v,\varphi(X)w\rangle -
	\EE[\langle v,\varphi(X)w\rangle]|
	\ge \|\varphi\|_{W^{4,1}}\big\{
	(R(X)+R(X)^2)x \\
	+(\sigma_*(X) + 
	R(X)^{\frac{1}{2}}(\EE\|X-\EE X\|)^{\frac{1}{2}} +
	R(X)(\EE\|X-\EE X\|^2)^{\frac{1}{2}})\sqrt{x}
	\big\}
	\big]\le 4e^{-Cx}
\end{multline*}
for all $x\ge 0$, where $C$ is a universal constant.
\end{prop}

The basis for the proof is the following identity.

\begin{lem}
\label{lem:hellfer}
For any $M\in\M_d(\mathbb{C})_{\rm sa}$, $p\in\mathbb{N}$, and $\varphi\in 
C^\infty_c(\mathbb{R})$, we have
\begin{multline*}
	\varphi(M) = 
	-\frac{1}{\pi}
	\lim_{\varepsilon\downarrow 0}
	\mathrm{Im}\,
	\int_{-\infty}^\infty \bigg(1+\frac{d}{dx}\bigg)^p\varphi(x)
	\times \mbox{} \\
	\frac{(1+i)^p}{(p-1)!}
	\int_0^\infty 
	((x+t+i(\varepsilon+t))\id-M)^{-1}
	\,t^{p-1}e^{-(1+i)t}
	\,dt\,dx.
\end{multline*}
\end{lem}

\begin{proof}
The identity follows by following verbatim the proof of 
\cite[Theorem 6.2]{HT05}, noting that we can express $\langle 
v,\varphi(M)v\rangle = \int \varphi(x)\,\mu_v(dx)$ for some measure $\mu_v$ 
(the spectral distribution of $M$ with respect to the state
$\tau_v(M):=\langle v,Mv\rangle$).
\end{proof}

We can now use the above representation to establish discrete (and 
continuous) gradient bounds of $\langle v,\varphi(X)w\rangle$ along the 
lines of Lemma \ref{lem:W}.

\begin{lem}
\label{lem:V}
Let $X=Z_0+\sum_{j=1}^n Z_j$ as in \eqref{eq:model}, define $X^{\sim i}$ 
as in Lemma \ref{lem:W}, let $G=A_0+\sum_{i=1}^N g_iA_i$ as in the proof 
of Lemma \ref{lem:gconc}, and let $\|v\|=\|w\|=1$.
\medskip
\begin{enumerate}[a.]
\itemsep\medskipamount
\item
The function $(g_1,\ldots,g_N)\mapsto \langle v,\varphi(G)w\rangle$ is
$C\|\varphi\|_{W^{3,1}}\sigma_*(X)$-Lipschitz.
\item $|\langle v,\varphi(X)w\rangle-
\langle v,\varphi(X^{\sim i})w\rangle| \lesssim\|\varphi\|_{W^{3,1}}R(X)$.
\item $\sum_{i=1}^n |\langle v,\varphi(X)w\rangle-
\langle v,\varphi(X^{\sim i})w\rangle|^2 \lesssim
\|\varphi\|_{W^{4,1}}^2V$.
\end{enumerate}
\medskip
Here $C$ is a universal constant, and $V$ is defined as $W$ in Lemma 
\ref{lem:W} with $\mathrm{Im}\,z=1$.
\end{lem}

\begin{proof}
By a routine approximation argument, we may assume that $\varphi\in 
C^\infty_c(\mathbb{R})$. Now
note that $\langle v,(z\id-G)^{-1}w\rangle$ is 
$\frac{\sigma_*(X)}{(\mathrm{Im}\,z)^2}$-Lipschitz as in the proof of Lemma 
\ref{lem:gconc}. Thus
Lemma \ref{lem:hellfer} with $p=3$ shows that $\langle 
v,\varphi(G)w\rangle$ is Lipschitz with constant
$$
	\frac{1}{\pi}\cdot
	3\|\varphi\|_{W^{3,1}}\cdot
	\lim_{\varepsilon\downarrow 0}
	\frac{(\sqrt{2})^3}{2!}
	\int_0^\infty \frac{\sigma_*(X)}{(\varepsilon+t)^2}\,t^2e^{-t}\,dt
	\lesssim 
	\|\varphi\|_{W^{3,1}}\sigma_*(X),
$$
which establishes part a. Part b.\ follows in precisely the same manner 
using that $|\langle v,(z\id-X)^{-1}w\rangle-
\langle v,(z\id-X^{\sim i})^{-1}w\rangle|\le 
\frac{2R(X)}{(\mathrm{Im}\,z)^2}$ as in the proof of Lemma \ref{lem:W}.

For part c., we begin by applying Lemma \ref{lem:hellfer} with $p=4$ to 
estimate
\begin{multline*}
	\Bigg(
	\sum_{i=1}^n
	|\langle v,\varphi(X)w\rangle -
	\langle v,\varphi(X^{\sim i})w\rangle|^2\Bigg)^{\frac{1}{2}}
	\le 
	\frac{4}{3!\pi}
	\int_{-\infty}^\infty 
	\bigg|\bigg(1+\frac{d}{dx}\bigg)^4\varphi(x)\bigg|
	\times \mbox{} \\
	\int_0^\infty 
	\Bigg(
	\sum_{i=1}^n	
	|\langle v,((x+t+it)\id-X)^{-1}w\rangle -
	\langle v,((x+t+it)\id-X^{\sim i})^{-1}w\rangle|^2
	\Bigg)^{\frac{1}{2}}
	\,t^3 e^{-t}
	\,dt\,dx,
\end{multline*}
where we used the triangle inequality to bring the Euclidean norm with 
respect to the index $i$ inside the integral. But it follows as in the 
proof of Lemma \ref{lem:W} that the quantity inside the brackets on the 
second line is bounded by the random variable $W$ of Lemma \ref{lem:W} with 
$\mathrm{Im}\,z=t$. The conclusion follows readily.
\end{proof}

We can now complete the proof of Proposition \ref{prop:specconc}.

\begin{proof}[Proof of Proposition \ref{prop:specconc}]
By rescaling $\varphi$, we may assume without loss of generality that 
$\|\varphi\|_{W^{3,1}}=1$ (Gaussian case) or 
$\|\varphi\|_{W^{4,1}}=1$ (non-Gaussian case).
By a union bound, it suffices to consider
separately the real and imaginary parts of $\langle 
v,\varphi(G)w\rangle$ and $\langle v,\varphi(X)w\rangle$, respectively. The 
conclusion now follows using Lemma \ref{lem:V} by 
repeating the proofs of Lemma \ref{lem:gconc} and Proposition 
\ref{prop:ngconc} verbatim with $\mathrm{Im}\,z=1$.
\end{proof}

\section{Universality of spectral statistics}
\label{sec:univstat}

The aim of this section is to prove our main universality principles for 
spectral statistics. The basic idea behind the proofs is that we will 
interpolate between the non-Gaussian and Gaussian models, and estimate the 
rate of change along the interpolation by means of the cumulant expansion 
and trace inequalities. This program will be implemented for the moments, 
resolvent moments, and resolvent in sections \ref{sec:mompf}, 
\ref{sec:respf}, and \ref{sec:smpf}, respectively. Before we do so, 
however, we first introduce some basic constructions that are common to 
all the proofs.

\subsection{Preliminaries}
\label{sec:univstprelim}

We always fix a random matrix $X$ as in \eqref{eq:model}, and let $G$ be 
its Gaussian model. Throughout this section, we will further assume that 
$X$ and $G$ are independent of each other; this will entail no loss of 
generality, as the universality results that are proved in this 
section---Theorems \ref{thm:momentuniv}, \ref{thm:smuniv}, and 
\ref{thm:resuniv}---are independent of the joint distribution of $X$ 
and $G$. Define
$$
	X(t) := \EE X +\sqrt{t}\,(X-\EE X)+\sqrt{1-t}\,(G-\EE G),\qquad
	t\in[0,1].
$$
The random matrix $X(t)$ interpolates between $X(1)=X$ and $X(0)=G$, where
the interpolation is chosen so that $\EE X(t)$ and $\Cov(X(t))$ are 
independent of $t$.
The basic principle behind all the proofs of this section is that we aim 
to compute $\frac{d}{dt}\EE[f(X(t))]$ for the relevant spectral statistic 
$f:\M_d(\mathbb{C})_{\rm sa}\to\mathbb{C}$ using the cumulant 
expansion. To this end, we will choose the random vector $Y_i$ in the 
statements of Theorems \ref{thm:cumpoly} and \ref{thm:cumsm} to 
be the $2d^2$-dimensional vector of the real and imaginary parts of the 
entries of the random matrix $Z_i$. In the following, we will apply the
notation for partitions in section \ref{sec:cumdef} without further 
comment.

For our purposes, it will be convenient to reformulate the resulting 
expansions by combining them with the cumulant formula of Lemma 
\ref{lem:shir}. Before we can do so, we must introduce a simple 
construction that will facilitate working with the second identity of 
Lemma \ref{lem:shir}.
For any $k\in\mathbb{N}$ and partition $\pi\in\mathrm{P}([k])$, define
random matrices $Z_{i1|\pi},\ldots,Z_{ik|\pi}$ $(i\in[n])$ with the
following properties:
\begin{enumerate}[1.]
\item $(Z_{ij|\pi})_{i\in[n]}$ has the same distribution as
$(Z_i)_{i\in[n]}$.
\item $(Z_{ij|\pi})_{i\in[n]}=(Z_{ij'|\pi})_{i\in[n]}$ for indices $j,j'$
that belong to
the same element of $\pi$.
\item $(Z_{ij|\pi})_{i\in[n]}$ are independent for indices $j$ that belong
to distinct elements of $\pi$.
\item $(Z_{ij|\pi})_{i\in[n],j\in[k]}$ is independent
of $X$ and $G$.
\end{enumerate}
This construction will be fixed in the sequel. (We do not 
specify the joint distribution of these matrices for different $k,\pi$ 
as these will not arise in the analysis.) We can now state a version
of Theorem \ref{thm:cumpoly} in the present setting.

\begin{cor}
\label{cor:mtxcumpoly}
For any $f:\M_d(\mathbb{C})_{\rm sa}\to\mathbb{C}$
that is polynomial in the matrix entries,
\begin{multline*}
	\frac{d}{dt}\EE[f(X(t))] = \mbox{}\\
	\frac{1}{2}\sum_{k=3}^\infty
	\frac{t^{\frac{k}{2}-1}}{(k-1)!}
	\sum_{\pi\in\mathrm{P}([k])}
	(-1)^{|\pi|-1}(|\pi|-1)!\,
	\EE\Bigg[
	\sum_{i=1}^n
	\partial_{Z_{i1|\pi}}\cdots\partial_{Z_{ik|\pi}}f(X(t))
	\Bigg]
\end{multline*}
where $\partial_Bf$ denotes the directional derivative of
$f$ in the direction $B\in\M_d(\mathbb{C})_{\rm sa}$.
\end{cor}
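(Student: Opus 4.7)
The plan is to derive the stated identity by applying Theorem \ref{thm:cumpoly} to a scalar coordinate representation of $X(t)$ and converting the resulting joint cumulants into the stated partition sum via the Leonov--Shiryaev inversion (second identity in Lemma \ref{lem:shir}).

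For the setup: since $\Cov(G) = \Cov(X)$ and the $Z_i$ are independent, we may write $G - \EE G = \sum_{i=1}^n G_i$ with $G_1,\ldots,G_n$ independent centered Gaussian self-adjoint matrices such that $\Cov(G_i) = \Cov(Z_i)$. Fix a linear isomorphism $M : \mathbb{R}^{d^2} \to \M_d(\mathbb{C})_{\mathrm{sa}}$ sending the real and imaginary parts of the upper-triangular entries to the corresponding self-adjoint matrix, and set $Y_i := M^{-1}(Z_i)$, $U_i := M^{-1}(G_i)$. These satisfy the hypotheses of Theorem \ref{thm:cumpoly}, and $X(t) = \EE X + \sum_i M(\sqrt{t}\,Y_i + \sqrt{1-t}\,U_i)$, so Theorem \ref{thm:cumpoly} applied to the polynomial $\tilde f(y_1,\ldots,y_n) := f(\EE X + \sum_i M(y_i))$ produces an expansion in which the derivative $\partial^k \tilde f/\partial y_{ij_1}\cdots \partial y_{ij_k}$ at $Y(t)$ is exactly $\partial_{M(e_{j_1})}\cdots \partial_{M(e_{j_k})} f(X(t))$.

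Substituting the moment-cumulant inversion from Lemma \ref{lem:shir} for each $\kappa(Y_{ij_1},\ldots,Y_{ij_k})$ introduces an outer sum over partitions $\pi \in \mathrm{P}([k])$ with signed weights $(-1)^{|\pi|-1}(|\pi|-1)!$ and an inner product $\prod_{J\in\pi}\EE[\prod_{\ell \in J} Y_{i,j_\ell}]$. Interchanging the resulting summations, the inner sum over $(j_1,\ldots,j_k)$ factorizes along the blocks of $\pi$, and for each block $J$ one uses the multilinear identity
\[
\sum_{(j_\ell)_{\ell \in J}}\EE\bigg[\prod_{\ell \in J}Y_{i,j_\ell}\bigg]\prod_{\ell \in J}\partial_{M(e_{j_\ell})} f \;=\; \EE\bigl[\partial_{Z_i^{(J)}}^{|J|} f\bigr],
\]
where $Z_i^{(J)}$ is an additional independent copy of $Z_i$. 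Assigning such a fresh copy to each block (independent across distinct blocks, identified across indices within the same block) reproduces exactly the family $(Z_{ij|\pi})_{j\in[k]}$ from the statement; independence of these copies from $X(t)$ allows the outer expectation to be pulled out, yielding the stated identity.

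The work is essentially bookkeeping; the one point that needs care is the passage from the scalar cumulant/moment expansion in the coordinates $(Y_{ij})$ to the matrix directional-derivative notation, i.e., verifying the displayed multilinear identity above and checking that insertion of an independent copy of $Z_i$ per block of $\pi$ really realizes the factor $\prod_{J\in\pi}\EE[\prod_{\ell \in J}Y_{i,j_\ell}]$. Both assertions follow from the definition of directional derivative and the fact that multilinear operators on $\M_d(\mathbb{C})_{\mathrm{sa}}$ are determined by their values on a basis, so no analytic estimate is required; partitions with a singleton block contribute zero automatically from $\EE[Y_{ij}]=0$, consistent with (and not needed for) the final formula.
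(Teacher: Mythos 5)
Your argument is correct and follows the same route as the paper: pass to real coordinates, invoke Theorem \ref{thm:cumpoly}, apply the second Leonov--Shiryaev identity of Lemma \ref{lem:shir} to each cumulant, and use the independence structure of $(Z_{ij|\pi})$ together with multilinearity of $(B_1,\ldots,B_k)\mapsto\partial_{B_1}\cdots\partial_{B_k}f$ to fold the product of moments into a single expectation. The only cosmetic difference is that you use a genuine isomorphism onto $\mathbb{R}^{d^2}$ via the upper-triangular entries, whereas the paper uses the (overdetermined but equally valid) map $\iota$ to $\mathbb{R}^{2d^2}$; both give the same directional-derivative identity.
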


\begin{proof}
We can clearly write $G-\EE G=\sum_{i=1}^n G_i$, where $G_i$ is the
Gaussian model associated to $Z_i$ and $G_1,\ldots,G_n$ are independent.

Let $\iota:\M_d(\mathbb{C})\to\mathbb{R}^{2d^2}$ be 
defined by 
$\iota(M):=(\mathrm{Re}\,M_{uv},\mathrm{Im}\,M_{uv})_{u,v\in[d]}$, 
let $Y_i=\iota(Z_i)$ and $U_i=\iota(G_i)$, and define $Y(t)$ as in Theorem 
\ref{thm:cumpoly}. Then we have
$$
	\iota(X(t)) = \iota(\EE X) + \sum_{i=1}^n Y_i(t).
$$
In particular, we can equivalently view $f(X(t))=f\big(\EE X + 
\sum_{i=1}^n \iota^{-1}(Y_i(t))\big)$ as a function of $Y(t)$. Applying 
Theorem \ref{thm:cumpoly} to the latter yields
\begin{multline*}
	\frac{d}{dt}\EE[f(X(t))] =
	\frac{1}{2}\sum_{k=3}^\infty
	\frac{t^{\frac{k}{2}-1}}{(k-1)!}\times\mbox{}\\
	\sum_{i=1}^n
	\sum_{\substack{(u_j,v_j,\alpha_j)\in\mathcal{I}\\
	j=1,\ldots,k}}
	\kappa((Z_i)_{u_1v_1}^{\alpha_1},\ldots,(Z_i)_{u_kv_k}^{\alpha_k})\,
	\EE\bigg[
	\frac{\partial^kf}{\partial  M_{u_1v_1}^{\alpha_1}\cdots
	\partial M_{u_kv_k}^{\alpha_k}}(X(t))
	\bigg],
\end{multline*}
where $\mathcal{I}:=[d]\times [d]\times \{\mathrm{R},\mathrm{I}\}$
and we denote $M_{uv}^\mathrm{R}:=\mathrm{Re}\,M_{uv}$ and
$M_{uv}^{\mathrm{I}}:=\mathrm{Im}\,M_{uv}$.
The conclusion follows by applying the second identity 
of Lemma \ref{lem:shir} to the cumulant, and using the independence 
structure of $Z_{ij|\pi}$ to merge the product of expectations in the 
resulting identity into a single expectation.
\end{proof}

The following is the analogous version of Theorem \ref{thm:cumsm}.

\begin{cor}
\label{cor:mtxcumsm}
For any $p\ge 3$ and smooth function $f:\M_d(\mathbb{C})\to\mathbb{C}$ we 
have
\begin{multline*}
	\frac{d}{dt}\EE[f(X(t))] = \mbox{}\\
	\frac{1}{2}\sum_{k=3}^{p-1}
	\frac{t^{\frac{k}{2}-1}}{(k-1)!}
	\sum_{\pi\in\mathrm{P}([k])}
	(-1)^{|\pi|-1}(|\pi|-1)!\,
	\EE\Bigg[
	\sum_{i=1}^n
	\partial_{Z_{i1|\pi}}\cdots\partial_{Z_{ik|\pi}}f(X(t))
	\Bigg]+ \mathcal{R},
\end{multline*}
where the remainder term satisfies
\begin{multline*}
	 |\mathcal{R}| \lesssim
	\sup_{s,t\in[0,1]}
	\Bigg\{
	\Bigg|
	\sum_{i=1}^n
	\EE[\partial_{Z_i}^pf(X(t,i,s))]\Bigg| +
	\\
	 \max_{2\le k\le p-1} 
	\Bigg|
	\sum_{\pi\in\mathrm{P}([k])}
	\frac{(-1)^{|\pi|-1}(|\pi|-1)!}{(k-1)!}\,
	\sum_{i=1}^n
	\EE[
	\partial_{Z_i}^{p-k}
	\partial_{Z_{i1|\pi}}\cdots\partial_{Z_{ik|\pi}}f(X(t,i,s))
	]\Bigg|\Bigg\}
\end{multline*}
with $X(t,i,s) := X(t) - (1-s)\sqrt{t}Z_i$.
\end{cor}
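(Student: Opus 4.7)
The plan is to reduce the matrix-valued statement to the vector-valued Corollary \ref{cor:cumsm} by the same vectorization trick used in the proof of Corollary \ref{cor:mtxcumpoly}, and then to translate the result back into directional-derivative notation using the Leonov--Shiryaev identity (Lemma \ref{lem:shir}).

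First, I would introduce the isomorphism $\iota:\M_d(\mathbb{C})_{\rm sa}\to\mathbb{R}^{2d^2}$ defined by $\iota(M)=(\mathrm{Re}\,M_{uv},\mathrm{Im}\,M_{uv})_{u,v\in[d]}$, set $Y_i=\iota(Z_i)$ and $U_i=\iota(G_i)$ for a suitable Gaussian decomposition of $G-\EE G$, and view the random matrix $X(t)$ as $\EE X+\sum_i \iota^{-1}(Y_i(t))$ where $Y_i(t)=\sqrt{t}Y_i+\sqrt{1-t}U_i$. Then $f(X(t))$ becomes a scalar function of $Y(t)\in\mathbb{R}^{2d^2 n}$, and Corollary \ref{cor:cumsm} applies verbatim, yielding a main sum over $3\le k\le p-1$ and a remainder $\mathcal{R}$ of precisely the form stated in that corollary but written in terms of partial derivatives $\partial/\partial M_{uv}^\alpha$ and cumulants of real/imaginary parts of entries.

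The next step is to rewrite each inner sum in the main term. For a fixed $k$ and $i$, the Leonov--Shiryaev formula expresses the cumulant $\kappa((Z_i)_{u_1v_1}^{\alpha_1},\ldots,(Z_i)_{u_kv_k}^{\alpha_k})$ as $\sum_{\pi\in\mathrm{P}([k])}(-1)^{|\pi|-1}(|\pi|-1)!\prod_{J\in\pi}\EE[\prod_{j\in J}(Z_i)_{u_jv_j}^{\alpha_j}]$. Summing against the partial derivatives of $f$ and using the independence structure of the $Z_{ij|\pi}$ (properties 2--3 of their definition) to merge products of expectations into a single joint expectation, the entire inner sum collapses into $\sum_\pi (-1)^{|\pi|-1}(|\pi|-1)!\,\EE[\sum_i\partial_{Z_{i1|\pi}}\cdots\partial_{Z_{ik|\pi}}f(X(t))]$; this is exactly the computation that converted Theorem \ref{thm:cumpoly} into Corollary \ref{cor:mtxcumpoly}, so no new work is required here.

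It remains to translate the remainder. The two types of sums inside $\mathcal{R}$ in Corollary \ref{cor:cumsm} are $\sum_{j_1,\ldots,j_p}\EE[Y_{ij_1}\cdots Y_{ij_p}\,\partial^p f/\partial y_{ij_1}\cdots\partial y_{ij_p}(Y(t,i,s))]$ and, for $2\le k\le p-1$,
\[
\sum_{j_1,\ldots,j_p}\frac{\kappa(Y_{ij_1},\ldots,Y_{ij_k})}{(k-1)!}\,\EE\bigl[Y_{ij_{k+1}}\cdots Y_{ij_p}\,\partial^pf/\partial y_{ij_1}\cdots\partial y_{ij_p}(Y(t,i,s))\bigr].
\]
For the first sum, the identity $\sum_j v_j\,\partial/\partial y_{ij}=\partial_{\iota^{-1}(v)}$ applied $p$ times recognizes $\sum_{j_1,\ldots,j_p}Y_{ij_1}\cdots Y_{ij_p}\,\partial^p/\partial y_{ij_1}\cdots\partial y_{ij_p}$ as $\partial_{Z_i}^p$, giving the first term in the stated bound on $|\mathcal{R}|$. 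For the second sum I would again apply Lemma \ref{lem:shir} to the $k$-th order cumulant, merge the resulting product of expectations using independence of the $Z_{ij|\pi}$, and note that the remaining $(p-k)$-fold sum against $Y_{ij_{k+1}}\cdots Y_{ij_p}$ produces an additional factor $\partial_{Z_i}^{p-k}$; the image $X(t,i,s)$ is exactly the matrix corresponding to the point $Y_j(t,i,s)=s^{1_{i=j}}\sqrt{t}Y_j+\sqrt{1-t}U_j$ after applying $\iota^{-1}$, which matches the formula $X(t,i,s)=X(t)-(1-s)\sqrt{t}Z_i$.

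The main obstacle I anticipate is purely notational bookkeeping: keeping track of which indices are absorbed by the cumulant (and hence become directional derivatives along independent copies $Z_{ij|\pi}$) versus which are absorbed by the product $Y_{ij_{k+1}}\cdots Y_{ij_p}$ (and become additional directional derivatives along $Z_i$ itself). Once these two mechanisms are kept distinct, the statement follows by combining Corollary \ref{cor:cumsm} with the Leonov--Shiryaev identity exactly as in the proof of Corollary \ref{cor:mtxcumpoly}, with the mild additional observation that $(e-1)$-type combinatorial prefactors from the integral $\int_0^1 (1-s)^{p-k-1}/(p-k-1)!\,ds$ are absorbed into the $\lesssim$ in the stated bound.
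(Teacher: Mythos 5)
Your proposal is correct and takes essentially the same approach as the paper, whose proof of Corollary~\ref{cor:mtxcumsm} consists only of the sentence ``The conclusion follows from Corollary~\ref{cor:cumsm} in exactly the same manner as we derived Corollary~\ref{cor:mtxcumpoly} from Theorem~\ref{thm:cumpoly}.'' Your careful spelling-out of the vectorization via $\iota$, the Leonov--Shiryaev translation of the cumulants into directional derivatives along the $Z_{ij|\pi}$, and the identification of $X(t,i,s)=X(t)-(1-s)\sqrt{t}\,Z_i$ with $\iota^{-1}$ of $Y(t,i,s)$ is precisely the argument the authors intend.
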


\begin{proof}
The conclusion follows from Theorem \ref{thm:cumsm} in exactly the same 
manner as we derived Corollary \ref{cor:mtxcumpoly} from Theorem 
\ref{thm:cumpoly}.
\end{proof}

\subsection{Moments}
\label{sec:mompf}

The aim of this section is to show that the moments
$\EE[\ntr X^{2p}]$ are close to their Gaussian analogues
$\EE[\ntr G^{2p}]$. To this end, we will first compute
$\frac{d}{dt}\EE[\ntr X(t)^{2p}]$ by means of the cumulant expansion, and 
then estimate the individual terms to obtain a differential inequality.

We begin by computing the derivatives of the moment function
$M\mapsto \ntr[M^{2p}]$.

\begin{lem}
\label{lem:momder}
Let $p\in\mathbb{N}$ and $B_1,\ldots,B_k\in\M_d(\mathbb{C})_{\rm sa}$. 
Then
\begin{multline*}
	\partial_{B_1}\cdots\partial_{B_k}\ntr[M^{2p}]
	=\mbox{}\\
	\sum_{\sigma\in\mathrm{Sym}(k)}
	\sum_{\substack{r_1,\ldots,r_{k+1}\ge 0\\
	r_1+\cdots+r_{k+1}=2p-k}}
	\ntr[M^{r_1}B_{\sigma(1)}
	M^{r_2}B_{\sigma(2)}\cdots
	M^{r_k}B_{\sigma(k)}M^{r_{k+1}}].
\end{multline*}
\end{lem}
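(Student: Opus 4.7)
The plan is to prove this identity by direct application of the multilinear Leibniz rule, writing $\ntr[M^{2p}] = \ntr[M \cdot M \cdots M]$ as a trace of a product of $2p$ factors equal to $M$. The key observation is that $\partial_B$ acts as a derivation satisfying $\partial_B M = B$ and $\partial_B B' = 0$ for any deterministic $B' \in \M_d(\mathbb{C})_{\rm sa}$ independent of $M$. Iteratively applying $\partial_{B_1}, \ldots, \partial_{B_k}$ therefore produces a sum where each $\partial_{B_j}$ hits exactly one of the $2p$ factors of $M$ in the product, and since the $B_j$'s carry no $M$-dependence, no two derivatives can hit the same factor.

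First I would make this precise: the $k$-fold directional derivative equals the sum, over all injections $\varphi : [k] \hookrightarrow [2p]$, of the trace obtained by replacing the factor at position $\varphi(j)$ with $B_j$ for each $j \in [k]$. Any such injection is uniquely determined by (i) the increasing enumeration $1 \le p_1 < p_2 < \cdots < p_k \le 2p$ of its image, and (ii) a permutation $\sigma \in \mathrm{Sym}(k)$ specifying which $B_j$ appears at position $p_i$, namely $B_{\sigma(i)}$. With this parametrization, the resulting term in the expansion is exactly
\[
\ntr[M^{r_1}B_{\sigma(1)}M^{r_2}B_{\sigma(2)}\cdots M^{r_k}B_{\sigma(k)}M^{r_{k+1}}],
\]
where $r_1 := p_1 - 1$, $r_i := p_i - p_{i-1} - 1$ for $2 \le i \le k$, and $r_{k+1} := 2p - p_k$, so that $r_1 + \cdots + r_{k+1} = 2p - k$ and all $r_i \ge 0$.

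Next I would verify that the map $(p_1, \ldots, p_k) \mapsto (r_1, \ldots, r_{k+1})$ is a bijection between increasing $k$-tuples in $[2p]$ and nonnegative integer $(k+1)$-tuples summing to $2p - k$, which is a standard ``stars-and-bars'' encoding. Summing over this bijection together with the permutation $\sigma$ yields exactly the right-hand side of the claim.

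As a safety check, one can carry out the proof by induction on $k$ instead: the base case $k = 1$ follows from $\partial_{B_1}\ntr[M^{2p}] = \sum_{r=0}^{2p-1}\ntr[M^r B_1 M^{2p-1-r}]$ by the chain rule and cyclicity, and the inductive step uses that differentiating $\ntr[M^{r_1}B_{\sigma(1)}\cdots M^{r_k}]$ with respect to $B_k$ splits each block $M^{r_i}$ into $M^s B_k M^t$ with $s + t = r_i - 1$, which corresponds bijectively to inserting $B_k$ into one of the $k$ gaps in the $\sigma$-sequence; summing over $\sigma \in \mathrm{Sym}(k-1)$ and over the insertion position produces each $\sigma' \in \mathrm{Sym}(k)$ exactly once. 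There is no real obstacle here; the only thing to watch is the bookkeeping to ensure the bijective correspondence between insertion data and permutations in $\mathrm{Sym}(k)$ is clean.
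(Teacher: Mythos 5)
Your argument is correct and is essentially the same as the paper's, which simply says the identity follows by applying the product rule $k$ times; you have spelled out the combinatorial bookkeeping (injections $\varphi:[k]\hookrightarrow[2p]$, parametrized by a $k$-subset of positions together with a permutation $\sigma\in\mathrm{Sym}(k)$, then re-encoded via the stars-and-bars bijection to $(r_1,\ldots,r_{k+1})$) that the paper leaves implicit. Both the direct multilinear-Leibniz argument and the inductive alternative you sketch are sound.
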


\begin{proof}
This follows by applying the product rule $k$ times.
\end{proof}

We will also need the following estimate.

\begin{lem}
\label{lem:feller}
For any $k\in\mathbb{N}$, we have
$$
        \sum_{\pi\in\mathrm{P}([k])}
        (|\pi|-1)! 
	\le 2^k(k-1)!.
$$
\end{lem}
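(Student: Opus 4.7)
The plan is to reduce the sum to a well-studied combinatorial quantity and then bound that quantity by a clean induction.

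First I would classify partitions by the number of blocks, writing
\[
T_k := \sum_{\pi\in\mathrm{P}([k])} (|\pi|-1)! = \sum_{j=1}^{k} S(k,j)\,(j-1)!,
\]
where $S(k,j)$ is the Stirling number of the second kind. Using the standard recursion $S(k,j) = j\,S(k-1,j) + S(k-1,j-1)$ and reindexing gives
\[
T_k = \sum_{j=1}^{k} S(k-1,j)\,j! + \sum_{j=1}^{k} S(k-1,j-1)(j-1)! = 2\,U_{k-1},
\]
where $U_k := \sum_{j=0}^{k} S(k,j)\,j!$ is the number of ordered set partitions of $[k]$ (the ``Fubini'' or ordered Bell number), with $U_0 = 1$. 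So the lemma reduces to showing $U_{k} \le 2^{k}\,k!$ for $k\ge 0$.

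Next I would prove this bound by strong induction on $k$. Conditioning on the size $m\in\{1,\ldots,k\}$ of the first block in an ordered set partition of $[k]$ gives the recursion
\[
U_k = \sum_{m=1}^{k} \binom{k}{m}\,U_{k-m}.
\]
Assuming inductively that $U_{\ell} \le 2^{\ell}\,\ell!$ for every $\ell < k$, one obtains
\[
U_k \le \sum_{m=1}^{k} \binom{k}{m}\, 2^{k-m}(k-m)! = 2^{k}\,k! \sum_{m=1}^{k} \frac{1}{2^m\, m!}.
\]
The residual sum satisfies $\sum_{m=1}^{\infty} \frac{1}{2^m m!} = e^{1/2}-1 < 1$, so $U_k \le 2^k k!$, closing the induction. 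Combining the two steps yields $T_k = 2\,U_{k-1} \le 2\cdot 2^{k-1}(k-1)! = 2^k(k-1)!$, which is the claim.

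I do not expect any serious obstacle here: the only nontrivial ingredient is the identity $T_k = 2U_{k-1}$, which one can also derive by a direct bijective argument (a pair consisting of a partition with a cyclic ordering of its blocks is built by inserting elements one at a time, with $|\pi|$ choices for placement in an existing block and $|\pi|$ choices for inserting a new singleton into the cyclic order, giving the doubling). The inductive step is completely mechanical once one recognizes that the coefficient $e^{1/2}-1$ is strictly less than $1$.
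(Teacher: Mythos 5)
Your proposal is correct and takes a genuinely different route from the paper's. The paper first inserts the crude pointwise estimate $(|\pi|-1)!\le(|\pi|-1)!\prod_{J\in\pi}|J|!$ and then evaluates the resulting sum \emph{exactly}, generating each $m$-block partition from a composition $(r_1,\ldots,r_m)$ of $k$ so that the multinomial coefficient cancels against $\prod_j r_j!$ and the binomial theorem produces $(2^k-1)(k-1)!$. You instead keep the original sum exact, rewrite it as $T_k=2U_{k-1}$ via the Stirling recursion, and bound the ordered Bell number $U_k$ by an induction driven by the first-block recursion $U_k=\sum_{m=1}^k\binom{k}{m}U_{k-m}$, closed by $\sum_{m\ge1}2^{-m}/m!=e^{1/2}-1<1$. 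Both arguments are short and self-contained, and both actually give a bit more than stated ($(2^k-1)(k-1)!$ for the paper, $2^k(k-1)!(e^{1/2}-1)$ for you). One small point you should add: the identity $T_k=2U_{k-1}$ as you derive it requires $k\ge2$, since for $k=1$ your first sum $\sum_{j=1}^1 S(0,j)\,j!$ equals $0$ rather than $U_0=1$; the case $k=1$ must be checked directly, which is immediate since $T_1=1\le2$. A contextual advantage of the paper's route is that its intermediate quantity $\sum_\pi(|\pi|-1)!\prod_{J\in\pi}|J|!$ is reused verbatim in the proof of Theorem~\ref{thm:subexpmom}, where the slack in the crude inequality is precisely what absorbs the factors $\prod_{J\in\pi}\EE[\eta_i^{|J|}]\le\prod_{J\in\pi}\kappa^{|J|}|J|!$; that extra structure is hidden in your derivation.
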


\begin{proof}
We first crudely estimate
$$
        \sum_{\pi\in\mathrm{P}([k])}
        (|\pi|-1)! 
	\le
        \sum_{\pi\in\mathrm{P}([k])}
        (|\pi|-1)! 
	\prod_{J\in\pi} |J|!.
$$
Now note that any partition of $[k]$ into $m$ parts can be generated by 
first choosing $r_1,\ldots,r_m\ge 1$ such that $r_1+\cdots+r_m=k$, and 
then choosing disjoint sets $J_1,\ldots,J_m$ with $|J_i|=r_i$. Moreover, 
each distinct partition is generated precisely $m!$ times in this manner, 
as relabeling the sets $J_i$ does not change the partition. Therefore
\begin{align*}
        \sum_{\pi\in\mathrm{P}([k])}
        (|\pi|-1)! 
	\prod_{J\in\pi} |J|! &=
	\sum_{m=1}^k (m-1)!\, \frac{1}{m!}
	\sum_{\substack{r_1,\ldots,r_m\ge 1
	\\ r_1+\cdots+r_m=k}}
	{k\choose r_1,\ldots,r_m}
	\prod_{j=1}^m r_j!
	\\ &=
	(k-1)!
	\sum_{m=1}^k 
	{k\choose m} = (2^k-1)(k-1)!,
\end{align*}
where the second equality follows as the number of $m$-tuples of
positive integers that sum to $k$ is ${k-1\choose m-1}$, and the 
last equality holds by the binomial theorem.
\end{proof}

We are now ready to apply the cumulant expansion.

\begin{prop}
\label{prop:momdiffeq}
For any $p\in\mathbb{N}$ with $p\ge 2$,
$2p\le q\le\infty$, and $t\in[0,1]$, we have
\begin{multline*}
	\bigg|\frac{d}{dt}\EE[\ntr X(t)^{2p}]\bigg| \le
	64p^3 \max\{R_q(X)\sigma_q(X)^2,R_q(X)^3\}\times\\
	\max\big\{
        \EE[\ntr X(t)^{2p}]^{1-\frac{3}{2p}},
	(8pR_q(X))^{2p-3}
	\big\}.
\end{multline*}
\end{prop}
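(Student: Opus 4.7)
The strategy is to apply the cumulant expansion of Corollary~\ref{cor:mtxcumpoly} to the polynomial $f(M)=\ntr M^{2p}$ (so the expansion truncates at $k=2p$), estimate each resulting trace by Proposition~\ref{prop:trace}, and sum the resulting finite series in $k$. By Lemma~\ref{lem:momder}, each mixed derivative $\partial_{Z_{i1|\pi}}\cdots\partial_{Z_{ik|\pi}}\ntr X(t)^{2p}$ expands as a sum over $\sigma\in\mathrm{Sym}(k)$ and compositions $r_1+\cdots+r_{k+1}=2p-k$ of traces $\ntr[X(t)^{r_1}Z_{i\sigma(1)|\pi}X(t)^{r_2}\cdots Z_{i\sigma(k)|\pi}X(t)^{r_{k+1}}]$. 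A cyclic rotation of the trace absorbs $X(t)^{r_1}$ into $X(t)^{r_{k+1}}$, leaving exactly $k$ factors $X(t)^{s_1},\dots,X(t)^{s_k}$ interlaced with the $Z$'s, where $\sum_j s_j=2p-k$.

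Next I apply Proposition~\ref{prop:trace} to $\sum_i\EE[\ntr Z_{i\sigma(1)|\pi}X(t)^{s_1}\cdots Z_{i\sigma(k)|\pi}X(t)^{s_k}]$. Its hypothesis is satisfied because the construction in section~\ref{sec:univstprelim} ensures each $Z_{ij|\pi}$ is distributed like $Z_i$ and the whole family is independent of $X(t)$; the matrices $Z_{ij|\pi}$ within the same partition block are allowed to be mutually dependent. Choosing Schatten exponents via $p_js_j=(2p-k)q/(q-k)=:C$ (with the convention $p_j=\infty$ when $s_j=0$), the product $\prod_j\|X(t)^{s_j}\|_{p_j}$ collapses to $\|X(t)\|_C^{2p-k}$. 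Since $C\le 2p$ and the normalized Schatten norm is monotone in its index (Lyapunov), $\|X(t)\|_C\le\|X(t)\|_{2p}$, so each trace is bounded by $A_k\,E^{2p-k}$ where $A_k:=R_q(X)^{(k-2)q/(q-2)}\sigma_q(X)^{2(q-k)/(q-2)}$ and $E:=\EE[\ntr X(t)^{2p}]^{1/(2p)}$. Writing $A_k=R_q^{k-2}\sigma_q^2(R_q/\sigma_q)^{(k-2)\delta}$ with $\delta=2/(q-2)$, and noting that $(k-2)\delta\le 2$ for $k\le 2p$ and $q\ge 2p\ge 4$, a two-case split on whether $R_q\le\sigma_q$ gives $A_k\le\max\{R_q^{k-2}\sigma_q^2,\,R_q^k\}$.

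Collecting combinatorial factors, $k!$ permutations from Lemma~\ref{lem:momder}, $\binom{2p}{k}$ compositions, $\sum_\pi(|\pi|-1)!\le 2^k(k-1)!$ from Lemma~\ref{lem:feller}, the $1/(k-1)!$ prefactor of the cumulant expansion, the overall factor $\tfrac{1}{2}$, and the estimate $k!\binom{2p}{k}\le(2p)^k$ combine to $\tfrac{1}{2}(4p)^k$ per trace. It then remains to bound
\[
\tfrac{1}{2}\sum_{k=3}^{2p}(4p)^k\max\{R_q^{k-2}\sigma_q^2,\,R_q^k\}\,E^{2p-k}.
\]
If $E\ge 8pR_q$, the ratio $4pR_q/E\le\tfrac12$ makes this series geometric and dominated by twice its $k=3$ term; the arithmetic yields $64p^3\max\{R_q\sigma_q^2,R_q^3\}E^{2p-3}$. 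If $E<8pR_q$, substituting $E'=8pR_q\ge E$ into each summand produces $(4pR_q)^k(8pR_q)^{2p-k}=(8pR_q)^{2p}(1/2)^k$, and $\sum_{k\ge 3}(1/2)^k=1/4$ together with $(8pR_q)^3=512p^3R_q^3$ gives the analogous bound with $(8pR_q)^{2p-3}$ in place of $E^{2p-3}$.

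The main difficulty is combinatorial orchestration rather than any single estimate: one must align the $(k-1)!$ in the cumulant prefactor with the partition sum of Lemma~\ref{lem:feller}, perform the cyclic-trace reduction from $k+1$ to $k$ matrix slots, balance the Schatten exponents so that every power of $X(t)$ collapses into the single quantity $E=\|X(t)\|_{2p}$, and interpolate $A_k$ to expose both the $R_q^{k-2}\sigma_q^2$ and $R_q^k$ regimes. With these choices the dominant $k=3$ contribution delivers the advertised constant $64p^3$ with no loss.
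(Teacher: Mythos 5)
Your proposal uses the same core machinery as the paper: the cumulant expansion of Corollary~\ref{cor:mtxcumpoly} applied to $\ntr M^{2p}$ (which truncates at $k=2p$), the derivative formula of Lemma~\ref{lem:momder}, the trace inequality of Proposition~\ref{prop:trace} with Schatten exponents chosen so every power of $X(t)$ collapses to $\|X(t)\|_{2p}^{2p-k}$, and the same combinatorial count ($\tfrac{1}{2}(4p)^k$ per term via Lemma~\ref{lem:feller} and $k!\binom{2p}{k}\le (2p)^k$). The tail of the argument is organized differently but is of comparable weight: the paper extracts $2^{-k}$ to bound the finite sum by $\tfrac{1}{8}\max_k$ of the summands and then invokes convexity in $k$ to reduce to the endpoints $k\in\{3,2p\}$, whereas you first simplify $A_k\le\max\{R_q^{k-2}\sigma_q^2,R_q^k\}$ by a case split on $R_q\lessgtr\sigma_q$ (correct, since the exponents of $R_q,\sigma_q$ in $A_k$ sum to $k$ with $(k-2)\cdot\tfrac{2}{q-2}\le 2$ under $q\ge 2p\ge 4$) and then dominate the sum by a geometric series, splitting on $E\lessgtr 8pR_q$. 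Both routes land on the same constant.

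One small slip in your case $E<8pR_q$: after substituting $E'=8pR_q$, the summand is not $(4pR_q)^k(8pR_q)^{2p-k}$ — that replaces $\max\{R_q^{k-2}\sigma_q^2,R_q^k\}$ by only its second branch $R_q^k$, which is not an upper bound. Factor it as $\max\{R_q^{k-2}\sigma_q^2,R_q^k\}=R_q^{k-3}\max\{R_q\sigma_q^2,R_q^3\}$ instead; the summand then becomes $(8pR_q)^{2p}(1/2)^k\cdot R_q^{-3}\max\{R_q\sigma_q^2,R_q^3\}$ and the geometric sum $\tfrac12\sum_{k\ge 3}(1/2)^k=\tfrac18$ together with $(8pR_q)^{2p}R_q^{-3}=512p^3(8pR_q)^{2p-3}$ gives exactly $64p^3\max\{R_q\sigma_q^2,R_q^3\}(8pR_q)^{2p-3}$. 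The final bound you state is therefore correct; only the intermediate display needs this adjustment.
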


\begin{proof}
Combining Corollary \ref{cor:mtxcumpoly} and Lemma \ref{lem:momder}
yields
\begin{multline*}
	\frac{d}{dt}\EE[\ntr X(t)^{2p}] = 
	\frac{1}{2}\sum_{k=3}^{2p}
	kt^{\frac{k}{2}-1}
	\sum_{\pi\in\mathrm{P}([k])}
	(-1)^{|\pi|-1}(|\pi|-1)! \times\mbox{}\\ 
	\sum_{\substack{r_1,\ldots,r_{k+1}\ge 0\\
	r_1+\cdots+r_{k+1}=2p-k}}
	\sum_{i=1}^n
	\EE[
	\ntr X(t)^{r_1}Z_{i1|\pi}
	X(t)^{r_2}Z_{i2|\pi}\cdots
	X(t)^{r_k}Z_{ik|\pi}X(t)^{r_{k+1}}].
\end{multline*}
Here we used that as 
$(Z_{i\sigma(j)|\pi})_{i\in[n],j\in[k]}$ and
$(Z_{ij|\sigma^{-1}(\pi)})_{i\in[n],j\in[k]}$ have the same distribution
for any permutation $\sigma$, we can eliminate the sum over $\sigma$ in 
Lemma \ref{lem:momder} by symmetry. 
Now let $r=\frac{(2p-k)q}{q-k}$, so that $2p-k\le r\le 2p$.
Let $p_j=\frac{r}{r_{j+1}}$ for $j<k$ and $p_k=\frac{r}{r_{k+1}+r_1}$.
Then we can apply Proposition \ref{prop:trace} to estimate
\begin{align*}
	&\Bigg|
	\sum_{i=1}^n
	\EE[
	\ntr X(t)^{r_1}Z_{i1|\pi}
	X(t)^{r_2}Z_{i2|\pi}\cdots
	X(t)^{r_k}Z_{ik|\pi}X(t)^{r_{k+1}}]
	\Bigg|
	\\ 
	&\qquad\le
	R_q(X)^{\frac{(k-2)q}{q-2}}\sigma_q(X)^{\frac{2(q-k)}{q-2}}
	\EE[\ntr X(t)^r]^{\frac{2p-k}{r}}
	\\ &\qquad\le
	R_q(X)^{\frac{(k-2)q}{q-2}}\sigma_q(X)^{\frac{2(q-k)}{q-2}}
	\EE[\ntr X(t)^{2p}]^{1-\frac{k}{2p}}
\end{align*}
for any $r_1,\ldots,r_{k+1}\ge 0$ with $r_1+\cdots+r_{k+1}=2p-k$.
It follows that
$$
	\bigg|\frac{d}{dt}\EE[\ntr X(t)^{2p}]\bigg| \le 
	\frac{1}{2}\sum_{k=3}^{2p}
	(4p)^k
	R_q(X)^{\frac{(k-2)q}{q-2}}\sigma_q(X)^{\frac{2(q-k)}{q-2}}
        \EE[\ntr X(t)^{2p}]^{1-\frac{k}{2p}}
$$
using Lemma \ref{lem:feller}, $t\le 1$, and that 
the number of $(k+1)$-tuples of nonnegative integers that sum to
$2p-k$ is ${2p\choose k}\le \frac{(2p)^k}{k!}$. 
To simplify the expression, we estimate
\begin{align*}
	\bigg|\frac{d}{dt}\EE[\ntr X(t)^{2p}]\bigg| &\le 
	\frac{1}{2}\sum_{k=3}^{2p} 2^{-k}
	(8p)^k
	R_q(X)^{\frac{(k-2)q}{q-2}}\sigma_q(X)^{\frac{2(q-k)}{q-2}}
        \EE[\ntr X(t)^{2p}]^{1-\frac{k}{2p}}
	\\ &\le
	\frac{1}{8}
	\max_{3\le k\le 2p}
	(8p)^k
	R_q(X)^{\frac{(k-2)q}{q-2}}\sigma_q(X)^{\frac{2(q-k)}{q-2}}
        \EE[\ntr X(t)^{2p}]^{1-\frac{k}{2p}}
	\\ &\le
	64p^3
	\max\big\{
	R_q(X)^{\frac{q}{q-2}}\sigma_q(X)^{\frac{2(q-3)}{q-2}}
        \EE[\ntr X(t)^{2p}]^{1-\frac{3}{2p}},
	\\
	&\phantom{\mbox{}\le 64p^3 \max\big\{\mbox{}}
	(8p)^{2p-3}
	R_q(X)^{\frac{(2p-2)q}{q-2}}\sigma_q(X)^{\frac{2(q-2p)}{q-2}}
	\big\}.
\end{align*}
Here we used that the term inside the maximum on the second line is convex 
as a function of $k$, so that the maximum 
is attained at one of the endpoints $k\in\{3,2p\}$.
The proof is readily concluded using that
$R^{\frac{q}{q-2}-1}\sigma^{\frac{2(q-3)}{q-2}}
\le (\max\{R,\sigma\})^2$ and
$R^{\frac{(2p-2)q}{q-2}-(2p-3)-1}\sigma^{\frac{2(q-2p)}{q-2}}
\le (\max\{R,\sigma\})^2$ (as $q\ge 2p\ge 4$ implies that 
in both cases the exponents on the left-hand side are positive and sum to 
$2$).
\end{proof}

It remains to solve the differential inequality in the statement of 
Proposition \ref{prop:momdiffeq}. To this end we will use the following 
simple lemma.

\begin{lem}
\label{lem:diffineq}
Let $f:[0,1]\to\mathbb{R}_+$, $C,K\ge 0$, and $\alpha\in[0,1]$. Suppose 
that
$$
	\bigg|\frac{d}{dt}f(t)\bigg| \le
	C\max\{f(t)^{1-\alpha},K^{1-\alpha}\}
$$
for all $t\in[0,1]$. Then
$$
	|f(1)^\alpha-f(0)^\alpha| \le C\alpha +
	K^\alpha.
$$
\end{lem}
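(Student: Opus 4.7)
The plan is to reduce the statement to a clean differential inequality by truncating $f$ from below at $K$, which effectively eliminates the ``max'' in the hypothesis. Concretely, set
\[
\tilde f(t) := \max\{f(t),K\}.
\]
If $f$ is absolutely continuous (which is all the application needs, since $\frac{d}{dt}\EE[\ntr X(t)^{2p}]$ is a polynomial in $\sqrt{t},\sqrt{1-t}$), then so is $\tilde f$, and at a.e.\ $t$ one has $\tilde f'(t)=f'(t)\mathbf{1}_{\{f(t)>K\}}$. Consequently
\[
|\tilde f'(t)|\le |f'(t)|\,\mathbf{1}_{\{f(t)\ge K\}}
\le C f(t)^{1-\alpha}\mathbf{1}_{\{f(t)\ge K\}}
= C\tilde f(t)^{1-\alpha}\mathbf{1}_{\{f(t)\ge K\}}\le C\tilde f(t)^{1-\alpha}.
\]
Here I used that on $\{f(t)\ge K\}$ we have $\tilde f(t)=f(t)$, while on $\{f(t)<K\}$ the derivative of $\tilde f$ vanishes.

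With this in hand, the chain rule (valid a.e.\ since $\tilde f\ge K>0$, so $x\mapsto x^\alpha$ is smooth on the range of $\tilde f$) gives
\[
\left|\frac{d}{dt}\tilde f(t)^\alpha\right|
=\alpha\tilde f(t)^{\alpha-1}|\tilde f'(t)|\le C\alpha
\]
at a.e.\ $t\in[0,1]$. Integrating yields the clean bound
\[
\bigl|\tilde f(1)^\alpha-\tilde f(0)^\alpha\bigr|\le C\alpha.
\]

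It remains to remove the truncation. For $\alpha\in[0,1]$ and $a,b\ge 0$ one has $\max(a,b)^\alpha\le(a+b)^\alpha\le a^\alpha+b^\alpha$ by subadditivity of $x\mapsto x^\alpha$, so $f(t)^\alpha\le\tilde f(t)^\alpha\le f(t)^\alpha+K^\alpha$. Combining this sandwich with the bound above gives
\[
f(1)^\alpha\le\tilde f(1)^\alpha\le\tilde f(0)^\alpha+C\alpha\le f(0)^\alpha+K^\alpha+C\alpha,
\]
and the symmetric inequality with $0$ and $1$ swapped, which together yield $|f(1)^\alpha-f(0)^\alpha|\le C\alpha+K^\alpha$ as desired. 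The only mild subtlety is ensuring enough regularity of $\tilde f$ to justify the a.e.\ chain rule; since in every intended application $f$ is smooth in $t$, this is not a real obstacle, but if one wishes to state the lemma under minimal hypotheses the cleanest route is to assume $f$ absolutely continuous and invoke the a.e.\ differentiability of a maximum of two absolutely continuous functions.
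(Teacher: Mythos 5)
Your proof is correct and follows essentially the same route as the paper's: the paper works with $(f(t)+K)^\alpha$ instead of $\max\{f(t),K\}^\alpha$, bounds its derivative by $C\alpha$ via the chain rule, integrates, and then removes the shift using $x^\alpha-y^\alpha\le (x+K)^\alpha-(y+K)^\alpha+K^\alpha$, which is the same idea as your sandwich $f^\alpha\le\tilde f^\alpha\le f^\alpha+K^\alpha$. The only practical difference is that the additive shift $f+K$ is exactly as smooth as $f$, so the paper sidesteps the a.e.-differentiability caveat for $\max\{f,K\}$ that you rightly flag at the end.
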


\begin{proof}
It follows readily by the chain rule that
$$
	\bigg|\frac{d}{dt}(f(t)+K)^\alpha\bigg| =
	\alpha (f(t)+K)^{\alpha-1}
	\bigg|\frac{d}{dt}f(t)\bigg|
	\le
	C\alpha,
$$
so that
$$
	|(f(1)+K)^\alpha-(f(0)+K)^\alpha| =
	\bigg|
	\int_0^1  \frac{d}{dt}(f(t)+K)^\alpha\, dt\bigg|
	\le
	C\alpha.
$$
The conclusion follows as
$x^\alpha-y^\alpha \le
(x+K)^\alpha - (y+K)^\alpha + K^\alpha$
for any $x,y\ge 0$.
\end{proof}

We can now conclude the proof of Theorem \ref{thm:momentuniv}.

\begin{proof}[Proof of Theorem \ref{thm:momentuniv}: first inequality]
If $p=1$, then $\EE[\ntr X(t)^{2p}]=\EE[\ntr X^2]$ is independent of $t$ 
by construction, and the conclusion is trivial. If $p\ge 2$, we can 
apply Lemma \ref{lem:diffineq} with $\alpha=\frac{3}{2p}\in[0,1]$
and Proposition \ref{prop:momdiffeq} to obtain
$$
	|\EE[\ntr X^{2p}]^{\frac{3}{2p}}-
	\EE[\ntr G^{2p}]^{\frac{3}{2p}}|
	\le
	96 p^2 \max\{R_q(X)\sigma_q(X)^2,R_q(X)^3\}
	+ (8pR_q(X))^{3}.
$$
The conclusion follows as
$|{}x^{\frac{1}{3}}-y^{\frac{1}{3}}|\le |x-y|^{\frac{1}{3}}$ for 
$x,y\ge 0$.
\end{proof}

The second inequality of Theorem \ref{thm:momentuniv} follows by a slight
variation of the proof.

\begin{proof}[Proof of Theorem \ref{thm:momentuniv}: second inequality]
Note that $\sigma_{2p}(X)=\sigma_{2p}(X(t))$ for all $t$, as
the definition of $\sigma_{2p}(X)$ depends only on $\mathrm{Cov}(X)$.
We can therefore estimate
$$
	\sigma_{2p}(X)^{2p} =
	\ntr\, (\EE[X(t)^2] - \EE[X(t)]^2)^p
	\le
	\ntr \EE[X(t)^2]^p
	\le \EE[\ntr X(t)^{2p}]
$$
for every $t\in[0,1]$. Here we used that
$\ntr A^p \le \ntr B^p$ for $B\ge A\ge 0$, and
that $A\mapsto \ntr A^p$ is convex for $A\ge 0$
\cite[Theorem 2.10]{Car10}.
Furthermore, note that 
$$
	R_{2p}(X) 
	\le R_2(X)^{\frac{q-2p}{p(q-2)}} 
	R_q(X)^{\frac{(p-1)q}{p(q-2)}}
	\le \sigma_{2p}(X)^{\frac{q-2p}{p(q-2)}} 
	R_q(X)^{\frac{(p-1)q}{p(q-2)}}
$$
by the Riesz convexity theorem and as 
$R_2(X)=\sigma_2(X)\le\sigma_{2p}(X)$.
Consequently, we can bound the differential inequality in the
proof of Proposition \ref{prop:momdiffeq} as
\begin{align*}
	\bigg|\frac{d}{dt}\EE[\ntr X(t)^{2p}]\bigg| 
	&\le 
	\frac{1}{2}\sum_{k=3}^{2p}
	(4p)^k
	R_{2p}(X)^{\frac{(k-2)p}{p-1}}\sigma_{2p}(X)^{\frac{2p-k}{p-1}}
        \EE[\ntr X(t)^{2p}]^{1-\frac{k}{2p}}
\\
	&\le 
	\frac{1}{2}\sum_{k=3}^{2p}
	(4p)^k
        R_q(X)^{\frac{(k-2)q}{q-2}}
        \EE[\ntr X(t)^{2p}]^{1-\frac{k}{2p}+\frac{q-k}{p(q-2)}}
\\
	&\le 
	\frac{1}{8}
	\max_{3\le k\le 2p}
	(8p)^k
        R_q(X)^{\frac{(k-2)q}{q-2}}
        \EE[\ntr X(t)^{2p}]^{1-\frac{k}{2p}+\frac{q-k}{p(q-2)}}.
\end{align*}
By convexity, we may bound all terms in the maximum by their value at 
either $k=2+\frac{q-2}{q}\le 3$ or $k=2+2p\frac{q-2}{q}\ge 2p$, so that
$$
	\bigg|\frac{d}{dt}\EE[\ntr X(t)^{2p}]\bigg| \le
	\frac{(8p)^{2+\frac{q-2}{q}}
        R_q(X)}{8}
	\max\big\{
        \EE[\ntr X(t)^{2p}]^{1-\frac{1}{2p}},
	\big((8p)^{\frac{q-2}{q}}
        R_q(X)\big)^{2p-1}\big\}.
$$
The conclusion follows from Lemma \ref{lem:diffineq}.
\end{proof}

\begin{rem}
That there is considerable room in the proof of Theorem 
\ref{thm:momentuniv} is evident from the crude inequality in the first 
equation display of the proof of Lemma \ref{lem:feller}. This additional 
room can be used to capture models whose summands $Z_i$ are not 
uniformly bounded, but have subexponential tails. For the purposes of 
this paper, such an extension is not needed as the truncation method that 
will be developed in section \ref{sec:trunc} below yields far more general 
results. However, this extra room can be of significant utility in 
extending the approach of this paper to random matrices that are not 
captured by the independent sum model \eqref{eq:model}, cf.\ \cite{PvH23}. 
\end{rem}

\subsection{Resolvent moments}
\label{sec:respf}

The aim of this section is to prove the following universality principle 
for the moments of the resolvent. This result will form the basis for the 
proof of Theorem \ref{thm:specuniv} (which is given in section 
\ref{sec:univspec} below).

\begin{thm}[Resolvent moments universality]
\label{thm:resuniv}
We have
$$
	\big|\EE[\ntr |z\id-X|^{-2p}]^{\frac{1}{2p}} -
	\EE[\ntr |z\id-G|^{-2p}]^{\frac{1}{2p}}\big|
	\lesssim
	\frac{R(X)\sigma(X)^2p^2+R(X)^3p^3}{
	(\mathrm{Im}\,z)^4}
$$
for any $p\in\mathbb{N}$ and
$z\in\mathbb{C}$ with $\mathrm{Im}\,z>0$.
\end{thm}

The proof of Theorem \ref{thm:resuniv} is very similar in spirit to that 
of Theorem \ref{thm:momentuniv}. However, as the resolvent is not a 
polynomial (and does not have a globally convergent power series), we must 
truncate the cumulant expansion as in Corollary \ref{cor:mtxcumsm}.

We begin by computing the derivatives of $M\mapsto\ntr |z\id - M|^{-2p}$.

\begin{lem}
\label{lem:resder}
Let $z\in\mathbb{C}$ with $\mathrm{Im}\,z>0$,
$p\in\mathbb{N}$, and $M,B_1,\ldots,B_k\in\M_d(\mathbb{C})_{\rm sa}$.
Denote the resolvent of $M$ as $\RR_M(z) := (z\id - M)^{-1}$. Then
\begin{align*}
	&\partial_{B_1}\cdots\partial_{B_k}\ntr |z\id - M|^{-2p} 
	= \mbox{}
\\
	&\quad
	\sum_{\sigma\in\mathrm{Sym}(k)}
	\sum_{\substack{l,m\ge 0\\l+m=k}}
	\sum_{\substack{r_1,\ldots,r_{l+1}\ge 1\\
	r_1+\cdots+r_{l+1}=p+l}}
	\sum_{\substack{s_1,\ldots,s_{m+1}\ge 1\\
	s_1+\cdots+s_{m+1}=p+m}}
	\ntr[
	\RR_M(z)^{r_1} B_{\sigma(1)}
\\
	&\quad\qquad
	\cdots
	\RR_M(z)^{r_l} B_{\sigma(l)}
	\RR_M(z)^{r_{l+1}}
	\RR_M(\bar z)^{s_1} B_{\sigma(l+1)} 
	\cdots
	\RR_M(\bar z)^{s_m} B_{\sigma(k)} 
	\RR_M(\bar z)^{s_{m+1}}].
\end{align*}
In particular,
$$
	\big|\partial_{B_1}\cdots\partial_{B_k}\ntr |z\id - M|^{-2p}\big|
	\le
	\frac{(2p-1+k)!}{(2p-1)!}
	\frac{\|B_1\|_k\cdots \|B_k\|_k}{(\mathrm{Im}\,z)^{2p+k}}.
$$
\end{lem}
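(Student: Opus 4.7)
The starting point is the observation that for self-adjoint $M$, we have $(z\id-M)^* = \bar z\id - M$, and these two matrices commute. Hence
\[
	|z\id - M|^{-2p} = [(\bar z\id-M)(z\id-M)]^{-p} = R_M(z)^p\, R_M(\bar z)^p,
\]
and therefore $\ntr|z\id-M|^{-2p} = \ntr[R_M(z)^p R_M(\bar z)^p]$. The basic identity we will iterate is $\partial_B R_M(z) = R_M(z)B\,R_M(z)$, obtained from $\frac{d}{dt}\big|_{t=0}(z\id - M - tB)^{-1}$ and the resolvent identity $A^{-1}-B^{-1}=A^{-1}(B-A)B^{-1}$. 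This same identity applies with $z$ replaced by $\bar z$.

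The plan is to apply $\partial_{B_1}\cdots\partial_{B_k}$ to the product $R_M(z)^p R_M(\bar z)^p$ via the product rule. Each derivative hits exactly one of the currently present resolvent factors (either a $R_M(z)$ or a $R_M(\bar z)$), splitting it into two copies of the same resolvent with the corresponding $B_j$ inserted between them. Since directional derivatives commute, summing over all ways of distributing the $k$ derivatives amongst the factors corresponds to choosing an ordering $\sigma\in\mathrm{Sym}(k)$ of the $B$'s from left to right, a split $l+m=k$ describing how many insertions land in the $R_M(z)^p$ block versus the $R_M(\bar z)^p$ block, and compositions $r_1+\cdots+r_{l+1}=p+l$ and $s_1+\cdots+s_{m+1}=p+m$ with $r_i,s_j\ge 1$ prescribing the resulting exponents of the resolvents in each gap. (The constraint $r_i,s_j\ge 1$ arises because each insertion adds exactly one resolvent to the block into which it is inserted, and because the original $p$ resolvents were already present on both sides of every later-inserted~$B$.) Matching these enumerations with the right-hand side of the displayed formula completes the identity.

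For the pointwise bound, I would apply Schatten--H\"older (Lemma \ref{lem:holder}) to each term, placing every $B_j$ in $S_k$ (so $\sum_j 1/k = 1$) and every resolvent power in $S_\infty$. Since $\|R_M(z)^r\|\le(\mathrm{Im}\,z)^{-r}$ and the total exponent of all resolvent factors is $(p+l)+(p+m) = 2p+k$, each term is bounded by $\|B_1\|_k\cdots\|B_k\|_k/(\mathrm{Im}\,z)^{2p+k}$. It remains to count the number of terms: summing $\binom{p+l-1}{l}\binom{p+m-1}{m}$ over $l+m=k$ gives $\binom{2p+k-1}{k}$ by Vandermonde's identity, and multiplying by $k!$ for the sum over $\sigma$ yields the factor $\frac{(2p+k-1)!}{(2p-1)!}$, matching the stated bound.

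The main conceptual step is keeping careful track of the combinatorics of insertions and verifying that the lower bound $r_i,s_j\ge 1$ is correct; the rest is a routine application of the product rule and the Schatten--H\"older inequality. I do not expect any real obstacle beyond organizing the bookkeeping.
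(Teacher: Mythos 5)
Your proposal is correct and follows essentially the same route as the paper: rewrite $\ntr|z\id-M|^{-2p}=\ntr[R_M(z)^pR_M(\bar z)^p]$, iterate the product rule with $\partial_B R_M(z)=R_M(z)BR_M(z)$ to get the identity, then bound each term by Schatten--H\"older together with $\|R_M(z)\|\le(\mathrm{Im}\,z)^{-1}$ and count the terms. The only (immaterial) difference is that you count the terms explicitly via the Vandermonde convolution $k!\sum_{l+m=k}\binom{p+l-1}{l}\binom{p+m-1}{m}=\frac{(2p+k-1)!}{(2p-1)!}$, whereas the paper observes the same count by specializing the identity to $d=1$.
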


\begin{proof}
The identity follows by applying the product rule $k$ times to
$\ntr |z\id - M|^{-2p} = \ntr[\RR_M(z)^p \RR_M(\bar z)^p]$ and using that
$\partial_B \RR_M(z) = \RR_M(z)B\RR_M(z)$. To prove the inequality, note 
that 
each summand is bounded by 
$(\mathrm{Im}\,z)^{-2p-k}\|B_1\|_k\cdots\|B_k\|_k$ by
H\"older's inequality and $\|\RR_M(z)\|\le |\mathrm{Im}\,z|^{-1}$, 
while the sums have $\frac{(2p-1+k)!}{(2p-1)!}$ terms
(the latter is most easily seen by applying the first identity with 
$d=1$.)
\end{proof}

We can now apply the cumulant expansion.

\begin{prop}
\label{prop:resdiffeq}
For any $z\in\mathbb{C}$ with $\mathrm{Im}\,z>0$, $p\in\mathbb{N}$, and 
$t\in[0,1]$, we have
\begin{multline*}
	\bigg|\frac{d}{dt}\EE[\ntr |z\id - X(t)|^{-2p}]\bigg| 
\\	\lesssim
	\frac{p^3 R(X)\sigma(X)^2}{(\mathrm{Im}\,z)^4}
	\max\Bigg\{
	\EE[\ntr |z\id - X(t)|^{-2p}]^{1-\frac{1}{2p}},
	\frac{(32pR(X))^{6p-3}}{(\mathrm{Im}\,z)^{8p-4}}
	\Bigg\}.
\end{multline*}
\end{prop}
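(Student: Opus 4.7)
The plan is to adapt the strategy used for moment universality (Proposition \ref{prop:momdiffeq}) to the resolvent, with two new ingredients. First, since $f(M) := \ntr|z\id-M|^{-2p}$ is not polynomial, I will invoke the truncated cumulant expansion of Corollary \ref{cor:mtxcumsm} with truncation order $p_0 = 6p$ (a choice dictated by the form of the second branch of the $\max$). Second, a more delicate choice of H\"older exponents in Proposition \ref{prop:trace} is needed to produce the factor $(\mathrm{Im}\,z)^{-(k+1)}\EE[\ntr|z\id-X(t)|^{-2p}]^{1-1/(2p)}$. Throughout, the self-adjointness of $X(t)$ is crucial: the resolvents $R(z):=(z\id-X(t))^{-1}$ and $R(\bar z)$ commute as spectral functions of $X(t)$, so that $|R(z)^a R(\bar z)^b| = |R(z)|^{a+b}$ for all $a,b\ge 0$.

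After applying Corollary \ref{cor:mtxcumsm} and expanding the mixed directional derivatives via Lemma \ref{lem:resder}, each term in the cumulant sum is a linear combination of cyclic traces of the form $\ntr[R^{r_1}Z_{i,1|\pi}R^{r_2}\cdots Z_{i,k|\pi}]$ containing $2p+k$ resolvents (a mix of $R(z)$ and $R(\bar z)$) distributed over $k$ cyclic gaps of sizes $n_1,\ldots,n_k$ with $\sum n_j = 2p+k$. Since the copies $Z_{i,j|\pi}$ are by construction independent of $X(t)$, I invoke Proposition \ref{prop:trace} with $q=\infty$ to bound each such sum over $i$ by $R(X)^{k-2}\sigma(X)^2\prod_j \|Y_j\|_{p_j}$, subject to $\sum_j 1/p_j = 1$, where $Y_j$ denotes the resolvent product in gap $j$.

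The heart of the proof is the choice of $p_j$'s. Using $|Y_j|=|R(z)|^{n_j}$ (by commutativity), the Lyapunov inequality for $\ntr$ combined with $\|R(z)\|_\infty\le(\mathrm{Im}\,z)^{-1}$ yields
\[
\|Y_j\|_{p_j} = \|\,|R(z)|^{n_j}\|_{p_j} \le (\mathrm{Im}\,z)^{-(n_j - 2p/p_j)_+}\,\EE[\ntr|R(z)|^{2p}]^{\min(1/p_j,\,n_j/(2p))}.
\]
For $k\ge 3$, a pigeonhole argument produces a gap with $n_{j^*}\le 2p-1$; setting $p_{j^*}=2p/(n_{j^*}+1)$ and choosing $p_j\ge\max(1,2p/n_j)$ for $j\ne j^*$ (feasibly so under $\sum 1/p_j=1$) makes the exponents telescope to $\prod_j\|Y_j\|_{p_j}\le(\mathrm{Im}\,z)^{-(k+1)}\EE[\ntr|R(z)|^{2p}]^{1-1/(2p)}$, using $\sum n_j=2p+k$. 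After accounting for combinatorial factors from permutations (Lemma \ref{lem:resder}), partitions (Lemma \ref{lem:feller}), and compositions of $r_i,s_j$, the $k$-th term of the expansion contributes at most $p^{O(1)}(CpR(X)/(\mathrm{Im}\,z))^{k-3}\cdot R(X)\sigma(X)^2(\mathrm{Im}\,z)^{-4}\EE[\ntr|R|^{2p}]^{1-1/(2p)}$.

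Summing over $k=3,\ldots,6p-1$ yields the first branch of the $\max$ when $pR(X)/(\mathrm{Im}\,z)$ is controlled, and the second branch via the tail term otherwise. The remainder $\mathcal R$ from Corollary \ref{cor:mtxcumsm} is bounded directly: applying Lemma \ref{lem:resder} pointwise, the first piece $\sum_i\EE[\partial_{Z_i}^{6p}f(X(t,i,s))]$ is bounded by a factor $(8p-1)!/(2p-1)!\cdot(\mathrm{Im}\,z)^{-8p}$ times $\sum_i\EE\ntr|Z_i|^{6p}$, which in turn is at most $R(X)^{6p-2}\sigma(X)^2$ by extracting $\|Z_i\|_\infty^{6p-2}\le R(X)^{6p-2}$ and using $\sum_i\EE\ntr Z_i^2\le\sigma(X)^2$; this matches the second branch. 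The second (cumulant-weighted) piece of $\mathcal R$ is handled analogously using Lemma \ref{lem:feller}. The main obstacle is the balanced choice of H\"older exponents in the third paragraph: without commutativity of $R(z)$ and $R(\bar z)$, each gap's mixed $R(z)^a R(\bar z)^b$ structure would require separate treatment, destroying the clean form of the target bound.
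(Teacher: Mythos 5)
Your proposal is correct and uses the same key ingredients as the paper: the truncated cumulant expansion at order $6p$ (Corollary~\ref{cor:mtxcumsm}), the derivative formula of Lemma~\ref{lem:resder}, Proposition~\ref{prop:trace} with $q=\infty$, and a direct bound on the order-$6p$ remainder in terms of $\sum_i\EE\ntr|Z_i|^{6p}\le R(X)^{6p-2}\sigma(X)^2$. The one genuine divergence is in the H\"older exponents. The paper makes the simpler choice $p_j=(2p+k)/n_j$, which satisfies $\sum_j 1/p_j=1$ automatically and, using the same commutativity and $|R(z)|=|R(\bar z)|$ you exploit, gives $\prod_j\|Y_j\|_{p_j}=\EE[\ntr|z\id-X(t)|^{-(2p+k)}]$ exactly. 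It then treats the sum over $k$ with the convexity-in-$k$ device already used in Proposition~\ref{prop:momdiffeq}: the maximum of the summand is at $k=3$ or $k=6p$, with the former reduced to $(\mathrm{Im}\,z)^{-4}\EE[\ntr|z\id-X(t)|^{-2p}]^{1-1/(2p)}$ by Jensen, and the latter to $(\mathrm{Im}\,z)^{-8p}$. Your pigeonhole choice is also valid --- the gap $n_{j^*}\le 2p-1$ always exists for $k\ge3$, the feasibility of $\sum_j 1/p_j=1$ subject to your constraints holds, and the $(\mathrm{Im}\,z)$ exponents do telescope to $k+1$ --- but it forces you to replace the convexity argument by a case split on $pR(X)/\mathrm{Im}\,z$, and the claim that the tail ``gives the second branch'' needs the extra steps $\EE[\ntr|z\id-X(t)|^{-2p}]^{1-1/(2p)}\le(\mathrm{Im}\,z)^{-(2p-1)}$ and $\mathrm{Im}\,z\lesssim pR(X)$ in that regime, which your sketch leaves implicit. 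Both routes reach the stated bound up to universal constants; the paper's choice of exponents is cleaner and avoids the case analysis.
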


\begin{proof}
Combining Corollary \ref{cor:mtxcumsm} with Lemma \ref{lem:resder} yields
\begin{align*}
	&\frac{d}{dt}\EE[\ntr |z\id -X(t)|^{-2p}] = 
	\frac{1}{2}\sum_{k=3}^{6p-1}
	kt^{\frac{k}{2}-1}
	\sum_{\pi\in\mathrm{P}([k])}
	(-1)^{|\pi|-1}(|\pi|-1)!
	\times\mbox{}\\
&\enskip
        \sum_{\substack{l,m\ge 0\\l+m=k}}
        \sum_{\substack{r_1,\ldots,r_{l+1}\ge 1\\
        r_1+\cdots+r_{l+1}=p+l}}
        \sum_{\substack{s_1,\ldots,s_{m+1}\ge 1\\
        s_1+\cdots+s_{m+1}=p+m}}
	\sum_{i=1}^n
	\EE[
        \ntr
        \RR_{X(t)}(z)^{r_1} Z_{i1|\pi}
        \cdots
        \RR_{X(t)}(z)^{r_l} Z_{il|\pi}\cdot\mbox{}
\\
&\qquad\qquad\quad
        \RR_{X(t)}(z)^{r_{l+1}}
        \RR_{X(t)}(\bar z)^{s_1} Z_{i(l+1)|\pi}
        \cdots
        \RR_{X(t)}(\bar z)^{s_m} Z_{ik|\pi}
        \RR_{X(t)}(\bar z)^{s_{m+1}}
	]+ \mathcal{R}
\end{align*}
with
$$
	 |\mathcal{R}| \lesssim
	\frac{(8p-1)!}{(2p-1)!}
	\frac{2^{6p}}{(\mathrm{Im}\,z)^{8p}}
	\sum_{i=1}^n
	\EE[\ntr Z_i^{6p}].
$$
Here we eliminated the sum over permutations $\sigma$ in the identity as 
in the proof of Proposition \ref{prop:momdiffeq}, and we used Lemma 
\ref{lem:feller} and H\"older's inequality in the estimate of the 
remainder. To proceed, we apply Proposition \ref{prop:trace} with
$p_j=\frac{2p+k}{r_{j+1}}$ for $1\le j<l$,
$p_l=\frac{2p+k}{r_{l+1}+s_1}$,
$p_j=\frac{2p+k}{s_{j-l+1}}$ for $l<j<k$,
$p_k =\frac{2p+k}{s_{m+1}+r_1}$, and $q=\infty$ to estimate
\begin{align*}
	&\Bigg|\sum_{i=1}^n
	\EE[
        \ntr
        \RR_{X(t)}(z)^{r_1} Z_{i1|\pi}
        \cdots
        \RR_{X(t)}(z)^{r_l} Z_{il|\pi}
        \RR_{X(t)}(z)^{r_{l+1}}
        \RR_{X(t)}(\bar z)^{s_1} Z_{i(l+1)|\pi}
\\
&\qquad\quad
        \cdots
        \RR_{X(t)}(\bar z)^{s_m} Z_{ik|\pi}
        \RR_{X(t)}(\bar z)^{s_{m+1}}
	]\Bigg|
	\le
	R(X)^{k-2}\sigma(X)^2 \EE[\ntr |z\id - X(t)|^{-2p-k}].
\end{align*}
We can therefore estimate
\begin{multline*}
	\bigg|\frac{d}{dt}\EE[\ntr |z\id -X(t)|^{-2p}]\bigg| 
	\le
	C\frac{(8p-1)!}{(2p-1)!}
	\frac{2^{6p}}{(\mathrm{Im}\,z)^{8p}}
	\sum_{i=1}^n
	\EE[\ntr Z_i^{6p}]
	\\
	+
	\frac{1}{2}\sum_{k=3}^{6p-1}
	\frac{(2p-1+k)!}{(2p-1)!}
	2^k 
	R(X)^{k-2}\sigma(X)^2 \EE[\ntr |z\id - X(t)|^{-2p-k}]
\end{multline*}
for a universal constant $C$,
where we used Lemma \ref{lem:feller} and that the sums over
$l,m,r_j,s_j$ contain a total of ${2p-1+k\choose 2p-1}$ terms
(cf.\ the proof of Lemma \ref{lem:resder}). Thus
\begin{multline*}
	\bigg|\frac{d}{dt}\EE[\ntr |z\id -X(t)|^{-2p}]\bigg| 
	\le
	C
	\frac{(16p)^{6p}}{(\mathrm{Im}\,z)^{8p}}
	R(X)^{6p-2}\sigma(X)^2
	\\
	+
	\frac{1}{2}
	\sum_{k=3}^{6p-1}
	(16p)^k
	R(X)^{k-2}\sigma(X)^2
	\EE[\ntr |z\id - X(t)|^{-2p-k}],
\end{multline*}
where we used $\frac{(2p-1+k)!}{(2p-1)!} \le (8p)^k$
for $k\le 6p$ and $\sum_{i=1}^n\EE[\ntr Z_i^{6p}]
\le R(X)^{6p-2}\sigma(X)^2$.

We now proceed as in the proof of Proposition \ref{prop:momdiffeq}.
As the terms inside the sum are convex as a function of $k$, we can 
estimate
\begin{align*}
	&\frac{1}{2}
	\sum_{k=3}^{6p-1}
	(16p)^k
	R(X)^{k-2}\sigma(X)^2
	\EE[\ntr |z\id - X(t)|^{-2p-k}]
\\
&	\le
	\frac{1}{8}
	\max\big\{
	(32p)^3
	R(X)\sigma(X)^2
	\EE[\ntr |z\id - X(t)|^{-2p-3}],
\\
&
	\phantom{\mbox{}\le\frac{1}{8}\max\big\{\mbox{}}
	(32p)^{6p}
	R(X)^{6p-2}
	\sigma(X)^2
	\EE[\ntr |z\id - X(t)|^{-8p}]
	\big\}
\\
&	\le
	\frac{(32p)^3
	R(X)\sigma(X)^2}{8(\mathrm{Im}\,z)^4}
	\max\Bigg\{
	\EE[\ntr |z\id - X(t)|^{-2p}]^{1-\frac{1}{2p}},
	\frac{(32pR(X))^{6p-3}}{(\mathrm{Im}\,z)^{8p-4}}
	\Bigg\},
\end{align*}
where we used that
$\EE[\ntr |z\id - X(t)|^{-2p+1}]\le
\EE[\ntr |z\id - X(t)|^{-2p}]^{1-\frac{1}{2p}}$ by Jensen's inequality,
and that $\|(z\id - X(t))^{-1}\|\le (\mathrm{Im}\,z)^{-1}$. The conclusion 
follows readily.
\end{proof}

The proof of Theorem \ref{thm:resuniv} is now immediate.

\begin{proof}[Proof of Theorem \ref{thm:resuniv}]
Combine Proposition \ref{prop:resdiffeq} and Lemma 
\ref{lem:diffineq}.
\end{proof}

\subsection{Resolvent matrix}
\label{sec:smpf}

The aim of this section is to prove the resolvent universality principle 
of Theorem \ref{thm:smuniv}. In contrast to the universality principles 
for the moments and resolvent moments, the present result is much more 
classical in nature as its proof does not require the cumulant expansion; 
it could therefore also be approached by means of more traditional 
universality methods as in \cite{Cha06,Cha14}. In particular, we will 
apply Corollary \ref{cor:mtxcumsm} with $p=3$: in this special case, the 
proof of Corollary \ref{cor:mtxcumsm} uses only Taylor expansion and no 
cumulants appear.

Nonetheless, the present situation is somewhat different in nature than 
the previous universality results in that we bound the difference between 
the expected resolvents of $X$ and $G$ in norm (as opposed to their 
traces, see Remark \ref{rem:stieltjes} below). This introduces some 
additional subtleties that must be addressed in the proof.

We begin by applying Corollary \ref{cor:mtxcumsm} in the present setting.

\begin{lem}
\label{lem:smpf}
We have
$$
	\big\|\EE[(z\id-X)^{-1}] - \EE[(z\id-G)^{-1}]\big\|
	\lesssim
	\bigg(
	\frac{R(X)}{(\mathrm{Im}\,z)^4}	
	+
	\frac{R(X)^3}{(\mathrm{Im}\,z)^6}\bigg)
	\EE\Bigg\|\sum_{i=1}^n Z_i^2\Bigg\|.
$$
\end{lem}

\begin{proof}
We readily compute
\begin{multline*}
	\partial_{B_1}\partial_{B_2}\partial_{B_3}
	(z\id - M)^{-1} = \mbox{}\\
	\sum_{\sigma\in\mathrm{Sym}(3)}
	(z\id - M)^{-1}
	B_{\sigma(1)}
	(z\id - M)^{-1}
	B_{\sigma(2)}
	(z\id - M)^{-1}
	B_{\sigma(3)}
	(z\id - M)^{-1}.
\end{multline*}
Applying Corollary \ref{cor:mtxcumsm} to
$f(M) = \langle v,(z\id - M)^{-1}w\rangle$
with $p=3$ yields
\begin{multline*}
	\big\|\EE[(z\id-X)^{-1}]-\EE[(z\id -G)^{-1}]\big\|
	=
	\sup_{\|v\|=\|w\|=1}
	\bigg|\int_0^1
	\frac{d}{dt}\EE[\langle v,(z\id-X(t))^{-1}w\rangle]
	\,dt\bigg|
	\\
	\lesssim
	F(Z,Z,Z)+
	F(Z,Z',Z')+F(Z',Z,Z') + F(Z',Z',Z),
\end{multline*}
where $Z'=(Z_i')_{1\le i\le n}$ is an independent copy of $Z=(Z_i)_{1\le 
i\le n}$,
$$
	F(Z^{(1)},Z^{(2)},Z^{(3)}) =
	\sup_{\|v\|=\|w\|=1}
	\sup_{s,t\in[0,1]}
	\Bigg|
	\sum_{i=1}^n
	\EE[\langle v,
	\RR_{tis}Z_i^{(1)} \RR_{tis} Z_i^{(2)} \RR_{tis} Z_i^{(3)} \RR_{tis}
	w\rangle]
	\Bigg|,
$$ 
and $\RR_{tis}=(z\id - X(t,i,s))^{-1}$ with $X(t,i,s) = 
X(t)-(1-s)\sqrt{t}Z_i$. (Note that the term with $|\pi|=2$
in the bound on $|\mathcal{R}|$ in Corollary 
\ref{cor:mtxcumsm} vanishes as $\EE[Z_i]=0$.)

As $\|\RR_{tis}\|\le 
(\mathrm{Im}\,z)^{-1}$ and $\|Z_i\|\le R(X)$, we have
$$
	F(Z^{(1)},Z^{(2)},Z^{(3)}) \le
	\frac{R(X)}{(\mathrm{Im}\,z)^2}	
	\sup_{\|v\|=\|w\|=1}
	\sup_{s,t\in[0,1]}
	\sum_{i=1}^n
	\EE[\|Z_i^{(1)} \RR_{tis}^*v\|
	\|Z_i^{(3)} \RR_{tis}w\|].
$$
Now note that as 
$A^{-1}-B^{-1}=A^{-1}(B-A)B^{-1}$, we have
\begin{align*}
	\RR_{tis} &=
	\RR_t - (1-s)\sqrt{t}\,\RR_{tis} Z_i  \RR_t, \\
	\RR_{tis}^* &=
	\RR_t^* - (1-s)\sqrt{t}\,\RR_{tis} Z_i  \RR_t^*
\end{align*}
with $\RR_t = (z\id - X(t))^{-1}$. Thus
\begin{align*}
	\|Z_i^{(3)} \RR_{tis}w\| &\le
	\|Z_i^{(3)} \RR_tw\| +
	\frac{R(X)}{\mathrm{Im}\,z}
	\|Z_i  \RR_tw\|, \\
	\|Z_i^{(1)} \RR_{tis}^*v\| &\le
	\|Z_i^{(1)} \RR_t^*v\| +
	\frac{R(X)}{\mathrm{Im}\,z}
	\|Z_i  \RR_t^*v\|
\end{align*}
for $s,t\in[0,1]$. 
Using $(a+b)(c+d) \le a^2+b^2+c^2+d^2$ yields
\begin{multline*}
	\EE[\|Z_i^{(1)} \RR_{tis}^*v\|
        \|Z_i^{(3)} \RR_{tis}w\|]
	\le
	\EE[\langle \RR_tw,(Z_i^{(3)})^2\RR_tw\rangle] +
	\EE[\langle \RR_t^*v,(Z_i^{(1)})^2\RR_t^*v\rangle] +
	\\
	\frac{R(X)^2}{(\mathrm{Im}\,z)^2}
	\big(
	\EE[\langle \RR_tw,Z_i^2\RR_tw\rangle] +
	\EE[\langle \RR_t^*v,Z_i^2\RR_t^*v\rangle] 
	\big).	
\end{multline*}
We can therefore estimate
$$
	F(Z^{(1)},Z^{(2)},Z^{(3)}) \lesssim
	\bigg(
	\frac{R(X)}{(\mathrm{Im}\,z)^4}	
	+
	\frac{R(X)^3}{(\mathrm{Im}\,z)^6}\bigg)
	\EE\Bigg\|\sum_{i=1}^n Z_i^2\Bigg\|
$$
whenever $Z^{(k)}$ have the same distribution as $Z$, concluding the 
proof.
\end{proof}

We also need the following simple lemma.

\begin{lem}
\label{lem:posnck}
We have
$$
	\EE\Bigg\|\sum_{i=1}^n Z_i^2\Bigg\|
	\lesssim \sigma(X)^2 + R(X)^2\log d.
$$
\end{lem}

\begin{proof}
We can estimate
\begin{align*}
	\EE\Bigg\|\sum_{i=1}^n Z_i^2\Bigg\| & \le
	\sigma(X)^2 +
	\EE\Bigg\|\sum_{i=1}^n (Z_i^2-\EE Z_i^2)\Bigg\|
	\\
	&\lesssim
	\sigma(X)^2 +
	\Bigg\|\sum_{i=1}^n
	\EE[(Z_i^2-\EE Z_i^2)^2]
	\Bigg\|^{\frac{1}{2}}\sqrt{\log d} +
	R(X)^2\log d
	\\
	&\lesssim
	\sigma(X)^2 +
	R(X)\sigma(X)\sqrt{\log d} + R(X)^2\log d,
\end{align*}
where the second line follows from the matrix Bernstein inequality
\eqref{eq:bernstein}. The conclusion follows as
$R(X)\sigma(X)\sqrt{\log d} \le \sigma(X)^2+R(X)^2\log d$.
\end{proof}

We can now complete the proof of Theorem \ref{thm:smuniv}.

\begin{proof}[Proof of Theorem \ref{thm:smuniv}]
Lemmas \ref{lem:smpf} and \ref{lem:posnck} yield
$$
	\big\|\EE[(z\id-X)^{-1}] - \EE[(z\id-G)^{-1}]\big\|
	\lesssim
	\frac{\sigma(X)^2+R(X)^2\log d}{(\mathrm{Im}\,z)^3}
	\bigg(\frac{R(X)}{\mathrm{Im}\,z}	
	+
	\frac{R(X)^3}{(\mathrm{Im}\,z)^3}\bigg).
$$
This yields the first inequality of Theorem \ref{thm:smuniv} when 
$\mathrm{Im}\,z\ge R(X)$. On the other hand, when $\mathrm{Im}\,z<R(X)$,
we can crudely estimate
\begin{align*}
	\big\|\EE[(z\id-X)^{-1}] - \EE[(z\id-G)^{-1}]\big\|
	&\le
	\big\|\EE[(z\id-X)^{-1}]\big\| + \big\| \EE[(z\id-G)^{-1}]\big\|
	\\ &\le
	\frac{2}{\mathrm{Im}\,z}
	\le
	\frac{2R(X)^3}{(\mathrm{Im}\,z)^4},
\end{align*}
concluding the proof of the first inequality.

To prove the second inequality, it suffices to consider real-valued
$\varphi:\mathbb{R}\to\mathbb{R}$ (as otherwise we may apply the 
real-valued inequality separately to the real and imaginary parts of 
$\varphi$). Then $\varphi(X)$ and $\varphi(G)$ are self-adjoint, so 
we may express
\begin{align*}
	\big\|\EE[\varphi(X)] - \EE[\varphi(G)] \big\| &=
	\sup_{\|v\|=1}
	\big|\langle v,\EE[\varphi(X)]v\rangle -
	\langle v,\EE[\varphi(G)]v\rangle\big|
	\\
	&= \sup_{\|v\|=1}
	\bigg| \int \varphi \,d\mu_v - \int \varphi\,d\nu_v\bigg|
\end{align*}
where we defined the probability measures $\mu_v,\nu_v$ for $\|v\|=1$ so 
that $\int \varphi \,d\mu_v=\langle v,\EE[\varphi(X)]v\rangle$ and
$\int \varphi\,d\nu_v =\langle v,\EE[\varphi(G)]v\rangle$ for all
bounded continuous $\varphi$. As the first inequality of
Theorem \ref{thm:smuniv} implies that
$$
	\sup_{\|v\|=1}
	\bigg| \int \frac{1}{z-x} \,\mu_v(dx) - \int 
	\frac{1}{z-x}\,\nu_v(dx)\bigg| \lesssim
	\frac{R(X)\sigma(X)^2+R(X)^3\log d}{(\mathrm{Im}\,z)^4},
$$
the second inequality of Theorem \ref{thm:smuniv} follows from \cite[Lemma 
5.11]{BBV21}.
\end{proof}

\begin{rem}[An improved inequality for Stieltjes transforms]
\label{rem:stieltjes}
The main complication in the proof of Theorem \ref{thm:smuniv} arises from 
the fact that we aim to achieve a norm estimate. If we are only interested 
in establishing universality of the trace of the resolvent (that is, of 
the Stieltjes transform of the empirical spectral distribution), the 
proof simplifies greatly and yields a better bound. Indeed, the first 
equation display of the proof of Lemma \ref{lem:smpf} and H\"older's 
inequality readily yield
$$
	|\EE[
	\partial_{Y_1}\partial_{Y_2}\partial_{Y_3}
        \ntr\mbox{} (z\id - X(t,i,s))^{-1}]|
	\le
	\frac{6\|Y_1\|_3\|Y_2\|_3\|Y_3\|_3}{(\mathrm{Im}\,z)^4}
$$
for any random matrices $Y_1,Y_2,Y_3$. Combining this inequality with
the $p=3$ case of Corollary \ref{cor:mtxcumsm} immediately yields the 
estimate
$$
        |\EE[\ntr (z\id-X)^{-1}] - \EE[\ntr (z\id-G)^{-1}]|
        \lesssim
        \frac{1}{(\mathrm{Im}\,z)^4} 
        \sum_{i=1}^n \EE[\ntr |Z_i|^3].
$$
In particular, in this case the logarithmic dimension dependence is 
eliminated.
\end{rem}

\section{Universality of the spectrum}
\label{sec:univspec}

The aim of this section is to prove Theorem \ref{thm:specuniv} and 
Corollary \ref{cor:normuniv}. The main idea behind the proof is that 
universality of the spectrum can be deduced from the bound on the moments 
of the resolvent in Theorem \ref{thm:resuniv} using a technique that was 
developed for Gaussian random matrices in \cite[\S 6.2]{BBV21}. The 
difficulty in the present setting is that the resolvent of the 
non-Gaussian random matrix $X$ exhibits more complicated concentration 
properties than in the Gaussian case.

We first introduce some basic estimates in section 
\ref{sec:uspprelim}. In sections \ref{sec:uspupper} and 
\ref{sec:usplower}, we bound the probability that 
$\spc(X)\subseteq\spc(G)+[-\varepsilon,\varepsilon]$ and 
$\spc(G)\subseteq\spc(X)+[-\varepsilon,\varepsilon]$, respectively. 
Combining these bounds yields the Hausdorff distance bound of Theorem 
\ref{thm:specuniv}. Finally, Corollary \ref{cor:normuniv} will be proved 
in section \ref{sec:uspnorm}.

\subsection{Preliminaries}
\label{sec:uspprelim}

The basic principle behind the proof is the following deterministic lemma, 
which is a trivial modification of \cite[Lemma 6.4]{BBV21}.

\begin{lem}
\label{lem:rescomparison}
Let $C,K_1,K_2,K_3\ge 0$, and let $A,B\in\M_d(\mathbb{C})_{\rm sa}$ 
satisfy
$$
	\|(z\id - A)^{-1}\| \le C\|(z\id - B)^{-1}\| 
	+\frac{K_1}{(\mathrm{Im}\,z)^2}
	+\frac{K_2}{(\mathrm{Im}\,z)^3}
	+\frac{K_3}{(\mathrm{Im}\,z)^4}
$$
for all $z=\lambda+i\varepsilon$ with $\lambda\in\spc(A)$ and
$\varepsilon=6K_1\vee (6K_2)^{\frac{1}{2}}\vee (6K_3)^{\frac{1}{3}}$.
Then
$$
	\spc(A)\subseteq\spc(B)+2C\varepsilon [-1,1].
$$
\end{lem}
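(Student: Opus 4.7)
The plan is to exploit the elementary fact that for a self-adjoint matrix $M$, the norm of its resolvent $\|(z\id - M)^{-1}\|$ equals the reciprocal of the distance from $z$ to $\spc(M)$. This converts the hypothesis, which is an analytic statement about resolvents, into a geometric statement about spectra.

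First, I would fix $\lambda \in \spc(A)$ and set $z = \lambda + i\varepsilon$. Since $A$ is self-adjoint and $\lambda$ is an eigenvalue, $\|(z\id - A)^{-1}\| = 1/\mathrm{dist}(z,\spc(A)) = 1/\varepsilon$. By the self-adjointness of $B$, similarly $\|(z\id - B)^{-1}\| = 1/\mathrm{dist}(z,\spc(B))$. Substituting these two identities into the hypothesis gives
\[
\frac{1}{\varepsilon} \le \frac{C}{\mathrm{dist}(z,\spc(B))} + \frac{K_1}{\varepsilon^2} + \frac{K_2}{\varepsilon^3} + \frac{K_3}{\varepsilon^4}.
\]

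Next, I would absorb the error terms using the choice of $\varepsilon$. The conditions $\varepsilon \ge 6K_1$, $\varepsilon \ge (6K_2)^{1/2}$, and $\varepsilon \ge (6K_3)^{1/3}$ are precisely tailored so that each of the three error terms is at most $\frac{1}{6\varepsilon}$, and their sum is bounded by $\frac{1}{2\varepsilon}$. Rearranging yields
\[
\frac{1}{2\varepsilon} \le \frac{C}{\mathrm{dist}(z,\spc(B))},
\]
which is equivalent to $\mathrm{dist}(z,\spc(B)) \le 2C\varepsilon$.

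Finally, I would observe that $\mathrm{dist}(\lambda,\spc(B)) \le \mathrm{dist}(z,\spc(B))$, since for any real $\mu$ we have $|\lambda - \mu| \le |\lambda + i\varepsilon - \mu|$. Hence every $\lambda \in \spc(A)$ lies within distance $2C\varepsilon$ of $\spc(B)$, which is exactly the conclusion $\spc(A) \subseteq \spc(B) + 2C\varepsilon[-1,1]$. There is no real obstacle here; the lemma is purely algebraic, and the only design choice is the calibration of the three thresholds for $\varepsilon$, which has already been made in the statement so as to make the three error contributions each a sixth of the leading $1/\varepsilon$ term.
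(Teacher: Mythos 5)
Your proof is correct and follows essentially the same route as the paper: identify each resolvent norm with the reciprocal distance to the spectrum, use the calibration of $\varepsilon$ to make the three error terms sum to at most $\tfrac{1}{2\varepsilon}$, and conclude $\mathrm{dist}(\lambda,\spc(B))\le 2C\varepsilon$. The only cosmetic difference is that the paper phrases the last step as a contradiction (assuming $\mathrm{dist}(\lambda,\spc(B))>2C\varepsilon$) while you rearrange directly; the content is identical.
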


\begin{proof}
Fix $\lambda\in\spc(A)$ and $z=\lambda+i\varepsilon$, where $\varepsilon$ 
is as defined in the statement.
As $\|(z\id-A)^{-1}\| = (\mathrm{dist}(z,\spc(A)))^{-1}$, the assumption 
implies that
$$
	\frac{1}{\varepsilon} \le
	\frac{C}{\sqrt{\varepsilon^2+\mathrm{dist}(\lambda,\spc(B))^2}}
	+ \frac{K_1}{\varepsilon^2}
	+ \frac{K_2}{\varepsilon^3}
	+ \frac{K_3}{\varepsilon^4}.
$$
If $\mathrm{dist}(\lambda,\spc(B))>2C\varepsilon$, we would have
$\frac{1}{2} < \frac{K_1}{\varepsilon}
+ \frac{K_2}{\varepsilon^2}
+ \frac{K_3}{\varepsilon^3}
\le \frac{1}{2}$ by the definition of $\varepsilon$, which is impossible.
Thus $\mathrm{dist}(\lambda,\spc(B))\le
2C\varepsilon$ for all $\lambda\in\spc(A)$.
\end{proof}

The main idea behind the proof of Theorem \ref{thm:specuniv} is that we 
will engineer the assumption of Lemma \ref{lem:rescomparison} by using 
Theorem \ref{thm:resuniv} and concentration of measure. Before we turn to 
the details of the argument, let us prove a crude \emph{a priori} 
bound on the spectrum that will be needed below.

\begin{lem}
\label{lem:net}
We have
$$
	\mathbf{P}\big[\spc(X)\subseteq\spc(\EE X) +
	C\{\sigma_*(X)\sqrt{d+t}+
	R(X)(d+t)\}[-1,1]\big] \ge 1-e^{-t}
$$
and
$$
	\mathbf{P}\big[\spc(G)\subseteq\spc(\EE G) +
	C\sigma_*(X)\sqrt{d+t}\,[-1,1]\big] \ge 1-e^{-t}
$$
for all $t\ge 0$, where $C$ is a universal constant.
\end{lem}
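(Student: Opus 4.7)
The plan is to reduce the spectral inclusions to operator norm bounds on the fluctuations $X-\EE X$ and $G - \EE G$, which can then be controlled via a standard $\varepsilon$-net argument combined with a scalar concentration inequality.

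First I would observe that by Weyl's inequality for eigenvalue perturbation of self-adjoint matrices, each eigenvalue of $X = \EE X + (X - \EE X)$ lies within $\|X - \EE X\|$ of the corresponding ordered eigenvalue of $\EE X$. This gives the deterministic inclusion
$$
\spc(X) \subseteq \spc(\EE X) + \|X - \EE X\|\,[-1,1],
$$
and analogously for $G$. So it suffices to prove the two tail bounds
$$
\|X - \EE X\| \lesssim \sigma_*(X)\sqrt{d+t} + R(X)(d+t),\qquad
\|G - \EE G\| \lesssim \sigma_*(X)\sqrt{d+t}
$$
with probability at least $1-e^{-t}$.

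Next I would discretize using a $\tfrac14$-net $\mathcal{N}$ of the complex unit sphere $S^{d-1}_{\mathbb{C}}$, which by a standard volumetric argument can be chosen with $|\mathcal{N}| \le 9^{2d}$. Since $X-\EE X$ and $G-\EE G$ are self-adjoint, the standard net bound gives
$$
\|X-\EE X\| \le 2\max_{v\in\mathcal{N}} |\langle v,(X-\EE X)v\rangle|,
$$
and likewise for $G-\EE G$. This reduces the problem to a uniform scalar deviation bound over the $9^{2d}$ test vectors in $\mathcal{N}$.

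For each fixed $v\in\mathcal{N}$, write $\langle v,(X-\EE X)v\rangle = \sum_{i=1}^n \langle v,Z_iv\rangle$, a sum of independent centered real-valued random variables (real because each $Z_i$ is self-adjoint). Each summand satisfies $|\langle v,Z_iv\rangle| \le \|Z_i\| \le R(X)$ almost surely, and the sum of variances is bounded by
$$
\sum_{i=1}^n \EE|\langle v,Z_iv\rangle|^2 = \langle v,\EE[(X-\EE X)^2]v\rangle \le \sigma_*(X)^2.
$$
I would then apply the classical scalar Bernstein inequality to obtain subgaussian behavior on the scale $\sigma_*(X)$ and subexponential behavior on the scale $R(X)$, and take a union bound over $\mathcal{N}$; choosing the deviation level $r \asymp \sigma_*(X)\sqrt{d+t} + R(X)(d+t)$ absorbs the $\log|\mathcal{N}| \lesssim d$ factor and delivers the desired tail bound. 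For the Gaussian matrix $G$, the same argument applies with Bernstein replaced by the Gaussian tail bound, which drops the linear $R(X)(d+t)$ term entirely because each $\langle v,(G-\EE G)v\rangle$ is a real centered Gaussian of variance at most $\sigma_*(G)^2 = \sigma_*(X)^2$. No real obstacle is anticipated; the only point requiring mild care is tracking constants so that the $1/4$-net approximation and the scalar Bernstein constants compose into a single universal $C$.
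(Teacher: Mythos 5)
Your proposal follows essentially the same route as the paper's own argument: Weyl's inequality to reduce the spectral inclusion to a tail bound on $\|X-\EE X\|$ (resp.\ $\|G-\EE G\|$), a $\tfrac14$-net of the complex unit sphere of cardinality $\lesssim C^d$, a scalar Bernstein (resp.\ Gaussian) tail bound per net point, and a union bound. The one cosmetic difference is that you work with the quadratic form $\langle v,(X-\EE X)v\rangle$ over a single net rather than the bilinear form $\langle v,(X-\EE X)w\rangle$ over pairs $(v,w)\in\mathcal N\times\mathcal N$ as the paper does; this is slightly cleaner, since the quadratic form of a self-adjoint matrix is automatically real and you avoid the separate treatment of real and imaginary parts. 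Both yield the same union-bound cost $\log|\mathcal N|\lesssim d$.

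There is, however, a genuine error in the line where you control the variance parameter for Bernstein. You assert
$$
\sum_{i=1}^n \EE|\langle v,Z_iv\rangle|^2 \;=\; \langle v,\EE[(X-\EE X)^2]v\rangle \;\le\; \sigma_*(X)^2,
$$
and both relations are false. By independence and zero mean the left side equals $\EE[|\langle v,(X-\EE X)v\rangle|^2]$, and Cauchy--Schwarz gives only the inequality $\EE[|\langle v,(X-\EE X)v\rangle|^2]\le \EE\|(X-\EE X)v\|^2 = \langle v,\EE[(X-\EE X)^2]v\rangle$, not equality. More seriously, $\langle v,\EE[(X-\EE X)^2]v\rangle$ is bounded only by $\sigma(X)^2$, not by $\sigma_*(X)^2$ (take $X$ diagonal with $d$ i.i.d.\ entries: the former is order $1$ while $\sigma_*(X)^2$ is order $1/d$). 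The quantity you actually need is bounded correctly, but by a different route: $\EE[|\langle v,(X-\EE X)v\rangle|^2]\le\sigma_*(X)^2$ follows directly from the definition of $\sigma_*$ by taking $w=v$ in the supremum. Replacing your faulty chain by this one-line observation repairs the proof and leaves everything else intact.
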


\begin{proof}
Let $N\subset\mathbb{S}^{d-1}$ be a $\frac{1}{4}$-net of the 
unit sphere $\mathbb{S}^{d-1}:=\{x\in\mathbb{C}^d:\|x\|=1\}$,
that is, $\mathrm{dist}(x,N)\le\frac{1}{4}$ for all 
$x\in\mathbb{S}^{d-1}$. A routine estimate \cite[p.\ 110]{Tao12}
yields
\begin{align*}
	\mathbf{P}[\|X-\EE X\| \ge x] &\le
	\mathbf{P}\bigg[\max_{v,w\in N}|\langle v,(X-\EE X)w\rangle| \ge
	\frac{x}{4}\bigg] \\ &\le
	|N|^2 \sup_{v,w\in\mathbb{S}^{d-1}}
	\mathbf{P}\bigg[|\langle v,(X-\EE X)w\rangle| \ge
	\frac{x}{4}\bigg].
\end{align*}
By viewing $\mathbb{C}^d$ as a $2d$-dimensional real vector space, we may 
use a standard volume argument \cite[Lemma 2.3.4]{Tao12} to choose
the net $N$ so that $|N|\le C^d$ for a universal constant 
$C$. On the other hand, by Bernstein's inequality \cite[Theorem 
2.10]{BLM13}
\begin{align*}
	&\mathbf{P}\big[|\langle v,(X-\EE X)w\rangle| \ge
	2\sigma_*(X)\sqrt{x} + \sqrt{2}R(X)x
	\big]
	\\ &\qquad\le
	\mathbf{P}\Bigg[
	\Bigg|\sum_{i=1}^n\mathrm{Re}\,\langle v,Z_iw\rangle\Bigg|
	\ge 
	\sigma_*(X)\sqrt{2x} + R(X)x
	\Bigg]
	\\ &\qquad\qquad +
	\mathbf{P}\Bigg[
	\Bigg|\sum_{i=1}^n\mathrm{Im}\,\langle v,Z_iw\rangle\Bigg|
	\ge
	\sigma_*(X)\sqrt{2x} + R(X)x
	\Bigg]
	\le 4e^{-x}
\end{align*}
for all $x\ge 0$ and $v,w\in\mathbb{S}^{d-1}$.
Combining the above estimates yields
$$
	\mathbf{P}[\|X-\EE X\| \ge
	8\sigma_*(X)\sqrt{cd+t} + 4\sqrt{2}R(X)(cd+t)]
	\le
	C^{2d} e^{-cd-t} \le e^{-t}
$$
for all $t\ge 0$, provided the universal constant $c$ is chosen 
sufficiently large. The first inequality in the statement now follows
by noting that
$$
	\spc(X)\subseteq \spc(\EE X)+\|X-\EE X\|[-1,1]
$$
by Weyl's inequality $\max_i |\lambda_i(A)-\lambda_i(B)|\le\|A-B\|$ for 
self-adjoint matrices $A,B$ \cite[Corollary III.2.6]{Bha97} (here 
$\lambda_i(A)$ is the $i$th largest eigenvalue of $A$).

The inequality for the Gaussian matrix $G$ follows in the identical 
fashion, except that we replace Bernstein's inequality 
by the Gaussian bound
$$
	\mathbf{P}\big[|\langle v,(G-\EE G)w\rangle| \ge
        2\sigma_*(X)\sqrt{x} 
        \big] \le 4e^{-x}
$$
(this follows from the Gaussian tail bound \cite[p.\ 22]{BLM13} as
the real and imaginary parts of
$\langle v,(G-\EE G)w\rangle$ are Gaussian variables
with variance bounded by $\sigma_*(X)^2$).
\end{proof}

\subsection{Proof of Theorem \ref{thm:specuniv}: upper bound}
\label{sec:uspupper}

The aim of the present section is to prove that 
$\spc(X)\subseteq\spc(G)+[-\varepsilon,\varepsilon]$ with high probability 
for a suitable choice of $\varepsilon$. This will be accomplished by 
showing that the corresponding resolvent norm inequality of Lemma 
\ref{lem:rescomparison} holds with with high probability. 

We begin by showing that this is the case for a single choice of $z$. Note 
that the joint distribution of $X$ and $G$ is irrelevant to the following 
proofs.

\begin{lem}
\label{lem:uppersinglez}
Let $z\in\mathbb{C}$ with $\mathrm{Im}\,z>0$.  Then
\begin{align*}
	\mathbf{P}\bigg[
	\|(z\id -X)^{-1}\| &\ge
	C\bigg\{\|(z\id -G)^{-1}\|+
	\frac{\sigma_*(X)}{(\mathrm{Im}\,z)^2}\sqrt{x}
\\ &\qquad\qquad\mbox{}
	+ 
	\frac{R(X)\sigma(X)^2x^2+R(X)^3x^3}{(\mathrm{Im}\,z)^4}
	\bigg\}\bigg] \le 3e^{-x}
\end{align*}
for all $x\ge\log d$, where $C$ is a universal constant.
\end{lem}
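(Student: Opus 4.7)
The strategy is the standard ``$L^{2p}$-moment method'' that converts the deterministic resolvent moment bound of Theorem \ref{thm:resuniv} into a tail estimate for the operator norm. The idea is to bound $\|(z\id-X)^{-1}\|$ by its $L^{2p}$ norm via Markov's inequality, pass from the operator norm to the normalized trace at the cost of a factor $d^{1/(2p)}$ that we tame by the hypothesis $x\ge\log d$, invoke Theorem \ref{thm:resuniv} to replace $X$ by $G$ in the resulting trace moment, and finally use the Gaussian concentration estimate of Lemma \ref{lem:gconc} to convert the Gaussian moment back into the random quantity $\|(z\id-G)^{-1}\|$ appearing in the statement.

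Concretely, I will choose $p=x\ge\log d$ and run the following chain. First, $\|(z\id-X)^{-1}\|^{2p}\le d\,\ntr|z\id-X|^{-2p}$, so after taking $(\EE\,\cdot)^{1/2p}$ and using $d^{1/(2p)}\le e^{1/2}$,
\[
	\EE[\|(z\id-X)^{-1}\|^{2p}]^{\frac{1}{2p}}
	\lesssim \EE[\ntr|z\id-X|^{-2p}]^{\frac{1}{2p}}.
\]
Theorem \ref{thm:resuniv} with this $p$ then gives
\[
	\EE[\ntr|z\id-X|^{-2p}]^{\frac{1}{2p}}
	\le \EE[\ntr|z\id-G|^{-2p}]^{\frac{1}{2p}}
	+C\,\frac{R(X)\sigma(X)^2x^2+R(X)^3x^3}{(\mathrm{Im}\,z)^4}.
\]
Since $\ntr A^{2p}\le\|A\|^{2p}$ for $A\ge 0$, the Gaussian trace moment is bounded by $\EE[\|(z\id-G)^{-1}\|^{2p}]^{1/(2p)}$. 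By Lemma \ref{lem:gconc}, $\|(z\id-G)^{-1}\|$ is $(\sigma_*(X)/(\mathrm{Im}\,z)^2)^2$-subgaussian around its mean $m:=\EE\|(z\id-G)^{-1}\|$, hence
\[
	\EE[\|(z\id-G)^{-1}\|^{2p}]^{\frac{1}{2p}}
	\lesssim m + \frac{\sigma_*(X)}{(\mathrm{Im}\,z)^2}\sqrt{x}.
\]
Applying Markov's inequality at level $e\cdot\EE[\|(z\id-X)^{-1}\|^{2p}]^{1/(2p)}$ then yields, with probability at least $1-e^{-2x}$,
\[
	\|(z\id-X)^{-1}\|\le C\bigg\{m+\frac{\sigma_*(X)}{(\mathrm{Im}\,z)^2}\sqrt{x}
	+\frac{R(X)\sigma(X)^2x^2+R(X)^3x^3}{(\mathrm{Im}\,z)^4}\bigg\}.
\]
Finally, a second application of Lemma \ref{lem:gconc} shows that $m\le\|(z\id-G)^{-1}\|+C\sigma_*(X)(\mathrm{Im}\,z)^{-2}\sqrt{x}$ on an event of probability at least $1-2e^{-x}$; combining the two events by a union bound replaces $m$ by $\|(z\id-G)^{-1}\|$ in the inequality above and yields the stated bound with total failure probability at most $3e^{-x}$.

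The only delicate point is ensuring that each of the three error scales appearing in the final bound is matched by a matching piece of the argument: the factor $\sqrt{x}$ comes simultaneously from the Gaussian moment growth \emph{and} from the concentration tail of Lemma \ref{lem:gconc}, while the polynomial pieces $x^2,x^3$ come from Theorem \ref{thm:resuniv} via the choice $p=x$. No genuinely hard step arises; the main thing to be careful about is the choice $p=x\ge\log d$, which is what allows the prefactor $d^{1/(2p)}$ incurred in passing from the normalized trace to the operator norm to be absorbed into a universal constant (this is why the hypothesis $x\ge\log d$ is essential). Note also that the joint law of $X$ and $G$ plays no role: the two concentration steps for $X$ (Markov) and for $G$ (Gaussian concentration) address the two random quantities separately and are combined by a union bound on the product space.
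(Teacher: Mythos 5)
Your proof is correct and follows essentially the same route as the paper: Markov's inequality at level $e\cdot\EE[\|(z\id-X)^{-1}\|^{2p}]^{1/(2p)}$ with $p\asymp x\ge\log d$, conversion between the operator norm and the normalized trace via $\frac{1}{d}\|A\|\le\ntr A\le\|A\|$, Theorem \ref{thm:resuniv} to pass from $X$ to $G$, and two applications of Lemma \ref{lem:gconc}---one to bound the Gaussian $L^{2p}$-moment by the mean plus a $\sqrt{x}$ fluctuation term, and one to replace that mean by the random norm on a high-probability event. The only cosmetic difference is that the paper states the Gaussian tail as $2e^{-C_2 p}$ and rescales $p$ at the end rather than fixing $p=x$ at the outset, which has no mathematical consequence.
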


\begin{proof}
We begin by noting that Markov's inequality implies
$$
	\mathbf{P}\Big[
	\|(z\id -X)^{-1}\| \ge
	e\, \EE[\|(z\id -X)^{-1}\|^{2p}]^{\frac{1}{2p}}
	\Big] \le e^{-2p}.
$$
By Theorem \ref{thm:resuniv}, the expectation inside the probability
satisfies
$$
	\EE[\|(z\id - X)^{-1}\|^{2p}]^{\frac{1}{2p}} \le
	d^{\frac{1}{2p}}
	\EE[\|(z\id - G)^{-1}\|^{2p}]^{\frac{1}{2p}}
	+ Cd^{\frac{1}{2p}}
	\frac{R(X)\sigma(X)^2p^2+R(X)^3p^3}{(\mathrm{Im}\,z)^4}
$$
for $p\in\mathbb{N}$, where $C$ is a universal constant. Here we used that
$\frac{1}{d}\|A\| \le \ntr A \le \|A\|$ for any positive 
semidefinite matrix
$A\in\M_d(\mathbb{C})_{\rm sa}$.

To proceed, note first that Lemma \ref{lem:gconc} implies
$$
	\EE[\|(z\id - G)^{-1}\|^{2p}]^{\frac{1}{2p}}
	\le
	\EE\|(z\id - G)^{-1}\| +
	C\sqrt{p}\frac{\sigma_*(X)}{(\mathrm{Im}\,z)^2}
$$
for $p\in\mathbb{N}$, where $C$ is a universal constant (this follows
as the $L^p$-norm of a $\sigma^2$-subgaussian random variable is
at most of order $\sigma\sqrt{p}$, cf.\ \cite[Theorem 2.1]{BLM13}).
Another application of Lemma \ref{lem:gconc} therefore yields
$$
        \mathbf{P}\bigg[
	\EE[\|(z\id - G)^{-1}\|^{2p}]^{\frac{1}{2p}}
	\ge
        \|(z\id -G)^{-1}\|+
	C_1\sqrt{p}\frac{\sigma_*(X)}{(\mathrm{Im}\,z)^2}
	\bigg]
        \le 2e^{-C_2p}
$$
for $p\in\mathbb{N}$ and universal constants $C_1,C_2$.

Combining the above bounds yields
\begin{align*}
	\mathbf{P}\bigg[
	\|(z\id -X)^{-1}\| &\ge
	e d^{\frac{1}{2p}}
        \|(z\id -G)^{-1}\|+
	C_1
	e d^{\frac{1}{2p}}
	\sqrt{p}\frac{\sigma_*(X)}{(\mathrm{Im}\,z)^2}
\\ &\qquad\mbox{}
	+ Ce d^{\frac{1}{2p}}
	\frac{R(X)\sigma(X)^2p^2+R(X)^3p^3}{(\mathrm{Im}\,z)^4}
	\bigg] \le e^{-2p}+2e^{-C_2p}
\end{align*}
for $p\in\mathbb{N}$. The conclusion follows readily
using $d^{\frac{1}{2p}}\le e^{\frac{1}{2}}$ for $p\ge\log d$.
\end{proof}

We are now ready to prove one direction of Theorem \ref{thm:specuniv}.

\begin{prop}
\label{prop:specunivupper}
For any $t\ge 0$, we have
$$
	\mathbf{P}\big[\spc(X)\subseteq\spc(G) + C\varepsilon(t)[-1,1]\big]
	\ge 1-de^{-t},
$$
where $C$ is a universal constant and $\varepsilon(t)$ is as defined
in Theorem \ref{thm:specuniv}.
\end{prop}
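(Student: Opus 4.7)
The plan is to apply a direct adaptation of the deterministic argument in Lemma \ref{lem:rescomparison}, where the resolvent inequality at $z = \mu + i\varepsilon$ for $\mu$ in a finite net is combined with the lower bound $\|(z\id - X)^{-1}\| \geq 1/|z - \lambda|$ for $\lambda \in \spc(X)$ close to $\mu$. Since $de^{-t} \geq 1$ when $t \leq \log d$, I may assume $t \geq \log d$ throughout. I set $x := t + C_0 \log d$ for a universal constant $C_0$ to be determined, and let
\begin{equation*}
  K_1 := c\sigma_*(X) x^{1/2}, \qquad K_3 := c\{R(X)\sigma(X)^2 x^2 + R(X)^3 x^3\},
\end{equation*}
where $c$ is the constant from Lemma \ref{lem:uppersinglez}. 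Let $\varepsilon$ be a sufficiently large universal constant multiple of $K_1 \vee K_3^{1/3}$ so that $K_1/\varepsilon, K_3/\varepsilon^3 \leq 1/12$; since $x \asymp t$ under the assumption $t \geq \log d$, one has $\varepsilon \lesssim \varepsilon(t)$.

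Lemma \ref{lem:net} first localizes $\spc(X) \subseteq I$ with probability at least $1 - e^{-t}$, where $I$ is a union of at most $d$ intervals of length $\lesssim \delta_* := \sigma_*(X)(d+t)^{1/2} + R(X)(d+t)$. I would cover $I$ by an $\varepsilon$-net $\mathcal{N}$ of cardinality $|\mathcal{N}| \lesssim d + d\delta_*/\varepsilon$; using $K_1 \gtrsim \sigma_*(X)\sqrt{t}$ and $K_3^{1/3} \gtrsim R(X)\,t$ together with $t \geq \log d$, one verifies that $|\mathcal{N}|$ is polynomial in $d$. A union bound of Lemma \ref{lem:uppersinglez} over the points $\mu + i\varepsilon$ with $\mu \in \mathcal{N}$, at tail parameter $x$, then shows that with probability at least $1 - 3|\mathcal{N}|e^{-x} \geq 1 - de^{-t}$ for $C_0$ large enough, the inequality
\begin{equation*}
  \|((\mu + i\varepsilon)\id - X)^{-1}\| \leq C\|((\mu + i\varepsilon)\id - G)^{-1}\| + K_1/\varepsilon^2 + K_3/\varepsilon^4
\end{equation*}
holds at every $\mu \in \mathcal{N}$ simultaneously.

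To conclude, fix $\lambda \in \spc(X) \cap I$ and let $\mu \in \mathcal{N}$ be nearest, so $|\lambda - \mu| \leq \varepsilon$. Setting $z := \mu + i\varepsilon$, one has $\|(z\id - X)^{-1}\| \geq 1/|z - \lambda| \geq 1/(\varepsilon\sqrt{2})$ and $\|(z\id - G)^{-1}\| = 1/\sqrt{\varepsilon^2 + \mathrm{dist}(\mu, \spc(G))^2}$. Substituting these into the resolvent inequality above and moving the terms $K_1/\varepsilon^2$ and $K_3/\varepsilon^4$ (each at most $1/(12\varepsilon)$ by the choice of $\varepsilon$) to the left yields
\begin{equation*}
  \frac{1/\sqrt{2} - 1/6}{\varepsilon} \leq \frac{C}{\sqrt{\varepsilon^2 + \mathrm{dist}(\mu, \spc(G))^2}},
\end{equation*}
whence $\mathrm{dist}(\mu, \spc(G)) \lesssim \varepsilon$ and therefore $\mathrm{dist}(\lambda, \spc(G)) \leq |\lambda - \mu| + \mathrm{dist}(\mu, \spc(G)) \lesssim \varepsilon \lesssim \varepsilon(t)$, which is the desired bound.

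The main obstacle is the parameter accounting in the second step: the net spacing $\delta = \varepsilon$ and the tail parameter $x = t + C_0 \log d$ must be balanced so that the polynomial-in-$d$ cardinality $|\mathcal{N}|$ is absorbed by the exponential factor $e^{-x}$ in the union bound, while the scale $\varepsilon$ stays of order $\varepsilon(t)$. Both constraints are compatible because each of $\sigma_*(X)(d+t)^{1/2}/\varepsilon$ and $R(X)(d+t)/\varepsilon$ appearing in $\delta_*/\varepsilon$ is at most polynomial in $d$ once $t \geq \log d$, an elementary computation from the definitions of $K_1, K_3$, and $\varepsilon$.
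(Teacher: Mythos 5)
Your proof is correct and follows the paper's strategy: localize $\spc(X)$ via Lemma \ref{lem:net}, cover the localization set by a finite net, union-bound the resolvent comparison of Lemma \ref{lem:uppersinglez} over the net, and extract the spectral inclusion from the resulting resolvent inequality. The minor deviations from the paper's version—using net spacing $\varepsilon$ rather than $\sigma_*(X)\sqrt{x}+R(X)x$ (the paper uses the finer spacing so that the Lipschitz error of the resolvent is absorbed into $K_1$ before invoking Lemma \ref{lem:rescomparison} directly), and re-deriving the conclusion of Lemma \ref{lem:rescomparison} in-line via the triangle inequality on $\mathrm{dist}(\lambda,\spc(G))$—do not change the substance of the argument, and the parameter bookkeeping you sketch goes through.
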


\begin{proof}
Define the set
$$
	\Omega_x := \spc(\EE X) + 
	C'\{\sigma_*(X)\sqrt{d+x}+R(X)(d+x)\}[-1,1],
$$
where $C'$ is the universal constant of Lemma \ref{lem:net}.
Then $\Omega_x$ is a union of $d$ intervals of length
$2C'\{\sigma_*(X)\sqrt{d+x}+R(X)(d+x)\}$. We can therefore find
$\mathcal{N}_x\subset\Omega_x$ with $|\mathcal{N}_x|\le 
\frac{4C'd(d+x)}{x}$
such that each $\lambda\in\Omega_x$ satisfies
$\mathrm{dist}(\lambda,\mathcal{N}_x)\le \sigma_*\sqrt{x}+R(X)x$.
In particular, for every $\lambda\in\Omega_x$, there exists
$\lambda'\in\mathcal{N}_x$ so that
$$
	\big|\|((\lambda+i\varepsilon)\id-X)^{-1}\|-
	\|((\lambda'+i\varepsilon)\id-X)^{-1}\|\big|
	\le
	\frac{\sigma_*\sqrt{x}+R(X)x}{\varepsilon^2}
$$
as well as the analogous bound where $X$ is replaced by $G$
(here we used the identity $A^{-1}-B^{-1}=A^{-1}(B-A)B^{-1}$).
We can therefore estimate
\begingroup
\allowdisplaybreaks
\begin{align*}
	&
	\mathbf{P}\bigg[
	\|(z\id -X)^{-1}\| \ge
	C\bigg\{\|(z\id -G)^{-1}\|+
	\frac{3\sigma_*(X)\sqrt{x} + 2R(X)x}{\varepsilon^2}
	\\ &\qquad\qquad\mbox{}
	+ 
	\frac{R(X)\sigma(X)^2x^2+R(X)^3x^3}{\varepsilon^4}
	\bigg\}\mbox{ for some }z\in\spc(X)+i\varepsilon\bigg]
\\
	&\le
	\mathbf{P}\bigg[
	\|(z\id -X)^{-1}\| \ge
	C\bigg\{\|(z\id -G)^{-1}\|+
	\frac{3\sigma_*(X)\sqrt{x} + 2R(X)x}{\varepsilon^2}
	\\ &\qquad\qquad\mbox{}
	+ 
	\frac{R(X)\sigma(X)^2x^2+R(X)^3x^3}{\varepsilon^4}
	\bigg\}\mbox{ for some }z\in\Omega_x+i\varepsilon\bigg]
	+ e^{-x}
\\
	&\le
	\mathbf{P}\bigg[
	\|(z\id -X)^{-1}\| \ge
	C\bigg\{\|(z\id -G)^{-1}\|+
	\frac{\sigma_*(X)}{\varepsilon^2}\sqrt{x}
	\\ &\qquad\qquad\mbox{}
	+ 
	\frac{R(X)\sigma(X)^2x^2+R(X)^3x^3}{\varepsilon^4}
	\bigg\}\mbox{ for some }z\in\mathcal{N}_x+i\varepsilon\bigg]
	+ e^{-x}
\\
	&\le (3|\mathcal{N}_x|+1)e^{-x} 
	\le \bigg(1+\frac{12C'd(d+x)}{x}\bigg) e^{-x}
\end{align*}
\endgroup
for $x\ge\log d$, where we used Lemma \ref{lem:net} in the first 
inequality and a union bound and Lemma \ref{lem:uppersinglez} in the 
third inequality (here $C>1$ is the constant of
Lemma \ref{lem:uppersinglez}).

Now let $x=Lt$ for a universal constant $L$. Recalling the standing 
assumption $d\ge 2$, it is readily seen that we may 
choose $L>1$ sufficiently large so that
$$
	\bigg(1+\frac{12C'd(d+x)}{x}\bigg) e^{-x}
	\le de^{-t}
$$
for all $t\ge\log d$. Then we have shown that
\begin{align*}
	&\mathbf{P}\bigg[
	\|(z\id -X)^{-1}\| \le
	3L^3C\bigg\{\|(z\id -G)^{-1}\|+
	\frac{\sigma_*(X)\sqrt{t} + R(X)t}{\varepsilon^2}
	\\ &\qquad\qquad\mbox{}
	+ 
	\frac{R(X)\sigma(X)^2t^2+R(X)^3t^3}{\varepsilon^4}
	\bigg\}\mbox{ for all }z\in\spc(X)+i\varepsilon\bigg]
	\ge 1-de^{-t}
\end{align*}
for all $t\ge\log d$. On the other hand, the same bound holds trivially 
for $t<\log d$ as then $1-de^{-t}<0$. The proof is concluded by applying 
Lemma \ref{lem:rescomparison}.
\end{proof}

\subsection{Proof of Theorem \ref{thm:specuniv}: lower bound}
\label{sec:usplower}

We now turn to the complementary inequality 
$\spc(G)\subseteq\spc(X)+[-\varepsilon,\varepsilon]$ with high 
probability. The proof is similar in spirit to that of the upper bound,
but we must now work with the more complicated concentration inequality
of Proposition \ref{prop:ngconc}. As before, we begin by establishing a 
resolvent norm inequality for a single choice of $z$.

\begin{lem}
\label{lem:lowersinglez}
Let $z\in\mathbb{C}$ with $\mathrm{Im}\,z>0$.  Then
\begin{multline*}
	\mathbf{P}\bigg[
	\|(z\id -G)^{-1}\| \ge
        C\bigg\{
	\|(z\id -X)^{-1}\| +
        \frac{R(X)\sigma(X)x+R(X)^2x^{\frac{3}{2}}}{(\mathrm{Im}\,z)^3}
	+
\mbox{}
\\
	\frac{R(X)\sigma(X)^2x^2+R(X)^3x^3}{(\mathrm{Im}\,z)^4}
	+
        \frac{
	\sigma_*(X)x^{\frac{1}{2}}+
	R(X)^{\frac{1}{2}}\sigma(X)^{\frac{1}{2}} x^{\frac{3}{4}} +
	R(X)x}{(\mathrm{Im}\,z)^2}
        \bigg\} 
	\bigg]
	\le 3e^{-x}
\end{multline*}
for all $x\ge\log d$, where $C$ is a universal constant.
\end{lem}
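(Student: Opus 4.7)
The overall plan is to mirror the proof of Lemma \ref{lem:uppersinglez}, interchanging the roles of $X$ and $G$ and replacing the Gaussian concentration estimate Lemma \ref{lem:gconc} by its non-Gaussian counterpart Proposition \ref{prop:ngconc}. The additional error terms in the statement correspond precisely to the extra contributions in Proposition \ref{prop:ngconc} that are absent from its Gaussian analogue.

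The first step is Markov's inequality applied to $\|(z\id-G)^{-1}\|^{2p}$, which gives $\mathbf{P}[\|(z\id-G)^{-1}\| \ge e\,\mathbf{E}[\|(z\id-G)^{-1}\|^{2p}]^{1/(2p)}] \le e^{-2p}$. The elementary bound $\tfrac{1}{d}\|A\|^{2p} \le \ntr A^{2p}$ for positive semidefinite $A$ converts the operator norm into a normalized trace, Theorem \ref{thm:resuniv} (applied with the roles of $X$ and $G$ interchanged) swaps $G$ for $X$ under the trace at the cost of the error $\frac{R(X)\sigma(X)^2 p^2 + R(X)^3 p^3}{(\mathrm{Im}\,z)^4}$, and the trivial bound $\ntr A^{2p} \le \|A\|^{2p}$ returns to the operator norm. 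This reduces the problem to controlling $\mathbf{E}[\|(z\id-X)^{-1}\|^{2p}]^{1/(2p)}$ and then comparing it to a single realization of the same quantity.

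Both of these I would handle by Proposition \ref{prop:ngconc}. Reading its tail bound as a sub-Gaussian plus sub-exponential estimate and integrating yields $\mathbf{E}[\|(z\id-X)^{-1}\|^{2p}]^{1/(2p)} \le \mathbf{E}\|(z\id-X)^{-1}\| + C(\alpha\sqrt{p} + \beta p)$, where $\alpha,\beta$ denote the coefficients of $\sqrt{x}$ and $x$ appearing in Proposition \ref{prop:ngconc}. To put $\alpha$ and $\beta$ in closed form I would bound $(\mathbf{E}\|X-\mathbf{E}X\|^k)^{1/k} \lesssim \sigma(X)\sqrt{\log d + k} + R(X)(\log d + k)$ using the classical matrix Bernstein inequality of \cite{Tro15}. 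Under the standing assumption $x \ge \log d$, taking $p$ to be a universal multiple of $x$ makes each such moment $\lesssim \sigma(X)\sqrt{p} + R(X)p$; distributing through the outer $\sqrt{p}$ then produces precisely the combinations $\frac{R(X)^{1/2}\sigma(X)^{1/2}x^{3/4} + R(X)x}{(\mathrm{Im}\,z)^2}$ and $\frac{R(X)\sigma(X)x + R(X)^2 x^{3/2}}{(\mathrm{Im}\,z)^3}$ listed in the statement. A second application of Proposition \ref{prop:ngconc}, this time to the lower tail, gives a matching inequality $\|(z\id-X)^{-1}\| \ge \mathbf{E}\|(z\id-X)^{-1}\| - (\text{tail})$ holding with probability at least $1 - 2e^{-Cx}$.

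Combining the three tail events above by a union bound, absorbing the factor $d^{1/(2p)} \le e^{1/2}$, and collecting constants then yields the statement. The main obstacle is not conceptual but parametric: one must track carefully how the moment bounds $(\mathbf{E}\|X-\mathbf{E}X\|^k)^{1/k}$ propagate through the $L^{2p}$-conversion of Proposition \ref{prop:ngconc} and verify that they produce exactly the five error terms in the statement and nothing larger.
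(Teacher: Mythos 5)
Your proposal is correct and follows essentially the same route as the paper's proof: Markov's inequality plus the trace-to-norm conversion and Theorem \ref{thm:resuniv} to swap $G$ for $X$, then two applications of Proposition \ref{prop:ngconc} (one to convert the $L^{2p}$ moment to a mean plus sub-Gaussian/sub-exponential tail, one to convert the mean to a single realization), with the matrix Bernstein inequality used to absorb $\EE\|X-\EE X\|$ and $(\EE\|X-\EE X\|^2)^{1/2}$ into $\sigma(X)\sqrt{p}+R(X)p$ under the standing hypothesis $p\ge\log d$, and finally taking $p\asymp x$ and $d^{1/(2p)}\le e^{1/2}$.
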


\begin{proof}
As in the proof of Lemma \ref{lem:uppersinglez}, we have
$$
	\mathbf{P}\Big[
	\|(z\id -G)^{-1}\| \ge
	e\, \EE[\|(z\id -G)^{-1}\|^{2p}]^{\frac{1}{2p}}
	\Big] \le e^{-2p}
$$
and
$$
	\EE[\|(z\id - G)^{-1}\|^{2p}]^{\frac{1}{2p}} \le
	d^{\frac{1}{2p}}
	\EE[\|(z\id - X)^{-1}\|^{2p}]^{\frac{1}{2p}}
	+ Cd^{\frac{1}{2p}}
	\frac{R(X)\sigma(X)^2p^2+R(X)^3p^3}{(\mathrm{Im}\,z)^4}
$$
for $p\in\mathbb{N}$ by Markov's inequality and Theorem \ref{thm:resuniv}.

To proceed, we use that Proposition \ref{prop:ngconc} implies
\begin{multline*}
	\EE[\|(z\id - X)^{-1}\|^{2p}]^{\frac{1}{2p}}
	\le
	\EE\|(z\id - X)^{-1}\| +
	C\bigg\{
	\frac{R(X)}{(\mathrm{Im}\,z)^2}+
	\frac{R(X)^2}{(\mathrm{Im}\,z)^3}
	\bigg\}\,p 
	\\
	+C\bigg\{
	\frac{\sigma_*(X)+R(X)^{\frac{1}{2}}(\EE\|X-\EE X\|)^{\frac{1}{2}}}
	{(\mathrm{Im}\,z)^2} +
	\frac{R(X)(\EE\|X-\EE X\|^2)^{\frac{1}{2}}}{(\mathrm{Im}\,z)^3}
	\bigg\}
	\sqrt{p}
\end{multline*}
for $p\in\mathbb{N}$ by \cite[Theorem 2.3]{BLM13}.
Another application of Proposition \ref{prop:ngconc} yields
\begin{multline*}
	\mathbf{P}\bigg[
	\EE[\|(z\id - X)^{-1}\|^{2p}]^{\frac{1}{2p}}
	\ge
	\|(z\id -X)^{-1}\| +
        C\bigg\{
        \frac{R(X)}{(\mathrm{Im}\,z)^2}+
        \frac{R(X)^2}{(\mathrm{Im}\,z)^3}
        \bigg\}\,p + \mbox{}
\\
\qquad
        C\bigg\{
        \frac{\sigma_*(X)+R(X)^{\frac{1}{2}}(\EE\|X-\EE X\|)^{\frac{1}{2}}}
        {(\mathrm{Im}\,z)^2} +
        \frac{R(X)(\EE\|X-\EE X\|^2)^{\frac{1}{2}}}{(\mathrm{Im}\,z)^3}
        \bigg\}
        \sqrt{p}
	\bigg]
	\le 2e^{-p}
\end{multline*}
for $p\in\mathbb{N}$, provided the universal constant $C$ is chosen 
sufficiently large. Now recall that the matrix Bernstein inequality 
\cite[eq.\ (6.1.4)]{Tro15} implies
$$
	(\EE\|X-\EE X\|^2)^{\frac{1}{2}} \lesssim
	\sigma(X)\sqrt{\log d} + R(X)\log d
	\le
	\sigma(X)\sqrt{p}+R(X)p
$$
for $p\ge\log d$. We can therefore further estimate
\begin{multline*}
	\mathbf{P}\bigg[
	\EE[\|(z\id - X)^{-1}\|^{2p}]^{\frac{1}{2p}}
	\ge
	\|(z\id -X)^{-1}\| +
        C\bigg\{
        \frac{R(X)\sigma(X)p+R(X)^2p^{\frac{3}{2}}}{(\mathrm{Im}\,z)^3}
	+
\mbox{}
\\
        \frac{
	\sigma_*(X)\sqrt{p}+
	R(X)^{\frac{1}{2}}\sigma(X)^{\frac{1}{2}} p^{\frac{3}{4}} +
	R(X)p}{(\mathrm{Im}\,z)^2}
        \bigg\} 
	\bigg]
	\le 2e^{-p}
\end{multline*}
for $p\ge \log d$, provided $C$ is chosen sufficiently large.
The proof is now readily concluded by
combining the above bounds
and using $d^{\frac{1}{2p}}\le e^{\frac{1}{2}}$ for $p\ge\log d$.
\end{proof}

\begin{rem}
We have emphasized in the introduction that the matrix Bernstein 
inequality may be viewed as a consequence of the universality principles 
of this paper. On the other hand, we have used the matrix Bernstein 
inequality in the proof of Lemma \ref{lem:lowersinglez} to estimate the
matrix norms that appear in Proposition \ref{prop:ngconc}. There is no 
circular reasoning here: the present section is only concerned 
with lower bounds on the spectrum of $X$, while the matrix Bernstein 
inequality already 
follows from the upper bound of Proposition \ref{prop:specunivupper} (or 
from Theorem \ref{thm:momentuniv} by choosing $p\asymp\log d$ and
$q=\infty$) and 
the noncommutative Khintchine inequality.

The same remark applies to the application of the matrix Bernstein 
inequality in the proof of Theorem \ref{thm:smuniv} (cf.\ Lemma 
\ref{lem:posnck} in section \ref{sec:smpf}).
\end{rem}

We are now ready to prove the converse direction of Theorem 
\ref{thm:specuniv}.

\begin{prop}
\label{prop:specunivlower}
For any $t\ge 0$, we have
$$
	\mathbf{P}\big[\spc(G)\subseteq\spc(X) + 
	C\varepsilon(t)[-1,1]\big]
	\ge 1-de^{-t},
$$
where $C$ is a universal constant and $\varepsilon(t)$ is as defined
in Theorem \ref{thm:specuniv}.
\end{prop}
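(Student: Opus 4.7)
The plan is to mirror the proof of Proposition \ref{prop:specunivupper} with the roles of $X$ and $G$ interchanged, using Lemma \ref{lem:lowersinglez} in place of Lemma \ref{lem:uppersinglez} and applying Lemma \ref{lem:rescomparison} with $A=G$ and $B=X$. The task reduces to establishing, uniformly over $z = \lambda + i\varepsilon$ with $\lambda \in \spc(G)$ and suitable $\varepsilon$, a resolvent comparison of the form
\[
\|(z\id - G)^{-1}\| \le C\|(z\id - X)^{-1}\| + \frac{K_1}{(\mathrm{Im}\,z)^2} + \frac{K_2}{(\mathrm{Im}\,z)^3} + \frac{K_3}{(\mathrm{Im}\,z)^4}
\]
with probability at least $1-de^{-t}$, where $K_1,K_2,K_3$ are read off from Lemma \ref{lem:lowersinglez}.

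Concretely, fix $t \ge \log d$ (the claim is vacuous otherwise, as then $1-de^{-t}\le 0$) and set $x = Lt$ for a sufficiently large universal constant $L$. First I would apply the second inequality of Lemma \ref{lem:net} to confine $\spc(G)$, with probability at least $1-e^{-x}$, to the deterministic set $\Omega_x := \spc(\EE G) + C'\sigma_*(X)\sqrt{d+x}\,[-1,1]$, which is a union of $d$ intervals. I would then cover $\Omega_x$ by a net $\mathcal{N}_x$ of cardinality $O(d(d+x)/x)$, with spacing small enough that the resolvent identity $A^{-1}-B^{-1}=A^{-1}(B-A)B^{-1}$ transfers $\|(z\id-G)^{-1}\|$ and $\|(z\id-X)^{-1}\|$ at any $\lambda \in \Omega_x$ from their values at the nearest netpoint. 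A union bound over $\mathcal{N}_x$ combined with Lemma \ref{lem:lowersinglez}, with the prefactor $|\mathcal{N}_x|$ absorbed into $de^{-t}$ by taking $L$ large, yields the required resolvent comparison uniformly for all $z \in \spc(G)+i\varepsilon$. The conclusion is then immediate from Lemma \ref{lem:rescomparison}.

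The main obstacle will be consolidating the error terms, which is more involved than in the upper bound because Lemma \ref{lem:lowersinglez} contributes five distinct error contributions rather than three. Reading off the constants gives
\begin{align*}
K_1 &\sim \sigma_*(X)\,t^{1/2} + R(X)^{1/2}\sigma(X)^{1/2}\,t^{3/4} + R(X)\,t, \\
K_2 &\sim R(X)\sigma(X)\,t + R(X)^2\,t^{3/2}, \\
K_3 &\sim R(X)\sigma(X)^2\,t^2 + R(X)^3\,t^3,
\end{align*}
and the threshold $\varepsilon \asymp K_1 \vee K_2^{1/2} \vee K_3^{1/3}$ in Lemma \ref{lem:rescomparison} generates six candidate terms which must be bounded by $\varepsilon(t) = \sigma_*(X)\,t^{1/2} + R(X)^{1/3}\sigma(X)^{2/3}\,t^{2/3} + R(X)\,t$. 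Two of them, $\sigma_*(X)\,t^{1/2}$ (from $K_1$) and $R(X)^{1/3}\sigma(X)^{2/3}\,t^{2/3}$ (from $K_3^{1/3}$), are already of the desired form. The remaining ``mixed'' terms yield to Young's inequality: for example, writing $R^{1/2}\sigma^{1/2}\,t^{3/4} = (R^{1/3}\sigma^{2/3}t^{2/3})^{3/4}(Rt)^{1/4}$, we obtain $R^{1/2}\sigma^{1/2}\,t^{3/4} \le \tfrac{3}{4}R^{1/3}\sigma^{2/3}t^{2/3} + \tfrac{1}{4}Rt$, and the other mixed terms are handled analogously, freely using $t\ge\log d$, hence $t\gtrsim 1$ for $L$ large.

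One conceptual point worth noting: the proof of Lemma \ref{lem:lowersinglez} invokes the matrix Bernstein inequality to control $\EE\|X-\EE X\|^2$ in the concentration bound of Proposition \ref{prop:ngconc}. This is not circular, since the matrix Bernstein inequality follows from Theorem \ref{thm:momentuniv} (taking $p\asymp\log d$ and $q=\infty$) combined with the noncommutative Khintchine inequality, and hence is already available independently of the spectral universality principle we are now proving.
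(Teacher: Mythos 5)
Your proposal is correct and follows essentially the same route as the paper: the paper's proof likewise repeats the steps of Proposition \ref{prop:specunivupper} verbatim with Lemma \ref{lem:net} (Gaussian part) and Lemma \ref{lem:lowersinglez}, applies Lemma \ref{lem:rescomparison} with $A=G$, $B=X$, and then absorbs the extra term via exactly the Young's inequality bound $R(X)^{1/2}\sigma(X)^{1/2}t^{3/4}\le \tfrac{3}{4}R(X)^{1/3}\sigma(X)^{2/3}t^{2/3}+\tfrac{1}{4}R(X)t$ that you give. Your remark on the non-circularity of invoking matrix Bernstein inside Lemma \ref{lem:lowersinglez} matches the paper's own remark as well.
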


\begin{proof}
By following exactly the same steps as in the proof of Proposition 
\ref{prop:specunivupper}, we can deduce using Lemmas \ref{lem:net} and 
Lemma \ref{lem:lowersinglez} the inequality
\begin{multline*}
	\mathbf{P}\bigg[
	\|(z\id -G)^{-1}\| \le
        C\bigg\{
	\|(z\id -X)^{-1}\| +
        \frac{R(X)\sigma(X)t+R(X)^2t^{\frac{3}{2}}}{\varepsilon^3}
	+
\mbox{}
\\
	\frac{R(X)\sigma(X)^2t^2+R(X)^3t^3}{\varepsilon^4}
	+
        \frac{
	\sigma_*(X)t^{\frac{1}{2}}+
	R(X)^{\frac{1}{2}}\sigma(X)^{\frac{1}{2}} t^{\frac{3}{4}} +
	R(X)t}{\varepsilon^2}
        \bigg\} 
\\
	\mbox{for all }z\in\spc(G)+i\varepsilon
	\bigg]
	\ge 1-de^{-t}
\end{multline*}
for all $t,\varepsilon\ge 0$, where $C$ is a universal constant.
Thus Lemma \ref{lem:rescomparison} implies
$$
	\mathbf{P}\big[\spc(G)\subseteq\spc(X) + 
	C\varepsilon'(t)[-1,1]\big]
	\ge 1-de^{-t}
$$
for all $t\ge 0$ and a universal constant $C$, where
$$
	\varepsilon'(t) =
	\sigma_*(X)t^{\frac{1}{2}}
	+R(X)^{\frac{1}{3}}\sigma(X)^{\frac{2}{3}}t^{\frac{2}{3}}
        +R(X)^{\frac{1}{2}}\sigma(X)^{\frac{1}{2}}t^{\frac{3}{4}} 
	+R(X)t.
$$
It remains to note that
$$
	R(X)^{\frac{1}{2}}\sigma(X)^{\frac{1}{2}}t^{\frac{3}{4}} \le
	\frac{3}{4}
	R(X)^{\frac{1}{3}}\sigma(X)^{\frac{2}{3}}t^{\frac{2}{3}} +
	\frac{1}{4}
	R(X)t
$$ by Young's inequality, concluding the proof.
\end{proof}

We now conclude the proof of Theorem \ref{thm:specuniv}.

\begin{proof}[Proof of Theorem \ref{thm:specuniv}]
Combining Propositions \ref{prop:specunivupper} and
\ref{prop:specunivlower} yields
$$
	\mathbf{P}[\dH(\spc(X),\spc(G)) >
	C\varepsilon(s)] \le 2de^{-s}
$$
for all $s\ge 0$ by the union bound. Choosing $s=2t$, we obtain
$$
	\mathbf{P}[\dH(\spc(X),\spc(G)) >
	2C\varepsilon(t)] \le 2de^{-2t} \le de^{-t}
$$
for $t\ge\log d$, as the latter implies $2e^{-t} \le \frac{2}{d}\le 1$
by the standing assumption $d\ge 2$. But for $t<\log d$ the
inequality is trivial as then $de^{-t}>1$. The tail bound follows.

To deduce the expectation bound, we note that
\begin{align*}
	\EE[\dH(\spc(X),\spc(G))] 
	&\le
	C\varepsilon(\log d)+\int_{C\varepsilon(\log d)}^\infty
	\mathbf{P}[\dH(\spc(X),\spc(G)) >
        x]\,dx
\\
	&=
	C\varepsilon(\log d)+C\int_{\log d}^\infty
	\mathbf{P}[\dH(\spc(X),\spc(G)) >
        C\varepsilon(t)]\,\frac{d\varepsilon(t)}{dt}\,dt
\\
	&\le
	C\varepsilon(\log d)+2dC\int_{\log d}^\infty
	e^{-t}\,\frac{d\varepsilon(t)}{dt}\,dt
	\lesssim \varepsilon(\log d)
\end{align*}
using $\int_a^{\infty} e^{-t} t^{\beta}\,dt
\le C_\beta e^{-a} a^{\beta}$ for $a>\frac{1}{4}$, $\beta\in\mathbb{R}$,
where $C_\beta$ depends only on $\beta$.
\end{proof}

\subsection{Proof of Corollary \ref{cor:normuniv}}
\label{sec:uspnorm}

Now that Theorem \ref{thm:specuniv} has been established, the proof of
Corollary \ref{cor:normuniv} follows by routine manipulations.	

\begin{proof}[Proof of Corollary \ref{cor:normuniv}]
We first note that
$$
	\spc(A)\subseteq \spc(B)+[-\varepsilon,\varepsilon]
$$
certainly implies
$$
	\lambda_{\rm max}(A)\le\lambda_{\rm max}(B)+\varepsilon
$$
for any $A,B\in\M_d(\mathbb{C})_{\rm sa}$ and $\varepsilon>0$. Thus
$$	|\lambda_{\rm max}(A)-\lambda_{\rm max}(B)|\le 
	\dH(\spc(A),\spc(B)),
$$
and the first and last bound of Corollary \ref{cor:normuniv} follow
immediately from Theorem \ref{thm:specuniv}.

To prove the middle bound, we note that a routine application of 
Gaussian concentration (see, e.g., \cite[Corollary 4.14]{BBV21}) yields
$$
	\mathbf{P}\big[ |\lambda_{\rm max}(G)-
	\EE\lambda_{\rm max}(G)| \ge \sigma_*(X)\sqrt{2t}\big]
	\le 2e^{-t}
$$
for all $t\ge 0$. Combined with the first bound of Corollary 
\ref{cor:normuniv}, we obtain
\begin{multline*}
	\mathbf{P}\big[
	|\lambda_{\rm max}(X)-\EE\lambda_{\rm max}(G)| >
	\sigma_*(X)\sqrt{2t}+
	C\varepsilon(t)\big]
\\	\le
	2e^{-t} +
	\mathbf{P}\big[
	|\lambda_{\rm max}(X)-\lambda_{\rm max}(G)| >
	C\varepsilon(t)\big]
	\le (d+2)e^{-t}
\end{multline*}
for all $t\ge 0$. The second inequality of Corollary \ref{cor:normuniv} 
follows for a suitable choice of the universal constant (as in the last 
step of the proof of Theorem \ref{thm:specuniv}).

The analogous bounds for $\|X\|,\|G\|$ are proved in an identical manner.
\end{proof}

\section{Truncation}
\label{sec:trunc}

The aim of this section is to prove Theorems \ref{thm:specheavy} and 
\ref{thm:smunivheavy}. The basic idea behind these results is the 
following truncation argument. Let $X$ be as in \eqref{eq:model}, and let 
$G$ be the associated Gaussian model. Define the truncated model 
$$
	\tilde X := Z_0 + \sum_{i=1}^n 1_{\|Z_i\|\le R} Z_i.
$$
Then $X=\tilde X$ on the event $\{\max_i\|Z_i\|\le R\}$, while
$R(\tilde X)\le R$. We can therefore obtain universality 
principles for unbounded $X$ by conditioning on the above event, and 
applying the results of the previous sections to $\tilde X$.

The problem with this approach is that it does not yield a comparison 
between the spectra of $X$ and $G$, but rather between the spectra of $X$ 
and $\tilde G$, where $\tilde G$ is the Gaussian model associated to 
$\tilde X$. The main difficulty in the implementation of the truncation 
argument is therefore to compare the spectra of the Gaussian models $G$ 
and $\tilde G$. To this end, we will first prove general comparison 
principles for the spectra of Gaussian random matrices in section 
\ref{sec:gaussc}. In section \ref{sec:truncerr}, we will upper bound the 
relevant parameters in the specific case of $G$ and $\tilde G$. Finally, 
we combine these estimates in section \ref{sec:truncpf} to complete the 
proof of Theorems \ref{thm:specheavy} and \ref{thm:smunivheavy}.

\subsection{Gaussian comparison principles}
\label{sec:gaussc}

The aim of this section is to prove general comparison principles for the 
spectra of Gaussian random matrices. We begin by stating a comparison 
principle for the resolvent moments.

\begin{lem}
\label{lem:gaussperturb}
Let $H,\tilde H$ be self-adjoint Gaussian random matrices. Then we have
$$
	\big|\EE[\ntr |z\id - H|^{-2p}]^{\frac{1}{2p}} -
	\EE[\ntr |z\id - \tilde H|^{-2p}]^{\frac{1}{2p}}\big|
	\le
	\frac{\|\EE H-\EE\tilde H\|}{(\mathrm{Im}\,z)^2} +
	2p\,\frac{\Delta(H,\tilde H)}{(\mathrm{Im}\,z)^3}
$$
for any $p\in\mathbb{N}$, where
$$
	\Delta(H,\tilde H) :=
	\sup_{\|M\|\le 1}\big\|
	\EE[(H-\EE H)M(H-\EE H)]-
	\EE[(\tilde H-\EE\tilde H)M(\tilde H-\EE\tilde H)]\big\|.
$$

\end{lem}

\begin{proof}
First, note that as $A^{-1}-B^{-1}=A^{-1}(B-A)B^{-1}$, we have
$$
	\|(z\id -H)^{-1}-(z\id - (H-\EE H+\EE\tilde H))^{-1}\| \le
	\frac{\|\EE H-\EE\tilde H\|}{(\mathrm{Im}\,z)^2}.
$$
Thus we can assume in the sequel that $\EE H=\EE\tilde H$.

Assume without loss of generality that $H,\tilde H$ are independent
and $\EE H=\EE\tilde H$, and let $Y,\tilde Y$ be independent copies of 
$H-\EE H$ and $\tilde H-\EE\tilde H$, respectively.
Define 
$$
	H(t) := \EE H + \sqrt{t}\,(H-\EE H)+\sqrt{1-t}\,(\tilde H-\EE\tilde H).
$$
By the Gaussian interpolation lemma \cite[Lemma 4.11]{BBV21}
$$
	\frac{d}{dt}\EE[\ntr |z\id - H(t)|^{-2p}] =
	\frac{
	\EE[\partial_{Y}^2 \ntr |z\id - H(t)|^{-2p}]
	-
	\EE[\partial_{\tilde Y}^2 \ntr |z\id - H(t)|^{-2p}]
	}{2}.
$$
By the product rule, we have
\begin{multline*}
	\partial_B^2 \ntr|z\id-H|^{-2p} =
	2p \sum_{k=0}^p \mathop{\mathrm{Re}} \ntr[
	B (z\id-H)^{-k-1} B (z\id-H)^{-p-1+k} (\bar z\id-H)^{-p}]
	\\ +2p \sum_{k=0}^{p-1} \mathop{\mathrm{Re}} \ntr[
	B (z\id-H)^{-p-1} 
	(\bar z\id-H)^{-k-1}
	B
	(\bar z\id-H)^{-p+k}].
\end{multline*}
We can therefore bound
$$
	\bigg|
	\frac{d}{dt}\EE[\ntr |z\id - H(t)|^{-2p}] \bigg|
	\le
	p(2p+1)\, \Delta(H,\tilde H)\,
	\EE[\ntr |z\id-H(t)|^{-2p-2}]
$$
by applying Lemma \ref{lem:calderon} to 
$F(M,M') := \EE[\ntr YMYM']-\EE[\ntr \tilde YM\tilde YM']$ as in the
proof of Lemma \ref{lem:holder}, and using 
$\sup_{\|M\|\le 1,\|M'\|_1\le 1}|F(M,M')| = \Delta(H,\tilde H)$.

It remains to note that $\EE[\ntr |z\id-H(t)|^{-2p-2}]\le
(\mathrm{Im}\,z)^{-3}\EE[\ntr |z\id-H(t)|^{-2p}]^{1-\frac{1}{2p}}$ and 
the chain rule readily yield the estimate
$$
	\bigg|
	\frac{d}{dt}\EE[\ntr |z\id - H(t)|^{-2p}]^{\frac{1}{2p}} \bigg|
	\le
	(p+\tfrac{1}{2})\,
	\frac{\Delta(H,\tilde H)}{(\mathrm{Im}\,z)^3}.
$$
The conclusion now follows by integrating over $t$ (using 
$p+\frac{1}{2}\le 2p$).
\end{proof}

A bound on the Hausdorff distance now follows along familiar lines.

\begin{prop}
\label{prop:gausscomp}
Let $H,\tilde H$ be self-adjoint Gaussian random matrices. Then
\begin{multline*}
	\mathbf{P}\big[
	\dH(\spc(H),\spc(\tilde H)) >
	C\{\|\EE H-\EE\tilde H\| +
	\Delta(H,\tilde H)^{\frac{1}{2}}\sqrt{\log d} \\
	+ (\sigma_*(H)+\sigma_*(\tilde H))x\}\big]
	\le de^{-x^2}
\end{multline*}
for all $x\ge 0$, where $C$ is a universal constant and
$\Delta(H,\tilde H)$ is as in Lemma \ref{lem:gaussperturb}.
\end{prop}

\begin{proof}
Note first that
$$
	\EE[\ntr |z\id - H|^{-2p}]^{\frac{1}{2p}}
	\le
	\EE[\|(z\id - H)^{-1}\|^{2p}]^{\frac{1}{2p}}
	\le
	\EE\|(z\id - H)^{-1}\| +
	C\sqrt{p}\frac{\sigma_*(H)}{(\mathrm{Im}\,z)^2}
$$
for a universal constant $C$ as in the proof of 
Lemma \ref{lem:uppersinglez}. On the other hand,
$$
	\EE[\ntr |z\id - \tilde H|^{-2p}]^{\frac{1}{2p}}
	\ge d^{-\frac{1}{2p}} \EE\|(z\id - \tilde H)^{-1}\|.
$$
Combining these bounds with Lemmas \ref{lem:gconc} and 
\ref{lem:gaussperturb} yields
\begin{multline*}
	\PP\bigg[
	d^{-\frac{1}{2p}}
	\|(z\id - \tilde H)^{-1}\| \ge
	\|(z\id - H)^{-1}\|
	+
	\frac{\|\EE H-\EE\tilde H\|}{(\mathrm{Im}\,z)^2}
	+
	2p
	\,\frac{\Delta(H,\tilde H)}{(\mathrm{Im}\,z)^3}
\\
	+
	C\sqrt{p}\frac{\sigma_*(H)}{(\mathrm{Im}\,z)^2}
	+ 
	\frac{\sigma_*(\tilde H)+\sigma_*(H)}{(\mathrm{Im}\,z)^2}\,x\bigg]
	\le 4e^{-x^2/2}.
\end{multline*}
Choosing $p=\lfloor\log d\rfloor$ and
proceeding as in the proof of Proposition \ref{prop:specunivupper} yields
\begin{multline*}
	\PP\bigg[
	\|(z\id - \tilde H)^{-1}\| \ge
	L\bigg\{
	\|(z\id - H)^{-1}\|
	+
	\frac{\|\EE H-\EE\tilde H\|}{\varepsilon^2}
	+
	\frac{\Delta(H,\tilde H)\log d}{\varepsilon^3}
\\
	+ 
	\frac{\sigma_*(\tilde H)+\sigma_*(H)}{\varepsilon^2}\,x
	\bigg\}\text{ for some }z\in\spc(\tilde H)+i\varepsilon\bigg]
	\le d e^{-x^2}
\end{multline*}
for all $x\ge 0$, where $L$ is a universal constant. The same 
bound holds if we reverse the roles of $H$ and $\tilde H$, and the 
conclusion follows readily from Lemma \ref{lem:rescomparison}.
\end{proof}

We finally formulate a variant of Lemma \ref{lem:gaussperturb} for the
resolvent.

\begin{lem}
\label{lem:gaussperturbres}
Let $H,\tilde H$ be self-adjoint Gaussian random matrices. Then we have
$$
	\big\|\EE[(z\id - H)^{-1}] - \EE[(z\id - \tilde H)^{-1}] \big\|
	\le
	\frac{\|\EE H-\EE\tilde H\|}{(\mathrm{Im}\,z)^2} +
	\frac{\Delta(H,\tilde H)}{(\mathrm{Im}\,z)^3},
$$
where $\Delta(H,\tilde H)$ is as in Lemma \ref{lem:gaussperturb}.
\end{lem}

\begin{proof}
As in the proof of Lemma \ref{lem:gaussperturb}, it suffices to assume
that $\EE H=\EE\tilde H$. Moreover, applying Gaussian interpolation 
as in the proof of Lemma \ref{lem:gaussperturb} yields
\begin{multline*}
	\frac{d}{dt}\EE[\langle v,(z\id -H(t))^{-1}w\rangle]
	=
	\EE[\langle v,(z\id -H(t))^{-1}
	Y(z\id -H(t))^{-1}Y(z\id -H(t))^{-1}
	w\rangle] \\ -
	\EE[\langle v,(z\id -H(t))^{-1}
	\tilde Y(z\id -H(t))^{-1}\tilde Y(z\id -H(t))^{-1}
	w\rangle].
\end{multline*}
Integrating this identity and taking the supremum over $v,w$ with
$\|v\|=\|w\|=1$ readily yields the conclusion, where we use that
$Y,\tilde Y$ are independent of $H(t)$.
\end{proof}

\subsection{The truncation error}
\label{sec:truncerr}

In order to apply the above comparison principles to $G,\tilde G$, we must 
estimate the relevant parameters in this case. 

\begin{lem}
\label{lem:truncerrmn}
$\|\EE G-\EE \tilde G\| \le \sqrt{2}\,\sigma_*(X)$ for
$R\ge \sqrt{2}\,\bar R(X)$.
\end{lem}

\begin{proof}
We first note that as $\EE X=Z_0$ and
by the independence of $Z_1,\ldots,Z_n$
$$
	\EE \tilde G - \EE G =
	\EE \tilde X - \EE X =
	\sum_{i=1}^n \EE[1_{\|Z_i\|\le R} Z_i] =
	\sum_{i=1}^n b_i^{-1} \EE[1_{\max_j\|Z_j\|\le R} Z_i],
$$
where $b_i := \mathbf{P}[\max_{j\ne i}\|Z_j\|\le R]\ge
\mathbf{P}[\max_j\|Z_j\|\le R]$. We therefore obtain 
$$
	\|\EE \tilde G - \EE G\|
	=
	\sup_{\|v\|=\|w\|=1}
	\bigg|\EE\bigg[1_{\max_j\|Z_j\|\le R} 
	\sum_{i=1}^n b_i^{-1}\langle v,Z_iw\rangle
	\bigg]\bigg| \le
	\frac{\sigma_*(X)}{
	\mathbf{P}[\max_j\|Z_j\|\le R]^{\frac{1}{2}}}
$$
by Cauchy-Schwarz. It remains to note that 
$\mathbf{P}[\max_j\|Z_j\|\le R] \ge \frac{1}{2}$
whenever $R\ge \sqrt{2}\,\EE[\max_j\|Z_j\|^2]^{\frac{1}{2}}
=: \sqrt{2}\,\bar R(X)$ by Markov's inequality.
\end{proof}

\begin{lem}
\label{lem:truncerrvar}
$\Delta(G,\tilde G)\le 24\,\bar R(X)\sigma(X)$
for $R\ge \bar R(X)^{\frac{1}{2}}\sigma(X)^{\frac{1}{2}}$.
\end{lem}

\begin{proof}
Suppose first that $M\ge 0$ with $\|M\|\le 1$. We begin by writing
\begin{multline*}
	\EE[(G-\EE G)M(G-\EE G)] -
	\EE[(\tilde G-\EE \tilde G)M(\tilde G-\EE \tilde G)] = \\
	\sum_{i=1}^n
	\big\{
	\EE[1_{\|Z_i\|>R}Z_i M Z_i]
	+ \EE[1_{\|Z_i\|\le R} Z_i]\, M\, \EE[1_{\|Z_i\|\le R} Z_i]\big\}
	\ge 0.
\end{multline*}
Now note that as $\EE[Z_i]=0$, we have $\EE[1_{\|Z_i\|\le R} Z_i]=
-\EE[1_{\|Z_i\|>R} Z_i]$. Moreover, for any self-adjoint random matrix 
$Y$, we have $\EE[Y]\,M\,\EE[Y] \le \EE[Y]^2 \le
\EE[Y^2]$ using $\|M\|\le 1$ and Jensen's inequality. We therefore obtain
$$
	\|\EE[(G-\EE G)M(G-\EE G)] -
	\EE[(\tilde G-\EE \tilde G)M(\tilde G-\EE \tilde G)]\|
	\le 2\!\sup_{\|v\|=1}\sum_{i=1}^n \EE[1_{\|Z_i\|>R} \|Z_iv\|^2].
$$
To proceed, let $Z_i'$ be independent copies of $Z_i$, and note that 
\begin{align*}
	&\sum_{i=1}^n \EE[1_{\|Z_i\|>R} \|Z_iv\|^2] \le
	\sum_{i=1}^n \EE[1_{\max_j\|Z_j\|>R} \|Z_iv\|^2] \\ &\qquad \le
	\sum_{i=1}^n \EE[1_{\max_j\|Z_j\|>R}(\|Z_iv\|^2-\|Z_i'v\|^2)] +
	\PP[{\textstyle\max_j\|Z_j\|>R}]\, \sigma(X)^2
\end{align*}
for $\|v\|=1$.
Moreover, we have
\begin{multline*}
	\sum_{i=1}^n \EE[1_{\max_j\|Z_j\|>R}(\|Z_iv\|^2-\|Z_i'v\|^2)]
	\le
	\EE\bigg[
	\bigg|\sum_{i=1}^n
	(\|Z_iv\|^2-\|Z_i'v\|^2)\bigg|\bigg] =
	\\ 
	\EE\bigg[
	\bigg|\sum_{i=1}^n
	\varepsilon_i(\|Z_iv\|^2-\|Z_i'v\|^2)\bigg|\bigg]
	\le 2\,
	\EE\bigg[
	\bigg|\sum_{i=1}^n
	\varepsilon_i\|Z_iv\|^2\bigg|\bigg]
	\le
	2\,
	\EE\bigg[
        \bigg(\sum_{i=1}^n
        \|Z_iv\|^4\bigg)^{\frac{1}{2}}\bigg],
\end{multline*}
where $\varepsilon_i$ are i.i.d.\ random signs that are independent of the
other variables, we used that the distribution of $(Z_i,Z_i')$ is 
invariant under exchanging $Z_i$ and $Z_i'$ for any $i$, and we applied
Jensen's inequality conditionally on $(Z_i)$ in the last inequality.
Bounding $\|Z_iv\|^4\le(\max_j\|Z_j\|^2) \|Z_iv\|^2$ and applying 
Cauchy-Schwarz yields
$$
	\sum_{i=1}^n \EE[1_{\max_j\|Z_j\|>R}(\|Z_iv\|^2-\|Z_i'v\|^2)] \le
	2\,
	\EE[{\textstyle \max_j \|Z_j\|^2}]^{\frac{1}{2}}
	\sigma(X)
$$
for $\|v\|=1$.
Putting together all the above estimates yields
\begin{multline*}
	\|\EE[(G-\EE G)M(G-\EE G)] -
	\EE[(\tilde G-\EE \tilde G)M(\tilde G-\EE \tilde G)]\|
	\le \\
	2\,\PP[{\textstyle\max_j\|Z_j\|>R}]\, \sigma(X)^2 +
	4\,\EE[{\textstyle \max_j \|Z_j\|^2}]^{\frac{1}{2}}\sigma(X)
	\le
	6\,\EE[{\textstyle \max_j \|Z_j\|^2}]^{\frac{1}{2}}\sigma(X)
\end{multline*}
for $R\ge \EE[{\textstyle \max_j \|Z_j\|^2}]^{\frac{1}{4}}
\sigma(X)^{\frac{1}{2}}$ using Markov's inequality in the last step.

Finally, note that any matrix $M$ with $\|M\|\le 1$ can be written
as $M=\mathrm{Re}\,M+i\,\mathrm{Im}\,M$ with $\|\mathrm{Re}\,M\|=
\frac{1}{2}\|M+M^*\|\le 1$ and $\|\mathrm{Im}\,M\|=
\frac{1}{2}\|M-M^*\|\le 1$. As any self-adjoint matrix
is the difference of its positive and negative parts, we can write
$M=M_1-M_2+iM_3-iM_4$ with $M_i\ge 0$ with $\|M_i\|\le 1$. 
Applying 
the above estimate to each $M_i$ and using the triangle inequality 
concludes the proof.
\end{proof}

Finally, we must bound the matrix parameters of $\tilde X$.

\begin{lem}
\label{lem:truncparm}
We have $R(\tilde X)\le 2R$, 
$\sigma_*(\tilde X)\le\sigma_*(X)$, and
$\sigma(\tilde X)\le\sigma(X)$.
\end{lem}

\begin{proof}
The first inequality follows immediately from
$$
	R(\tilde X) = \Big\|\max_{1\le i\le n} 
	\big\|1_{\|Z_i\|\le R}Z_i - \EE[1_{\|Z_i\|\le R}Z_i]\big\|
	\Big\|_\infty
$$
and the triangle inequality.
Next, note that for any 
(complex) random variable $Y$ and event $A$, we have
$\EE[|1_AY-\EE[1_AY]|^2] \le \EE[1_A|Y|^2] \le \EE[|Y|^2]$.
Thus
\begin{align*}
	\EE[|\langle v,(\tilde X-\EE\tilde X)w\rangle|^2]
	&=
	\sum_{i=1}^n 
	\EE[|1_{\|Z_i\|\le R}\langle v,Z_iw\rangle - \EE[1_{\|Z_i\|\le R} 
	\langle v,Z_iw\rangle]|^2]
	\\ &\le
	\sum_{i=1}^n \EE[|\langle v,Z_i w\rangle|^2]
	=
	\EE[|\langle v,(X-\EE X)w\rangle|^2]
\end{align*}
for any nonrandom vectors $v,w$. The remaining bounds follow, 
respectively, by taking the supremum over $\|v\|=\|w\|=1$, or by summing 
over the coordinate basis $w=e_k$ 
and then taking the supremum over $\|v\|=1$.
\end{proof}

\subsection{Proof of Theorems \ref{thm:specheavy} and
\ref{thm:smunivheavy}}
\label{sec:truncpf}

We now put everything together.

\begin{proof}[Proof of Theorem \ref{thm:specheavy}]
As $X=\tilde X$ on the event $\{\max_i \|Z_i\|\le R\}$, we obtain
\begin{align*}
	&\PP\Big[\dH(\spc(X),\spc(\tilde G)) > C
	\tilde\varepsilon(t),~\max_{1\le i\le n}\|Z_i\|\le R\Big] 
	\\
	&\qquad\qquad\le \PP\big[\dH(\spc(\tilde X),\spc(\tilde G)) > C  
        \tilde\varepsilon(t)\big] \le de^{-t}
\end{align*}
with
$$
	\tilde\varepsilon(t) :=
	\sigma_*(\tilde X)\,t^{\frac{1}{2}} +
	R(\tilde X)^{\frac{1}{3}}\sigma(\tilde X)^{\frac{2}{3}}t^{\frac{2}{3}}
	+R(\tilde X)\,t.
$$
from Theorem \ref{thm:specuniv}. Moreover, we can replace 
$\tilde\varepsilon(t)$ by $\varepsilon_R(t)$ on the left-hand side of the 
above inequality as $\tilde\varepsilon(t)\le\varepsilon_R(t)$ by Lemma 
\ref{lem:truncparm}.

On the other hand, 
Proposition \ref{prop:gausscomp} and
Lemmas \ref{lem:truncerrmn}, \ref{lem:truncerrvar} and 
\ref{lem:truncparm} imply
$$
	\PP\big[\dH(\spc(\tilde G),\spc(G)) >
	C\{ R + \sigma_*(X)\}t^{\frac{1}{2}}\big]
	\le de^{-t}
$$
for all $t\ge 0$ and $R\ge R_0:= \bar 
R(X)^{\frac{1}{2}}\sigma(X)^{\frac{1}{2}}+
\sqrt{2}\,\bar R(X)$.
Here we used that we may assume without loss of 
generality that $t\ge\log d$ in the above estimate (as otherwise
the right-hand side exceeds one and the bound is trivial), and that
$\Delta(G,\tilde G)\le 24 R^2$ by the assumption on $R$ and Lemma 
\ref{lem:truncerrvar}. We may once again replace $\{ R + 
\sigma_*(X)\}t^{\frac{1}{2}}$ by $\varepsilon_R(t)$ on the left-hand side
as $t^{\frac{1}{2}}\lesssim t$ for $t\ge\log d$.

Combining the above bounds, we obtain
\begin{align*}
	&\PP\Big[\dH(\spc(X),\spc(G)) > 2C
	\varepsilon_R(t),~\max_{1\le i\le n}\|Z_i\|\le R\Big]  \\
	&\le \PP\Big[\dH(\spc(X),\spc(\tilde G))+\dH(\spc(\tilde G),\spc(G))
	> 2C
	\varepsilon_R(t),~\max_{1\le i\le n}\|Z_i\|\le R\Big]  \\
	&\le 
	\PP\Big[\dH(\spc(X),\spc(\tilde G))
	> C
	\varepsilon_R(t),~\max_{1\le i\le n}\|Z_i\|\le R\Big] 
	\\ &\qquad\qquad
	+
	\PP\big[\dH(\spc(\tilde G),\spc(G))
	> C
	\varepsilon_R(t)\big] \le 2de^{-t}.
\end{align*}
As in the proof of Theorem \ref{thm:specuniv},
the upper bound can be replaced by $de^{-t}$ if we increase the value of 
the universal constant on the left-hand side.

This concludes the proof of the tail bound. To prove the expectation 
bound, note that choosing $R=R_0t$ in the tail bound yields
$$
	\PP[\dH(\spc(X),\spc(G)) > C\delta(t)]
	\le e^{-t/2}+
	\PP\bigg[\max_{1\le i\le n}\|Z_i\|> R_0 t\bigg] 
$$
for $t\ge 2\log d$, where
$$
	\delta(t):=
	\sigma_*(X)\,t^{\frac{1}{2}} +
        R_0^\frac{1}{3}\sigma(X)^{\frac{2}{3}}t +
        R_0t^2
$$
and we used $de^{-t}\le e^{-t/2}$ for $t\ge 2\log d$.
We now compute
\begin{align*}
	&\EE[\dH(\spc(X),\spc(G))]
	\le
	C\delta(2\log d) +
	\int_{C\delta(2\log 2)}^\infty
	\PP[\dH(\spc(X),\spc(G))>x]\,dx 
\\
	&=
	C\delta(2\log d) +
	C\int_{2\log d}^\infty
	\PP[\dH(\spc(X),\spc(G))>C\delta(t)]\,
	\frac{d\delta(t)}{dt}\,dt 
\\
	&\le
	C\delta(2\log d) +
	C\int_0^\infty
	e^{-t/2} 
	\frac{d\delta(t)}{dt}\,dt 
	+
	C\int_0^\infty
	\PP\bigg[\max_{1\le i\le n}\|Z_i\|> R_0 t\bigg]\,
	\frac{d\delta(t)}{dt}\,dt 
\\
	&\lesssim
	\delta(2\log d) 
	+
	\EE[\delta({\textstyle\max_i\|Z_i\|/R_0})]
	\lesssim \delta(2\log d).\phantom{\int}
\end{align*}
The conclusion follows readily using the assumption $\bar R(X)\, (\log 
d)^3 \lesssim \sigma(X)$.
\end{proof}

\begin{proof}[Proof of Theorem \ref{thm:smunivheavy}]
As $X=\tilde X$ on the event $\{\max_i \|Z_i\|\le R\}$, we have
$$
	\EE[(z\id -X)^{-1}] =
	\EE[(z\id -\tilde X)^{-1}] +
	\EE[\{(z\id -X)^{-1}-
	(z\id -\tilde X)^{-1}\}1_{\max_i\|Z_i\|>R}].
$$
Thus Markov's inequality yields
$$
	\big\|\EE[(z\id -X)^{-1}]-
	\EE[(z\id -\tilde X)^{-1}]\big\|
	\le
	\frac{2\,\mathbf{P}[\max_i\|Z_i\|>R]}{\mathrm{Im}\,z}
	\le
	\frac{2\bar R(X)^2}{R^2\,\mathrm{Im}\,z}.
$$
Applying Theorem \ref{thm:smuniv} and Lemmas
\ref{lem:gaussperturbres}, 
\ref{lem:truncerrmn}, \ref{lem:truncerrvar} and
\ref{lem:truncparm} yields
$$
	\big\|\EE[(z\id -X)^{-1}]-
	\EE[(z\id -G)^{-1}]\big\|
	\lesssim
	\frac{\bar R(X)^2}{R^2\,\mathrm{Im}\,z} 
	+
	\frac{\sigma_*(X)}{(\mathrm{Im}\,z)^2}
	+
	\frac{R^2}{(\mathrm{Im}\,z)^3}
	+
	\frac{R \sigma(X)^2 + R^3\log d}{(\mathrm{Im}\,z)^4}
$$
for $R\ge R_0:= \bar
R(X)^{\frac{1}{2}}\sigma(X)^{\frac{1}{2}}+
\sqrt{2}\,\bar R(X)$.

Now assume first that $\mathrm{Im}\,z\ge R_0$ and 
choose $R=R_0^{\frac{1}{2}}(\mathrm{Im}\,z)^{\frac{1}{2}}$. Then 
we obtain
$$
	\big\|\EE[(z\id -X)^{-1}]-
	\EE[(z\id -G)^{-1}]\big\|
	\lesssim
	\frac{\sigma_*(X)+R_0}{(\mathrm{Im}\,z)^2} 
	+\frac{ R_0^{\frac{3}{2}}\log d}{(\mathrm{Im}\,z)^{\frac{5}{2}}}
	+\frac{R_0^{\frac{1}{2}} \sigma(X)^2}{(\mathrm{Im}\,z)^{\frac{7}{2}}}.
$$
In particular, if $\mathrm{Im}\,z\ge 
R_0^{\frac{1}{5}}\sigma(X)^{\frac{4}{5}}+
R_0(\log d)^{\frac{2}{3}}$, we obtain
$$
	\big\|\EE[(z\id -X)^{-1}]-
	\EE[(z\id -G)^{-1}]\big\|
	\lesssim
	\frac{\sigma_*(X)+
	R_0^{\frac{1}{5}}\sigma(X)^{\frac{4}{5}}
	+R_0 (\log d)^{\frac{2}{3}}}{(\mathrm{Im}\,z)^2}.
$$
On the other hand, for $\mathrm{Im}\,z< 
R_0^{\frac{1}{5}}\sigma(X)^{\frac{4}{5}} +R_0(\log d)^{\frac{2}{3}}$
we can estimate
$$
	\big\|\EE[(z\id -X)^{-1}]-
	\EE[(z\id -G)^{-1}]\big\| \le
	\frac{2}{\mathrm{Im}\,z} \lesssim
	\frac{
	R_0^{\frac{1}{5}}\sigma(X)^{\frac{4}{5}}
	+R_0(\log d)^{\frac{2}{3}}}{(\mathrm{Im}\,z)^2}.
$$
If $\bar R(X) (\log d)^{\frac{5}{3}}\lesssim \sigma(X)$, then
$R_0 (\log d)^{\frac{2}{3}} \lesssim 
R_0^{\frac{1}{5}}\sigma(X)^{\frac{4}{5}} \asymp
\bar R(X)^{\frac{1}{10}}\sigma(X)^{\frac{9}{10}}$, and the first part
of the theorem follows. The second part of the theorem now follows from
\cite[Lemma 5.11]{BBV21} as in the proof of Theorem \ref{thm:smuniv}.
\end{proof}

\section{Applications: Proofs}
\label{sec:applproofs}

\subsection{Random lifts}
\label{sec:pflift}

The aim of this section is to prove Theorem \ref{thm:lift}. We will first 
prove a more general result, and then specialize to the case of lifts.

\subsubsection{Strong convergence}

In this section, we let $\Pi_1,\ldots,\Pi_k$ be i.i.d.\ uniformly 
distributed random $n\times n$ permutation matrices, and we fix 
$A_1,\ldots,A_k\in\mathrm{M}_d(\mathbb{C})$.
We consider the random matrix
$$
	X = \sum_{i=1}^k (A_i\otimes\Pi_i + A_i^*\otimes\Pi_i^*),
$$
and let $X^\perp$ be its restriction to $\mathbb{C}^d\otimes 1^\perp$.
Recall that if $s_1,\ldots,s_{2k}$ is a free semicircular family, then
$c_1,\ldots,c_k$ defined by $c_j = \frac{s_j+is_{k+j}}{\sqrt{2}}$ is 
a free circular family.

\begin{prop}
\label{prop:scperm}
Let $c_1,\ldots,c_k$ be a free circular family, and define
$$
	X_{\mathrm{F}} =
	\sum_{i=1}^k 
	\big\{
	\big(
	(1-\varepsilon^2)^{\frac{1}{2}}A_i+\varepsilon A_i^*
	\big)\otimes c_i +
	\big(
	(1-\varepsilon^2)^{\frac{1}{2}}A_i+\varepsilon A_i^*
	\big)^*\otimes c_i^*
	\big\}
$$
with $\varepsilon = \frac{1}{\sqrt{n-1}(\sqrt{n}+\sqrt{n-2})}$.
Then 
$$
	\mathbf{P}\big[\|X^\perp\| \ge \|X_{\rm F}\| + 
	C\big\{
	v^{\frac{1}{2}}\sigma^{\frac{1}{2}}(\log nd)^{\frac{3}{4}} +
	vt^{\frac{1}{2}} +
	R^{\frac{1}{3}}\sigma^{\frac{2}{3}}t^{\frac{2}{3}} +
	Rt\big\}\big]
	\le 2nd e^{-t}
$$
for all $t\ge 0$, where $C$ is a universal constant and
$$
	\sigma = \bigg\|\sum_{i=1}^k\bigg(A_iA_i^*+A_i^*A_i+
	\frac{A_i^2+A_i^{*2}}{n-1}\bigg)\bigg\|^{\frac{1}{2}},\qquad
	R = 2\max_{1\le i\le k}\|A_i\|,
$$
and
$$
	v = \frac{2}{\sqrt{n-1}} \,
	\bigg\|\Cov\bigg(\sum_{i=1}^k A_ig_i\bigg)\bigg\|^{\frac{1}{2}},
$$
where $g_1,\ldots,g_k$ are i.i.d.\ standard real Gaussians.
\end{prop}

Proposition \ref{prop:scperm} is an immediate consequence of 
Theorem \ref{thm:smconc} once we prove that
$\sigma(X^\perp)=\sigma$, $R(X^\perp)\le R$, $\sigma_*(X^\perp)\le 
v(X^\perp)\le v$, and $\|X^\perp_{\rm free}\|=\|X_{\rm F}\|$. These facts 
will be established in the following lemmas, concluding the proof.

\begin{lem}
\label{lem:stpersig}
We have $\sigma(X^\perp)=\sigma$ and $R(X^\perp)\le R$.
\end{lem}

\begin{proof}
The bound on $R(X^\perp)$ follows immediately from $\|\Pi_i\|=1$. To 
compute $\sigma(X^\perp)$, we note that the restriction of $\Pi_i$ to 
$1^\perp$ a random matrix as in Lemma \ref{lem:groupthy} with $d=n-1$ and 
$s=1$ (as this is an $(n-1)$-dimensional real representation of the 
symmetric group; alternatively, the conclusions of Lemma 
\ref{lem:groupthy} can be verified in this case by a direct computation).
We can therefore compute
$$
	\EE[X^\perp]=0,\qquad
	\EE[(X^\perp)^2] =
	\sum_{i=1}^k\bigg(A_iA_i^*+A_i^*A_i+
        \frac{A_i^2+A_i^{*2}}{n-1}\bigg)\otimes \id,
$$
and the conclusion follows immediately.
\end{proof}

\begin{lem}
We have $\|X_{\rm free}^\perp\|=\|X_{\mathrm{F}}\|$.
\end{lem}

\begin{proof}
Let $\bar c_i := (1-\varepsilon^2)^{\frac{1}{2}}c_i+\varepsilon c_i^*$.
Then we write
$$
	X_{\rm free}^\perp = \sum_{i=1}^k (A_i\otimes 
	\Pi_{i,\mathrm{free}}^\perp + A_i^*\otimes \bar 
	\Pi_{i,\mathrm{free}}^{\perp*}),\qquad
	X_{\rm F} = \sum_{i=1}^k (A_i\otimes \bar c_i + A_i^*\otimes \bar 
	c_i^*).
$$
Now note that
$$
	\tau(\bar c_i)=0,\qquad
	\tau(\bar c_i\bar c_i^*) = 
	\tau(\bar c_i^*\bar c_i) = 1,\qquad
	\tau(\bar c_i^2) = \tau((\bar c_i^*)^2) = \frac{1}{n-1},
$$
while applying Lemma \ref{lem:groupthy} as in the proof of 
Lemma \ref{lem:stpersig} yields
$$
	\mathbf{E}[\Pi_i^\perp] =0,
	\quad
	\mathbf{E}[\Pi_i^\perp\Pi_i^{\perp*}] =
	\mathbf{E}[\Pi_i^{\perp*}\Pi_i^\perp] = \id,\quad
	\mathbf{E}[
	(\Pi_i^\perp)^2] =
	\mathbf{E}[
	(\Pi_i^{\perp*})^2] =
	\frac{1}{n-1}\id.
$$
The conclusion follows as in the proof of \cite[Lemma 7.9]{BBV21}.
\end{proof}

\begin{lem}
We have $v(X^\perp)\le v$.
\end{lem}

\begin{proof}
By Lemma \ref{lem:groupthy}, we have
$$
	\Cov(A_i\otimes\Pi_i^\perp) = \iota(A_i)\iota(A_i)^*\otimes
	\Cov(\Pi_i^\perp) =
	\iota(A_i)\iota(A_i)^*\otimes \frac{1}{n-1}\id,
$$
where $\iota:\mathrm{M}_d(\mathbb{C})\to\mathbb{C}^{d^2}$ maps
a matrix to its vector of entries.
Therefore, as $\Pi_1,\ldots,\Pi_k$ are independent, we have
$$
	\Cov\bigg(\sum_{i=1}^k A_i\otimes\Pi_i^\perp\bigg) =
	\sum_{i=1}^k \Cov(A_i\otimes\Pi_i^\perp) =
	\frac{1}{n-1} \Cov\bigg(\sum_{i=1}^k A_i g_i\bigg)\otimes \id.
$$
The conclusion follows from the triangle inequality
$v(A+B)\le v(A)+v(B)$.
\end{proof}

\subsubsection{Free generators and circular variables}

The operator $X_{\rm F}$ in Proposition \ref{prop:scperm} is defined by a 
free circular family. In the study of random lifts, however, we are 
interested in the analogous operator where the circular variables $c_i$ 
are replaced by the left-regular representation $\lambda(g_i)$ of the free 
generators of $\mathrm{F}_k$. We presently establish a comparison 
principle between these objects.

\begin{prop}
\label{prop:freeclt}
Let $c_1,\ldots,c_k$ be a free circular family and let $g_1,\ldots,g_k$ be 
free generators of $\mathrm{F}_k$. Then for any 
$A_1,\ldots,A_k\in\mathrm{M}_d(\mathbb{C})$, we have
$$
	\bigg\|\sum_{i=1}^k (A_i\otimes c_i+A_i^*\otimes c_i^*)\bigg\|
	\le
	\bigg\|\sum_{i=1}^k (A_i\otimes \lambda(g_i) +
	A_i^*\otimes\lambda(g_i)^*)\bigg\| +
	2\tilde\sigma^{\frac{1}{2}}\tilde R^{\frac{1}{2}},
$$
where $\tilde\sigma^2 = \|\sum_{i=1}^k (A_iA_i^*+A_i^*A_i)\|$
and $\tilde R=\max_{1\le i\le k}\|A_i\|$.
\end{prop}

\begin{proof}
We proceed in several steps.

\medskip
\textbf{Step 1.} We begin by noting that if $c_1,\ldots,c_k$ is a circular 
family, $c_1^*,\ldots,c_k^*$ is also a circular family. We can therefore 
write
$$
	\bigg\|\sum_{i=1}^k (A_i\otimes c_i+A_i^*\otimes c_i^*)\bigg\|
	=
	\bigg\|\sum_{i=1}^k (A_i^*\otimes c_i+A_i\otimes c_i^*)\bigg\|
	=
	\bigg\|\sum_{i=1}^k (\tilde A_i\otimes c_i+\tilde B_i\otimes 
	c_i^*)\bigg\|,
$$
where we defined the self-adjoint matrices
$$
	\tilde A_i = \begin{bmatrix} 0 & A_i \\ A_i^* & 0 \end{bmatrix},\qquad
	\quad
	\tilde B_i = \begin{bmatrix} 0 & A_i^* \\ A_i & 0 \end{bmatrix}.
$$
Similarly, if $g_1,\ldots,g_k$ are free generators of $\mathrm{F}_k$, then 
$g_1^{-1},\ldots,g_k^{-1}$ are as well, and thus the analogous identities 
holds when $c_i$ is replaced by $\lambda(g_i)$.

\medskip
\textbf{Step 2.} For any $A,M\in \mathrm{M}_d(\mathbb{C})_{\rm sa}$ with
$M>0$, define
$$
	\mathrm{R}_A(M) = 
	M^{\frac{1}{2}}\big(
	\big(\id+(M^{-\frac{1}{2}}AM^{-\frac{1}{2}})^2\big)^{\frac{1}{2}}-\id
	\big)M^{\frac{1}{2}}.
$$
Then by \cite[Theorem 1.1 and p.\ 454]{Leh99}, we have
$$
	\bigg\|\sum_{i=1}^k (\tilde A_i\otimes \lambda(g_i)+\tilde 
	B_i\otimes
        \lambda(g_i)^*)\bigg\| =
	\inf_{M>0}\bigg\|
	2M + \sum_{i=1}^k
	(\mathrm{R}_{\tilde A_i}(M) + \mathrm{R}_{\tilde B_i}(M))
	\bigg\|.
$$
On the other hand, as the circular family $c_1,\ldots,c_k$ can be realized 
by setting $c_i = l_i + l_{k+i}^*$ where $l_1,\ldots,l_{2k}$ are canoncial 
creation operators on the free Fock space (cf.\ \cite[\S 1.2]{Leh99}),
we obtain by \cite[Corollary 1.4]{Leh99} 
$$
	\bigg\|\sum_{i=1}^k (\tilde A_i\otimes c_i + 
	\tilde B_i\otimes c_i^*)\bigg\| \le
	\inf_{M>0}\bigg\|
	2M + \sum_{i=1}^k 
	\frac{\tilde A_i M^{-1} \tilde A_i + \tilde B_i M^{-1} \tilde B_i}{2}
	\bigg\|	.
$$

\medskip
\textbf{Step 3.} We claim that for any $\delta>0$, there exists
$M\ge\frac{\delta}{2}\id$ so that
$$
	2M + \sum_{i=1}^k
	(\mathrm{R}_{\tilde A_i}(M) + \mathrm{R}_{\tilde B_i}(M)) =
	\bigg(
	\bigg\|\sum_{i=1}^k (\tilde A_i\otimes \lambda(g_i)+\tilde 
	B_i\otimes
        \lambda(g_i)^*)\bigg\|+\delta\bigg)\id.
$$
Indeed, let $\tilde g_1,\ldots,\tilde g_{2k}$ be the free generators of 
the free product $\mathbb{Z}_2*\cdots*\mathbb{Z}_2$ of $2k$ copies of 
$\mathbb{Z}_2$, and define the operator $\tilde X=\sum_{i=1}^k(\tilde 
A_i\otimes\lambda(\tilde g_i)+\tilde B_i\otimes\lambda(\tilde g_{i+k}))$.
Then \cite[Theorem 1.1, Lemma 2.3 and Proposition 3.1]{Leh99} show that
the above identity is satisfied if we choose the matrix $M$ so that
$(2M)^{-1}=(\mathrm{id}\otimes\tau)\big[\big((\|\tilde 
X\|+\delta)\id-\tilde X\big)^{-1}\big]$.
As $\|\tilde X\|\id-\tilde X\ge 0$, we clearly have
$(2M)^{-1} \le \delta^{-1}\id$, establishing the claim.

\medskip
\textbf{Step 4.} Define the function
$$
	h(x) = \frac{x^2}{2}-((1+x^2)^{\frac{1}{2}}-1)
	= \frac{x^4}{2((1+x^2)^{\frac{1}{2}}+1)^2}.
$$
Then clearly $h(x)\le \frac{1}{8}x^4$. Thus we have
for any self-adjoint $A,M$ with $M>0$
$$
	\frac{AM^{-1}A}{2} \le
	\mathrm{R}_A(M) + \frac{AM^{-1}AM^{-1}AM^{-1}A}{8},
$$
where we used that
$\frac{1}{2}AM^{-1}A-\mathrm{R}_A(M) = 
M^{\frac{1}{2}}h(M^{-\frac{1}{2}}AM^{-\frac{1}{2}})M^{\frac{1}{2}}$.

\medskip
\textbf{Step 5.} We now put everything together. Let $\delta>0$
and choose $M$ as in Step~3. Then we can estimate
\begin{align*}
	&\bigg\|\sum_{i=1}^k (A_i\otimes c_i+A_i^*\otimes c_i^*)\bigg\|
	\le
	\bigg\|
	2M + \sum_{i=1}^k 
	\frac{\tilde A_i M^{-1} \tilde A_i + \tilde B_i M^{-1} \tilde B_i}{2}
	\bigg\|	\\
	&\le
	\bigg\|\sum_{i=1}^k (A_i\otimes \lambda(g_i) +
	A_i^*\otimes\lambda(g_i)^*)\bigg\| +
	\delta \\
	&\qquad\quad+
	\bigg\|
	\sum_{i=1}^k
	\frac{\tilde A_i M^{-1}\tilde A_i M^{-1}\tilde 
	A_iM^{-1}\tilde A_i
	+
	\tilde B_i M^{-1}\tilde B_i M^{-1}\tilde 
	B_iM^{-1}\tilde B_i}{8}
	\bigg\| \\
	&\le
	\bigg\|\sum_{i=1}^k (A_i\otimes \lambda(g_i) +
	A_i^*\otimes\lambda(g_i)^*)\bigg\| +
	\delta +
	\frac{\max_i (\|\tilde A_i\|^2\vee \|\tilde B_i\|^2)}{\delta^3}
	\bigg\|\sum_{i=1}^k (\tilde A_i^2+\tilde B_i^2)\bigg\|,
\end{align*}
where we used
$\langle v,\tilde A_i M^{-1}\tilde A_i M^{-1}\tilde A_iM^{-1}\tilde 
A_iv\rangle \le\|\tilde A_iv\|^2 \|M^{-1}\|^3 \|\tilde A_i\|^2$ 
(and analogously for $\tilde B_i$) and $M\ge\frac{\delta}{2}$
in the last line. As $\max_i (\|\tilde A_i\|^2\vee \|\tilde 
B_i\|^2)=\tilde R^2$ and $\|\sum_i (\tilde A_i^2+\tilde 
B_i^2)\|=\tilde\sigma^2$, the conclusion follows by choosing
$\delta = \tilde\sigma^{\frac{1}{2}}\tilde R^{\frac{1}{2}}$.
\end{proof}

\subsubsection{Random $n$-lifts}

We now specialize the above results to the situation of random $n$-lifts. 
That is, we fix a base graph $H=([d],E_H)$, set $k=|E_H|$, and 
$A_e=e_ie_j^*$ for $e=(i,j)\in E_H$, $i\le j$. Then the random matrix 
$X=X^{(n)}$ is the adjacency matrix of the random $n$-lift of $H$. Let us 
begin by estimating the parameters that appear in Propositions 
\ref{prop:scperm} and \ref{prop:freeclt} in this case.

\begin{lem}
\label{lem:liftparm}
Denote by $\mathrm{D}(H)$ the maximal degree of a vertex of $H$ and by
$\mathrm{M}(H)$ the
maximal multiplicity of an edge of $H$. Then we have
$$
	\tilde\sigma^2 \le 
	\sigma^2 \le 2\,\mathrm{D}(H),\qquad
	v^2 = \frac{4\,\mathrm{M}(H)}{n-1},\qquad
	\tilde R\le R\le 2.
$$
\end{lem}

\begin{proof}
That $\tilde R\le R\le 2$ is immediate. Now let
$A_e=e_ie_j^*$ for $e=(i,j)\in E_H$. Then
$A_eA_e^*+A_e^*A_e=e_ie_i^*+e_je_j^*$ and $A_e^2+A_e^{*2}=
2e_ie_i^*1_{i=j}$. Therefore
$$
	\bigg\|\sum_{e\in E_H} (A_eA_e^*+A_e^*A_e)\bigg\| =
	\mathrm{D}(H),\qquad
	\bigg\|\sum_{e\in E_H} (A_e^2+A_e^{*2})\bigg\| =
	2\,\mathrm{L}(H)\le \mathrm{D}(H),
$$
where $\mathrm{L}(H)$ denotes the maximal number of self-loops attached to
a vertex of $H$. The bounds on $\tilde\sigma,\sigma$ follows.
Finally, note that $\sum_{e\in E_H} A_eg_e$, where $(g_e)_{e\in E_H}$ are 
i.i.d.\ standard real Gaussians, is a matrix with independent entries
such that the variance of its $(i,j)$ entry for $i\le j$ is the number 
of edges in $H$ between vertices $i$ and $j$. The computation of $v$ 
follows immediately.
\end{proof}

Combining Proposition \ref{prop:scperm} and Lemma \ref{lem:liftparm} 
yields an analogue of Theorem \ref{thm:lift}, in which $\varrho(\hat H)$ 
is replaced by $\|X_\mathrm{F}\|$. Moreover, Proposition 
\ref{prop:freeclt} and \eqref{eq:sprcover} readily imply that $\|X_{\rm 
F}\|\le (1+Cn^{-1}+C\mathrm{D}(H)^{-\frac{1}{4}})\varrho(\hat H)$. Thus 
the conclusion of Theorem \ref{thm:lift} holds even when $H$ has 
self-loops, but with an extra $O(n^{-1})$ error term. However, when 
$H$ has no self-loops, the special structure of the coefficients $A_e$ 
enables us to eliminate the $O(n^{-1})$ term, so that the bound can be 
sharp even when $n\not\to\infty$.

\begin{lem}
\label{lem:noloops}
If $H$ has no self-loops, then
$$
	\|X_{\rm F}\| \le
	\bigg(1+\frac{C}{\mathrm{D}(H)^{\frac{1}{4}}}\bigg)
	\varrho(\hat H)
$$
for a universal constant $C$.
\end{lem}

\begin{proof}
Let $\varepsilon$ be as in Proposition \ref{prop:scperm}. As $H$ is 
loopless, all $A_e$ are of the form
$A=e_ie_j^*$ with $i<j$. If we define $A_\varepsilon = 
(1-\varepsilon^2)^{\frac{1}{2}}A+\varepsilon A^*$, then we can compute
$$
	A_\varepsilon MA^*_\varepsilon + A^*_\varepsilon MA_\varepsilon = 
	M_{jj} e_ie_i^* + M_{ii} e_je_j^* +
	\frac{M_{ji} e_ie_j^* + M_{ij} e_je_i^*}{n-1}.
$$
The circular family $(c_e)_{e\in E_H}$ can be realized as
$c_e=l_e+\tilde l_e^*$ where $l_e,\tilde l_e$ are canonical creation 
operators on a free Fock space \cite[\S 1.2]{Leh99},
we have \cite[Theorem 1.3]{Leh99}
$$
	\|X_{\rm F}\| =
	\inf_{M>0} \bigg\|
	M^{-1} + \sum_{e\in E_H} (A_{e\varepsilon}MA_{e\varepsilon}^* +
	A_{e\varepsilon}^*MA_{e\varepsilon})\bigg\|,
$$
and moreover the infimum is attained by an $M$ so that the quantity inside 
the norm on the right-hand side is proportional to the identity. We can 
now reason precisely as in the proof of \cite[Lemma 3.2]{BBV21} that the 
infimum in the above expression can be taken over diagonal matrices only.
Therefore
$$
	\|X_{\rm F}\| =
	\inf_{x\in\mathbb{R}^d:x>0}\max_{i\in [d]}
	\bigg\{ \frac{1}{x_i} + \sum_{j\in [d]:j\sim i} x_j
	\bigg\},
$$
where $j\sim i$ denotes that there is an edge between $i,j$ in $H$.
As the latter expression does not depend on $n$, we can conclude that 
when $H$ has no self-loops, $\|X_F\|$ is unchanged if we set 
$\varepsilon=0$. Then Proposition \ref{prop:freeclt},
\eqref{eq:sprcover}, and Lemma \ref{lem:liftparm} yield
$$
	\|X_{\rm F}\| \le
	\varrho(\hat H) + C\mathrm{D}(H)^{\frac{1}{4}}.
$$
It remains to note that $\varrho(\hat H)\ge \mathrm{D}(H)^{\frac{1}{2}}$
by \eqref{eq:sprcover} and \cite[eq.\ (9.7.2)]{Pis03}.
\end{proof}

We now conclude the proof of Theorem \ref{thm:lift}.

\begin{proof}[Proof of Theorem \ref{thm:lift}]
Applying Proposition \ref{prop:scperm} with $t=(a+2)\log nd$ yields
$$
	\mathbf{P}\bigg[\|X^{(n)\perp}\| \ge
	\bigg(1
	+ C\frac{\mathrm{M}(H)^{\frac{1}{4}}}{n^{\frac{1}{4}}}
	\frac{(\log nd)^{\frac{3}{4}}}{\mathrm{D}(H)^{\frac{1}{4}}}
	+
	C\frac{(\log nd)^{\frac{2}{3}}}{\mathrm{D}(H)^{\frac{1}{6}}}
	+
	C\frac{\log nd}{\mathrm{D}(H)^{\frac{1}{2}}}
	\bigg)
	\varrho(\hat H)
	\bigg]
	\le (nd)^{-a}
$$
using Lemmas \ref{lem:liftparm} and \ref{lem:noloops}, that 
$\varrho(\hat H)\ge \mathrm{D}(H)^{\frac{1}{2}}$ as in the proof of Lemma 
\ref{lem:noloops}, and that $\mathrm{M}(H)\le\mathrm{D}(H)$. Here $C$ is a 
constant that depends on $a$ only.
\end{proof}

\subsection{Smallest singular value}
\label{sec:pfssing}

The aim of this section is to prove Theorem \ref{thm:ssing}. The proof is 
based on the following linearization lemma, which we state in a 
slightly more general form than is needed here as it will be used again in 
section \ref{sec:pfsampcov}.

\begin{lem}
\label{lem:linearize}
Let $Y$ be a $d\times m$ random matrix, and let $B\ge 0$ be a nonrandom 
$m\times m$ positive semidefinite matrix. Define the $(d+2m)\times (d+2m)$ 
random matrix
$$
	\hat Y_\varepsilon :=
	\begin{bmatrix}
	0 & Y^* & (B+4\varepsilon^2\id)^{\frac{1}{2}} \\
	Y & 0 & 0 \\
	(B+4\varepsilon^2\id)^{\frac{1}{2}} & 0 & 0
	\end{bmatrix},
$$
and let $\hat Y_{\varepsilon,\mathrm{free}}$ be its noncommutative 
model. Then
\begin{align*}
	&\spc(\hat Y_\varepsilon)\subseteq \spc(\hat 
	Y_{\varepsilon,\mathrm{free}}) + [-\varepsilon,\varepsilon]
	\qquad\Longrightarrow \\
        &\begin{cases}
        \lambda_{\rm max}(Y^*Y+B+4\varepsilon^2\id)^{\frac{1}{2}} \le
        \lambda_{\rm max}(Y_{\rm free}^*Y_{\rm free}
        +B\otimes\id + 4\varepsilon^2\id)^{\frac{1}{2}}+\varepsilon,\\
        \lambda_{\rm min}(Y^*Y+B+4\varepsilon^2\id)^{\frac{1}{2}} \ge
        \lambda_{\rm min}(Y_{\rm free}^*Y_{\rm free}
        +B\otimes\id + 4\varepsilon^2\id)^{\frac{1}{2}}-\varepsilon
        \end{cases}
\end{align*}
for any $\varepsilon\ge 0$, where
$\lambda_{\rm max}(X):=\sup\spc(X)$ and $\lambda_{\rm min}(X):=\inf\spc(X)$. 
\end{lem}

\begin{proof}
The proof is identical to that of \cite[Lemma 3.13]{BBV21}.
\end{proof}

We can now complete the proof of Theorem \ref{thm:ssing}.

\begin{proof}[Proof of Theorem \ref{thm:ssing}]
We readily compute $\sigma_*(\hat 
Y_\varepsilon)=\sigma_*(Y)$, $\sigma(\hat Y_\varepsilon)=\sigma(Y)$, and
$v(\hat Y_\varepsilon)\le \sqrt{2}\,v(Y)$ by \cite[Lemma 4.10]{BBV21},
while clearly $R(\hat Y_\varepsilon)=R(Y)$ by Remark \ref{rem:nonsa}.
Applying Theorem \ref{thm:smconc} and Lemma \ref{lem:linearize} with $B=0$ 
therefore yields
$$
	\mathbf{P}\big[
        \lambda_{\rm min}(Y^*Y+4\delta(t)^2\id)^{\frac{1}{2}} \le
        \lambda_{\rm min}(Y_{\rm free}^*Y_{\rm free}
        + 4\delta(t)^2\id)^{\frac{1}{2}}-\delta(t)
	\big] 
	\le 6de^{-t},
$$
where 
$$
	\delta(t) = C\big\{v(Y)^{\frac{1}{2}}\sigma(Y)^{\frac{1}{2}}
	(\log d)^{\frac{3}{4}}
	+ \sigma_*(Y)\, t^{\frac{1}{2}} +
        R(Y)^{\frac{1}{3}}\sigma(Y)^{\frac{2}{3}} t^{\frac{2}{3}} +
        R(Y)\,t\big\}
$$
for a universal constant $C$. Using that $\lambda_{\rm min}(Y_{\rm 
free}^*Y_{\rm free}+ 4\delta(t)^2\id)^{\frac{1}{2}} \ge 
\mathrm{s_{min}}(Y_{\rm free})$
and $\lambda_{\rm min}(Y^*Y+4\delta(t)^2\id)^{\frac{1}{2}} \le
\mathrm{s_{min}}(Y)+2\delta(t)$, we obtain
$$
	\mathbf{P}\big[
	\mathrm{s_{min}}(Y) \le
	\mathrm{s_{min}}(Y_{\rm free})
	-3\delta(t)
	\big] 
	\le 6de^{-t}.
$$
It remains to note that we can replace $6d$ by $d$ on the right-hand side 
if we increase the universal constant $C$ (as in the last
step of the proof of Theorem \ref{thm:specuniv}).
\end{proof}

\subsection{Sample covariance matrices}
\label{sec:pfsampcov}

\subsubsection{Proof of Theorem \ref{thm:sguniv}}
\label{sec:pfsguniv}

Gaussian random matrices are unbounded but possess moments of all orders. 
Therefore, a result along the lines of Theorem \ref{thm:sguniv} can be 
proved either using Theorem \ref{thm:specheavy} or using Theorem 
\ref{thm:momentuniv}. These two approaches yield similar conclusions; we 
have chosen the latter approach here as it yields a slightly cleaner bound 
in the present setting. In preparation for the proof, let us estimate the 
relevant matrix parameters of the random matrix $S$ of \eqref{eq:smtx}.

\begin{lem}
\label{lem:pfsgump}
We have
$$
	\sigma(S) =
	\bigg\| \sum_{i=1}^n \big({\tr[\Sigma_i]\Sigma_i +
	\Sigma_i^2}\big)\bigg\|^{\frac{1}{2}},\qquad
	v(S) \le
	\sqrt{2}\,\bigg\|\sum_{i=1}^n \Sigma_i^2\bigg\|^{\frac{1}{2}}.
$$
\end{lem}

\begin{proof}
The identity for $\sigma(S)$ follows readily using
$$
	\EE[(S-\EE S)^2] =
	\sum_{i=1}^n
	\EE[(Y_iY_i^*-\Sigma_i)^2]
	=
	\sum_{i=1}^n
	\big(\EE[Y_iY_i^*\|Y_i\|^2] - \Sigma_i^2\big)
$$
and that
$\EE[Y_iY_i^*\|Y_i\|^2] = \tr[\Sigma_i]\Sigma_i + 2\Sigma_i^2$
by the Wick formula \eqref{eq:wick}.

To bound $v(S)$, we reason analogously. We first note that
$$
	\EE[|{\tr[M(S-\EE S)]}|^2] =
	\sum_{i=1}^n
	\big(\EE[|\langle Y_i,MY_i\rangle|^2]
	- |{\tr[M\Sigma_i]}|^2\big) 
	\le
	2\sum_{i=1}^n
	\tr[M\Sigma_i M^*\Sigma_i],
$$
using $\EE[|\langle Y_i,MY_i\rangle|^2] \le
|{\tr[M\Sigma_i]}|^2 + 2\tr[M\Sigma_i M^*\Sigma_i]$ by \eqref{eq:wick} and 
Cauchy-Schwarz. As
$2\tr[M\Sigma_i M^*\Sigma_i] \le
2\tr[M\Sigma_i^2 M^*]^{\frac{1}{2}}
\tr[\Sigma_i M M^*\Sigma_i]^{\frac{1}{2}} \le
\tr[(M^*M+MM^*)\Sigma_i^2]$ by Cauchy-Schwarz and Young's inequality,
we have
$$
	v(S)^2 = 
	\sup_{\tr |M|^2\le 1}
	\EE[|{\tr[M(S-\EE S)]}|^2] \le
	\sup_{\tr |M|^2\le 1}
	\tr\bigg[(M^*M+MM^*)\sum_{i=1}^n\Sigma_i^2\bigg],
$$
and the conclusion follows readily.
\end{proof}

We must further estimate the parameter $R_q(S)$ in Theorem 
\ref{thm:momentuniv}.

\begin{lem}
\label{lem:pfsgumpr}
For $q\ge 1$, we have
$R_q(S) \lesssim n^{\frac{1}{q}}\max_{i\le n}\{
{\tr \Sigma_i + q\|\Sigma_i\|}\}$.
\end{lem}

\begin{proof}
It follows directly from the definition of $R_q(S)$ that
$$
	R_q(S) 
	\le n^{\frac{1}{q}} \max_{i\le n} 
	\EE[\|Z_i\|^{q}]^{\frac{1}{q}} 
	\le 2 n^{\frac{1}{q}} \max_{i\le n} 
	\EE[\|Y_i\|^{2q}]^{\frac{1}{q}} 
$$
where we used $\|Z_i\| = \|Y_iY_i^*-\EE Y_iY_i^*\| \le \|Y_i\|^2 + 
\EE\|Y_i\|^2$. It remains to note that
$$
	\EE[\|Y_i\|^{2q}]^{\frac{1}{2q}} \le
	\EE\|Y_i\| +
	\EE[(\|Y_i\|-\EE\|Y_i\|)^{2q}]^{\frac{1}{2q}} \lesssim
	(\tr \Sigma_i)^{\frac{1}{2}} +
	\|\Sigma_i\|^{\frac{1}{2}}
	\sqrt{q},
$$
where we used that $\EE\|Y_i\|\le (\tr \Sigma_i)^{\frac{1}{2}}$ by
Cauchy-Schwarz and that $\|Y_i\|$ is $\|\Sigma_i\|$-subgaussian by 
Gaussian concentration \cite[Theorems 5.6 and 2.1]{BLM13}.
\end{proof}

We can now complete the proof of Theorem \ref{thm:sguniv}.

\begin{proof}[Proof of Theorem \ref{thm:sguniv}]
Theorem \ref{thm:momentuniv} and
\cite[Theorem 2.7 and Lemma 2.5]{BBV21} yield
$$
	d^{-\frac{1}{2p}}
	\EE\|S-\EE S\| \le
	\EE[\ntr (S-\EE S)^{2p}]^{\frac{1}{2p}} \le
	2\sigma(S) +
	C v(S)^{\frac{1}{2}}\sigma(S)^{\frac{1}{2}}p^{\frac{3}{4}} +
	C R_{2p}(S) p^2 
$$
for a universal constant $C$. Now let
$p=\lceil\frac{2}{\varepsilon}\log(d+n)\rceil$, so that
$\max(d^{\frac{1}{2p}},n^{\frac{1}{2p}}) \le e^{\frac{\varepsilon}{4}}$.
Moreover,
$C v(S)^{\frac{1}{2}}\sigma(S)^{\frac{1}{2}}p^{\frac{3}{4}} \le
(e^{\frac{\varepsilon}{4}}-1)2\sigma(S)+
\varepsilon^{-1}C^2v(S)p^{\frac{3}{2}}$ 
by Young's inequality and $e^x\ge 1+x$. We therefore obtain for any
$\varepsilon\in (0,1]$
$$
	\EE\|S-\EE S\| \le
	(1+\varepsilon)\,
	2\sigma(S) +
	\frac{K}{\varepsilon^{3}}
	\bigg(v(S) +
	\max_{i\le n} \tr\Sigma_i \bigg)
	\log^3(d+n),
$$
where $K$ is a universal constant and we used Lemma \ref{lem:pfsgumpr} and
$e^{\frac{\varepsilon}{2}}\le 1+\varepsilon$
for $\varepsilon\le 1$. The conclusion follows readily using
Lemma \ref{lem:pfsgump}.
\end{proof}

\subsubsection{A simple lower bound}
\label{sec:pfsimplelower}

The aim of this short section is to show that the leading terms of the 
upper bounds of Theorems \ref{thm:sgbd} and \ref{thm:sguniv} are also 
lower bounds up to a universal constant. These results therefore capture 
the correct ``user-friendly'' quantity in the present setting. In general, 
it is not the case these these terms are optimal to leading order, that is 
up to a factor $1+o(1)$; if such a sharp bound is desired, the proofs of 
Theorems \ref{thm:sgbd} and \ref{thm:sguniv} may be adapted to obtain 
bounds in terms of $\|Y_{\rm free}Y_{\rm free}^*-\EE S\otimes \id\|$ and 
$\|S_{\rm free}-\EE S\otimes \id\|$, respectively.

\begin{lem}
In the setting of section \ref{sec:gaussscov}, we have
$$
	\EE\|S-\EE S\| \gtrsim
	\bigg\|
	\sum_{i=1}^n \tr[\Sigma_i]\Sigma_i
	\bigg\|^{\frac{1}{2}} +
	\max_{i\le n}\tr \Sigma_i.	
$$
\end{lem}

\begin{proof}
We begin by noting that
$$
	\EE\|S-\EE S\| \ge
	\sup_{\|v\|=1} \EE\|(S-\EE S)v\|
	\gtrsim
	\sup_{\|v\|=1} \EE[\|(S-\EE S)v\|^2]^{\frac{1}{2}},
$$
where the last inequality follows by hypercontractivity
\cite[Theorem 3.50]{Jan97} using that $\|(S-\EE S)v\|^2$ is a polynomial 
of degree $4$ of the Gaussian variables $Y_{ij}$. The first part of
the proof of Lemma \ref{lem:pfsgump} therefore yields
$$
	\EE\|S-\EE S\| \gtrsim
	\bigg\|
	\sum_{i=1}^n \big({\tr[\Sigma_i]\Sigma_i+\Sigma_i^2}\big)
	\bigg\|^{\frac{1}{2}}
	\ge
	\bigg\|
	\sum_{i=1}^n \tr[\Sigma_i]\Sigma_i
	\bigg\|^{\frac{1}{2}}
	\ge \max_{i\le n}\|\Sigma_i\|.
$$
On the other hand, we can readily estimate by Jensen's inequality
$$
	\EE\|S-\EE S\|
	\ge \max_{i\le n}\EE\|Y_iY_i^*-\Sigma_i\|
	\ge \max_{i\le n}\tr \Sigma_i - \max_{i\le n}\|\Sigma_i\|,
$$
where we used that $S-\EE S=\sum_{i=1}^n (Y_iY_i^*-\Sigma_i)$ is a sum of 
independent centered random matrices. We can therefore estimate
$$
	\bigg\|
	\sum_{i=1}^n \tr[\Sigma_i]\Sigma_i
	\bigg\|^{\frac{1}{2}} +
	\max_{i\le n}\tr \Sigma_i \le
	\bigg\|
	\sum_{i=1}^n \tr[\Sigma_i]\Sigma_i
	\bigg\|^{\frac{1}{2}} +
	\max_{i\le n}\|\Sigma_i\| +
	\EE\|S-\EE S\|,
$$
and the proof is readily concluded.
\end{proof}

\subsubsection{Proof of Theorem \ref{thm:scovlin}}
\label{sec:pfscovlin}

The proof of Theorem \ref{thm:scovlin} combines our universality 
principles with a linearization argument as in Lemma \ref{lem:linearize}.

\begin{proof}[Proof of Theorem \ref{thm:scovlin}]
Given $B=\|\EE YY^*\|\id - \EE YY^*$, define 
$$
	\breve Y_\varepsilon :=
	\begin{bmatrix}
	0 & Y & (B+4\varepsilon^2\id)^{\frac{1}{2}} \\
	Y^* & 0 & 0 \\
	(B+4\varepsilon^2\id)^{\frac{1}{2}} & 0 & 0
	\end{bmatrix},
$$
and let $\breve H_\varepsilon$ be its Gaussian model.
Then a completely 
analogous argument to the one used in the proof of Lemma 
\ref{lem:linearize} yields
\begin{align*}
	& \dH(\spc(\breve Y_\varepsilon),\spc(\breve H_\varepsilon)) \le
	\varepsilon
	\qquad\Longrightarrow \\
        &\begin{cases}
        \big|\lambda_{\rm max}(YY^*+B+4\varepsilon^2\id)^{\frac{1}{2}} -
        \lambda_{\rm max}(HH^*
        +B + 4\varepsilon^2\id)^{\frac{1}{2}}\big| \le\varepsilon,\\
        \big|\lambda_{\rm min}(YY^*+B+4\varepsilon^2\id)^{\frac{1}{2}} -
        \lambda_{\rm min}(HH^*
        +B + 4\varepsilon^2\id)^{\frac{1}{2}}\big| \le \varepsilon
        \end{cases}
\end{align*}
for any $\varepsilon\ge 0$. Using $|a^{\frac{1}{2}}-b^{\frac{1}{2}}|
(a^{\frac{1}{2}}+b^{\frac{1}{2}}) = |a-b|$ for $a,b\ge 0$, we obtain
\begin{align*}
	& \dH(\spc(\breve Y_\varepsilon),\spc(\breve H_\varepsilon)) \le
	\varepsilon
	\qquad\Longrightarrow \\
        &
        \big| \|YY^*-\EE YY^*\| -
        \|HH^*-\EE HH^*\|\big|
	\le
	\big(\|Y\|+\|H\|+2\|\EE YY^*\|^{\frac{1}{2}}+4\varepsilon\big)
	\varepsilon,
\end{align*}
where we used that $|\|M\|-\|N\|| \le 
|\lambda_{\rm max}(M)-\lambda_{\rm max}(N)|\vee
|\lambda_{\rm min}(M)-\lambda_{\rm min}(N)|$
and $\EE HH^*=\EE YY^*$. Furthermore, we have
$\sigma_*(\breve{Y}_\varepsilon)=\sigma_*(Y)$, 
$\sigma(\breve{Y}_\varepsilon)=\sigma(Y)$, and
$R(\breve{Y}_\varepsilon)=R(Y)$
by \cite[Remark 2.6]{BBV21}.
Thus Theorem \ref{thm:specuniv} yields
\begin{multline*}
	\mathbf{P}\big[
        \big|\|YY^*-\EE YY^*\| -
        \|HH^*-\EE HH^*\|\big|
        >\\
        C\big(\|Y\|+\|H\|+\|\EE YY^*\|^{\frac{1}{2}}+\varepsilon(t)\big)
	\varepsilon(t)
	\big]
	\le (2d+m) e^{-t}
\end{multline*}
for all $t>0$, where $C$ is a universal constant and
$\varepsilon(t)$ is as in Theorem \ref{thm:specuniv}.

To proceed, note that
$$
	\mathbf{P}[\|H\|>\EE\|H\|+ C\varepsilon(t)]\le e^{-t}
$$
by Gaussian concentration as in \cite[Corollary 4.14]{BBV21}, while
$$
	\mathbf{P}[\|Y\|> \EE\|H\| + C\varepsilon(t)]
	\le (d+m)e^{-t}
$$
by Corollary \ref{cor:normuniv} and Remark \ref{rem:nonsa}.
Combining the above bounds yields
$$
	\mathbf{P}\big[
        \big|\|YY^*-\EE YY^*\| -
        \|HH^*-\EE HH^*\|\big|
        >
	C\varepsilon(t)\,\EE\|H\|+C\varepsilon(t)^2
	\big]
	\le C(d+m) e^{-t}
$$
for a universal constant $C$, where we used that 
$$
	\|\EE YY^*\| =
	\|\EE HH^*\| =
	\sup_{\|v\|=1} \EE \|H^*v\|^2 \lesssim
	\sup_{\|v\|=1} (\EE\|H^*v\|)^2 \le
	(\EE\|H\|)^2
$$
by hypercontractivity \cite[Theorem 3.50]{Jan97}.
The conclusion follows by
integrating this tail bound as in the proof of Theorem
\ref{thm:specuniv}.
\end{proof}

\subsubsection{Proof of Theorem \ref{thm:versh}}
\label{sec:pfversh}

Throughout this section we adopt the setting and notation of Theorem 
\ref{thm:versh}. Its proof combines two distinct universality principles. 
Let us begin by applying universality to $Y$.

\begin{prop}
\label{prop:versh1}
We have 
\begin{multline*}
	\EE\|S-\EE S\| \le
	2\|B\|_{\rm HS}\|B\|\sqrt{n} + \|B\|_{\rm HS}^2
	\\ +
	C\big\{\alpha^{\frac{1}{3}}\|B\|^{\frac{1}{3}} 
	(\|B\|\sqrt{n} + \|B\|_{\rm HS})^{\frac{5}{3}}
	+
	\alpha^2\|B\|^2\big\} \log^2(d+n).
\end{multline*}
\end{prop}

\begin{proof}
We will write $Y$ in the form \eqref{eq:model} as
$$
	Y=\sum_{i=1}^N\sum_{j=1}^n Z_{ij},\qquad
	Z_{ij} = A_{ij}\,Be_ie_j^*.
$$
We readily compute
$$
	\EE[YY^*] = n\,BB^*,\qquad
	\EE[Y^*Y] = \|B\|_{\rm HS}^2\id,\qquad
	v(Y) = \|B\|,\qquad
	R(Y) \le \alpha\|B\|
$$
(here we used that $Y$ has independent columns $Y_1,\ldots,Y_n$ with
$\mathrm{Cov}(Y_i)=BB^*$, so that 
$\|\Cov(Y)\|=\max_i\|\Cov(Y_i)\|=\|B\|^2$). 
Applying Theorem \ref{thm:scovlin} yields
$$
	\big|\EE\|S-\EE S\| -
	\EE\|HH^*-\EE HH^*\|\big| \lesssim
	\delta\,(\|B\|\sqrt{n} + \|B\|_{\rm HS}) + \delta^2,
$$
where
$$
	\delta \lesssim 
	\alpha^{\frac{1}{3}}\|B\|^{\frac{1}{3}}
	(\|B\|\sqrt{n} + \|B\|_{\rm HS})^{\frac{2}{3}}
	\log^{\frac{2}{3}}(d+n) +
	\alpha\|B\|\log(d+n)
$$
and we used that $\EE\|H\| \lesssim 
\|B\|\sqrt{n} + \|B\|_{\rm HS}$ (see, e.g., \cite[Lemma 5.4]{vH17}) and 
$\alpha\ge 1$. On the other hand, $\EE\|HH^*-\EE HH^*\|$ can be estimated 
by Theorem \ref{thm:sgbd} with $\Sigma_i = BB^*$. Combining all the above 
bounds yields the conclusion. 
\end{proof}

We now apply universality to $S$. In preparation for the following 
computations, we begin by estimating $\sigma(S)$ and $v(S)$. Recall that
$Y_i$ denotes the $i$th column of $Y$.

\begin{lem}
\label{lem:pfsgvng}
We have
$$
	\sigma(S) \le
	(\|B\|_{\rm HS}\|B\|+
	2\alpha\|B\|^2)\sqrt{n},
	\qquad
	v(S) \le 4\alpha\|B\|^2\sqrt{n}.
$$
\end{lem}

\begin{proof}
By Lemma \ref{lem:shir}, if
$W_1,\ldots,W_m$ are independent centered random variables with unit 
variance and $g_1,\ldots,g_m$ are independent standard Gaussians, then
$$
	\EE[W_iW_jW_kW_l] = \EE[g_ig_jg_kg_l] + 
	(\EE[W_i^4]-3)
	1_{i=j=k=l}.
$$
We will apply this identity in the case that $W_i$ are entries of the 
random matrix $A$. In particular, arguing as in the first part of the 
proof of Lemma \ref{lem:pfsgump}, we obtain
$$
	\EE[(S-\EE S)^2] =
	n \|B\|_{\rm HS}^2 BB^* + n (BB^*)^2
	+ 
	\sum_{i=1}^N
	\sum_{j=1}^n (\EE[A_{ij}^4]-3)
	(B^*B)_{ii} Be_ie_i^*B^*.
$$
As $\EE[A_{ij}^4] \le \alpha^2$, we readily obtain
$$
	\sigma(S)^2 \le
	n \|B\|_{\rm HS}^2 \|B\|^2 + (1+\alpha^2) n \|B\|^4,
$$
and the bound on $\sigma(S)$ follows using $\alpha\ge 1$.

The parameter $v(S)$ can be estimated analogously, but an adequate bound
also follows from a standard concentration argument. Indeed, note that
$$
	v(S)^2 =
	\sup_{\tr |M|^2=1} \mathrm{Var}({\tr MS})
	=
	\sup_{\tr |M|^2=1}\sum_{i=1}^n\mathrm{Var}(\langle Y_i,MY_i\rangle).
$$
By the convex Poincar\'e inequality \cite[Theorem 3.17]{BLM13} and 
Cauchy-Schwarz, we obtain $\mathrm{Var}(\langle Y_i,MY_i\rangle) \le 
16\alpha^2 \|B\|^4$ for $\tr |M|^2=1$, concluding the proof.
\end{proof}

Next, we estimate the parameter $R_q(S)$ in Theorem \ref{thm:momentuniv}.

\begin{lem}
\label{lem:pfscc}
For $q\ge 1$, we have
$R_q(S) \lesssim n^{\frac{1}{q}} 
\{\|B\|_{\rm HS}^2 + \alpha^2 q\|B\|^2\}$.
\end{lem}

\begin{proof}
The convex concentration inequality \cite[Theorems 6.10 and 2.1]{BLM13} 
yields that $\EE[\|Y_i\|^{2q}]^{\frac{1}{2q}}\le \|B\|_{\rm HS} + 
C\alpha\sqrt{q}\|B\|$, where we used $\EE\|Y_i\|\le 
\EE[\|Y_i\|^2]^{\frac{1}{2}} = \|B\|_{\rm HS}$. The conclusion follows
directly as in the proof of Lemma \ref{lem:pfsgumpr}.
\end{proof}

We can now proceed as in the proof of Theorem \ref{thm:sguniv}.

\begin{prop}
\label{prop:versh2}
We have for $\varepsilon\in (0,1]$
$$
	\EE\|S-\EE S\| \le 
	(1+\varepsilon)\,2\|B\|_{\rm HS}\|B\|\sqrt{n} 
	+\frac{C}{\varepsilon^3}
	\big(\|B\|_{\rm HS}^2 + (\alpha\sqrt{n}+\alpha^2) \|B\|^2\big)
	\log^3(d+n).
$$
\end{prop}

\begin{proof}
Apply Lemmas \ref{lem:pfsgvng} and
\ref{lem:pfscc} precisely as in the proof of Theorem \ref{thm:sguniv}.
\end{proof}

We now complete the proof of Theorem \ref{thm:versh}.

\begin{proof}[Proof of Theorem \ref{thm:versh}]
It is convenient to define $\gamma=\frac{\|B\|_{\rm HS}}{\|B\|\sqrt{n}}$ 
and $\delta = \frac{\alpha}{\sqrt{n}}$. In terms of these 
dimensionless parameters, Proposition \ref{prop:versh1} can be expressed 
as
$$
	\frac{\EE\|S-\EE S\|}{n\|B\|^2} \le
	2\gamma + \gamma^2
	+
	C\big\{ 
	\delta^{\frac{1}{3}}
	(1 + \gamma)^{\frac{5}{3}}
	+
	\delta^2\big\} \log^2(d+n)
$$
while Proposition \ref{prop:versh2} yields
$$
	\frac{\EE\|S-\EE S\|}{n\|B\|^2} \le
	2\gamma 
	+C\big\{
	(\gamma^2 + \delta+\delta^2)^{\frac{1}{4}}\gamma^{\frac{3}{4}}
	+ \gamma^2 + \delta+\delta^2\big\}
	\log^3(d+n),
$$
where in the last equation we used
$\inf_{\varepsilon\le 1}\{2\varepsilon\gamma + \frac{K}{\varepsilon^3}\}
\le 3K^{\frac{1}{4}}\gamma^{\frac{3}{4}} + 3K$.

Now note that the assumptions of the theorem imply $\delta\le 1$ and
$\gamma\ge\delta$. Using $\delta^2\le 
\delta^{\frac{1}{3}}(1+\gamma)^{\frac{5}{3}}$,
$\frac{\delta^{\frac{1}{3}}(1 + 
\gamma)^{\frac{5}{3}}}{2\gamma + \gamma^2}\lesssim
\frac{\delta^{\frac{1}{3}}}{\gamma} + 
(\frac{\delta}{\gamma})^{\frac{1}{3}}$ and $\frac{\delta}{\gamma}\le 1$,
we can rearrange and combine the above inequalities to estimate
$$
	\frac{\EE\|S-\EE S\|}{n\|B\|^2} \le
	\bigg(
	1+
	C
	\bigg\{
	\min\bigg(
	\frac{\delta^{\frac{1}{3}}}{\gamma},
	\gamma^{\frac{1}{4}} + \gamma\bigg)
	+
	\frac{\delta^{\frac{1}{4}}}{\gamma^{\frac{1}{4}}}
	\bigg\}
	\log^3(d+n)
	\bigg)
	(2\gamma + \gamma^2).
$$
We conclude with
$\min(\frac{a}{\gamma},\gamma^{\frac{1}{4}}+\gamma) \le
\min(\frac{a}{\gamma},\gamma^{\frac{1}{4}})+
\min(\frac{a}{\gamma},\gamma) \le 
a^{\frac{1}{5}}+a^{\frac{1}{2}}$
and $\delta<1$.
\end{proof}

\begin{rem}[Unbounded entries]
\label{rem:fourmomsc}
The formulation of Theorem \ref{thm:versh} for bounded $A_{ij}$ is not a 
fundamental restriction of our approach: results for 
unbounded entries can be obtained analogously by using our universality 
principles for unbounded random matrices. We have restricted to the 
bounded case largely for simplicity and brevity of exposition. However, in 
order to illustrate some features of the unbounded case, 
let us briefly discuss these here in the context of the slightly simpler 
problem of estimating $\|Y\|$ (as opposed to $\|YY^*-\EE YY^*\|$).

Let $Y=BA$ with $A,B$ as in Theorem \ref{thm:versh}, except that we now 
assume only that $\|A_{ij}\|_s \le \alpha$ for some $4<s<\infty$.
We write $Y$ in the form \eqref{eq:model} as in the proof of Proposition 
\ref{prop:versh1}. Applying Theorem \ref{thm:specheavy} as in the 
proof of Corollary \ref{cor:userfriendly} yields
$$
	\EE\|Y\| \le
	\|\EE YY^*\|^{\frac{1}{2}} +
	\|\EE Y^*Y\|^{\frac{1}{2}} +
	C\big\{
	v(Y)^{\frac{1}{2}}\sigma(Y)^{\frac{1}{2}}
	+
	\bar R(Y)^{\frac{1}{6}}\sigma(Y)^{\frac{5}{6}}\big\} \log(d+n)
$$
provided that $\bar R(Y) \log^3(d+n) \le \sigma(Y)$. All parameters in 
this bound were already computed in the proof of Proposition
\ref{prop:versh1} except $\bar R(Y)$, which we estimate as
\begin{align*}
	\bar R(Y) &=
	\EE\bigg[\max_{i\le N}\max_{j\le n} A_{ij}^2 
	\|Be_i\|^2\bigg]^{\frac{1}{2}} 
	\le
	\EE\bigg[\sum_{i=1}^N\sum_{j=1}^n
	|A_{ij}|^s 
	\|Be_i\|^s\bigg]^{\frac{1}{s}} 
	\\
	&\le
	\alpha n^{\frac{1}{s}}
	\bigg(
	\sum_{i=1}^N
	\|Be_i\|^s\bigg)^{\frac{1}{s}} 
	\le
	\alpha n^{\frac{1}{s}} \|B\|^{1-\frac{2}{s}}
	\|B\|_{\rm HS}^{\frac{2}{s}}.
\end{align*}
Combining the above estimates, we obtain the bound
$$
	\EE\|Y\| \le
	\bigg(
	1+
	\frac{C\log(d+n)}{(n\vee r)^{\frac{1}{4}}} +
	\frac{C\alpha^{\frac{1}{6}}\log(d+n)
	}{(n \vee r)^{\frac{1}{12}-\frac{1}{3s}}}
	\bigg)
	\big(\|B\|\sqrt{n}+\|B\|_{\rm HS}\big)
$$
for $\alpha\log^3(d+n) \le (n \vee r)^{\frac{1}{2}-\frac{2}{s}}$,
where $r=\|B\|_{\rm HS}^2\|B\|^{-2}$ is the effective rank of $B$
and we used that $\|B\|\sqrt{n}+\|B\|_{\rm HS} \ge
(\|B\|\sqrt{n})^{1-t} \|B\|_{\rm HS}^t$ for any $t\in[0,1]$.

This result should be compared to the best previous bound in this 
setting due to Vershynin \cite{Ver11}, which states that $\EE\|Y\| \le 
C(s)\alpha (\|B\|\sqrt{n}+\|B\|_{\rm HS})$ where
$C(s)<\infty$ for $s>4$. In contrast, our bound yields $\EE\|Y\|\le 
(1+o(1))(\|B\|\sqrt{n}+\|B\|_{\rm HS})$ 
as soon as
$n\vee r \gg (\alpha^{\frac{1}{6}}\log(d+n))^{\beta(s)}$, which not only 
yields the best possible constant for a bound of this kind (in view of the 
Bai-Yin law) but also allows $\alpha$ to diverge without affecting the 
estimate to leading order. We emphasize a key feature of both bounds
that was highlighted in \cite{Ver11}: they do not depend on the inner 
dimension $N$, despite that $\bar R(Y)$ is defined as a maximum over
$nN$ random variables.
\end{rem}

\subsection{Strong asymptotic freeness}
\label{sec:pfsaf}

The main aim of this section is to prove Theorem \ref{thm:saf}. In section 
\ref{sec:pfsaflin}, we first develop the special case of bounded random 
matrices by means of a linearization argument. The result is then extended 
to the general setting in section \ref{sec:pfsaftrunc} by employing a 
truncation argument as in section~\ref{sec:trunc}. Finally, Corollary 
\ref{cor:sparsesaf} will be proved in section \ref{sec:pfcorsparse}.

\subsubsection{Linearization}
\label{sec:pfsaflin}

The aim of this section is to prove the following special case of Theorem 
\ref{thm:saf} for bounded random matrices. The general case will be 
deduced from this result in the next section by a truncation argument.

\begin{thm}
\label{thm:bddsaf}
Let $s_1,\ldots,s_m$ be a free semicircular family, and let
$H_1^N,\ldots,H_m^N$ be independent self-adjoint random matrices 
as in Theorem \ref{thm:saf}. Suppose 
$$
	\lim_{N\to\infty}\|\EE[H_k^N]\|=
	\lim_{N\to\infty}\|\EE[(H_k^N)^2]-\id\|= 0
$$
and 
$$
	\lim_{N\to\infty}
	(\log d_N)^{\frac{3}{2}} 
	v(H_k^N) 
	=
	\lim_{N\to\infty} (\log d_N)^{2} R(H_k^N)
	= 0
$$
for every $1\le k\le m$. Then
\begin{align*}
	\lim_{N\to\infty}\ntr p(H_1^N,\ldots,H_m^N) =
	\tau(p(s_1,\ldots,s_m))\quad\mbox{a.s.},\\
	\lim_{N\to\infty}\|p(H_1^N,\ldots,H_m^N)\| =
	\|p(s_1,\ldots,s_m)\|\quad\mbox{a.s.}
\end{align*}
for every noncommutative polynomial $p$.
\end{thm}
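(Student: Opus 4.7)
The plan is to combine Theorem \ref{thm:smconc} with the standard linearization trick from free probability and a Borel--Cantelli argument. By a Haagerup--Thorbj{\o}rnsen--Pisier style reduction, to obtain both the trace and norm convergence in the theorem for every noncommutative polynomial, it suffices to prove that for every $L\in\mathbb{N}$ and every self-adjoint $a_0,a_1,\ldots,a_m\in\M_L(\mathbb{C})_{\rm sa}$,
$$
    \lim_{N\to\infty}\Bigl\|a_0\otimes\id_{d_N}+\sum_{k=1}^m a_k\otimes H_k^N\Bigr\|
    =
    \Bigl\|a_0\otimes 1+\sum_{k=1}^m a_k\otimes s_k\Bigr\|\quad\text{a.s.,}
$$
together with the analogous a.s.\ convergence of the normalized trace of every power of the matrix on the left. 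The hypotheses $\|\EE H_k^N\|\to 0$ and $\|\EE(H_k^N)^2-\id\|\to 0$ allow an initial normalization, so I may assume $\EE H_k^N=0$ and $\EE(H_k^N)^2=\id$ up to a vanishing additive perturbation.

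The central step is to apply Theorem \ref{thm:smconc} to the $(Ld_N)$-dimensional random matrix
$$
    Y^N := a_0\otimes\id_{d_N}+\sum_{k=1}^m a_k\otimes H_k^N
    = \Bigl(a_0\otimes\id+\sum_{k=1}^m a_k\otimes Z_{k0}^N\Bigr) + \sum_{k=1}^m\sum_{i=1}^{M_N}a_k\otimes Z_{ki}^N,
$$
which is of the form \eqref{eq:model}. Routine Kronecker-product bounds give
$$
    \sigma(Y^N)\le C_a,\qquad
    v(Y^N)\le C_a\max_{1\le k\le m} v(H_k^N),\qquad
    R(Y^N)\le C_a\max_{1\le k\le m} R(H_k^N),
$$
where $C_a$ depends only on $a_0,\ldots,a_m$. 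Since $\log(Ld_N)=O(\log d_N)$, the hypotheses $(\log d_N)^{3/2}v(H_k^N)\to 0$ and $(\log d_N)^2R(H_k^N)\to 0$ force the entire error term in Theorem \ref{thm:smconc} to be $o(1)$. Taking $t=3\log d_N$ yields tail probabilities of order $d_N^{-2}$, which is summable since $d_N\ge N$, so Borel--Cantelli gives the almost sure upper bound $\limsup_N\{\lambda_{\max}(Y^N)-\lambda_{\max}(Y_{\rm free}^N)\}\le 0$, and the analogous bound for the lower edge by applying the same reasoning to $-Y^N$.

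The remaining task is to identify $\lim\|Y_{\rm free}^N\|$. Here $Y_{\rm free}^N = a_0\otimes 1+\sum_k a_k\otimes s_k^{(N)}$, where the $s_k^{(N)}$ are free across $k$ and each $s_k^{(N)}\in\M_{d_N}(\mathcal{A})$ has semicircular entries matching $\Cov(H_k^N)$. Since the normalization gives $(\ntr\otimes\tau)((s_k^{(N)})^2)=1$ and $v(H_k^N)\to 0$, the Gaussian theory of \cite{BBV21} applied to $s_k^{(N)}$ (which shares the noncommutative model of the Gaussian analogue of $H_k^N$) shows that the joint $*$-distribution of $(s_k^{(N)})_{k=1}^m$ converges to that of a standard free semicircular family, and therefore $\|Y_{\rm free}^N\|\to\|a_0\otimes 1+\sum_k a_k\otimes s_k\|$. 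Trace convergence follows analogously by combining Theorem \ref{thm:momentuniv} (to compare moments of $Y^N$ to those of its Gaussian model) with the BBV21 moment comparison between the Gaussian and free models, plus concentration for the a.s.\ upgrade. The main obstacle I foresee is the last transfer from $*$-distributional convergence of the family $(s_k^{(N)})_k$ to operator-norm convergence of the affine expression $Y_{\rm free}^N$; this can be handled either by a dedicated strong-convergence argument on the noncommutative side or, more efficiently, by pairing the spectral upper bound above with a matching lower bound derived from $\|Y^N\|\ge(\ntr(Y^N)^{2p})^{1/(2p)}$ and sending $p\to\infty$ after $N\to\infty$.
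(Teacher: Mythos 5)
Your overall architecture matches the paper's: linearize, apply Theorem~\ref{thm:smconc} to the $(Ld_N)$-dimensional matrix $Y^N$ with $t\asymp\log d_N$, use Borel--Cantelli, prove weak asymptotic freeness separately to obtain the lower bound via $\|p\|\ge(\ntr|p|^{2r})^{1/(2r)}$, and close with Haagerup--Thorbj{\o}rnsen linearization. This is precisely the route taken in sections \ref{sec:weaksaf}--\ref{sec:strongsaf}.

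However, there is a genuine gap in the free-side step, and the ``more efficient'' workaround you propose does not close it. Theorem~\ref{thm:smconc} yields $\spc(Y^N)\subseteq\spc(Y^N_{\rm free})+o(1)[-1,1]$ a.s., but what the Haagerup--Thorbj{\o}rnsen linearization lemma actually requires as input is $\spc(Y^N)\subseteq\spc(a_0\otimes 1+\sum_k a_k\otimes s_k)+[-\varepsilon,\varepsilon]$ with the \emph{limiting} free model on the right. Bridging these two requires the one-sided spectral inclusion $\spc(Y^N_{\rm free})\subseteq\spc(a_0\otimes 1+\sum_k a_k\otimes s_k)+[-\varepsilon,\varepsilon]$ for large $N$, which is a strong (operator-norm level) convergence statement on the noncommutative side. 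You correctly observe that $*$-distributional convergence of $(s_k^{(N)})_k$ is insufficient, but your proposed shortcut---pairing the spectral upper bound with the lower bound $\|Y^N\|\ge(\ntr(Y^N)^{2p})^{1/(2p)}$---only yields $\liminf_N\|Y^N\|\ge\|a_0\otimes 1+\sum_k a_k\otimes s_k\|$ and hence, via the already-established inclusion, a \emph{lower} bound on $\limsup_N\|Y^N_{\rm free}\|$. It provides no control at all on $\limsup_N\|Y^N_{\rm free}\|$ from above, which is exactly what is needed. The paper resolves this by invoking \cite[Lemma 7.10]{BBV21}, a dedicated deterministic comparison between the spectra of the approximating operator-valued semicircular elements and of the limiting $a_0\otimes 1+\sum_k a_k\otimes s_k$; your ``dedicated strong-convergence argument on the noncommutative side'' option is therefore the only viable one, and it must actually be supplied. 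A secondary, minor remark: your appeal to Theorem~\ref{thm:momentuniv} for the trace side is plausible, but to extract a.s.\ convergence of each individual mixed trace $\ntr(H^N_{k_1}\cdots H^N_{k_q})$ one still needs a concentration inequality for such nonlinear, non-convex trace functionals---this is nontrivial and is supplied in the paper by Proposition~\ref{prop:weakconc} via the entropy method, a step your sketch leaves implicit.
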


The difficulty here is that we are interested in general noncommutative 
polynomials of random matrices, while our universality principles apply 
only to the linear situation of \eqref{eq:model}. To reduce the former to 
the latter, we will use classical linearization arguments that we 
presently recall.

\begin{prop}[Linearization]
\label{prop:masterlin}
Let $H_1^N,\ldots,H_m^N$ be self-adjoint random matrices and
let $s_1,\ldots,s_m$ be a free semicircular family.
\begin{enumerate}[a.]
\itemsep\medskipamount
\item
Suppose that for every $d'\in\mathbb{N}$ and
$A_0,\ldots,A_m\in\M_{d'}(\mathbb{C})_{\rm sa}$
$$
        \spc\big(A_0\otimes\id + \textstyle{\sum_{k=1}^m}A_k\otimes H_k^N
	\big)
        \subseteq
        \spc\big(A_0\otimes\id + \textstyle{\sum_{k=1}^m}A_k\otimes s_k
	\big)
        + [-\varepsilon,\varepsilon]
$$
eventually as $N\to\infty$ a.s.\ for all $\varepsilon>0$. Then
$$
        \limsup_{N\to\infty}\|p(H_1^N,\ldots,H_m^N)\|
        \le \|p(s_1,\ldots,s_m)\|\quad\mbox{a.s.}
$$
for every noncommutative polynomial $p$.
\item
Suppose that for every $d'\in\mathbb{N}$ and
$A_0,\ldots,A_m\in\M_{d'}(\mathbb{C})_{\rm sa}$
$$
	\lim_{N\to\infty}
	\ntr\big[
	\big(A_0\otimes\id + \textstyle{\sum_{k=1}^m}A_k\otimes H_k^N\big)^{2r}
	\big]
	=
	({\ntr}\otimes\tau)\big[\big(A_0\otimes\id + 
	\textstyle{\sum_{k=1}^m}A_k\otimes s_k\big)^{2r}\big]
$$
a.s.\ for all $r\in\mathbb{N}$. Then
$$
	\lim_{N\to\infty} \ntr p(H_1^N,\ldots,H_k^N) =
	\tau(p(s_1,\ldots,s_k))\quad\mbox{a.s.}
$$
for every noncommutative polynomial $p$.
\end{enumerate}
\end{prop}

\begin{proof}
The first part is proved in \cite[Lemma 1 and pp.\ 758–760]{HT05}.
The second part follows directly from the proof of \cite[Lemma 1.1]{DlS10}.
\end{proof}

To apply the linearization argument in the present setting, we use 
the following.

\begin{lem}
\label{lem:pfexploitlin}
Let $H_1^N,\ldots,H_m^N$ be random matrices as in Theorem 
\ref{thm:bddsaf}, and fix $d'\in\mathbb{N}$ and 
$A_0,\ldots,A_m\in\M_{d'}(\mathbb{C})_{\rm sa}$.
Define the random matrix
$$
	\Xi^N = A_0\otimes\id + \sum_{k=1}^m A_k\otimes H_k^N,
$$
and let $\Xi^N_{\rm free}$ be the associated noncommutative model.
Then
$$
	\spc(\Xi^N)\subseteq\spc(\Xi^N_{\rm free})+[-\varepsilon,
		\varepsilon]
$$
eventually as $N\to\infty$ a.s.\ for every $\varepsilon>0$, and
for every $r\in\mathbb{N}$
$$
	\lim_{N\to\infty}
	\big|{\ntr[(\Xi^N)^{2r}]^{\frac{1}{2r}} -
	({\ntr}\otimes\tau)[(\Xi^N_{\rm free})^{2r}]^{\frac{1}{2r}}}
	\big|=0\quad\mbox{a.s.}
$$
\end{lem}

\begin{proof}
It follows as in the proof of \cite[Lemma 7.8]{BBV21} that
$\sigma(\Xi^N)=O(1)$ and $\sigma_*(\Xi^N)\le v(\Xi^N)=o((\log 
d_N)^{-\frac{3}{2}})$.
Moreover, it is clear from the definition of $\Xi^N$ that
$$
	R(\Xi^N) = \max_{k\le m} \|A_k\| R(H_k^N) = o((\log d_N)^{-2}).
$$
Applying Theorem \ref{thm:smconc} with $t=3\log d_N$ yields
$$
	\mathbf{P}\big[\spc(\Xi^N)\subseteq\spc(\Xi^N_{\rm free})
	+ o(1)[-1,1]\big] \ge 1-\frac{2d'}{d_N^2} \ge 1 - \frac{2d'}{N^2}.
$$
The first conclusion follows by the Borel-Cantelli lemma.

On the other hand, Theorem \ref{thm:momentuniv} and
\cite[Theorem 2.7]{BBV21} yield for all $r\in\mathbb{N}$
$$
	\lim_{N\to\infty}
	\big|
	\EE[ \ntr (\Xi^N)^{2r}]^{\frac{1}{2r}} -
	({\ntr}\otimes\tau)[(\Xi^N_{\rm free})^{2r}]^{\frac{1}{2r}}
	\big| = 0.
$$
In particular, $\EE[ \ntr 
(\Xi^N)^{2r}]^{\frac{1}{2r}}=O(1)$ as
$({\ntr}\otimes\tau)[(\Xi^N_{\rm free})^{2r}]^{\frac{1}{2r}} \le
\|\Xi^N_{\rm free}\|\le 2\sigma(\Xi^N)$.
To conclude, we need to show that
$\ntr[(\Xi^N)^{2r}]^{\frac{1}{2r}}$ concentrates around
$\EE[ \ntr (\Xi^N)^{2r}]^{\frac{1}{2r}}$ a.s. To this end, we apply
Lemma \ref{lem:pfsafconc} below with
$t=2\log d_N$ to estimate
$$
	\mathbf{P}\big[
	\big|{\ntr[(\Xi^N)^{2r}]^{\frac{1}{2r}} -
	\EE[ \ntr (\Xi^N)^{2r}]^{\frac{1}{2r}}}\big|
	\ge
	o(1)\big] \le \frac{1}{d_N^2} \le \frac{1}{N^2},
$$
and the conclusion follows by Borel-Cantelli.
\end{proof}

Above we used the following concentration inequality.

\begin{lem}
\label{lem:pfsafconc}
For any random matrix as in \eqref{eq:model}, we have
$$
	\mathbf{P}\big[
	\big|(\ntr X^{2r})^{\frac{1}{2r}} -
	\EE[\ntr X^{2r}]^{\frac{1}{2r}}\big|
	\ge
	C\big(\sigma_*(X)+R(X)^{\frac{1}{2}}\EE[\ntr 
	X^{2r}]^{\frac{1}{4r}}\big)\sqrt{t} +
	CR(X) t
	\big] \le 2e^{-t}
$$
for all $t\ge r$.
\end{lem}

\begin{proof}
We begin by writing
$(\ntr X^{2r})^{\frac{1}{2r}} =
\sup_{f\in\mathcal{F}}\big|\sum_{i=0}^n f(Z_i)\big|$
where
$$
	\mathcal{F}=\{ Z\mapsto x\mathop{\mathrm{Re}}\ntr[MZ] + y 
	\mathop{\mathrm{Im}}\ntr[MZ] :x^2+y^2\le 1, 
	\|M\|_{\frac{2r}{2r-1}}\le 1\}.
$$
Then
$$
	\sup_{f\in\mathcal{F}}\sum_{i=0}^n \mathrm{Var}(f(Z_i)) \le
	\sup_{\|M\|_1\le 1}
	\sum_{i=1}^n \EE |{\ntr MZ_i}|^2 =
	\sigma_*(X)^2
$$
as $\|M\|_1\le \|M\|_{\frac{2r}{2r-1}}$ and as the extreme points of 
$S_1^d$ are rank one matrices, and
$$
	\sup_{f\in\mathcal{F}} \max_{0\le i\le n} 
	\|f(Z_i)-f(Z_i')\|_\infty \le 2R(X)
$$
where $Z_i'$ is an independent copy of $Z_i$.
We now apply\footnote{%
While the statement of \cite[Theorem 3]{Mas00} assumes that 
$\|f\|_\infty\le b$ for all $f\in\mathcal{F}$, only the weaker
assumption $\|f(\xi_i)-f(\xi_i')\|_\infty\le 2b$ is used in the proof.
We also optimized the conclusion over $\varepsilon$.}
\cite[Theorem 3]{Mas00} to estimate
\begin{multline*}
	\mathbf{P}\big[
	\big|(\ntr X^{2r})^{\frac{1}{2r}} -
	\EE[(\ntr X^{2r})^{\frac{1}{2r}}]\big|
	\ge \\
	C\big(\sigma_*(X)+R(X)^{\frac{1}{2}}\EE[(\ntr 
	X^{2r})^{\frac{1}{2r}}]^{\frac{1}{2}}\big)\sqrt{t} +
	CR(X) t
	\big] \le 2e^{-t}
\end{multline*}
for all $t\ge 0$. Consequently
$$
	\big|\EE[\ntr X^{2r}]^{\frac{1}{2r}}-
	\EE[(\ntr X^{2r})^{\frac{1}{2r}}]\big| \lesssim
	\big(\sigma_*(X)+R(X)^{\frac{1}{2}}\EE[(\ntr 
	X^{2r})^{\frac{1}{2r}}]^{\frac{1}{2}}\big)\sqrt{r} +
	R(X) r	
$$
by \cite[Theorem 2.3]{BLM13}, and we conclude by combining the 
above inequalities.
\end{proof}

We can now conclude the proof of Theorem \ref{thm:bddsaf}.

\begin{proof}[Proof of Theorem \ref{thm:bddsaf}]
Let $\Xi^N$ and $\Xi^N_{\rm free}$ be as in Lemma \ref{lem:pfexploitlin}. 
It follows from the proofs of \cite[Lemmas 7.9 and 7.10]{BBV21} that
$$
	\spc(\Xi^N_{\rm free})
	\subseteq\spc(\textstyle{A_0\otimes\id + 
	\sum_{k=1}^m A_k\otimes s_k})+[-\varepsilon,
                \varepsilon]
$$
eventually as $N\to\infty$ for every $\varepsilon>0$, and that
$$
	\lim_{N\to\infty}
	({\ntr}\otimes\tau)[(\Xi^N_{\rm free})^{2r}]
	=
	({\ntr}\otimes\tau)[(\textstyle{A_0\otimes\id +
        \sum_{k=1}^m A_k\otimes s_k})^{2r}]
$$
for all $r\in\mathbb{N}$. Thus Lemma \ref{lem:pfexploitlin} and
Proposition \ref{prop:masterlin} yield
\begin{align*}
	\lim_{N\to\infty}\ntr p(H_1^N,\ldots,H_m^N) =
	\tau(p(s_1,\ldots,s_m))\quad\mbox{a.s.},\\
	\limsup_{N\to\infty}\|p(H_1^N,\ldots,H_m^N)\| \le
	\|p(s_1,\ldots,s_m)\|\quad\mbox{a.s.}
\end{align*}
for every noncommutative polynomial $p$. To conclude, note that
\begin{align*}
	\liminf_{N\to\infty}\|p(H_1^N,\ldots,H_m^N)\|
	&\ge
	\liminf_{N\to\infty} (\ntr 
	|p(H_1^N,\ldots,H_m^N)|^{2r})^{\frac{1}{2r}} \\ &=
	\tau(|p(s_1,\ldots,s_m)|^{2r})^{\frac{1}{2r}}\quad\mbox{a.s.}
\end{align*}
for any $r\in\mathbb{N}$, where we used 
that $|p|^{2r}$ is also a polynomial. As
$$
	\lim_{r\to\infty} \tau(|p(s_1,\ldots,s_m)|^{2r})^{\frac{1}{2r}} =
	\|p(s_1,\ldots,s_m)\|
$$
(here we use that $\tau$ is faithful), the conclusion follows.
\end{proof}

\subsubsection{Proof of Theorem \ref{thm:saf}}
\label{sec:pfsaftrunc}

To prove Theorem \ref{thm:saf} in the general setting, we will combine 
Theorem \ref{thm:bddsaf} with the truncation arguments of section 
\ref{sec:trunc}. Before we proceed to the proof, we state an 
elementary lemma that will be needed below.

\begin{lem}
\label{lem:aszero}
Let $(Y_n)_{n\ge 1}$ be a sequence of real-valued random variables such 
that $|Y_n|\to 0$ a.s.\ as $n\to\infty$. Then there is a nonrandom 
sequence $(a_n)_{n\ge 1}$ with $a_n\to 0$ as $n\to\infty$, such that 
$|Y_n|\le a_n$ eventually as $n\to\infty$ a.s.
\end{lem}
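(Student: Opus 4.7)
The plan is to reduce the a.s.\ convergence $|Y_n|\to 0$ to a quantitative statement via the standard characterization: $|Y_n|\to 0$ a.s.\ is equivalent to $\mathbf{P}[\sup_{m\ge n}|Y_m|>\varepsilon]\to 0$ as $n\to\infty$ for every $\varepsilon>0$. From this I would extract, for each integer $k\ge 1$, an index $N_k$ (strictly increasing in $k$) such that
$$
	\mathbf{P}\Big[\sup_{m\ge N_k}|Y_m|>\tfrac{1}{k}\Big]\le 2^{-k}.
$$

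Given such $N_k$, I would define the deterministic sequence $a_n$ by setting $a_n := 1/k$ for $N_k\le n<N_{k+1}$ (and $a_n := 1$ for $n<N_1$, say). By construction $a_n\to 0$, since $n\to\infty$ forces the associated index $k$ to tend to infinity.

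To verify the eventual domination, I would apply the Borel--Cantelli lemma to the events $A_k:=\{\sup_{m\ge N_k}|Y_m|>1/k\}$: since $\sum_k \mathbf{P}[A_k]\le\sum_k 2^{-k}<\infty$, we have $\mathbf{P}[A_k\text{ infinitely often}]=0$. Therefore, almost surely there exists a random index $K(\omega)$ such that $\sup_{m\ge N_k}|Y_m(\omega)|\le 1/k$ for every $k\ge K(\omega)$. For any $n\ge N_{K(\omega)}$, choosing the unique $k\ge K(\omega)$ with $N_k\le n<N_{k+1}$ yields $|Y_n(\omega)|\le 1/k=a_n$, which is the required eventual bound.

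This argument is entirely standard and essentially obstacle-free; the only point requiring mild care is ensuring that the $N_k$ can be chosen strictly increasing (which is immediate, since we are free to replace $N_k$ by $\max(N_k,N_{k-1}+1)$) and that the piecewise-constant sequence $a_n$ genuinely tends to zero, which follows from the construction. No deep input is needed beyond the equivalence between a.s.\ convergence and convergence of the tail suprema in probability, together with Borel--Cantelli.
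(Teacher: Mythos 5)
Your proof is correct and follows essentially the same strategy as the paper's: pick a sequence of indices at which the tail supremum exceeds a shrinking threshold with summably small probability, set $a_n$ piecewise constant between those indices, and conclude by Borel--Cantelli. The only cosmetic differences are your use of the harmonic thresholds $1/k$ in place of $2^{-k}$ and your choice to force strict monotonicity of the $N_k$ directly, whereas the paper defines $n_k$ as a first-passage time and handles the degenerate case separately.
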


\begin{proof}
Let $Y_n^* := \sup_{m\ge n}|Y_m|$, and let
$n_k := \inf\{n:\mathbf{P}[Y_n^*>2^{-k}] \le 2^{-k}]\}$.
Then clearly $n_k$ is nondecreasing, and
$n_k<\infty$ as we assumed $|Y_n|\to 0$ a.s.
Moreover, we may assume without loss of 
generality that $n_k\to\infty$, as otherwise $Y_n^*=0$ a.s.\ for some $n$ 
and the conclusion is trivial.
We may therefore define 
$(a_n)_{n\ge 1}$ by setting $a_n = 2^{-k}$ for $n_k\le 
n<n_{k+1}$, $k\ge 0$. As by construction
$$
	\mathbf{P}[|Y_n|>a_n\mbox{ for some }n_k\le n<n_{k+1}]
	\le
	\mathbf{P}[Y_{n_k}^*>2^{-k}]\le 2^{-k},
$$
the conclusion follows by the Borel-Cantelli lemma.
\end{proof}

We can now complete the proof of Theorem \ref{thm:saf}.

\begin{proof}[Proof of Theorem \ref{thm:saf}]
We first note that it suffices to prove the a.s.\ version of the theorem,
as the in probability version follows immediately from the a.s.\ version 
using the classical fact that a sequence of random variables converges in 
probability if and only if every subsequence has an a.s.\ convergent 
subsequence.

We therefore assume from now on that the assumptions of the 
theorem hold in the a.s.\ sense.
By Lemma \ref{lem:aszero}, the assumptions imply that there exists a 
nonrandom sequence $(a_N)$ with $a_N\to 0$ as $N\to\infty$ such that
$$
	\max_{1\le k\le m}\max_{1\le i\le M_N} \|Z_{ki}^N\|
	\le (\log d_N)^{-2}a_N
$$
eventually as $N\to\infty$ a.s. Now define the truncated random matrices
$$
	\tilde H_k^N := Z_{k0}^N +
	\sum_{i=1}^{M_N}
	1_{\|Z_{ki}^N\|\le (\log d_N)^{-2}a_N}Z_{ki}^N
$$
as in section \ref{sec:trunc}. 
Then $\tilde H_k^N=H_k^N$ eventually as $N\to\infty$ a.s.\ for all $k$. To 
complete the proof, it therefore suffices to show that 
$\tilde H_1^N,\ldots,\tilde H_m^N$ satisfy the 
assumptions of Theorem \ref{thm:bddsaf}.
To this end, note first that by Lemma \ref{lem:truncparm}
$$
	(\log d_N)^2
	R(\tilde H_k^N) \le
	2a_N \xrightarrow{N\to\infty}0.
$$
Using $\EE[|1_AY-\EE[1_AY]|^2] \le 
\EE[|Y|^2]$ as in the proof of Lemma \ref{lem:truncparm},
we also have
$$
	(\log d_N)^{\frac{3}{2}}v(\tilde H_k^N)
	\le
	(\log d_N)^{\frac{3}{2}}v(H_k^N)
	\xrightarrow{N\to\infty}0.
$$
It remains to estimate $\EE[\tilde H_k^N]$ and
$\EE[(\tilde H_k^N)^2]$.

To bound $\EE[\tilde H_k^N]$, note that it was shown in the proof
of Lemma \ref{lem:truncerrmn} that
$$
	\|\EE[\tilde H_k^N]-\EE[H_k^N]\| \le
	\frac{\sigma_*(H_k^N)}{
	\mathbf{P}[\max_j\|Z_{kj}^N\|\le (\log d_N)^{-2}a_N]^{\frac{1}{2}}}.
$$
But as $\max_j\|Z_{kj}^N\|\le (\log d_N)^{-2}a_N$ eventually a.s., the 
denominator on the right-hand side converges to one and the numerator 
satisfies $\sigma_*(H_k^N)\le v(H_k^N)\to 0$. As by assumption 
$\|\EE[H_k^N]\|\to 0$, we conclude that $\|\EE[\tilde H_k^N]\|\to 0$ as 
well.

To bound $\EE[(\tilde H_k^N)^2]$, note that setting $M=\id$ in the proof 
of Lemma \ref{lem:truncerrvar} yields
\begin{multline*}
	\|\EE[(\tilde H_k^N-\EE \tilde H_k^N)^2] -
	\EE[(H_k^N-\EE H_k^N)^2]\|
	\\ \le
	2\,\PP[{\textstyle\max_j\|Z_{kj}^N\|>(\log d_N)^{-2}a_N}]\, 
	\sigma(H_k^N)^2 +
	4\,\bar R(H_k^N)\sigma(H_k^N).
\end{multline*}
The right-hand side converges to zero as $\max_j\|Z_{kj}^N\|\le (\log 
d_N)^{-2}a_N$ eventually a.s.\ and as $\bar R(H_k^N)\to 0$ and 
$\sigma(H_k^N)=O(1)$ by assumption. As $\|\EE[\tilde H_k^N]\|\to 0$,
$\|\EE[H_k^N]\|\to 0$, and $\|\EE[(H_k^N)^2]-\id\|\to 0$, we conclude
that $\|\EE[(\tilde H_k^N)^2]-\id\|\to 0$.
\end{proof}

\subsubsection{Proof of Corollary \ref{cor:sparsesaf}}
\label{sec:pfcorsparse}

Before we proceed to the proof of Corollary \ref{cor:sparsesaf}, we first 
state another elementary probabilistic lemma.

\begin{lem}
\label{lem:asmax}
Let $(Y_n)_{n\ge 1}$ be a sequence of i.i.d.\ random variables with
$\EE[|Y_n|^p]<\infty$ for some $p>0$. Then
$\lim_{n\to\infty} n^{-\frac{1}{p}}\max_{m\le n}|Y_m| = 0$ a.s.
\end{lem}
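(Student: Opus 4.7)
The plan is to reduce the statement to a standard application of the first Borel--Cantelli lemma. The key observation is that the moment hypothesis $\EE[|Y|^p]<\infty$ is equivalent, by the layer-cake formula, to
$$
\sum_{n=1}^\infty \mathbf{P}[|Y|^p>n]<\infty,
$$
and hence to $\sum_{n=1}^\infty \mathbf{P}[|Y|>\varepsilon\, n^{1/p}]<\infty$ for every fixed $\varepsilon>0$ (after absorbing $\varepsilon$ into a harmless constant factor via comparison of the series with an integral). Since the $Y_n$ are i.i.d., Borel--Cantelli yields that, almost surely, there is a (random) index $N_\varepsilon<\infty$ with $|Y_n|\le \varepsilon\, n^{1/p}$ for every $n\ge N_\varepsilon$.

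Given this, I would split the maximum according to whether the index is above or below $N_\varepsilon$: for $n\ge N_\varepsilon$,
$$
\max_{m\le n}|Y_m|\le \max\!\bigg(\max_{m<N_\varepsilon}|Y_m|,\,\max_{N_\varepsilon\le m\le n}|Y_m|\bigg)\le \max(M_\varepsilon,\varepsilon\, n^{1/p}),
$$
where $M_\varepsilon:=\max_{m<N_\varepsilon}|Y_m|$ is almost surely finite (it is a maximum of finitely many random variables on the almost sure event where $N_\varepsilon<\infty$). Dividing by $n^{1/p}$ and letting $n\to\infty$, the contribution of $M_\varepsilon$ vanishes, giving
$$
\limsup_{n\to\infty} n^{-1/p}\max_{m\le n}|Y_m|\le \varepsilon\qquad\mbox{a.s.}
$$

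Finally, to conclude the almost sure limit equals zero rather than $\varepsilon$, I would take a countable sequence $\varepsilon_k\downarrow 0$ (say $\varepsilon_k=1/k$) and intersect the almost sure events obtained above over all $k$. On this intersection, the $\limsup$ is bounded by $1/k$ for every $k$, hence equals zero. There is no real obstacle here: the only mildly delicate point is ensuring the implicit constant when passing between $\EE[|Y|^p]<\infty$ and $\sum_n \mathbf{P}[|Y|>\varepsilon n^{1/p}]<\infty$, which is handled by the standard layer-cake identity.
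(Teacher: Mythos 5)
Your proof is correct. It follows the same basic Borel--Cantelli strategy as the paper's, but with a different decomposition. You apply the first Borel--Cantelli lemma to the single-variable events $\{|Y_n|>\varepsilon n^{1/p}\}$, whose probabilities are summable precisely because $\EE[|Y_1|^p]<\infty$, and then recover control of the running maximum by splitting off the finitely many ``bad'' initial terms. The paper instead applies Borel--Cantelli directly to the maxima along the dyadic subsequence $n=2^k$: it bounds $\mathbf{P}[2^{-k/p}\max_{m\le 2^k}|Y_m|\ge\varepsilon]$ by $2^k\,\mathbf{P}[|Y_1|^p\ge 2^k\varepsilon^p]$ via a union bound, sums using the elementary estimate $\sum_{k\ge 0}2^k\,1_{2^k\le x}\le 2x$, and then interpolates between consecutive dyadic blocks with the harmless factor $2^{1/p}$. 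Both arguments use the same two ingredients (layer-cake/tail-sum characterization of moments, first Borel--Cantelli) and are of comparable length; your version avoids the dyadic blocking and the union bound at the cost of the small ``split off the first $N_\varepsilon$ terms'' step, and is perhaps the more textbook-standard presentation. Either is perfectly acceptable.
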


\begin{proof}
By the union bound and as $\sum_{k\ge 0} 2^k 1_{2^k\le x} \le 2x$,
we can estimate
$$
	\sum_{k\ge 0}
	\mathbf{P}\bigg[2^{-\frac{k}{p}}\max_{m\le 2^k}|Y_m|
	\ge\varepsilon\bigg]
	\le
	\sum_{k\ge 0}
	2^k\,
	\mathbf{P}\bigg[|Y_1|^p
	\ge 2^{k}\varepsilon^p
	\bigg]\le
	\frac{2\,\EE[|Y_1|^p]}{\varepsilon^p}<\infty
$$
for any $\varepsilon>0$. Thus
$$
	\lim_{k\to\infty}
	\max_{2^{k-1}\le n<2^k}
	n^{-\frac{1}{p}}\max_{m\le n}|Y_m| \le
	2^{\frac{1}{p}}
	\lim_{k\to\infty}
	2^{-\frac{k}{p}}\max_{m\le 2^k}|Y_m|=0\quad\mbox{a.s.}
$$
by the Borel-Cantelli lemma.
\end{proof}

We can now complete the proof of Corollary \ref{cor:sparsesaf}.

\begin{proof}[Proof of Corollary \ref{cor:sparsesaf}]
We prove both parts separately.

\medskip

\textit{Part a.} It suffices to verify that the assumptions of Theorem 
\ref{thm:saf} are satisfied. Let $\mathrm{G}_N=([d_N],E_N)$ be 
$k_N$-regular, and write
$$
	H_k^N = 
	\sum_{i<j:\{i,j\}\in E_N} \frac{\eta_{kij}}{\sqrt{k_N}}
	(e_ie_j^* + e_je_i^*).
$$
Then $\EE[H_k^N]=0$ and $\EE[(H_k^N)^2]=\id$ by construction.
Furthermore, we have
$$
	\lim_{N\to\infty} (\log d_N)^{\frac{3}{2}} v(H_k^N) =
	\sqrt{2}\lim_{N\to\infty} (\log d_N)^{\frac{3}{2}} 
	k_N^{-\frac{1}{2}} = 0
$$
by the assumption of part a. On the other hand, note that
$$
	\EE\bigg[
	\max_{1\le i\le M_N}\|Z_{ki}^N\|^2
	\bigg]
	\le
	\frac{1}{k_N}
	\EE\bigg[
	\max_{i<j:\{i,j\}\in E_N} |\eta_{kij}|^p
	\bigg]^{\frac{2}{p}}
	\le
	\frac{(k_Nd_N)^{\frac{2}{p}}}{k_N}
	\EE[|\eta_{kij}|^p]^{\frac{2}{p}}.
$$
As the assumption of part a.\ implies
$(k_Nd_N)^{\frac{2}{p}} k_N^{-1} \ll (\log d_N)^{-4}$,
we have shown that $(\log d_N)^2\bar R(H_k^N)\to 0$ as $N\to\infty$.
This simultaneously verifies both remaining assumptions of the in 
probability version of Theorem \ref{thm:saf}.

If in addition $E_N$ is increasing, we can use Lemma \ref{lem:asmax} to 
obtain
$$
	\limsup_{N\to\infty}\,
	(\log d_N)^2 \max_{1\le i\le M_N}\|Z_{ki}^N\|
	\lesssim
	\lim_{N\to\infty}
	(k_Nd_N)^{-\frac{1}{p}}
	\max_{i<j:\{i,j\}\in E_N} |\eta_{kij}| =0\quad\mbox{a.s.},
$$
where we used that
$(\log d_N)^2k_N^{-\frac{1}{2}} \lesssim (k_Nd_N)^{-\frac{1}{p}}$
by the assumption of part a. The remaining conclusion of part a.\ then 
follows from the a.s.\ version of Theorem \ref{thm:saf}.

\medskip

\textit{Part b.} Fix any $p>2$. Then we may choose a distribution of the 
entries $\eta_{kij}$ such that
$\EE[\eta_{kij}]=0$,
$\mathrm{Var}(\eta_{kij})=1$, and
$\mathbf{P}[|\eta_{kij}|>x]=(x\log x)^{-p}$ for all 
$x\ge x_0$ (here $x_0>0$ is a sufficiently large constant).
As $\EE[|\eta_{kij}|^p]<\infty$, the assumptions of 
Corollary \ref{cor:sparsesaf} are satisfied. 
Now note that as $\|M\|\ge
\max_{i,j}|M_{ij}|$, we have
$$
	\mathbf{P}\big[\|H_1^N\|>k_N^{-\frac{1}{2}}x
	\big] \ge 
	\mathbf{P}\bigg[\max_{i<j:\{i,j\}\in E_N}
        |(H_1^N)_{ij}|>k_N^{-\frac{1}{2}}x
	\bigg]
	=
	1-\big(1-(x\log x)^{-p}\big)^{\frac{k_Nd_N}{2}}
$$
for $x\ge x_0$. Choosing $x=(k_Nd_N)^{\frac{1}{p}} (\log d_N)^{-1}$
yields $(x\log x)^{-p} \ge (k_Nd_N)^{-1}$ for all sufficiently large $N$, 
where we used $k_N\le d_N$. We have therefore shown that
$$
	\mathbf{P}\big[\|H_1^N\|>k_N^{\frac{1}{p}-\frac{1}{2}}
	d_N^{\frac{1}{p}} (\log d_N)^{-1}
	\big] \ge 
	1-e^{-\frac{1}{2}}
$$
for all large $N$. But the assumption of part b.\ implies that 
$k_N^{\frac{1}{p}-\frac{1}{2}}d_N^{\frac{1}{p}} (\log 
d_N)^{-1}\to\infty$. This stands in contradiction to the conclusion of part 
a., which would imply in particular that $\|H_1^N\|\to \|s_1\|=2$ in 
probability.
\end{proof}

\subsection{Phase transitions in spiked models}
\label{sec:pfspiked}

Here we prove Theorem \ref{thm:spiked}. We first 
prove convergence of the outlier eigenvalues (as in part a.\ of Theorem 
\ref{thm:bbp}), and then consider the eigenvectors (as in part b.\ of 
Theorem \ref{thm:bbp}).

In the following, it will be convenient to introduce the notation
$$
	\mathrm{B}(\theta) := \begin{cases}
	\theta+\frac{1}{\theta} & \mbox{for }\theta>1,\\
	2 & \mbox{for }\theta\le 1,
	\end{cases}
$$
and to write 
$\theta_1\ge\cdots\ge\theta_s>1\ge\theta_{s+1}\ge\cdots\ge\theta_r>0=:\theta_{r+1}$.
The assumptions of Theorem \ref{thm:spiked} will be assumed to 
hold without further comment.

\subsubsection{Outlier eigenvalues}

We begin by a direct application of universality.

\begin{lem}
\label{lem:bbphaus}
$\dH(\spc(A_d+H_d),\spc(A_d+G_d)) \to 0$ as $d\to\infty$ a.s.
\end{lem}

\begin{proof}
We readily compute $\sigma(A_d+H_d)=O(1)$ and
$\sigma_*(A_d+H_d)=O(d^{-\frac{1}{2}})$, while by assumption 
$R(A_d+H_d)=o((\log d)^{-2})$.  Theorem \ref{thm:specuniv}
with $t=3\log d$ yields
$$
	\mathbf{P}[\dH(\spc(A_d+H_d),\spc(A_d+G_d)) >
	o(1)] \le \frac{1}{d^2},
$$
and the conclusion follows by the Borel-Cantelli lemma.
\end{proof}

When combined with Theorem \ref{thm:bbp}, Lemma \ref{lem:bbphaus} suffices 
to detect the presence and locations of any outlier eigenvalues. However, 
Hausdorff convergence is not sufficiently strong to establish convergence 
of individual eigenvalues. In the present setting, this stronger conclusion 
can however be achieved by combining universality with the min-max 
principle. To this end we will use the following lemma.

\begin{lem}
\label{lem:bbpproj}
Fix any orthonormal eigenvectors $v_{d,1},\ldots,v_{d,r}$ of $A_d$ 
with eigenvalues $\theta_1,\ldots,\theta_r$, respectively, and let
$Q_{d,i}$ be the projection onto 
$\{v_{d,1},\ldots,v_{d,i}\}^\perp$.
Then $\lambda_1(Q_{d,i}(A_d+H_d)Q_{d,i}^*)\to \mathrm{B}(\theta_{i+1})$
as $d\to\infty$ a.s.\ for $i=1,\ldots,r$.
\end{lem}

\begin{proof}
The identical argument as in Lemma \ref{lem:bbphaus} yields that
$$
	|\lambda_1(Q_{d,i}(A_d+H_d)Q_{d,i}^*)-\lambda_1(Q_{d,i}(A_d+G_d)Q_{d,i}^*)|
	\xrightarrow{d\to\infty} 0\quad\mbox{a.s.}
$$
But $Q_{d,i}G_d Q_{d,i}^*$ is GOE of dimension
$d-i$ (scaled by a factor $(\frac{d-i}{d})^{\frac{1}{2}}=
1+o(1)$). Thus applying Theorem \ref{thm:bbp}
to $Q_{d,i}(A_d+G_d)Q_{d,i}^*$ yields the conclusion.
\end{proof}

We can now deduce the convergence of the eigenvalues.

\begin{cor}
\label{cor:bbpeig}
$\lambda_i(A_d+H_d)\to \mathrm{B}(\theta_i)$ as $d\to\infty$ a.s.\ for
$1\le i\le r+1$.
\end{cor}

\begin{proof}
Note first that $\lambda_i(A_d+H_d) \le 
\lambda_1(Q_{d,i-1}(A_d+H_d)Q_{d,i-1}^*)$ by the min-max 
principle. Thus $\limsup_{d\to\infty} \lambda_i(A_d+H_d) \le 
\mathrm{B}(\theta_i)$ a.s.\ by Lemma \ref{lem:bbpproj}.

Next, note that the empirical spectral distribution of $H_d$ converges 
a.s.\ to the standard semicircle distribution by Theorem \ref{thm:saf}. 
This implies in particular that $\liminf_{d\to\infty} \lambda_i(A_d+H_d) 
\ge \liminf_{d\to\infty} \lambda_i(H_d)\ge 2$ a.s.\ for all $1\le i\le 
r+1$. Thus we obtain
$\lambda_i(A_d+H_d)\to \mathrm{B}(\theta_i)=2$ as $d\to\infty$ a.s.\ for 
$s+1\le i\le r+1$.

On the other hand, by Lemma \ref{lem:bbphaus} and Theorem 
\ref{thm:bbp}, each $\mathrm{B}(\theta_i)$ with $1\le i\le s$ is
a limit point of the spectrum of $A_d+H_d$ as $d\to\infty$. If 
$\theta_1>\theta_2>\cdots>\theta_s$ are all distinct, this immediately 
implies $\liminf_{d\to\infty} \lambda_i(A_d+H_d) \ge \mathrm{B}(\theta_i)$ 
a.s., concluding the proof. If $\theta_1,\ldots,\theta_s$ are not distinct, 
we can choose $0\le A_d'\le A_d$ of rank $r$ with distinct nonzero 
eigenvalues so that $\lambda_i(A_d')\ge\theta_i-\varepsilon$ for 
all $1\le i\le r$. Then
$$
	\liminf_{d\to\infty}\lambda_i(A_d+H_d) \ge
	\liminf_{d\to\infty}\lambda_i(A_d'+H_d) \ge
	\mathrm{B}(\theta_i-\varepsilon)
	\quad\mbox{a.s.},
$$
and the conclusion follows as $\varepsilon>0$ is arbitrary.
\end{proof}

\subsubsection{Outlier eigenvectors}

A universality statement for eigenvectors of $A_d+H_d$ can be obtained 
using Theorem \ref{thm:smuniv}. This yields the following conclusion.

\begin{lem}
\label{lem:bbpvecuniv}
Let $\varepsilon>0$ be sufficiently small that 
$I_i:=\theta_i+\frac{1}{\theta_i}+[-\varepsilon,\varepsilon]$ are disjoint
for distinct values of $\theta_i$. Then for all $1\le i\le s$ and
nonrandom $v_d\in\mathbb{C}^d$, $\|v_d\|=1$
$$
	|\langle v_d,1_{I_i}(A_d+H_d)v_d\rangle-
	\langle v_d,1_{I_i}(A_d+G_d)v_d\rangle|\xrightarrow{d\to\infty}0
	\quad\mbox{a.s.}
$$
\end{lem}

\begin{proof}
Let $\varphi_i:\mathbb{R}\to[0,1]$ be a smooth function so that
$\varphi_i(x)=0$ for $x\not\in I_i$ and $\varphi_i(x)=1$ for
$x\in \theta_i+\frac{1}{\theta_i}+[-\frac{\varepsilon}{2},\frac{\varepsilon}{2}]$.
Then the second part of Theorem \ref{thm:smuniv} yields
$$
	|\EE[\langle v_d,\varphi_i(A_d+H_d)v_d\rangle] -
	\EE[\langle v_d,\varphi_i(A_d+G_d)v_d\rangle]|\xrightarrow{d\to\infty} 0.
$$
On the other hand, applying Proposition \ref{prop:specconc} with 
$x=2C^{-1}\log d$ yields
$$
	\mathbf{P}\big[\big|\langle v_d,\varphi_i(A_d+H_d)v_d\rangle-
	\EE[\langle v_d,\varphi_i(A_d+H_d)v_d\rangle]\big|
	\ge o(1)
	\big] \le \frac{4}{d^2}, 
$$
and analogously for $A_d+G_d$. Thus the Borel-Cantelli lemma yields
$$
	|\langle v_d,\varphi_i(A_d+H_d)v_d\rangle -
	\langle v_d,\varphi_i(A_d+G_d)v_d\rangle|\xrightarrow{d\to\infty} 0
	\quad\mbox{a.s.}
$$
But
$\varphi_i(A_d+H_d)=1_{I_i}(A_d+H_d)$ and
$\varphi_i(A_d+G_d)=1_{I_i}(A_d+G_d)$ eventually as $d\to\infty$ a.s.\ by 
Corollary \ref{cor:bbpeig} and by
part a.\ of Theorem \ref{thm:bbp}, respectively.
\end{proof}

The desired properties of the eigenvectors now follow directly in the case 
that $\theta_1>\cdots>\theta_s$ are all distinct. The main difficulty is to 
remove the latter requirement by means of a suitable perturbation argument.

\begin{cor}
\label{cor:bbpvec}
For any $1\le i\le s$ and $1\le j\le r$, we have
$$
	\|P_j(A_d)v_i(A_d+H_d)\|^2 \xrightarrow[\text{a.s.}]{d\to\infty}
	\bigg(
	1-\frac{1}{\theta_i^2}\bigg)1_{\theta_j=\theta_i}.
$$
\end{cor}

\begin{proof}
If $\theta_1>\cdots>\theta_s$ are all distinct, then 
$1_{I_i}(A_d+H_d)=v_i(A_d+H_d)v_i(A_d+H_d)^*$ and 
$1_{I_i}(A_d+G_d)=v_i(A_d+G_d)v_i(A_d+G_d)^*$ eventually as $d\to\infty$ 
a.s.\ for $1\le i\le s$ by Corollary \ref{cor:bbpeig} and part a.\ of 
Theorem \ref{thm:bbp}. The conclusion then follows readily by applying 
Lemma \ref{lem:bbpvecuniv} and part b.\ of Theorem \ref{thm:bbp}.

In the general case, let $A_d'=\sum_{m=1}^r \theta_m' \,v_m(A_d)v_m(A_d)^*$ 
be a perturbation of $A_d$ (for a choice of orthonormal eigenvectors 
$v_m(A_d)$) with distinct 
$\theta_1'>\cdots>\theta_r'$ and such that 
$|\theta_m-\theta_m'|\le\varepsilon$ for all $m$.
Let $J_i:=\{k:\theta_k=\theta_i\}$, and let $Q_i(A_d'+H_d)$ be the 
projection on the linear span of $\{v_k(A_d'+H_d):k\in J_i\}$. We claim 
that
$$
	\limsup_{d\to\infty}
	\bigg|
	\|P_j(A_d)\tilde v_{d,i}\|^2 -
	\bigg(
	1-\frac{1}{\theta_i^2}\bigg)1_{\theta_j=\theta_i}
	\bigg|\lesssim\varepsilon\quad\mbox{a.s.}
$$
for any (possibly random) choice of unit vector $\tilde 
v_{d,i}\in\mathop{\mathrm{ran}}(Q_i(A_d'+H_d))$. Indeed, writing
$\tilde v_{d,i}=\sum_{k\in J_i} c_{d,k} v_k(A_d'+H_d)$ with
$\sum_{k\in J_i}c_{d,k}^2=1$, we can compute
\begin{align*}
	\|P_j(A_d)\tilde v_{d,i}\|^2 &=
	\sum_{m\in J_j}
	\sum_{k,l\in J_i}
	c_{d,k} c_{d,l} 
	\langle v_k(A_d'+H_d),v_m(A_d)\rangle
	\langle v_m(A_d),
	v_l(A_d'+H_d)\rangle \\
	&=
	\sum_{k\in J_i}
	c_{d,k}^2 
	\bigg(1-\frac{1}{(\theta_k')^2}\bigg)
	1_{\theta_j=\theta_i}+o(1)\quad\mbox{a.s.}
\end{align*}
as $d\to\infty$,
where we used that 
$|\langle v_m(A_d),v_l(A_d'+H_d)\rangle|^2\to 
(1-\frac{1}{(\theta_l')^2})1_{m=l}$ a.s.\ 
as $A_d'$ has distinct eigenvalues. The claim follows as
$\theta\mapsto 1-\frac{1}{\theta^2}$ is Lipschitz on
$[1,\infty)$.

For any projection matrices $P,Q\in\M_d(\mathbb{C})_{\rm sa}$ and
$c\in\mathbb{R}$, we can write
$$
	\sup_{v\in\mathop{\mathrm{ran}}Q,\|v\|=1}
	\big|\|Pv\|^2-c\,\big| =
	\|Q(P-c\id)Q\|.
$$
Using this identity, the above claim may be rewritten as
$$
	\limsup_{d\to\infty}
	\big\|Q_i(A_d'+H_d)
	\big(P_j(A_d)-(\textstyle{1-\frac{1}{\theta_i^2}})1_{\theta_j=\theta_i}
	\id\big)
	Q_i(A_d'+H_d)\big\|
	\lesssim \varepsilon\quad\mbox{a.s.}
$$
On the other hand, when $\varepsilon$ is sufficiently small, Corollary 
\ref{cor:bbpeig} ensures that the eigenvalues 
$\{\lambda_k(A_d'+H_d):k\in J_i\}$ are separated from the rest of the
spectrum of $A_d'+H_d$ by a positive gap as $d\to\infty$.
A routine 
application of the Davis-Kahan theorem \cite[Theorem VII.3.1 and
Exercise VII.1.11]{Bha97} yields
$$
	\|Q_i(A_d'+H_d)-Q_i(A_d+H_d)\|\lesssim\|A_d'-A_d\|\le\varepsilon
$$
eventually as $d\to\infty$ a.s. As we may choose $\varepsilon>0$ 
arbitrarily small, it follows that
$$
	\big\|Q_i(A_d+H_d)
	\big(P_j(A_d)-(\textstyle{1-\frac{1}{\theta_i^2}})1_{\theta_j=\theta_i}
	\id\big)
	Q_i(A_d+H_d)\big\| \xrightarrow{d\to\infty} 0\quad\mbox{a.s.}
$$
The conclusion follows as 
$v_i(A_d+H_d)\in\mathop{\mathrm{ran}}(Q_i(A_d+H_d))$.
\end{proof}

Combining Corollaries \ref{cor:bbpeig} and \ref{cor:bbpvec} concludes the
proof of Theorem \ref{thm:spiked}.

\SkipTocEntry\subsection*{Acknowledgments}

This work was supported in part by the NSF grants DMS-1856221, 
DMS-2054565, and DMS-2347954. The authors thank Noga Alon, Afonso 
Bandeira, March Boedihardjo, Ioana Dumitriu, Mark Rudelson, Sasha Sodin, 
Joel Tropp, Pierre Youssef, and Yizhe Zhu for helpful discussions on the 
topic of this paper, and the anonymous referees for very helpful comments 
and suggestions.

\bibliographystyle{abbrv}
\bibliography{ref}

\end{document}